\theoremstyle{definition}
\newtheorem{thm}{Theorem}[section]
\newtheorem{lem}[thm]{Lemma}
\newtheorem{cor}[thm]{Corollary}
\newtheorem{prop}[thm]{Proposition}
\newtheorem{conj}[thm]{Conjecture}
\theoremstyle{definition}
\newtheorem{rem}[thm]{Remark}
\newtheorem{definition}[thm]{Definition}
\begin{document}

\title[Chow motives of genus one fibrations]{Chow motives of genus one fibrations}

\author{(*) Daiki Kawabe}

\date{March 9, 2024.}
\keywords{Chow motives, genus one fibrations, surfaces not of general type, 
Kimura-O'Sullivan finite-dimensionality}
\thanks{\textit{(*) Email address}: daiki.kawabe.math@gmail.com, \ \ \textit{ORCID}: 0000-0003-2766-7115\\
\textit{Conflict of interest:} This work is supported by the JSPS KAKENHI Grant Number 18H03667 and Tohoku University Global Hagi Scholarship.}

\maketitle

\begin{abstract}
Let $f: X \rightarrow C$ be a genus 1 fibration from a smooth projective surface, 
i.e. its generic fiber is a regular genus 1 curve.
Let $j: J \rightarrow C$ be the Jacobian fibration of $f$. 
In this paper, we prove that the Chow motives of $X$ and $J$ are isomorphic.
As an application, combined with our concomitant work on motives of quasi-elliptic fibrations, we prove Kimura finite-dimensionality for smooth projective surfaces not of general type with geometric genus 0.
This generalizes Bloch-Kas-Lieberman's result to arbitrary characteristic.
\end{abstract}

\section{Introduction}

\subsection{Motivation}
Let $k$ be an algebraically closed field of arbitrary characteristic.
Let $f: X \rightarrow C$ be a fibration from a smooth projective surface over $k$ to a smooth projective curve. 
It is a proper, surjective, $k$-morphism such that $f_{*}\mathcal{O}_{X} \cong \mathcal{O}_{C}$.\\
Let $\eta$ be the generic point of $C$ and $X_{\eta}$ the generic fiber of $f$.
We study the following:
\begin{enumerate}
\item $f$ is a \textit{genus $1$ fibration} if $X_{\eta}$ is a \textit{regular genus} $1$ curve.
More precisely,
\[ \text{$X_{\eta}$ is a \textit{regular}, projective, geometrically-integral, curve with arithmetic genus $1$.} 
\] 
\item A genus $1$ fibration $f$ is \textit{elliptic}  if $X_{\eta}$ is \textit{smooth}, i.e.
\textit{geometrically-regular}.
\item A genus $1$ fibration $f$ is \textit{quasi-elliptic} if $X_{\eta}$ is \textit{not smooth}.
\end{enumerate}
Note that quasi-elliptic surfaces exist only in characteristic $2$ or $3$ (Proposition \ref{non-smooth genus one curve}).
We say that a genus 1 fibration $f : X \rightarrow C$ is \textit{minimal} if every birational morphism 
$f : X \rightarrow X'$ over $C$ onto a smooth projective surface $X'$ is an isomorphism.\\
\indent From now on, we let $f : X \rightarrow C$ be a genus 1 fibration and assume $f$ is minimal if not stated otherwise.
Then $X_{\eta}$ does not necessarily have a $\eta$-rational point, 
hence $f$ may have multiple fibers.
To remedy this problem, we consider the Jacobian fibration $j : J \rightarrow C$ of $f$ (\cite{Cossec and Dolgachev}). Its generic fiber $J_{\eta}$ is the regular compactification of the Jacobian variety of $X_{\eta}$.
Then, $J$ is a smooth projective surface, and $j$ is a genus 1 fibration that has no multiple fibers.
Clearly, it is easier to study $J$ than $X$.
It is known that there are relations between some invariants of $X$ and $J$, e.g. the equalities of the $i$-th Betti numbers $b_{i}(X) = b_{i}(J)$ and the Picard numbers $\rho(X) = \rho(J)$ (\cite{Cossec and Dolgachev}).
For a smooth projective surface $S$ over $k$, we denote by $h(S)$ the Chow motive of $S$ with $\mathbb{Q}$-coefficients, 
and by $T(S)$ the kernel of the Albanese map $a_{S} : \mathrm{CH}_{0}(S)_{\mathbb{Z}}^{0} \rightarrow \mathrm{Alb}_{S/k}(k)$.\\
\indent In 1976, Bloch-Kas-Lieberman proved the following relation between $X$ and $J$$:$
\begin{prop}(\cite[Prop.~4,\ p.138]{Bloch and Kas and Lieberman}).\label{J implies X}
Let $f : X \rightarrow C$ be an elliptic fibration over $\mathbb{C}$
and $j : J \rightarrow C$ the Jacobian fibration of $f$. If $T(J) = 0$, then $T(X) = 0$.
\end{prop}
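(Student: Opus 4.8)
\emph{Proof plan.} Write $K=\mathbb{C}(C)$. The plan is to produce, from a multisection of $f$, an explicit correspondence from $X$ to $J$ which on each smooth fibre is the multiplication-by-$n$ isogeny of the associated elliptic curve, and then to combine the identity $[n]^{\ast}[n]_{\ast}=n^{2}$ for $0$-cycles on an elliptic curve with Roitman's theorem. First, choose an irreducible curve $D\subset X$ finite and surjective over $C$, of some degree $n\geq 1$ (for instance a general hyperplane section, or the closure of any closed point of $X_{\eta}$). Then $D_{\eta}:=D\cap X_{\eta}$ is an effective divisor of degree $n$ on the smooth genus $1$ curve $X_{\eta}$, hence a $K$-point of $\mathrm{Pic}^{n}_{X_{\eta}/K}$, which trivialises the torsor $\mathrm{Pic}^{n}_{X_{\eta}/K}$ under $\mathrm{Pic}^{0}_{X_{\eta}/K}=J_{\eta}$ by $L\mapsto L(-D_{\eta})$. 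Composing the morphism $X_{\eta}\to\mathrm{Pic}^{n}_{X_{\eta}/K}$, $x\mapsto\mathcal{O}_{X_{\eta}}(nx)$, with this trivialisation gives a finite morphism of smooth curves over $K$,
\[
\nu_{\eta}\colon X_{\eta}\longrightarrow J_{\eta},\qquad x\longmapsto[\,nx-D_{\eta}\,]\in\mathrm{Pic}^{0}_{X_{\eta}/K}=J_{\eta},
\]
of degree $n^{2}$, equivariant for multiplication by $n$ in the sense that $\nu_{\eta}(x+e)=\nu_{\eta}(x)+ne$, where $e\in J_{\eta}$ acts on $X_{\eta}$ via the torsor structure and on $J_{\eta}$ by translation; over $\overline{K}$, after trivialising the torsor, $\nu_{\eta}$ is $[n]$ followed by a translation.

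Next, $\nu_{\eta}$ spreads out to a $C$-morphism $\nu_{U}\colon X_{U}\to J_{U}$ over a dense open $U\subseteq C$ over which both $f$ and $j$ are smooth, hence to a rational map from $X$ to $J$ with indeterminacy lying over $C\setminus U$. Resolving it yields a smooth projective surface $\widetilde{X}$ with a birational morphism $\widetilde{X}\to X$ that is an isomorphism over $f^{-1}(U)$, and a $C$-morphism $\widetilde{\nu}\colon\widetilde{X}\to J$, generically finite of degree $n^{2}$, restricting to $\nu_{U}$ over $U$. Since $T$ is a birational invariant of smooth projective surfaces, $T(\widetilde{X})=T(X)$; and since $\widetilde{\nu}_{\ast}$ preserves the degree of $0$-cycles and commutes with the Albanese map, it maps $T(\widetilde{X})$ into $T(J)$.

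The heart of the matter is the identity $\widetilde{\nu}^{\ast}\widetilde{\nu}_{\ast}=n^{2}\cdot\mathrm{id}$ on $\mathrm{CH}_{0}(\widetilde{X})$. Let $p\in\widetilde{X}$ lie over a point $c\in U$. Then $\widetilde{X}$ agrees with $X$ near $p$, the fibre $X_{c}$ is a torsor under the elliptic curve $J_{c}=j^{-1}(c)$, the morphism $\widetilde{\nu}$ is étale at $p$, and $\widetilde{\nu}|_{X_{c}}$ is the map $x\mapsto[nx-D_{c}]$, whose fibre over $\widetilde{\nu}(p)$ is the coset $\{p+t:t\in N\}$ of $n^{2}$ reduced points, $N\subset J_{c}$ being the kernel of $[n]$. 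Hence $\widetilde{\nu}^{\ast}\widetilde{\nu}_{\ast}[p]=\sum_{t\in N}[p+t]$, and identifying $X_{c}$ with $J_{c}$ via $e\mapsto p+e$ and using $\sum_{t\in N}t=0$ in $J_{c}$, one gets $\sum_{t\in N}[p+t]=n^{2}[p]$ in $\mathrm{CH}_{0}(X_{c})$, hence in $\mathrm{CH}_{0}(\widetilde{X})$. Thus $\widetilde{\nu}^{\ast}\widetilde{\nu}_{\ast}[p]=n^{2}[p]$ for all $p$ in the dense open $f^{-1}(U)$. The locus of $p\in\widetilde{X}$ where this holds is a countable union of closed subvarieties (namely the locus where the algebraic family of $0$-cycles $p\mapsto(\widetilde{\nu}^{\ast}\widetilde{\nu}_{\ast}-n^{2})[p]$, given by the correspondence ${}^{t}\Gamma_{\widetilde{\nu}}\circ\Gamma_{\widetilde{\nu}}-n^{2}\Delta_{\widetilde{X}}$, is rationally equivalent to $0$), and it contains a dense open; as $\widetilde{X}$ is irreducible and $\mathbb{C}$ is uncountable, a Baire-category argument forces it to be all of $\widetilde{X}$, and since $\mathrm{CH}_{0}(\widetilde{X})$ is generated by classes of closed points the identity follows.

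Finally, let $\alpha\in T(X)=T(\widetilde{X})$. Then $\widetilde{\nu}_{\ast}\alpha\in T(J)=0$, so $n^{2}\alpha=\widetilde{\nu}^{\ast}\widetilde{\nu}_{\ast}\alpha=\widetilde{\nu}^{\ast}(0)=0$; since $T(X)$ is torsion-free by Roitman's theorem, $\alpha=0$, that is, $T(X)=0$. The main obstacle is the step just above: promoting the fibrewise isogeny computation to an identity valid on all of $\widetilde{X}$, i.e.\ controlling the correspondence over the singular and multiple fibres of $f$. The countability argument disposes of it, but one may instead reason directly with the localization sequence relating $\mathrm{CH}_{0}(X)$ to $\mathrm{CH}_{0}(X_{\eta})$; the remaining ingredients are then essentially formal.
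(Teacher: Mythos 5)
Your argument is correct, but note that the paper never actually proves Proposition \ref{J implies X}: it is quoted from Bloch--Kas--Lieberman as motivation, and what you have written is essentially a careful reconstruction of their original argument. The paper's own route to the same conclusion is genuinely different and stronger. Instead of the degree-$n^{2}$ correspondence $x\mapsto[nx-D_{\eta}]$ attached to a multisection, the author produces a degree-$1$ correspondence: over a finite Galois extension $L/K$ trivialising the torsor $X_{\eta}$ with a \emph{torsion} cocycle (Proposition \ref{existence of a torsion}), the graph of the trivialisation is averaged over $\mathrm{Gal}(L/K)$, and the error terms are translations by torsion points, which act trivially on Chow groups of an abelian variety by the Deninger--Murre decomposition (Proposition \ref{product of elliptic}); this yields $h(X_{\eta})\cong h(J_{\eta})$ (Theorem \ref{motive of smooth genus one}), which is then spread out over $C$ via the localization sequence and the calculus of relative correspondences (Lemmas \ref{correspondences and base change}, \ref{correspondences and closed immersion}, \ref{two funcotors}) to give $t_{2}(X)\cong t_{2}(J)$, hence $T(X)_{\mathbb{Q}}\cong T(J)_{\mathbb{Q}}$ and then $T(X)=0$. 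What your approach buys is elementarity: no motives and no Beauville decomposition, only the fibrewise identity $\sum_{t\in J_{c}[n]}[p+t]=n^{2}[p]$ on a genus-one fibre, Roitman's theorem for torsion-freeness of $T(X)$, and the standard countability argument for rational equivalence over $\mathbb{C}$; you also correctly flag that $\widetilde{\nu}^{\ast}\widetilde{\nu}_{\ast}=n^{2}$ is \emph{not} the projection formula (which gives $\widetilde{\nu}_{\ast}\widetilde{\nu}^{\ast}=n^{2}$ on $J$) but a genuine computation with the group law on the fibres, which is the heart of the original proof. What it does not buy is an isomorphism $h(X)\cong h(J)$, the converse implication $T(X)=0\Rightarrow T(J)=0$, or independence of the base field (you use uncountability of $\mathbb{C}$ and Roitman), whereas the paper's construction works over any algebraically closed field. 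One small point you should make explicit: after shrinking $U$, arrange that $\widetilde{\nu}^{-1}(j^{-1}(U))=f^{-1}(U)$, so that no curve contracted by $\widetilde{\nu}$ (exceptional curves of the resolution, components of bad fibres) contributes to $\widetilde{\nu}^{-1}(\widetilde{\nu}(p))$; this is harmless but is needed for the pointwise formula $\widetilde{\nu}^{\ast}\widetilde{\nu}_{\ast}[p]=\sum_{t}[p+t]$.
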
 
In 1992, K.~Coombes proved the following relation between $X$ and $J$$:$
\begin{prop} (\cite[Prop.~3.1,\ p.52]{Coombes}). \label{Coombes's result}
Let $k$ be an algebraically closed field.
Let $X$ be an Enriques surface  over $k$ with an elliptic fibration $f : X \rightarrow \mathbb{P}^{1}$,
and $j : J \rightarrow \mathbb{P}^{1}$ the Jacobian fibraton of $f$.
Then there is an isomorphism of Chow motives $h(X) \cong h(J)$.
\end{prop}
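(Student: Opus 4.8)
The plan is to split off the evidently matching pieces of $h(X)$ and $h(J)$ and then to match the one remaining piece --- the transcendental motive --- by transporting a correspondence from the generic fibres.

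Both $X$ and $J$ are smooth projective surfaces with $q=p_{g}=0$: $X$ is an Enriques surface, and the Jacobian fibration of an elliptic Enriques surface is a rational elliptic surface. Hence $b_{1}=b_{3}=0$ and $\rho=b_{2}=10$ for both. By Murre's Chow--K\"unneth decomposition for surfaces and its refinement (Kahn, Murre, Pedrini), any such surface $S$ satisfies
\[
h(S)\;\cong\;\mathbf{1}\,\oplus\,\bigl(\mathrm{NS}(S)_{\mathbb{Q}}\otimes\mathbf{L}\bigr)\,\oplus\,t_{2}(S)\,\oplus\,\mathbf{L}^{2},
\]
where $\mathbf{L}$ is the Lefschetz motive and the transcendental motive $t_{2}(S)$ has $H^{*}(t_{2}(S))=H^{2}_{\mathrm{tr}}(S)$ and $\mathrm{CH}^{2}(t_{2}(S))=T(S)_{\mathbb{Q}}$. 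Over the algebraically closed $k$ the middle summand is just $\mathbf{L}^{\oplus 10}$ for both $S=X$ and $S=J$, so it is enough to produce an isomorphism $t_{2}(X)\cong t_{2}(J)$. Moreover $J$ is rational, so $\mathrm{CH}_{0}(J)=\mathbb{Z}$ and $\rho(J)=b_{2}(J)$, giving $T(J)=0$ and $H^{2}_{\mathrm{tr}}(J)=0$, hence $t_{2}(J)=0$; the statement thus reduces to $t_{2}(X)=0$, equivalently (since $H^{2}_{\mathrm{tr}}(X)=0$) to $T(X)=0$ --- the analogue of Bloch-Kas-Lieberman in arbitrary characteristic.

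The transcendental motive of a relatively minimal elliptic surface $g\colon S\to\mathbb{P}^{1}$ with $q=p_{g}=0$ is governed by the generic fibre: via the Leray spectral sequence $H^{2}_{\mathrm{tr}}(S)$ is a subquotient of $H^{1}\!\bigl(\mathbb{P}^{1},R^{1}g_{*}\mathbb{Q}_{\ell}\bigr)$, which depends only on $R^{1}g_{*}\mathbb{Q}_{\ell}$, hence only on $H^{1}(S_{\eta})$ as a Galois module --- the components of the bad and multiple fibres contributing only to the algebraic summands already split off. For $f\colon X\to\mathbb{P}^{1}$ and $j\colon J\to\mathbb{P}^{1}$ the generic fibre $J_{\eta}$ is the Jacobian of the genus $1$ curve $X_{\eta}$; since a genus $1$ curve is a torsor under its Jacobian, there is a canonical isomorphism of Chow motives $h^{1}(X_{\eta})\cong h^{1}(J_{\eta})$ over $K=k(\mathbb{P}^{1})$, realised by an explicit correspondence built from the Abel morphism $X_{\eta}\to\mathrm{Pic}^{d}_{X_{\eta}/K}\cong J_{\eta}$, where $d$ is the period and one works with $\mathbb{Q}$-coefficients. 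The plan is to spread this $K$-correspondence out to a cycle $\Gamma$ on $X\times_{k}J$, to correct $\Gamma$ by cycles supported over finitely many closed points of $\mathbb{P}^{1}$ so that $\Gamma^{t}\circ\Gamma$ and $\Gamma\circ\Gamma^{t}$ become non-zero rational multiples of the transcendental projectors of $X$ and $J$, and thereby to get $t_{2}(X)\cong t_{2}(J)$, hence $h(X)\cong h(J)$. (Alternatively one may pass to a finite cover $C'\to\mathbb{P}^{1}$ over which the torsor $X_{\eta}$ becomes trivial, so that the two pulled-back surfaces are birational, and then descend along $C'/\mathbb{P}^{1}$; this meets the same difficulty described next.)

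The main obstacle will be this spreading-out and the attendant error-term bookkeeping. One must extend a correspondence that a priori lives only over the generic point to genuine algebraic cycles on the smooth projective surfaces $X$ and $J$ --- which differ in the location and type of their singular and multiple fibres (the elliptic Enriques surface $f$ has two double fibres not present on $j$) --- and then show that the defect of $\Gamma^{t}\circ\Gamma$, and of $\Gamma\circ\Gamma^{t}$, from a multiple of the diagonal is a correspondence annihilating $t_{2}$, i.e. factoring through Tate motives. Here the hypotheses $q=p_{g}=0$ and $\rho=b_{2}$ are essential: they force every correction cycle into the already-accounted-for algebraic part. Executing this requires a fibre-by-fibre comparison of the finitely many bad fibres of $f$ and $j$, matching Kodaira types (which coincide away from the two double fibres), controlling $\mathrm{Pic}$ of $X_{\eta}$ by Tsen's theorem, and keeping track of the Lefschetz summands introduced when one resolves or compactifies --- this is the technical core of the argument.
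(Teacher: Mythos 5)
Your overall strategy is the one the paper uses to prove the general Theorem~\ref{motive of genus one} (of which this proposition is the special case): split off $h_{0},h_{4}$, the Picard/Albanese parts and $h_{2}^{alg}$, reduce to $t_{2}(X)\cong t_{2}(J)$, obtain $h(X_{\eta})\cong h(J_{\eta})$ from the torsor structure of the genus~$1$ generic fibre (the paper's Theorem~\ref{motive of smooth genus one}, proved by Galois descent of the trivialization with a torsion cocycle rather than your Abel map $x\mapsto d[x]$, but both constructions rest on the same fact that translation by a torsion point acts as the identity on Chow groups with $\mathbb{Q}$-coefficients), and then spread the generic-fibre correspondence out over $\mathbb{P}^{1}$. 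Your Enriques-specific shortcut --- $J$ is rational, so $t_{2}(J)=0$ and $h_{1}=h_{3}=0$ for both surfaces, reducing everything to $t_{2}(X)=0$ --- is sound and is essentially how the paper's Remark after Theorem~\ref{tmotive of genus one} situates Coombes's case, though you do not use it to save any work since you still prove $t_{2}(X)\cong t_{2}(J)$ by the correspondence argument.

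Where you go astray is in your diagnosis of the "main obstacle." The spreading-out requires no fibre-by-fibre comparison of the singular fibres of $f$ and $j$, no matching of Kodaira types, and no use of the hypotheses $q=p_{g}=0$ or $\rho=b_{2}$. The point (Lemma~\ref{two funcotors} and Lemma~\ref{lemma for main prop} in the paper) is purely formal: the localization sequence gives
\[
\mathrm{CH}_{1}(X_{\eta}\times_{\eta}J_{\eta})\;\cong\;\mathrm{CH}_{2}(X\times_{C}J)\big/\textstyle\bigoplus_{c}\mathrm{CH}_{2}(X_{c}\times_{c}J_{c}),
\]
and any class supported on $X_{c}\times_{c}J_{c}$ pushes forward into $\mathrm{CH}_{2}(X\times J)_{\equiv}$ simply because it is supported on $(\text{curve})\times(\text{curve})$; by Theorem~\ref{automorophism groups of motives} such classes induce the zero map on $t_{2}$. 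Compatibility of composition of correspondences with restriction to the generic fibre and with pushforward from $X\times_{C}J$ to $X\times_{k}J$ (Lemmas~\ref{correspondences and base change} and~\ref{correspondences and closed immersion}) then transports the identities $a\circ_{\eta}b=\Delta_{J_{\eta}}$, $b\circ_{\eta}a=\Delta_{X_{\eta}}$ to identities modulo $\mathrm{CH}_{2}(\cdot)_{\equiv}$, and Proposition~\ref{isomorphism of transcendental motives} concludes. This is exactly why the paper's theorem holds for arbitrary minimal genus~$1$ fibrations, not just those with $p_{g}=q=0$; attributing the error-term control to those hypotheses, and to a comparison of bad fibres, is the one substantive misconception in your write-up. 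Your sketch is otherwise correct, but the step you leave as "the technical core" is precisely the part you would need to execute, and it closes by the formal localization argument above rather than by the geometric bookkeeping you anticipate.
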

The author was inspired by Propositions \ref{J implies X} and \ref{Coombes's result}.
In this paper, we generalize Proposition \ref{Coombes's result} for genus $1$ fibrations defined over arbitrary algebraically closed fields.\\

\indent S.~Kimura and O'Sullivan independently introduced 
a notion of finite-dimensionality for Chow motives 
(\cite[Def.~3.7]{Kimura} and \cite[Ch.\ 12]{Andre}).
We call it Kimura-finiteness.
They conjectured that every Chow motive is Kimura-finite.
Moreover, for a surface $S$ over $\mathbb{C}$ with $p_{g} = 0$, Kimura proved the theorem that $h(S)$ is Kimura-finite if and only if 
the Albanese map $a_{S} : \mathrm{CH}_{0}(S)_{\mathbb{Z}}^{0} \rightarrow \mathrm{Alb}_{S/\mathbb{C}}(\mathbb{C})$ is an isomorphism (\cite[Coro.~7.7]{Kimura}).
By using his theorem, the result of Bloch-Kas-Lieberman 
\cite{Bloch and Kas and Lieberman} (see Theorem \ref{result of BKL})
is equivalent to the following$:$
\begin{thm} \label{motivic vertion 1 of BKL} (= Theorem \ref{motivic vertion 2 of BKL}).
Let $X$ be a smooth projective surface over $\mathbb{C}$.
Assume that $X$ has geometric genus $0$ and Kodaira dimension $< 2$. 
Then $h(X)$ is Kimura-finite.
\end{thm}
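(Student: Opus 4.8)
The plan is twofold. Since the statement is over $\mathbb{C}$, the quickest route is to observe that it is literally equivalent to the (non-motivic) theorem of Bloch--Kas--Lieberman recorded as Theorem \ref{result of BKL}: that theorem asserts $T(X)=0$ for a smooth projective surface $X$ over $\mathbb{C}$ with $p_g(X)=0$ and $\kappa(X)<2$, while Kimura's theorem quoted above says that for $p_g=0$ surfaces $h(X)$ is Kimura-finite if and only if the Albanese map $a_X$ is an isomorphism, i.e. $T(X)=0$. So at bare minimum the proof is a one-line combination of Theorem \ref{result of BKL} with Kimura's criterion. I would nevertheless prefer a proof that uses the isomorphism $h(X)\cong h(J)$ established in this paper, both because it does not treat \cite{Bloch and Kas and Lieberman} as a black box and because it is the version that later survives in positive characteristic (cf. Theorem \ref{motivic vertion 2 of BKL}).

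For that self-contained argument I would first reduce by the Enriques--Kodaira classification: replacing $X$ by its minimal model (which changes $h(X)$ only by adding Tate twists $\mathbf{1}(-i)$ and leaves $p_g$, $\kappa$, and Kimura-finiteness unchanged), a surface over $\mathbb{C}$ with $p_g=0$ and $\kappa<2$ is rational, ruled over a curve, Enriques, bielliptic, or properly elliptic with $p_g=0$. If $\kappa(X)=-\infty$ then $h(X)$ is a finite direct sum of motives of the form $\mathbf{1}(-i)$ and $h^{1}(C)(-i)$ (blow-up, projective-bundle and $h(\mathbb{P}^2)$ decompositions), hence Kimura-finite, since motives of curves are Kimura-finite and the class of Kimura-finite motives is closed under $\oplus$, $\otimes$, twists and direct summands. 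In each of the three remaining cases $X$ carries a genus $1$ fibration $f:X\to C$; over $\mathbb{C}$ there are no quasi-elliptic fibrations, so $f$ is elliptic. Applying the main theorem of this paper gives $h(X)\cong h(J)$, where $j:J\to C$ is the Jacobian fibration of $f$; thus $J$ is a smooth projective surface with $p_g(J)=p_g(X)=0$ admitting an elliptic fibration with a section, and it suffices to prove $h(J)$ Kimura-finite. But the canonical bundle formula for an elliptic surface with a section (hence without multiple fibers), $\omega_J\cong\pi^*(\omega_C\otimes\mathcal{L})$ with $\deg\mathcal{L}=\chi(\mathcal{O}_J)\ge 0$, forces $\kappa(J)\le 0$ once $p_g(J)=0$: over $\mathbb{P}^1$ one gets $\chi(\mathcal{O}_J)\le 1$ and $J$ rational, while over a base of genus $\ge 1$ one gets $\chi(\mathcal{O}_J)=0$, $g(C)=1$ and $\omega_J$ numerically trivial, so $J$ is rational, ruled over an elliptic curve, or (when $\kappa(J)=0$) bielliptic. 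In all of these cases $\mathrm{CH}_0$ is representable, i.e. $T(J)=0$, so $h(J)$ and hence $h(X)$ is Kimura-finite by Kimura's criterion.

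The main obstacle is this last step: one must be sure that after passing to the Jacobian fibration one lands among surfaces whose $0$-cycles are controlled. The canonical bundle computation above handles the bookkeeping cleanly, but the subcase of bielliptic $J$ still rests on the classical fact that $\mathrm{CH}_0$ of a bielliptic surface is representable (equivalently, at this point one may invoke the comparatively easy half of the argument of \cite{Bloch and Kas and Lieberman}, namely the case where $f$ admits a section). The genuinely new ingredient contributed here is the reduction $h(X)\cong h(J)$ to a fibration possessing a section, which replaces the transcendental arguments of \cite{Bloch and Kas and Lieberman} by a motivic statement valid in arbitrary characteristic.
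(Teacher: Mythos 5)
Your proposal is correct and matches the paper: the one-line route (combining Theorem \ref{result of BKL} with Proposition \ref{equivalent finite dimensionality}) is exactly how the paper deduces Theorem \ref{motivic vertion 2 of BKL}, and your self-contained argument is essentially the paper's proof of the more general Theorem \ref{characteristic $p$} specialized to $\mathbb{C}$ --- reduce to a minimal model, dispose of $\kappa=-\infty$ by ruledness, pass to the Jacobian fibration via $h(X)\cong h(J)$, and observe that $J$ is birationally ruled or bielliptic. The only cosmetic differences are that the paper controls $\kappa(J)$ through the invariant $\lambda(j)$ of Proposition \ref{inequality} rather than directly from the canonical bundle formula (the same computation), that it settles the bielliptic case via $J\cong (E\times F)/G$ and Kimura-finiteness of quotients of products of curves rather than via representability of $\mathrm{CH}_0$, and that your claim that $J$ is rational when $C=\mathbb{P}^1$ should read ``birationally ruled'' (the subcase $\chi(\mathcal{O}_J)=0$, $q(J)=1$ gives a surface ruled over an elliptic curve), none of which affects correctness.
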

In this paper, we generalize Theorem \ref{motivic vertion 1 of BKL} to arbitrary characteristic.

\subsection{Main theorems of this paper.} \ \indent \\
\indent We prove two main theorems (Theorems \ref{Main 1} and \ref{Main 4}).
The first one is the following:
\begin{thm} (= Theorem \ref{motive of genus one}). \label{Main 1} 
Let $k$ be an arbitrary algebraically closed field.
Let $f : X \rightarrow C$ be a minimal genus $1$ fibration from a smooth projective surface over $k$,
and $j : J \rightarrow C$ the Jacobian fibration of $f$.
Then, there is an isomorphism
\[ h(X) \cong h(J) \]
in the category $\mathrm{CH}\mathcal{M}(k, \mathbb{Q})$ 
of Chow motives over $k$ with $\mathbb{Q}$-coefficients.
\end{thm}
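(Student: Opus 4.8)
The plan is to reduce the isomorphism to the transcendental parts of the two motives, to obtain it there from an isomorphism over the generic point, and to correct it over the finitely many bad fibres. So I would first invoke the Chow--Künneth decomposition of a smooth projective surface $S$ over $k$, available in arbitrary characteristic:
\[
h(S)\;\cong\;\mathbf 1\,\oplus\,h_{1}(S)\,\oplus\,\bigl(\mathbb L^{\oplus\rho(S)}\oplus t_{2}(S)\bigr)\,\oplus\,h_{3}(S)\,\oplus\,\mathbb L^{2},
\]
where $\mathbb L:=\mathbf 1(-1)$ is the Lefschetz motive, $h_{1}(S)=h^{1}(\mathrm{Alb}_{S/k})$, $h_{3}(S)\cong h_{1}(S)^{\vee}(-2)$, $\rho(S)$ is the Picard number, and $t_{2}(S)$ is the transcendental motive, with $\mathrm{CH}_{0}(t_{2}(S))=T(S)$. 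Since $\rho(X)=\rho(J)$ and $b_{1}(X)=b_{1}(J)$, the algebraic summands $\mathbb L^{\oplus\rho}$ coincide and $\mathrm{Alb}_{X/k}$, $\mathrm{Alb}_{J/k}$ have the same dimension; as each is isogenous to the product of $\mathrm{Jac}(C)$ with the $K/k$-trace of $\mathrm{Pic}^{0}_{X_{\eta}/K}$ (the same abelian variety in both cases, since $J_{\eta}^{0}=\mathrm{Pic}^{0}_{X_{\eta}/K}$), they are isogenous, whence $h_{1}(X)\cong h_{1}(J)$ and dually $h_{3}(X)\cong h_{3}(J)$. It therefore suffices to prove $t_{2}(X)\cong t_{2}(J)$.

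Next I would produce an isomorphism over the generic point $\eta$ of $C$. Write $K:=k(C)$; then $X_{\eta}$ is a regular, projective, geometrically integral curve of arithmetic genus $1$ over $K$, and $J_{\eta}$ is the regular compactification of $\mathrm{Pic}^{0}_{X_{\eta}/K}$. With $\mathbb Q$-coefficients, $X_{\eta}$ carries a zero-cycle of degree $1$ (namely $\tfrac1d$ times a closed point of degree $d$), as does $J_{\eta}$ (its origin), so both acquire a Chow--Künneth decomposition of the shape $\mathbf 1\oplus h^{1}(\,\cdot\,)\oplus\mathbb L$ in $\mathrm{CH}\mathcal M(K,\mathbb Q)$. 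In the elliptic case $h^{1}(X_{\eta})\cong h^{1}(J_{\eta})$, both being the motive of the abelian variety $\mathrm{Pic}^{0}_{X_{\eta}/K}=J_{\eta}^{0}$; the isomorphism is realised by the Abel--Jacobi difference morphism $X_{\eta}\times_{K}X_{\eta}\to\mathrm{Pic}^{0}_{X_{\eta}/K}$ together with the chosen degree-$1$ cycles. In the quasi-elliptic case $X_{\eta}$ and $J_{\eta}$ are geometrically cuspidal cubics, and the normalization, being a universal homeomorphism, gives $h(X_{\eta})\cong\mathbf 1\oplus\mathbb L\cong h(J_{\eta})$. In every case one gets an isomorphism $\Phi_{\eta}\colon h(X_{\eta})\xrightarrow{\ \sim\ }h(J_{\eta})$ in $\mathrm{CH}\mathcal M(K,\mathbb Q)$ with an explicit inverse $\Psi_{\eta}$.

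Then I would spread out and correct. The generic fibre of the threefold $X\times_{C}J\to C$ is $X_{\eta}\times_{K}J_{\eta}$; let $\overline\Phi\in\mathrm{CH}^{2}(X\times J)_{\mathbb Q}$ be the push-forward along $X\times_{C}J\hookrightarrow X\times J$ of the Zariski closure of the cycle underlying $\Phi_{\eta}$, and define $\overline\Psi\in\mathrm{CH}^{2}(J\times X)_{\mathbb Q}$ symmetrically. Both $\overline\Psi\circ\overline\Phi$ and $\Delta_{X}$ are supported on $X\times_{C}X$ and restrict to $\Delta_{X_{\eta}}$ over $\eta$, so by the localization sequence $\overline\Psi\circ\overline\Phi=\Delta_{X}+Z_{X}$ with $Z_{X}$ supported over the finite set $B\subset C$ of points where $f$ or $j$ is not smooth, and likewise $\overline\Phi\circ\overline\Psi=\Delta_{J}+Z_{J}$. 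Over each $c\in B$ the motives of the fibres of $f$ and of $j$ (or of their reduced subschemes) are isomorphic finite sums of copies of $\mathbf 1$, of $\mathbb L$, and of at most one copy of $h^{1}$ of an elliptic curve; in the last case the reduced fibre of $f$ is a genus-one curve which is a torsor under the elliptic curve $j^{-1}(c)$, and one applies the previous paragraph one dimension down. Using these fibrewise isomorphisms one modifies $\overline\Phi$ and $\overline\Psi$ by correspondences supported over $B$ so that the corrected $\Phi,\Psi$ satisfy $\Psi\circ\Phi=\Delta_{X}$ and $\Phi\circ\Psi=\Delta_{J}$; reading off the $t_{2}$-summands then gives $t_{2}(X)\cong t_{2}(J)$, hence $h(X)\cong h(J)$.

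The genuine difficulty, I expect, is the correction over \emph{multiple fibres}, where $f$ and $j$ have fibres of different multiplicities: one must understand the local structure of $X$ near a multiple fibre, and of $J$ there (built from the Jacobian of the associated fibration), precisely enough to control the specialisations of the cycles along $f^{-1}(c)\times j^{-1}(c)$ and to carry out the correction compatibly for $X$ and for $J$. In particular the \emph{wild} multiple fibres, of multiplicity divisible by $\mathrm{char}\,k$, and the cuspidal fibres in the quasi-elliptic case, are the places where positive characteristic really enters; the reduction of the first step, the generic-fibre isomorphism, and the correction over reducible non-multiple fibres should be comparatively routine.
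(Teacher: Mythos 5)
Your overall architecture coincides with the paper's: Chow--K\"unneth decomposition, $h_{1}$ and $h_{3}$ via an isogeny of Picard/Albanese varieties obtained from the $K/k$-trace, $h_{2}^{alg}$ from $\rho(X)=\rho(J)$, and a reduction of $t_{2}(X)\cong t_{2}(J)$ to an isomorphism over the generic point of $C$ which is then spread out. However, there are two genuine gaps.

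First, the quasi-elliptic case. There $X_{\eta}$ is regular but \emph{not smooth} over $K=k(C)$, so it is not an object of $\mathcal{V}(K)$ and $h(X_{\eta})$ is simply not defined in $\mathrm{CH}\mathcal{M}(K,\mathbb{Q})$; moreover $X_{\eta}$ is already normal, so ``the normalization is a universal homeomorphism'' carries no content over $K$ --- it is only the geometric fibre $X_{\bar{\eta}}$ whose normalization is $\mathbb{P}^{1}$, and that morphism does not descend to $K$. The paper does not attempt a generic-fibre argument here at all: it invokes the author's prior result (Theorem \ref{tmotive of qusi-elliptic}) that $t_{2}=0$ for every quasi-elliptic surface, applied to both $X$ and $J$. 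Without that input, or a workable theory of motives of regular non-smooth curves over imperfect fields, your argument does not cover the quasi-elliptic fibrations in characteristic $2$ and $3$.

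Second, the ``correction over the bad fibres,'' which you rightly identify as the sticking point of your plan, is in fact unnecessary --- and that observation is the heart of the paper's proof. You aim for exact identities $\Psi\circ\Phi=\Delta_{X}$ and $\Phi\circ\Psi=\Delta_{J}$ and concede that the correction over multiple (in particular wild) fibres is unresolved. But by Theorem \ref{automorophism groups of motives} (Kahn--Murre--Pedrini), $\mathrm{Hom}(t_{2}(X),t_{2}(J))\cong\mathrm{CH}_{2}(X\times J)/\mathrm{CH}_{2}(X\times J)_{\equiv}$, where $\mathrm{CH}_{2}(\cdot)_{\equiv}$ is generated by classes supported on $M\times J$ or $X\times N$ with $\dim M,\dim N<2$. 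Your error terms $Z_{X},Z_{J}$ are supported on unions of $X_{c}\times J_{c}$ over finitely many $c\in C$, i.e.\ on products of curves, hence lie in $\mathrm{CH}_{2}(\cdot)_{\equiv}$ and induce the zero morphism between transcendental motives. So the identities already hold modulo $\mathrm{CH}_{2}(\cdot)_{\equiv}$, and Proposition \ref{isomorphism of transcendental motives} converts exactly that into $t_{2}(X)\cong t_{2}(J)$; multiple and wild fibres never enter. (A smaller caveat: your generic-fibre isomorphism in the elliptic case, ``Abel--Jacobi plus a degree-one zero-cycle,'' glosses over the content of Theorem \ref{motive of smooth genus one}, whose proof for a nontrivial torsor requires a Galois-descent and torsion-point argument to verify that the averaged correspondence composes to the diagonal.)
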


Theorem \ref{Main 1} is a generalization of Proposition \ref{Coombes's result} to genus $1$ fibrations.\\
Here, we give a sketch of the proof of Theorem \ref{Main 1}.
Let us consider the Chow-K\"unneth decompositions of Murre \cite{Murre On the Motive} (see Proposition \ref{Picard motive}) of $h(X)$ and $h(J)$, respectively
\begin{align*}
h(X) &\cong \oplus_{i=0}^{4}h_{i}(X) \cong 1 \oplus h_{1}(X) \oplus  h_{2}^{alg}(X) \oplus t_{2}(X) \oplus h_{3}(X) \oplus 
(\mathbb{L} \otimes \mathbb{L}) \\
h(J) &\cong \oplus_{i=0}^{4}h_{i}(J) \cong 1 \oplus h_{1}(J) \oplus  h_{2}^{alg}(J) \oplus t_{2}(J) \oplus h_{3}(J) \oplus
(\mathbb{L} \otimes \mathbb{L}). 
\end{align*}
Here, $1$ is the unit motive, $\mathbb{L}$ is the Lefschetz motive, 
and $h_{2}^{alg}(-)$ (resp. $t_{2}(-)$) is the algebraic (resp. transcendental) part of $h_{2}(-)$ (\cite{Kahn and Murre and Pedrini}).
Thus, it suffices to prove 
\[ h_{i}(X) \cong  h_{i}(J) \ \ \ \text{for $1 \leq i \leq 3$}. \]
\indent First, assume $i = 1$ or $3$.
For a smooth projective variety $V$ over $k$,
we denote by $(\mathrm{Pic}_{V/k}^{0})_{red}$ (resp. $\mathrm{Alb}_{V/k}$)
the Picard (resp. Albanese) variety of $V$.
We prove the following key proposition$:$
\begin{prop} \label{Main 2} (= Proposition \ref{Picard isog}).
There are isogenies of abelian $k$-varieties
\[ (\mathrm{Pic}_{X/k}^{0})_{red} \sim_{isog} (\mathrm{Pic}_{J/k}^{0})_{red}, \ \ \ \mathrm{Alb}_{X/k} \sim_{isog} \mathrm{Alb}_{J/k}. \]
\end{prop}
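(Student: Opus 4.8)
The plan is to reduce the statement to the construction of a dominant rational $C$-map $X\dashrightarrow J$, and to fix dimensions using the known equality $b_1(X)=b_1(J)$. First, the two asserted isogenies are equivalent: for smooth projective $V/k$ the abelian variety $\mathrm{Alb}_{V/k}$ is canonically dual to $(\mathrm{Pic}_{V/k}^{0})_{red}$, the dual of an isogeny is an isogeny, and isogeny is a symmetric relation; so it suffices to prove $\mathrm{Alb}_{X/k}\sim_{isog}\mathrm{Alb}_{J/k}$. One case is free: if $X_\eta$ has a $k(C)$-rational point (equivalently, $f$ has a section), then $X_\eta$ is $k(C)$-isomorphic to the regular compactification of its own Jacobian, that is, to $J_\eta$, so $X$ and $J$ are birational and the claim follows from birational invariance of the Albanese of a smooth projective surface. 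Thus assume $X_\eta$ has no rational point.

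Given a dominant rational $C$-map $\varphi\colon X\dashrightarrow J$, its indeterminacy locus is a finite set of closed points of the smooth surface $X$ (as $J\to C$ is proper), so blowing these up produces a morphism $g\colon X'\to J$ over $C$ with $X'\to X$ a composite of point blow-ups; in particular $\mathrm{Alb}_{X'/k}\cong\mathrm{Alb}_{X/k}$ and $b_1(X')=b_1(X)$. Since $g$ is dominant, $g_*\colon\mathrm{Alb}_{X'/k}\to\mathrm{Alb}_{J/k}$ is surjective; and by \cite{Cossec and Dolgachev} we have $b_1(X)=b_1(J)$, whence $\dim\mathrm{Alb}_{X/k}=\tfrac12 b_1(X)=\tfrac12 b_1(J)=\dim\mathrm{Alb}_{J/k}$. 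A surjective homomorphism of abelian varieties of equal dimension has finite kernel, hence is an isogeny, so it remains only to build $\varphi$.

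When $f$ is \emph{elliptic}, choose a general hyperplane section $D\subset X$, of degree $d:=\deg(D\cap X_\eta)\geq 1$; then $D\cap X_\eta$ is a $k(C)$-rational divisor of degree $d$ on the smooth genus $1$ curve $X_\eta$, and $x\mapsto\mathcal{O}_{X_\eta}(d\cdot x)\otimes\mathcal{O}_{X_\eta}(-D\cap X_\eta)$ defines a $k(C)$-morphism $X_\eta\cong\mathrm{Pic}^1_{X_\eta/k(C)}\longrightarrow\mathrm{Pic}^0_{X_\eta/k(C)}=J_\eta$ which, over an algebraic closure of $k(C)$, is a translate of multiplication by $d$ on the elliptic curve $\mathrm{Jac}(X_\eta)$, hence is finite of degree $d^{2}>0$, in particular dominant; it spreads out to the desired $\varphi$.

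The \emph{quasi-elliptic} case ($\mathrm{char}\,k\in\{2,3\}$) is where the genuine difficulty lies, and the construction above collapses: here $\mathrm{Pic}^0_{X_\eta/k(C)}$ is a $k(C)$-form $G$ of $\mathbb{G}_a$, so $[p]=0$ on $G$, the group $H^1(k(C),G)$ is $p$-torsion, the period of $X_\eta$ equals $p$, every multisection of $f$ has degree divisible by $p$, and $x\mapsto\mathcal{O}_{X_\eta}(d\cdot x)\otimes\mathcal{O}_{X_\eta}(-D\cap X_\eta)$ is constant. My plan here is to trivialise the torsor by a purely inseparable base change, which has the advantage of not changing Betti numbers: since $G$ becomes $\mathbb{G}_a$ over the perfect closure of $k(C)$, where $H^1(-,\mathbb{G}_a)=0$, there is a finite purely inseparable extension $K'/k(C)$ with $X_\eta\otimes_{k(C)}K'\cong J_\eta\otimes_{k(C)}K'$; for the associated radicial cover $C'\to C$, the surfaces $X\times_C C'$ and $J\times_C C'$ then have the same function field, so any smooth projective model $\widetilde Y$ of the first is birational to one of the second and, by birational invariance, $\mathrm{Alb}_{\widetilde Y/k}$ does not see which of the two it comes from. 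One then wants to compare $\mathrm{Alb}_{\widetilde Y/k}$ with $\mathrm{Alb}_{X/k}$ (and, symmetrically, with $\mathrm{Alb}_{J/k}$) along the purely inseparable — hence universally homeomorphic, hence $b_1$-preserving — morphism $X\times_C C'\to X$. The hard step is exactly this comparison, because $X\times_C C'$ is singular along the preimage of the cuspidal locus of $f$, so one must control how the Albanese variety behaves under its normalisation and resolution. I expect the cleanest route is to prove a motivic statement — that a purely inseparable $C$-morphism of genus $1$ fibrations induces an isomorphism of Chow motives once the Lefschetz summands contributed by exceptional divisors are removed — which would settle the quasi-elliptic case, and with it the proposition, in all characteristics.
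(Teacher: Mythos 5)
Your elliptic case is a legitimate alternative route, but the quasi-elliptic case is not proved: you say so yourself (``I expect the cleanest route is to prove a motivic statement \dots which would settle the quasi-elliptic case''), and the plan you sketch is genuinely problematic. The surface $X\times_C C'$ is singular along the non-smooth locus of $f$, and after normalisation and resolution there is no a priori control on $b_1$ or on the Albanese of the resulting smooth model relative to $X$; purely inseparable morphisms preserve \'etale cohomology of the schemes involved, but the resolution step does not, and ``removing Lefschetz summands contributed by exceptional divisors'' is precisely the statement you would need to prove. So as it stands the proposal establishes the proposition only for elliptic fibrations.

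For comparison, the paper's proof treats both cases uniformly and makes the quasi-elliptic case the \emph{easy} one. It applies Poincar\'e reducibility to write $(\mathrm{Pic}^0_{X/k})_{red}\sim_{isog}A\times\mathrm{Pic}^0_{C/k}$ and $(\mathrm{Pic}^0_{J/k})_{red}\sim_{isog}B\times\mathrm{Pic}^0_{C/k}$, and proves a refined Mordell--Weil (Lang--N\'eron) statement: $A=0$ iff $\mathrm{Jac}(X_\eta)(K)$ is finitely generated, and if $A\neq 0$ then $A_K$ is isogenous to $\mathrm{Jac}(X_\eta)$ (it is the $K/k$-trace). Since $\mathrm{Jac}(X_\eta)\cong\mathrm{Jac}(J_\eta)$, the vanishing of $A$ and $B$ is equivalent; when both are nonzero, $A_K\sim_{isog}\mathrm{Jac}(X_\eta)\cong\mathrm{Jac}(J_\eta)\sim_{isog}B_K$, and Chow's theorem for the primary extension $K/k$ descends this to $A\sim_{isog}B$ over $k$. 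In the quasi-elliptic case $\mathrm{Jac}(X_\eta)$ is a form of $\mathbb{G}_a$, the Mordell--Weil group is finitely generated, hence $A=B=0$ and both Picard varieties are isogenous to $\mathrm{Pic}^0_{C/k}$. If you want to stay within your own framework, the fix for the quasi-elliptic case is not an inseparable base change but exactly this input: show $b_1(X)=b_1(C)=b_1(J)$ (no abelian part in the generic Jacobian, so no $K/k$-trace), and then $f_*$ and $j_*$ already give surjections of $\mathrm{Alb}_{X/k}$ and $\mathrm{Alb}_{J/k}$ onto $\mathrm{Jac}(C)$ between abelian varieties of equal dimension. Your elliptic argument via a degree-$d$ multisection and the finite map $x\mapsto\mathcal{O}(dx-D\cap X_\eta)$ is correct and more geometric than the paper's, at the cost of invoking $b_1(X)=b_1(J)$ from \cite{Cossec and Dolgachev} as an external input rather than obtaining the comparison of traces directly.
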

Using Proposition \ref{Main 2}, we prove $h_{i}(X) \cong h_{i}(J)$ for $i = 1$ or $3$.\\
\indent Finally, assume $i = 2$.
By $\rho(X) = \rho(J)$,
we get $h_{2}^{alg}(X) \cong 
\mathbb{L}^{\oplus \rho(X)}  \cong h_{2}^{alg}(J)$.\\
Thus, it remains to prove $t_{2}(X) \cong t_{2}(J)$.
The outline of the proof is as follows.\\
If $f$ is quasi-elliptic, so is $j$.
Using the result of the author (\cite{Kawabe}), we get 
\[ t_{2}(X) = 0 = t_{2}(J).\]
Thus, it suffices to consider the case where $f$ is elliptic.
Let us consider the following functors between the category of Chow motives
\[
\mathrm{CH}\mathcal{M}(\eta, \mathbb{Q}) \overset{i}\longleftarrow \mathrm{CH}\mathcal{M}(C, \mathbb{Q}) 
\overset{F}\longrightarrow \mathrm{CH}\mathcal{M}(k, \mathbb{Q}).\]
Here $\mathrm{CH}\mathcal{M}(C, \mathbb{Q})$ is the category of Chow motives over $C$ 
(\cite[Def.~2.8]{Corti and Hanamura}).
In particular, $i$ is not fully-faithful. Then, we consider the following two-step process.
\begin{enumerate}
\item We prove an isomorphism of Chow motives between the generic fibers
\[ h(X_{\eta}) \cong h(J_{\eta}) \ \ \  \text{in} \ \ \ \mathrm{CH}\mathcal{M}(\eta, \mathbb{Q}). \]
\item We extend the isomorphism $h(X_{\eta}) \cong h(J_{\eta})$ to the isomorphism 
\[ t_{2}(X) \cong t_{2}(J) \ \ \ \text{in} \ \ \  \mathrm{CH}\mathcal{M}(k, \mathbb{Q}). \]
\end{enumerate}
Concerning (i), we prove
\begin{thm} (= Theorem \ref{motive of smooth genus one}). \label{Main 3}
Let $K$ be an arbitrary field.
Let $A$ be an abelian variety over $K$.
Let $T$ be a torsor over $K$ for $A$.
Then there is an isomorphism $h(T) \cong h(A)$ 
in the category $\mathrm{CH}\mathcal{M}(K, \mathbb{Q})$ of Chow motives over $K$ with 
$\mathbb{Q}$-coefficients.
\end{thm}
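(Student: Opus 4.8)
The plan is to reduce to the case of the trivial torsor by passing to a finite separable extension that splits $T$, to build a correspondence over $K$ out of the resulting trivialization, and then to descend the isomorphism back to $K$ by a transfer argument combined with the Kimura finite-dimensionality of the motive of an abelian variety. Since $A$ is smooth, the class of $T$ lies in $H^1(\mathrm{Gal}(K^{\mathrm{sep}}/K), A(K^{\mathrm{sep}}))$, so $T$ acquires a rational point over some finite separable, hence over some finite Galois, extension $L/K$. Fix such an $L$, put $G = \mathrm{Gal}(L/K)$, $n = [L:K]$, and choose $t_0 \in T(L)$. Translation by $t_0$ is an isomorphism of $L$-varieties $\phi_{t_0} \colon A_L \to T_L$, $a \mapsto a + t_0$, so its graph $\Gamma_{\phi_{t_0}}$ is an isomorphism $h(A_L) \xrightarrow{\ \sim\ } h(T_L)$ in $\mathrm{CH}\mathcal{M}(L, \mathbb{Q})$.

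First I would record a descent lemma: if $g \colon M \to N$ is a morphism in $\mathrm{CH}\mathcal{M}(K, \mathbb{Q})$ whose base change $g_L \colon M_L \to N_L$ is an isomorphism, then $g$ is an isomorphism. Indeed, writing $\pi$ for the base-change map and using the transfer $\pi_*$ on $\mathrm{Hom}$-groups of correspondences, the projection formula gives
\[ \pi_*(g_L^{-1}) \circ g = \pi_*\!\big(g_L^{-1} \circ \pi^* g\big) = \pi_*(\mathrm{id}_{M_L}) = \pi_*\pi^*(\mathrm{id}_M) = n\,\mathrm{id}_M, \]
and symmetrically $g \circ \pi_*(g_L^{-1}) = n\,\mathrm{id}_N$, so $\tfrac1n \pi_*(g_L^{-1})$ is a two-sided inverse of $g$.

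Next I would produce the morphism to which this lemma applies. Let $\pi \colon (A \times_K T)_L = A_L \times_L T_L \to A \times_K T$ be the base change and set
\[ f := \tfrac1n\, \pi_*\big(\Gamma_{\phi_{t_0}}\big) \in \mathrm{CH}^{\dim A}(A \times_K T)_{\mathbb{Q}} = \mathrm{Hom}_{\mathrm{CH}\mathcal{M}(K,\mathbb{Q})}\big(h(A), h(T)\big). \]
Since $A_L \times_L T_L \to A \times_K T$ is a $G$-torsor, $\pi^*\pi_* = \sum_{\sigma \in G} \sigma_*$, and $\sigma_* \Gamma_{\phi_{t_0}} = \Gamma_{\phi_{\sigma(t_0)}}$; hence, with $a_\sigma := \sigma(t_0) - t_0 \in A(L)$ (a cocycle for $[T]$) and $\tau_{a_\sigma}$ the corresponding translation, the relation $\phi_{\sigma(t_0)} = \phi_{t_0} \circ \tau_{a_\sigma}$ yields
\[ f_L = \tfrac1n \sum_{\sigma \in G} \Gamma_{\phi_{\sigma(t_0)}} = \Gamma_{\phi_{t_0}} \circ \Big( \tfrac1n \sum_{\sigma \in G} \Gamma_{\tau_{a_\sigma}} \Big) \quad \text{in } \mathrm{Hom}\big(h(A_L), h(T_L)\big). \]
Now a translation on an abelian variety acts as the identity on every Weil cohomology, so each $\Gamma_{\tau_{a_\sigma}} - \Delta_{A_L}$ is homologically, a fortiori numerically, trivial; thus $N := \tfrac1n \sum_{\sigma} \Gamma_{\tau_{a_\sigma}} - \mathrm{id}_{h(A_L)}$ is a numerically trivial endomorphism of $h(A_L)$. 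As $A_L$ is an abelian variety, $h(A_L)$ is Kimura finite-dimensional, so $N$ is nilpotent and $\mathrm{id}_{h(A_L)} + N$ is an automorphism. Hence $f_L = \Gamma_{\phi_{t_0}} \circ (\mathrm{id} + N)$ is an isomorphism of Chow motives over $L$, and by the descent lemma $f$ is an isomorphism $h(A) \cong h(T)$ in $\mathrm{CH}\mathcal{M}(K, \mathbb{Q})$.

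The part that needs real care — rather than any single hard theorem — is the formal input underlying Steps above: that base change $\mathrm{CH}\mathcal{M}(K, \mathbb{Q}) \to \mathrm{CH}\mathcal{M}(L, \mathbb{Q})$ carries a transfer on $\mathrm{Hom}$-groups with $\pi_*\pi^* = [L:K]$, the projection formula for composition of correspondences, and $\pi^*\pi_* = \sum_{\sigma \in G}\sigma_*$ in the Galois case, together with the identification $\sigma_*\Gamma_{\phi_{t_0}} = \Gamma_{\phi_{\sigma(t_0)}}$. These are standard but must be set up precisely over a possibly imperfect base field. The one external ingredient is the finite-dimensionality of $h(A_L)$ (equivalently, nilpotence of numerically trivial endomorphisms), due to Shermenev and Kimura; everything else is formal, and in particular the argument makes no use of the curve case or of any special feature of the application to genus $1$ fibrations.
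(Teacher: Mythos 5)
Your proposal is correct, but the engine driving it is genuinely different from the paper's. Both arguments trivialize $T$ over a finite Galois extension $L/K$ with group $G$ of order $n$, average the graph of the trivialization $\phi_{t_0}\colon A_L\to T_L$ over $G$, and reduce to showing that $\tfrac1n\sum_{\sigma\in G}\Gamma_{\tau_{a_\sigma}}$ is (close to) the identity of $h(A_L)$; your ``descent lemma'' is the same transfer computation the paper carries out by hand via $\Gamma_{q}\circ\Gamma_{\phi}\circ{}^{t}\Gamma_{p}$ and Proposition~\ref{Galois extension correspondence}~(ii). The divergence is in how the averaged sum of translation graphs is handled. The paper first refines the choice of the trivializing point: using a Kummer-sequence argument (Proposition~\ref{existence of a torsion}) it selects $p_{0}\in T(L)$ so that every cocycle value $p_{0}-p_{0}^{\sigma}$ is \emph{torsion}, and then invokes the Deninger--Murre eigenspace decomposition of $\mathrm{CH}^{d}(A\times A)$ (Proposition~\ref{product of elliptic}) to get $\Gamma_{\tau_{a_\sigma}}=\Delta_{A_L}$ \emph{exactly} in the Chow group, producing an explicit two-sided inverse. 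You skip the torsion refinement --- which is genuinely needed on the paper's route, since translation by a non-torsion point is \emph{not} the identity correspondence modulo rational equivalence --- and instead note that each $\Gamma_{\tau_{a_\sigma}}-\Delta_{A_L}$ is numerically trivial, then use Kimura finite-dimensionality of $h(A_L)$ and the nilpotence of numerically trivial endomorphisms to invert $\mathrm{id}+N$. What your route buys: Propositions~\ref{existence of a torsion} and~\ref{product of elliptic} become unnecessary, and the argument transfers to torsors under any commutative group whose motive is finite-dimensional with homologically trivial translations. What it costs: you import the finite-dimensionality of abelian motives and Kimura's nilpotence theorem, a heavier (though standard) input than the Beauville/Deninger--Murre decomposition, and the inverse is no longer explicit. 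The formal compatibilities you flag ($\pi_{*}\pi^{*}=n$, $\pi^{*}\pi_{*}=\sum_{\sigma}\sigma_{*}$, and the projection formula for composition of correspondences across $\mathrm{Spec}(L)\to\mathrm{Spec}(K)$) are exactly Lieberman's lemma and Proposition~\ref{Galois extension correspondence} as used in the paper, so there is no gap there.
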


\indent The second main theorem of this paper is the following$:$
\begin{thm} (= Theorem\ \ref{characteristic $p$}). \label{Main 4}
Let $X$ be a smooth projective surface over an algebraically closed field $k$ of characteristic $p \geq 0$.
Assume that $X$ has geometric genus $0$ and Kodaira dimension $< 2$, 
that is, $p_{g} = 0$ and $\kappa < 2$.
Then $h(X)$ is Kimura-finite in the category 
$\mathrm{CH}\mathcal{M}(k, \mathbb{Q})$ of Chow motives over $k$ with $\mathbb{Q}$-coefficients.
\end{thm}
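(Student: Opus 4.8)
The plan is to reduce to minimal surfaces and then run through the Enriques--Kodaira classification in positive characteristic (Bombieri--Mumford), using Theorem~\ref{Main 1} to replace the genuinely elliptic cases by their Jacobian fibrations. First I would note that Kimura-finiteness is stable under direct sums, tensor products, duals and direct summands, and that $h(\widetilde{S})\cong h(S)\oplus\mathbb{L}$ for the blow-up of a smooth projective surface $S$ at a point, with $\mathbb{L}$ evenly finite-dimensional; since any birational morphism of smooth projective surfaces factors into point blow-ups, Kimura-finiteness is a birational invariant among smooth projective surfaces, so I may assume $X$ minimal. A minimal surface with $p_{g}=0$ and $\kappa<2$ is then rational, ruled over a curve $B$ with $g(B)\ge1$, a (classical) Enriques surface, a bielliptic surface, a quasi-hyperelliptic surface, or a minimal properly elliptic or quasi-elliptic surface ($\kappa=1$).

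Several of these classes I would dispose of directly. If $X$ is rational, $h(X)$ is a finite sum of Lefschetz motives. If $X=\mathbb{P}(\mathcal{E})\to B$ is ruled, the projective bundle formula gives $h(X)\cong h(B)\oplus h(B)\otimes\mathbb{L}$, which is finite-dimensional because the motive of every smooth projective curve is finite-dimensional \cite{Kimura}. If $X=(A\times B)/G$ is bielliptic, the quotient map is finite \'etale, so with $\mathbb{Q}$-coefficients $h(X)$ is a direct summand of $h(A\times B)\cong h(A)\otimes h(B)$, and motives of abelian varieties are finite-dimensional \cite{Kimura}. Finally, whenever $X$ admits a quasi-elliptic fibration --- this covers all quasi-hyperelliptic surfaces as well as the quasi-elliptic Enriques and $\kappa=1$ surfaces --- I would invoke the author's result \cite{Kawabe} that $t_{2}(X)=0$, after which $h(X)\cong1\oplus h_{1}(X)\oplus h_{2}^{alg}(X)\oplus h_{3}(X)\oplus(\mathbb{L}\otimes\mathbb{L})$ is assembled from Lefschetz motives and the motives of the abelian varieties $(\mathrm{Pic}_{X/k}^{0})_{red}$ and $\mathrm{Alb}_{X/k}$, hence finite-dimensional.

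There remain the elliptic Enriques surfaces and the minimal properly elliptic surfaces with $p_{g}=0$. Such an $X$ carries a minimal (genuine) elliptic fibration $f:X\to C$; let $j:J\to C$ be its Jacobian fibration, so that $h(X)\cong h(J)$ by Theorem~\ref{Main 1}. It therefore suffices to show that a relatively minimal Jacobian genus $1$ fibration $j:J\to C$ with $p_{g}(J)=0$ has finite-dimensional motive; one first checks $p_{g}(J)=p_{g}(X)=0$, using that $e(X)=e(J)$ and $K_{X}^{2}=0=K_{J}^{2}$ force $\chi(\mathcal{O}_{X})=\chi(\mathcal{O}_{J})$ by Noether's formula, together with the comparison of $q$ (both fibrations have base $C$, and $b_{1}(X)=b_{1}(J)$ excludes $J$ being an abelian surface). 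Now, since $J$ has no multiple fibres, the canonical bundle formula reads $\omega_{J}\cong j^{*}(\omega_{C}\otimes L)$ with $L=j_{*}\omega_{J/C}$ of degree $\chi(\mathcal{O}_{J})=\tfrac{1}{12}e(J)\ge0$, so $p_{g}(J)=h^{0}(C,\omega_{C}\otimes L)$; its vanishing forces $\deg(\omega_{C}\otimes L)=2g(C)-2+\chi(\mathcal{O}_{J})<g(C)$, i.e.\ $(g(C),\chi(\mathcal{O}_{J}))\in\{(0,0),(0,1),(1,0)\}$. If $\chi(\mathcal{O}_{J})=1$ then $C=\mathbb{P}^{1}$ and $J$ is a rational elliptic surface. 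If $\chi(\mathcal{O}_{J})=0$ then $e(J)=0$, so $j$ has neither singular nor multiple fibres and is therefore an elliptic fibre bundle with a section over $C$ (with $g(C)\le1$); its $j$-invariant is constant, so the bundle trivialises along a finite \'etale cover of $C$, exhibiting $J$ as a finite \'etale quotient of a product of two curves of genus $\le1$. In every case $h(J)$, hence $h(X)$, is finite-dimensional by the second paragraph, which completes the proof.

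The genuine obstacle, I expect, lies not in this bookkeeping but in Theorem~\ref{Main 1} itself (proved earlier, in arbitrary characteristic): it is exactly what allows one to trade the potentially inscrutable surface $X$ for the transparent Jacobian $J$, and there is no workaround here --- an Enriques surface, for example, is covered by a $K3$ surface whose motive is not known to be finite-dimensional. The two mildly delicate points within the argument are the verification that $p_{g}$ is preserved on passing to $J$ (in characteristic $p$ this needs a moment's care, because of possible non-reducedness of Picard schemes and failure of Hodge--de Rham degeneration, but it follows from the comparison of numerical invariants) and the characteristic $2$ and $3$ phenomena: non-classical Enriques surfaces have $p_{g}=1$ and so lie outside the hypothesis, while quasi-hyperelliptic surfaces and quasi-elliptic fibrations are all absorbed through \cite{Kawabe}.
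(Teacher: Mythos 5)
Your proposal follows the same overall strategy as the paper: reduce to a minimal surface via the blow-up formula, dispose of $\kappa<0$ by ruledness, absorb every quasi-elliptic case through the vanishing $t_{2}=0$ of \cite{Kawabe}, and in the genuinely elliptic cases pass to the Jacobian fibration via Theorem \ref{Main 1} and show that $J$ is either birationally ruled or a quotient of a product of curves. The differences are only in the endgame. The paper pins down $J$ purely numerically: Noether's formula with $p_{g}=0$ forces $(q,b_{2})\in\{(0,10),(1,2)\}$, hence $(p_{a}(C),\chi(\mathcal{O}_{X}))\in\{(0,1),(\leq 1,0)\}$; since $\chi(\mathcal{O}_{J})=\chi(\mathcal{O}_{X})$ and $b_{2}(J)=b_{2}(X)$ (Proposition \ref{euler}), the invariant $\lambda(j)=2p_{a}(C)-2+\chi(\mathcal{O}_{J})$ of Proposition \ref{inequality} gives $\kappa(J)<0$ in the first case and $\kappa(J)\leq 0$ in the second, after which the classification ($\kappa=0$, $b_{2}=2$) identifies $J$ as hyperelliptic or quasi-hyperelliptic and Proposition \ref{hyper-elliptic} supplies the product-quotient structure. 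You instead argue geometrically that $\chi(\mathcal{O}_{J})=0$ forces $e(J)=0$, hence a smooth elliptic fibre bundle with section trivialised by a finite cover; this reaches the same conclusion (note only that finiteness and dominance of the cover, not \'etaleness, is what Proposition \ref{Properties of Kimura-finiteness} (iv) requires --- in characteristic $2$ or $3$ the group scheme $G$ of Proposition \ref{hyper-elliptic} need not be \'etale, so your ``finite \'etale quotient'' phrasing for bielliptic surfaces is occasionally false but harmless).

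The one step you should repair is the claim that $p_{g}(J)=p_{g}(X)=0$ ``follows from the comparison of numerical invariants.'' It does not: the equalities $\chi(\mathcal{O}_{X})=\chi(\mathcal{O}_{J})$ and $b_{i}(X)=b_{i}(J)$ only yield $2p_{g}(J)=\Delta_{J}=2q(J)-b_{1}(J)$, which sits exactly on the boundary of the inequality $\Delta\leq 2p_{g}$ and is realised with $p_{g}=1$ (compare rows (iii) and (iv) of Proposition \ref{list}: classical and non-classical Enriques surfaces share $\chi$, $b_{1}$, $b_{2}$ and $e$, yet have $p_{g}=0$ and $p_{g}=1$ respectively). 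Whether $p_{g}$ is preserved in passing to the Jacobian is precisely the kind of characteristic-$p$ subtlety the paper avoids by never computing $p_{g}(J)$ at all. Fortunately you do not need it: the only use you make of $p_{g}(J)=0$ is to restrict $(g(C),\chi(\mathcal{O}_{J}))$ to $\{(0,0),(0,1),(1,0)\}$, and that restriction already follows from the invariants of $X$, since $g(C)=p_{a}(C)\leq q(X)$, $\chi(\mathcal{O}_{J})=\chi(\mathcal{O}_{X})$, and $(q(X),\chi(\mathcal{O}_{X}))\in\{(0,1),(1,0)\}$ by your own Noether computation. With that substitution your argument closes and is equivalent to the paper's.
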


Theorem \ref{Main 4} is a generalization of Theorem \ref{motivic vertion 1 of BKL} to arbitrary characteristic.\\
The outline of the proof of Theorem \ref{Main 4} is as follows.
If $\kappa < 0$, the assertion is clear.
Assume $\kappa = 0$ or $1$.
Then $X$ has a genus $1$ fibration $f : X \rightarrow C$ by the classification of surfaces.
We take the Jacobian fibration $j : J \rightarrow C$ of $f$, and prove $h(J)$ is Kimura-finite.
By Theorem \ref{Main 1}, we have $h(X) \cong h(J)$, and see $h(X)$ is Kimura-finite.

\subsection{Organization} 
This paper is organized as follows.\\
\indent The main parts of this paper are Sections $8$ (Theorem \ref{Main 1}) and $11$ (Theorem \ref{Main 4}).\\
In Section $2$, we recall several basic objects in algebraic geometry.
In Section $3$, we collect and prove some facts about relative correspondences.
In Section $4$, we recall the theory of Chow motives, e.g.
Chow-K\"unneth decompositions and transcendental motives.
In Section $5$, to prove Theorem \ref{Main 3}, we prove several facts about torsors for commutative group varieties.
We treat (not necessarily smooth) genus $1$ curves.\\
\indent In Section $6$, we prove Theorem \ref{Main 3} ($h(T) \cong h(A)$)
which is crucial for the proof of $t_{2}(X) \cong t_{2}(J)$.
In Section $7$, we recall relations between some invariants of a genus $1$ fibration and the associated Jacobian fibration.
Moreover, we prove $h_{i}(X) \cong h_{i}(J)$ for $i = 1, 3$ and $h_{2}^{alg}(X) \cong h_{2}^{alg}(J)$.
This section is based on \cite{Cossec and Dolgachev} and \cite{Cossec and Dolgachev and Liedtke and Kondo}.\\
\indent In Section $8$, using the results of Sections $2$ - $7$, we prove Theorem \ref{Main 1} ($h(X) \cong h(J)$).
We extend $h(X_{\eta}) \cong h(J_{\eta})$ to $t_{2}(X) \cong t_{2}(J)$.
In Section $9$, we collect some properties of Kimura-finiteness.
Recall that any curve has Kimura-finiteness and Kimura-finiteness is stable under direct sums, tensor products, and quotients (Proposition \ref{Properties of Kimura-finiteness}).
In Section $10$, we quickly review the classification of surfaces.\\
\indent In Section $11$, using the results of Sections $8$ - $10$, we prove Theorem \ref{Main 4}.

\subsection{Conventions and Terminology.}
We fix a base field $k$.
By $k$-variety we mean a reduced separated $k$-scheme of finite type.
Unless otherwise stated, we assume irreducibility for any $k$-variety.
By $k$-curve (resp. $k$-surface) we mean a variety of dimension $1$ (resp. dimension $2$).\\
\indent Let $i \in \mathbb{Z}_{\geq 0}$. For a $k$-scheme $X$,
We denote by $\mathrm{CH}_{i}(X)$ (resp. $\mathrm{CH}^{i}(X)$)
the Chow group of $i$-dimensional (resp. $i$-codimensional)
cycles on $X$ modulo rational equivalence with $\mathbb{Q}$-coefficients,
and set $\mathrm{CH}(X) = \oplus_{i}\mathrm{CH}^{i}(X)$.
For an irreducible $k$-variety $X$ , we have $\mathrm{CH}_{i}(X) = \mathrm{CH}^{\mathrm{dim}(X)-i}(X)$.
Let $f : X \rightarrow Y$ be a morphism of $k$-schemes.
If $f$ is proper, then $f_{*} : \mathrm{CH}_{i}(X) \rightarrow \mathrm{CH}_{i}(Y)$ denote the proper-pushfoward.
If $f$ is flat of relative dimension $l$, then $f^{*} : \mathrm{CH}_{i}(X) \rightarrow \mathrm{CH}_{i+l}(Y)$ denote the flat-pullback.\\
\indent Let $\mathcal{V}(k)$ denote the category of smooth projective $k$-varieties.
For $X, Y \in \mathcal{V}(k)$, 
set $\mathrm{Corr}^{r}(X, Y) : = \oplus_{\alpha}\mathrm{CH}^{\mathrm{dim}(X_{\alpha}) + r}(X_{\alpha} \times Y)$
where $X = \sqcup X_{\alpha}$, with $X_{\alpha}$ equidimensional.\\
$\bullet$ For $X$ an irreducible variety over $k$, we use following notations$:$\\
\indent \ \ \  $k(X)$ $:$ the function field of $X$\\
\indent \ \ \  $X_{M} := X \times_{\mathrm{Spec}(k)} \mathrm{Spec}(M)$ for any extension $M$ of $k$\\
\indent \ \ \ $X(M) : = \mathrm{Hom}_{\mathrm{Sch}(k)}(\mathrm{Spec}(M), X)$ for an extension $M$ of $k$\\
$\bullet$ For $X$ a projective variety over $k$, we use following notations$:$\\
\indent \ \ \  $h^{i}(\mathcal{F}) = h^{i}(X, \mathcal{F}) = \mathrm{dim}_{k} \ H^{i}(X, \mathcal{F})$
for any coherent sheaf $\mathcal{F}$ on $X$\\
\indent \ \ \  $\chi(\mathcal{F}) : = \sum_{i}(-1)^{i}h^{i}(\mathcal{F})$ for any coherent sheaf $\mathcal{F}$ on $X$\\
\indent \ \ \  $p_{a}(X) : = (-1)^{\mathrm{dim}(X)}(\chi(\mathcal{O}_{X}) - 1)$ $:$ the arithmetic genus\\
\indent \ \ \  $q(X) : = h^{1}(X, \mathcal{O}_{X})$ $:$ the irregularity\\
\indent \ \ \  $\omega_{X}$ $:$ the dualizing sheaf of $X$\\
$\bullet$ For $X$ a smooth projective variety over $k$, we use following notations$:$\\
\indent \ \ \  $\omega_{X}$ $:$ the canonical sheaf of $X$\\
\indent \ \ \  $K_{X}$ $:$ the canonical divisor of $X$\\
\indent \ \ \  $P_{m}(X) : = h^{0}(X, \omega_{X}^{\otimes m})$ $:$ the $m$-genus,
for $m = 1, 2, \cdot \cdot \cdot $\\
\indent \ \ \  $p_{g}(X) : = P_{1}(X)$ $:$ the geometric genus\\
\indent \ \ \  $b_{i}(X) : = \mathrm{dim}_{\mathbb{Q}_{l}} H_{\text{\'{e}t}}^{i}(X, \mathbb{Q}_{l})$ $:$ the $i$-th Betti number for a prime number $l \neq \mathrm{char}(k)$\\
\indent \ \ \  $e(X) : = \sum_{i}(-1)^{i}b_{i}(X)$ $:$ the topological Euler characteristic\\
$\bullet$ For $S$ a smooth projecitve surface over $k$, we have $p_{g}(S) = h^{0}(S, \omega_{S}) =  h^{2}(S, \mathcal{O}_{S})$.\\
$\bullet$ For simplicity, we use following notations$:$\\
\indent \ \ \ $X \cong Y$ \ \ \ \ \ \ \ $X$ and $Y$ are isomorphic.\\
\indent \ \ \ $X \sim_{birat} Y$\ \ \ $X$ and $Y$ are birationally equivalent as varieties\\
\indent \ \ \ $A \sim_{isog} B$ \ \ \ $A$ and $B$ are isogenous as abelian varieties\\

\textbf{In the rest of this paper, let $k$ be an algebraically closed field of arbitrary characteristic 
if not stated otherwise, i.e. $k = \overline{k}$.}

\section{Picard schemes}
In this section, we recall several basic objects in algebraic geometry.\\
$\bullet$ For a scheme $X$, we denote by $\mathrm{Pic}(X)$ the \textit{Picard group} of $X$.
Its elements are isomorphism classes of invertible sheaves on $X$.
Then
\[ \mathrm{Pic}(X) \cong H^{1}_{Zar}(X, \mathcal{O}_{X}^{*}) \cong H^{1}_{\textit{\'{e}t}}(X, \mathbb{G}_{m}). \]
The second isomorphism uses Hilbert's Theorem 90 (\cite[III.~Prop.~4.9, p.124]{Milne E}).\\
$\bullet$ For a scheme $X$, we denote by $\mathrm{Div}(X)$ 
the group of \textit{Cartier divisors} on $X$.\\
$\bullet$ Let $S$ be a scheme and $f : X \rightarrow S$ an $S$-scheme.
The \textit{relative Picard functor} $\underline{\mathrm{Pic}}_{X/S}$ is  defined by 
\[ \underline{\mathrm{Pic}}_{X/S} : (\mathrm{Sch}/S)^{\mathrm{op}} \rightarrow (\mathrm{Sets}) \ \ ; \ \
                       T  \mapsto  \mathrm{Pic}(X_{T})/f_{T}^{*}\mathrm{Pic}(T) \]
where $f_{T} : X_{T} : = X \times_{S} T \rightarrow T$ is the second projection.
We denote by $\underline{\mathrm{Pic}}_{(X/S)(\textit{\'{e}t})}$ 
(resp. $\underline{\mathrm{Pic}}_{(X/S)(\textit{fppf})}$) its associated sheaf in the
\'{e}tale (resp. fppf) topology.
\begin{thm}
Let $f : X \rightarrow S$ be a projective flat morphism between Noetherian schemes
with geometrically integral fibers.
Then the functor $\underline{\mathrm{Pic}}_{(X/S)(\textit{\'{e}t})}$ is represented 
by a group $S$-scheme $\mathrm{Pic}_{X/S}$ which is separated, locally of finite type over $S$.
Moreover, $\mathrm{Pic}_{X/S}$ is a disjoint union of open quasi-projective subschemes over $S$.
\end{thm}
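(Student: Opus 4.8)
The plan is to follow Grothendieck's construction of the Picard scheme via the Hilbert scheme: realize $\underline{\mathrm{Pic}}_{(X/S)(\textit{\'{e}t})}$ as a quotient, in the fppf topology, of the scheme parametrizing relative effective Cartier divisors, along the Abel map $D \mapsto \mathcal{O}(D)$, and then descend representability. Since the statement is local on $S$, I may assume $S$ affine and Noetherian, and I fix a relatively very ample invertible sheaf $\mathcal{O}_{X}(1)$. The group structure on the eventual scheme is induced by tensor product of invertible sheaves, so it suffices to produce a scheme representing the sheaf.

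The first input I would record comes from cohomology and base change. Because $f$ is projective, flat, and has geometrically integral fibres, $f_{*}\mathcal{O}_{X} = \mathcal{O}_{S}$ holds universally, i.e. after every base change $T \to S$; equivalently $f$ is cohomologically flat in degree $0$. Two consequences: (a) the naive relative Picard presheaf already satisfies descent for fppf covers that admit a section, so its \'etale and fppf sheafifications coincide and $\underline{\mathrm{Pic}}_{X/S}$ injects into $\underline{\mathrm{Pic}}_{(X/S)(\textit{\'{e}t})}$; (b) for any bounded family of invertible sheaves, twisting by $\mathcal{O}(n)$ with $n$ large makes $f_{*}(L(n))$ locally free with formation commuting with base change, and makes $L(n)$ relatively globally generated with vanishing higher direct images. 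Next I would represent the functor $\mathrm{Div}_{X/S}$ of relative effective Cartier divisors: the condition that a $T$-flat closed subscheme of $X_{T}$ have invertible ideal sheaf is open on the Hilbert scheme, and since $f$ is projective, Grothendieck's theorem gives $\mathrm{Hilb}_{X/S}$ as a disjoint union of projective $S$-schemes indexed by Hilbert polynomials, whence $\mathrm{Div}_{X/S}$ is an open subscheme, a disjoint union of quasi-projective $S$-schemes. There is a canonical Abel transformation $A \colon \mathrm{Div}_{X/S} \to \underline{\mathrm{Pic}}_{(X/S)(\textit{\'{e}t})}$, $D \mapsto [\mathcal{O}_{X_{T}}(D)]$. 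Using (b), over the locally closed locus where $f_{*}(L(n))$ has constant rank the fibre of $A$ through a suitably twisted class is an open dense subscheme of the projective bundle $\mathbb{P}(f_{*}(L(n)))$; hence $A$ is smooth, quasi-projective, with geometrically integral fibres, and surjective as a morphism of fppf sheaves onto its image, and — choosing the twist $n$ uniformly on a given bounded family via Castelnuovo--Mumford regularity — every section of $\underline{\mathrm{Pic}}_{(X/S)(\textit{\'{e}t})}$ lying in a prescribed bounded open-and-closed subfunctor lies in the image of such an $A$.

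The heart of the argument is then a descent lemma: if $u \colon F \to G$ is a morphism of fppf sheaves with $F$ representable by a scheme and $u$ representable, faithfully flat, locally of finite presentation, and quasi-projective, then $G$ is representable by a scheme. Applying it to $A$ on each bounded piece represents the corresponding open-and-closed subfunctor of $\underline{\mathrm{Pic}}_{(X/S)(\textit{\'{e}t})}$; gluing over all Hilbert polynomials produces $\mathrm{Pic}_{X/S}$, separated and locally of finite type over $S$, a disjoint union of open subschemes each of which is a quotient of a quasi-projective scheme by a smooth projective-bundle morphism. To upgrade these pieces to quasi-projective I would descend a relatively ample line bundle (of determinant/theta type) from $\mathrm{Div}_{X/S}$. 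Separatedness is the valuative criterion: over a discrete valuation ring $R$ with fraction field $K$, an invertible sheaf on $X_{R}$ trivial on $X_{K}$ is pulled back from $\mathrm{Spec}\,R$ — here using $X_{R}$ integral together with $f_{*}\mathcal{O}_{X} = \mathcal{O}_{S}$ universally — hence trivial since $R$ is local, so $\mathrm{Pic}_{X/S}(R) \to \mathrm{Pic}_{X/S}(K)$ is injective.

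I expect the genuine obstacles to be exactly these last points: making the twist $n$ work uniformly over the (unbounded) index set of bounded families, proving the descent lemma for representability along a faithfully flat quasi-projective morphism of sheaves, and constructing the ample bundle on the quotient so that the conclusion ``disjoint union of quasi-projective opens'' survives. By contrast, the representability of $\mathrm{Hilb}_{X/S}$ and the cohomology-and-base-change statements I would take as known inputs, and the reduction to $S$ affine Noetherian is harmless since that is already hypothesized.
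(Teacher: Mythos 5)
The paper does not actually prove this statement: it is quoted as a known theorem with a pointer to Grothendieck (FGA, exp.\ 232, Thm.\ 3.1) and Kleiman (FGA Explained, Thm.\ 9.4.8), and your outline correctly reproduces the architecture of that classical proof --- represent $\mathrm{Div}_{X/S}$ as an open subscheme of $\mathrm{Hilb}_{X/S}$, stratify by Hilbert polynomial, twist until the Abel map has projective-space fibres, and pass to the quotient. So you are on the cited route as far as the skeleton goes.

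There is, however, a genuine gap at what you yourself call the heart of the argument. The descent lemma you propose --- if $u\colon F\to G$ is a representable, faithfully flat, finitely presented, quasi-projective surjection of fppf sheaves and $F$ is a scheme, then $G$ is a scheme --- is false, not merely hard. What descends along such a $u$ is the property of being an \emph{algebraic space}, not of being a scheme: Hironaka's proper non-projective smooth threefold $F$ carries a free $\mathbb{Z}/2$-action whose quotient $F/G_{0}$ is an algebraic space that is not a scheme, and $F\to F/G_{0}$ is finite \'etale, hence representable, faithfully flat, of finite presentation and even projective. So no amount of work will establish your lemma, and the proof strategy stalls exactly there. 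Grothendieck's actual argument replaces it by constructing \emph{Zariski-local sections} of the Abel map: fixing an auxiliary finite flat closed subscheme $Y\subset X$ of degree $\phi(n)-1$ over $S$, one considers the open subfunctor of $\mathrm{Pic}^{\phi}$ where the kernel of the restriction $f_{*}L(n)\to f_{*}(L(n)|_{Y})$ is a line bundle whose sections cut out relative Cartier divisors; this singles out a canonical member of each linear system, identifies that open subfunctor with a locally closed subscheme of $\mathrm{Div}_{X/S}$, and the resulting opens cover as $Y$ varies. That rigidification, together with the boundedness of each $\mathrm{Pic}^{\phi}$ needed to choose the twist $n$ uniformly (which you do flag), is the real content of the theorem. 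Two smaller points: with geometrically integral fibres and $n\gg 0$ the fibre of the Abel map is the \emph{entire} (Brauer--Severi twist of the) projective bundle $\mathbb{P}(f_{*}L(n))$, not just a dense open subscheme --- a nonzero section of an invertible sheaf on an integral fibre is automatically regular --- and this properness is used; and since one works with the \'etale-sheafified functor, a $T$-point need not come from an honest line bundle on $X_{T}$, so the fibres are a priori Brauer--Severi schemes rather than projective bundles, which is why the section trick is run \'etale- or Zariski-locally on the target.
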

\begin{proof}
 See \cite[Thm.~3.1, p.148]{GrV} or \cite[Thm.~9.4.8,\ p.263]{Kleiman}.
\end{proof}
\noindent The group scheme $\mathrm{Pic}_{X/S}$ is called the \textit{Picard scheme} of $X$ over $S$.
\begin{thm} \label{picard k}
Let $f : X \rightarrow S$ be a proper morphism between schemes.
Assume $S$ is a spectrum of a field $k$, and $X$ is complete.
Then the functor $\underline{\mathrm{Pic}}_{(X/k)(\textit{fppf})}$ is represented 
by a group scheme $\mathrm{Pic}_{X/k}$ which is separated, locally of finite type over $k$.
Moreover $\mathrm{Pic}_{X/k}$ is a disjoint union of open quasi-projective subschemes. 
\end{thm}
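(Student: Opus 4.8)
The plan is to bootstrap from the projective case, which is the preceding theorem of Grothendieck (applied componentwise to the quasi-projective pieces), by combining Chow's lemma with a dévissage; this is in substance the content of the theorems of Murre and Oort. The basic tool throughout is the representability of cohomological functors: for a coherent sheaf $\mathcal{F}$ on $X$ the functors $T \mapsto H^{i}(X_{T}, \mathcal{F}_{T})$ are of linear type, represented by affine $k$-group schemes of finite type (in characteristic $0$ the relevant ones are vector groups, i.e. extensions of copies of $\mathbb{G}_{a}$; in positive characteristic one uses a slightly finer filtration to keep the graded pieces additive), and similarly the "unit group" functors $T \mapsto \Gamma(Z_{T}, \mathcal{O}_{Z_{T}}^{\times})$ attached to a proper $k$-scheme $Z$ are representable by smooth affine $k$-groups built out of $\mathbb{G}_{m}$'s and $\mathbb{G}_{a}$'s. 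These are the "error terms" that let one pass between the Picard functors of $X$ and of a modification.

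Concretely I would argue in three reduction steps. First, reduce to $X$ reduced: writing $\mathcal{I}$ for the nilradical (a nilpotent sheaf of ideals, $X$ being Noetherian), the filtration by the $\mathcal{I}^{n}$ yields short exact sequences of sheaves relating $\mathcal{O}_{X/\mathcal{I}^{n+1}}^{\times}$ and $\mathcal{O}_{X/\mathcal{I}^{n}}^{\times}$ with graded pieces isomorphic to $(\mathcal{I}^{n}/\mathcal{I}^{n+1}, +)$; passing to fppf-cohomology one finds that $\underline{\mathrm{Pic}}_{(X/k)(\textit{fppf})}$ and $\underline{\mathrm{Pic}}_{(X_{red}/k)(\textit{fppf})}$ differ by kernels and cokernels that are representable smooth affine $k$-groups, so representability for $X_{red}$ gives it for $X$. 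Second, for $X$ reduced reduce to its normalization $\nu \colon \widetilde{X} \to X$, which is finite (fields are excellent), hence $\widetilde{X}$ is again complete over $k$; the conductor square compares $\mathrm{Pic}(X)$ with $\mathrm{Pic}(\widetilde{X})$ with error terms the above unit-group functors supported on the non-normal locus, again representable. Third, for $X$ normal and complete apply Chow's lemma to obtain a projective birational $\pi \colon X' \to X$, and normalize $X'$; since $X$ is normal, $\pi$ is an isomorphism outside a closed set of codimension $\geq 2$, so the Weil-divisor description identifies $\mathrm{Pic}_{X/k}$ as a subfunctor of the representable $\mathrm{Pic}_{X'/k}$ cut out by closed conditions. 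This chain produces $\mathrm{Pic}_{X/k}$ as a group scheme locally of finite type over $k$.

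For the remaining structural assertions: separatedness follows from the valuative criterion, using that $X$ is complete — for a discrete valuation ring $R$ over $k$ with fraction field $F$, cohomology-and-base-change gives $H^{0}(X_{R}, \mathcal{O}) = R$, and the seesaw principle then forces a line bundle on $X_{R}$ to be determined by its restriction to $X_{F}$. For the decomposition into open quasi-projective subschemes one notes that the subfunctor $\mathrm{Pic}^{\tau}_{X/k}$ (classes numerically trivial up to torsion) is bounded, hence of finite type, that every connected component of $\mathrm{Pic}_{X/k}$ is a translate of a component of $\mathrm{Pic}^{\tau}_{X/k}$, and that any group scheme of finite type over a field is quasi-projective; so $\mathrm{Pic}_{X/k}$ is a disjoint union of open quasi-projective subschemes.

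The hard part is the dévissage itself — making precise that $\underline{\mathrm{Pic}}_{(X/k)(\textit{fppf})}$ and the Picard functor of a non-trivial modification differ only by representable pieces, in particular handling $X$ not geometrically reduced, where the graded pieces of the unit filtration are not literally $\mathbb{G}_{a}$. In practice one does not carry this out from scratch; it is precisely the theorems of Murre and Oort, so one simply invokes \cite[Thm.~9.4.8, p.263]{Kleiman} together with the ensuing discussion of the proper-over-a-field case (alternatively, Artin's representability criteria exhibit the functor as an algebraic space, which is then a scheme since a group algebraic space locally of finite type over a field is a scheme).
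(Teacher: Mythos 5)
Your proposal is correct and ultimately takes the same route as the paper: the paper's entire proof of this theorem is the citation to Kleiman (Thm.~9.4.18.3, p.~276, the Murre--Oort theorem for proper schemes over a field, with Murre's representability theorem as the ``heart''), and your final paragraph invokes precisely that result (note the paper cites Thm.~9.4.18.3 rather than Thm.~9.4.8, which is the projective case you start from). The d\'evissage you sketch beforehand is a reasonable outline of what goes into the cited theorem, but the paper does not carry any of it out.
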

\begin{proof}
See \cite[Thm.~9.4.18.3,\ p.276]{Kleiman}.
The heart is \cite[Thm.~2,\ p.42]{Murre On contravariant}.
\end{proof}

\noindent $\bullet$ Let $G$ be a commutative group scheme over a field $k$, i.e. separated of locally finite type over $k$. Then $G$ is separated.
We denote by $G^{0}$ the connected component of the identity of $G$.
Then $G^{0}$ is a commutative group $k$-scheme of finite type.
We denote by $G_{red}$ the reduced scheme associated with $G$.
\begin{thm} \label{abelian picard variety}
Let $X$ be a proper scheme over a field $k$.
\begin{enumerate}
\item $\mathrm{Pic}^{0}_{X/k}$ is a group scheme which is a separated of finite type over $k$.
\item If $X$ is geometrically normal, then $\mathrm{Pic}^{0}_{X/k}$ is proper.
So $(\mathrm{Pic}^{0}_{X/k})_{red}$ is an abelian variety, and is called the \textit{Picard variety} of $X$.
\item If $h^{2}(\mathcal{O}_{X}) = 0$, then $\mathrm{Pic}^{0}_{X/k}$ is 
smooth of dim $h^{1}(\mathcal{O}_{X})$, so $\mathrm{Pic}^{0}_{X/k} = (\mathrm{Pic}^{0}_{X/k})_{red}$.
If also $X$ is geometrically-normal, $\mathrm{Pic}^{0}_{X/k}$ is an abelian variety of dim $h^{1}(\mathcal{O}_{X})$.
\end{enumerate}
\end{thm}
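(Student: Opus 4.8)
The plan is to derive all three assertions from Theorem~\ref{picard k} together with the deformation theory of line bundles and one nontrivial properness input of Grothendieck; a complete account is in \cite{Kleiman}, and I only indicate the skeleton. For (1): by Theorem~\ref{picard k} the scheme $\mathrm{Pic}_{X/k}$ exists, is separated, locally of finite type over $k$, and is a disjoint union of open quasi-projective subschemes. The first step is the standard fact that for any group scheme $G$ locally of finite type over a field the identity component $G^{0}$ is an open and closed subgroup scheme which is moreover of finite type over the field; applying this to $G=\mathrm{Pic}_{X/k}$, the connected scheme $\mathrm{Pic}^{0}_{X/k}$ is contained in a single quasi-projective piece, hence is itself quasi-projective, in particular separated of finite type over $k$. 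This proves (1).

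For (2), assume $X$ geometrically normal; the content is that $\mathrm{Pic}^{0}_{X/k}$ is \emph{proper}. Since by (1) it is already of finite type and separated, I would verify the valuative criterion, which reduces to the following extension statement: for every discrete valuation ring $R$ which is a $k$-algebra, with fraction field $K$, every $K$-point of $\mathrm{Pic}^{0}_{X/k}$ extends, after a finite extension of $K$, to an $R$-point. Unwinding the fppf-sheafification (and using $\mathrm{Pic}(R)=0$), such a $K$-point is represented by an invertible sheaf $L$ on $X_{K}$, and one must extend $L$ to an invertible sheaf on $X_{R}:=X\times_{k}R$; here one uses that $X_{R}$ is normal, as $X_{R}\to\mathrm{Spec}(R)$ is flat over the regular base $\mathrm{Spec}(R)$ with geometrically normal fibres. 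That this extension can in fact be carried out, after the necessary passage from $\mathrm{Pic}$ to the identity component, is the nontrivial theorem of Grothendieck, which I would quote from \cite{Kleiman}. Granting it, $\mathrm{Pic}^{0}_{X/k}$ is a proper, connected, commutative group $k$-scheme of finite type; since $k=\overline{k}$ is perfect, $(\mathrm{Pic}^{0}_{X/k})_{red}$ is a smooth closed subgroup scheme, hence a connected smooth proper commutative group $k$-scheme, i.e. an abelian variety. This proves (2).

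For (3), the first step is the deformation-theoretic computation that the tangent space of $\mathrm{Pic}_{X/k}$ at the origin is canonically $H^{1}(X,\mathcal{O}_{X})$ and that deformations of an invertible sheaf are governed by an obstruction theory with obstructions in $H^{2}(X,\mathcal{O}_{X})$. Hence, if $h^{2}(\mathcal{O}_{X})=0$, the functor $\underline{\mathrm{Pic}}_{(X/k)(\textit{fppf})}$ is formally smooth, so $\mathrm{Pic}_{X/k}$, being locally of finite type over $k$, is smooth over $k$; its dimension at the origin, and therefore by translation everywhere on $\mathrm{Pic}^{0}_{X/k}$, equals $\dim_{k}H^{1}(X,\mathcal{O}_{X})=h^{1}(\mathcal{O}_{X})$. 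A scheme smooth over a field is reduced, so $\mathrm{Pic}^{0}_{X/k}=(\mathrm{Pic}^{0}_{X/k})_{red}$; and if in addition $X$ is geometrically normal, then by (2) this scheme is proper, hence a connected smooth proper commutative group scheme, i.e. an abelian variety of dimension $h^{1}(\mathcal{O}_{X})$. Parts (1) and (3) are formal once Theorem~\ref{picard k} and the tangent/obstruction computation are available; the hard part is the properness statement underlying (2) for geometrically normal $X$, which I do not expect to reprove and would cite from \cite{Kleiman} instead.
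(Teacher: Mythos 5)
Your proposal is correct and takes essentially the same route as the paper, whose proof of this theorem consists entirely of citations to Grothendieck (FGA, expos\'es V--VI) and Kleiman; you have simply fleshed out the standard arguments behind those citations (identity component of a locally finite type group scheme, valuative criterion plus extension of line bundles over a DVR, tangent space $H^{1}(\mathcal{O}_{X})$ with obstructions in $H^{2}(\mathcal{O}_{X})$) while deferring the one hard properness input to Kleiman, exactly as the paper does. The only caveat is your appeal to perfectness of $k$ in (2) to ensure $(\mathrm{Pic}^{0}_{X/k})_{red}$ is a subgroup scheme: the theorem as stated allows an arbitrary field, where this step genuinely requires $k$ perfect, but this matches how the paper actually uses the reduced Picard variety (only over $k=\overline{k}$), so it is a harmless restriction rather than a gap.
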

\begin{proof}
(i) follows from Theorem \ref{picard k} and the above argument.\\
(ii) See \cite[Thm.~2.1 (ii) p.231, Coro.3.2. p.236]{GrVI} or \cite[Rem.~9.5.6,\ p.277]{Kleiman}.
(iii) The smoothness is \cite[Prop.~2.10 (ii), p.236]{GrVI} or 
\cite[Prop. 9.5.19, p.285]{Kleiman}.
The dimension is \cite[Prop.~2.10 (iii), p.236]{GrVI} or \cite[Coro.~9.5.13, p.283]{Kleiman}.
\\
\end{proof}

\noindent $\bullet$ Let $C$ be a proper scheme over a field $k$ of dimension 1, 
$C = \sqcup_{i=1}^{r}C_{i}$ an its irreducible decomposition,
and $m_{i}$ the multiplicity of $C_{i}$.
The \textit{total degree} map of $C$ is defined by
\[ \mathrm{deg} : \mathrm{Pic}(C) \rightarrow \mathbb{Z} \ ; \ \mathcal{L} \mapsto \chi(\mathcal{L}) - \chi(\mathcal{O}_{C}). \]
We set $\mathrm{Pic}^{0}(C) = \mathrm{Ker}(\mathrm{deg})$.
The \textit{Jacobian variety} of $C$ is defined by 
\[ \mathrm{Jac}(C) : = \mathrm{Pic}^{0}_{C/k}. \]
Since $h^{2}(\mathcal{O}_{C}) = 0$, 
$\mathrm{Jac}(C)$ is smooth of dimension $h^{1}(\mathcal{O}_{C})$ by Theorem \ref{abelian picard variety} (iii).\\
\noindent $\bullet$ 
Assume $k = \overline{k}$. Let $X \in \mathcal{V}(k)$.
Then $\mathrm{Pic}(X) = (\mathrm{Pic}_{X/k})_{red}(k).$ The group 
\[ \mathrm{NS}(X) : = (\mathrm{Pic}_{X/k})_{red}(k)/(\mathrm{Pic}^{0}_{X/k})_{red}(k) 
\ \ (\text{resp}. \ \ 
\mathrm{Num}(X) : = \mathrm{NS}(X)/\mathrm{Torsion} \ \ )  \]
is called the \textit{Neron-Severi group} of $X$ (resp. the \textit{Picard lattice} of $X$). Then
\begin{prop} \label{Neron severi} (\cite[Coro.~9.6.17,\ p.298]{Kleiman}). 
$\mathrm{NS}(X)$ is a finitely-generated abelian group.
Its rank is called the \textit{Picard number} of $X$ and is denoted by $\rho(X)$.
\end{prop}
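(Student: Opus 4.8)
\emph{Proof proposal.}
The plan is to identify $\mathrm{NS}(X)$ with the group of connected components of the Picard scheme, and then to invoke the classical Theorem of the Base.

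First I would rephrase the group. Since $X$ is smooth and projective it is geometrically normal, so by Theorem \ref{abelian picard variety} the reduction $A := (\mathrm{Pic}^{0}_{X/k})_{red}$ is an abelian variety. As $k = \overline{k}$ we have $\mathrm{Pic}(X) = (\mathrm{Pic}_{X/k})_{red}(k) = \mathrm{Pic}_{X/k}(k)$ (recalled just above) and $(\mathrm{Pic}^{0}_{X/k})_{red}(k) = \mathrm{Pic}^{0}_{X/k}(k)$, so $\mathrm{NS}(X) = \mathrm{Pic}_{X/k}(k)/\mathrm{Pic}^{0}_{X/k}(k)$. The fppf quotient $N := \mathrm{Pic}_{X/k}/\mathrm{Pic}^{0}_{X/k}$ exists as an \'{e}tale group $k$-scheme — the quotient of a group scheme locally of finite type over a field by its identity component — and $N(k)$ is in bijection with the set of connected components of $\mathrm{Pic}_{X/k}$. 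Over $k = \overline{k}$ every fppf torsor under a group $k$-scheme of finite type is trivial, since such a torsor is a nonempty $k$-scheme of finite type and hence has a $k$-point; applying this to $\mathrm{Pic}^{0}_{X/k}$, which is of finite type by Theorem \ref{abelian picard variety}(i), the exact sequence $0 \to \mathrm{Pic}^{0}_{X/k} \to \mathrm{Pic}_{X/k} \to N \to 0$ of fppf sheaves gives $\mathrm{NS}(X) = N(k)$. It therefore remains to prove that $N(k)$ is finitely generated.

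For the rank I would use the Kummer sequence $0 \to \mu_{n} \to \mathbb{G}_{m} \xrightarrow{\,n\,} \mathbb{G}_{m} \to 0$ (with $n$ prime to $\mathrm{char}(k)$) on $X_{\text{\'{e}t}}$, which yields an injection $\mathrm{Pic}(X)/n \hookrightarrow H^{2}_{\text{\'{e}t}}(X, \mu_{n})$; since $A(k)$ is divisible, $\mathrm{Pic}(X)/n = \mathrm{NS}(X)/n$, so $\mathrm{NS}(X)/n$ is finite for every such $n$. Passing to the limit over powers of a fixed prime $\ell \neq \mathrm{char}(k)$ produces a cycle-class map $\mathrm{NS}(X) \to H^{2}_{\text{\'{e}t}}(X, \mathbb{Z}_{\ell}(1))$ into a finitely generated $\mathbb{Z}_{\ell}$-module (finiteness of $\ell$-adic cohomology), and a standard argument (Severi's theorem) shows its kernel is torsion; hence $\rho(X) \leq b_{2}(X)$, and a similar cohomological bound shows that the torsion subgroup of $\mathrm{NS}(X)$ is finite.

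The substantive point is to upgrade ``finite rank, finite torsion, and $\mathrm{NS}(X)/n$ finite for all $n$'' to ``finitely generated'', i.e.\ to rule out pathologies such as a subgroup isomorphic to $\mathbb{Z}[1/\ell]$; this is not formal, and is exactly the classical Theorem of the Base of Severi and N\'{e}ron. I expect this to be the main obstacle. Rather than reproving it, I would cite \cite[Cor.~9.6.17, p.298]{Kleiman}, whose argument — via the behaviour of the Picard scheme in families together with a hyperplane-section reduction — ultimately goes back to N\'{e}ron and SGA~6, Exp.~XIII. The closing sentence of the statement, naming the rank of $\mathrm{NS}(X)$ the Picard number $\rho(X)$, is merely a definition.
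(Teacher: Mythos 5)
Your proposal is correct, and it ends up where the paper does: the paper offers no proof beyond the citation to \cite[Coro.~9.6.17, p.298]{Kleiman}, and you likewise (rightly) identify the Theorem of the Base as the non-formal core and defer to the same reference. The additional material you supply — the identification of $\mathrm{NS}(X)$ with the component group of $\mathrm{Pic}_{X/k}$ and the Kummer-sequence bound $\rho(X) \leq b_{2}(X)$ — is accurate but supplementary to what the paper records.
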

\noindent $\bullet$ Let $X$ be a geometrically-integral variety over a field $k$  
having a point $p_{0} \in X(k)$.\\
By \cite{Serre}, there are an abelian $k$-variety $\mathrm{Alb}_{X/k}$ and 
a $k$-morphism $\mathrm{alb}_{X} : X \rightarrow \mathrm{Alb}_{X/k}$ such that$:$
\begin{enumerate}
\item $\mathrm{alb}_{X}(p_{0}) = 0;$
\item for every $k$-morphism $g : X \rightarrow A$ of $X$ into an abelian variety $A$,
there is an unique $k$-homomorphism $g_{*} : \mathrm{alb}_{X} \rightarrow A$ such that $
g = g_{*} \circ \mathrm{alb}_{X}$.
\end{enumerate}
We call $\mathrm{Alb}_{X/k}$ the \textit{Albanese variety} of $X$ and 
call $\mathrm{alb}_{X}$ the \textit{Albanese morphism} of $X$.
If $X$ is smooth projective, $\mathrm{Alb}_{X/k}$ is the dual abelian variety of $(\mathrm{Pic}^{0}_{X/k})_{red}$.\\
\noindent $\bullet$
Assume $k = \overline{k}$.  Let $X \in \mathcal{V}(k)$ be an integral variety.
Fix a point $p_{0} \in X(k)$.\\
Let $\mathrm{CH}_{0}(X)_{\mathbb{Z}}^{0}$ be the Chow group of $0$-cycles of degree $0$ on $X$ with $\mathbb{Z}$-coefficients.
Then, there is a surjective homomorphism
\[ a_{X} : \mathrm{CH}_{0}(X)_{\mathbb{Z}}^{0} \rightarrow \mathrm{Alb}_{X/k}(k) 
\ \ ; \ \ \sum_{i}n_{i}[p_{i}] \mapsto \sum_{i}n_{i}[\mathrm{alb}_{X}(p_{i})]. \]
This map is called the \textit{Albanese map} of $X$. 
Moreover, its kernel is called the \textit{Albanese Kernel} of $X$, and is denoted by $T(X)$.\\ 
\noindent $\bullet$ The \textit{cohomological Brauer group} of a scheme $X$ is defined by 
$\mathrm{Br}(X) : = H^{2}_{\textit{\'{e}t}}(X, \mathbb{G}_{m})$.
If $K$ is a field, we set $\mathrm{Br}(K) : = \mathrm{Br}(\mathrm{Spec}(K))$.
\begin{prop} \label{exact brauer}
Let $f : X \rightarrow S$ be a separated morphism of finite type between locally Noetherian schemes.
Assume $f_{*}\mathcal{O}_{X} \cong \mathcal{O}_{S}$ holds universally, i.e.
$f_{T*}\mathcal{O}_{X_{T}} \cong \mathcal{O}_{T}$ for any $S$-scheme $T$. 
Then, there is an exact sequence
\[ 0 \rightarrow \mathrm{Pic}(X_{T})/\mathrm{Pic}(T) \overset{\alpha}\rightarrow \underline{\mathrm{Pic}}_{(X/S)(\textit{\'{e}t})}(T) \overset{\delta}\rightarrow \mathrm{Br}(T)  \]
for any $S$-scheme $T$. The map $\alpha$ is bijective if $f_{T}$ has a section or if $\mathrm{Br}(T) = 0$.
\end{prop}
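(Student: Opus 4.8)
The plan is to obtain the exact sequence as the five-term exact sequence of low degrees of the Leray spectral sequence for the sheaf $\mathbb{G}_{m}$ in the \'etale topology, applied to the base change $f_{T} : X_{T} \rightarrow T$ (the classical argument; cf.\ the exposition in \cite{Kleiman}). The hypothesis that $f_{*}\mathcal{O}_{X} \cong \mathcal{O}_{S}$ holds universally enters in exactly two places, which I would record first. For every \'etale $U \rightarrow T$ one has $X_{U} = X_{T} \times_{T} U$, and applying the hypothesis to the $S$-scheme $U$ gives $(f_{U})_{*}\mathcal{O}_{X_{U}} \cong \mathcal{O}_{U}$; passing to unit sections, this yields $(f_{T})_{*}\mathbb{G}_{m,X_{T}} \cong \mathbb{G}_{m,T}$ on the \'etale site over $T$. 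Moreover, the projection formula combined with $(f_{U})_{*}\mathcal{O}_{X_{U}} \cong \mathcal{O}_{U}$ shows that $f_{U}^{*} : \mathrm{Pic}(U) \rightarrow \mathrm{Pic}(X_{U})$ is injective for each such $U$; in particular $f_{T}^{*} : \mathrm{Pic}(T) \hookrightarrow \mathrm{Pic}(X_{T})$, so the quotient appearing in the statement is meaningful.

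Next I would write out the five-term exact sequence of $E_{2}^{p,q} = H^{p}_{\textit{\'{e}t}}(T, R^{q}(f_{T})_{*}\mathbb{G}_{m}) \Rightarrow H^{p+q}_{\textit{\'{e}t}}(X_{T}, \mathbb{G}_{m})$. Using $(f_{T})_{*}\mathbb{G}_{m} \cong \mathbb{G}_{m}$ from the previous step and the identifications $H^{1}_{\textit{\'{e}t}}(-,\mathbb{G}_{m}) = \mathrm{Pic}(-)$ and $H^{2}_{\textit{\'{e}t}}(-,\mathbb{G}_{m}) = \mathrm{Br}(-)$, it reads
\[ 0 \rightarrow \mathrm{Pic}(T) \overset{f_{T}^{*}}{\rightarrow} \mathrm{Pic}(X_{T}) \rightarrow H^{0}(T, R^{1}(f_{T})_{*}\mathbb{G}_{m}) \overset{\delta}{\rightarrow} \mathrm{Br}(T) \overset{f_{T}^{*}}{\rightarrow} \mathrm{Br}(X_{T}). \]
Exactness at $\mathrm{Pic}(X_{T})$ together with the injectivity of $f_{T}^{*}$ then produces an injection $\alpha : \mathrm{Pic}(X_{T})/\mathrm{Pic}(T) \hookrightarrow H^{0}(T, R^{1}(f_{T})_{*}\mathbb{G}_{m})$ whose image equals $\ker \delta$.

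The step I expect to be the main obstacle is identifying $H^{0}(T, R^{1}(f_{T})_{*}\mathbb{G}_{m})$ with $\underline{\mathrm{Pic}}_{(X/S)(\textit{\'{e}t})}(T)$. Here $R^{1}(f_{T})_{*}\mathbb{G}_{m}$ is by definition the \'etale sheafification of the presheaf $U \mapsto H^{1}_{\textit{\'{e}t}}(X_{U}, \mathbb{G}_{m}) = \mathrm{Pic}(X_{U})$ on the \'etale site over $T$, whereas, restricting $\underline{\mathrm{Pic}}_{(X/S)(\textit{\'{e}t})}$ over $T$ and using $X \times_{S} T' = X_{T} \times_{T} T'$, one sees that it is the \'etale sheafification of $U \mapsto \mathrm{Pic}(X_{U})/\mathrm{Pic}(U)$. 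These two presheaves differ by the sub-presheaf $U \mapsto \mathrm{Pic}(U) = H^{1}_{\textit{\'{e}t}}(U, \mathbb{G}_{m})$ (the inclusion uses once more the injectivity of $f_{U}^{*}$), and the \'etale sheafification of this sub-presheaf vanishes because any class in the $H^{1}$ of a sheaf is \'etale-locally trivial. Hence the two sheafifications coincide, $H^{0}(T, R^{1}(f_{T})_{*}\mathbb{G}_{m}) = \underline{\mathrm{Pic}}_{(X/S)(\textit{\'{e}t})}(T)$, and we obtain the exact sequence of the statement with $\alpha$ the displayed injection.

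Finally I would settle the bijectivity of $\alpha$. If $\mathrm{Br}(T) = 0$, then $\delta = 0$, so $\alpha$ is surjective, hence an isomorphism. If $f_{T}$ admits a section $s : T \rightarrow X_{T}$, then $s^{*} \circ f_{T}^{*} = \mathrm{id}_{\mathrm{Br}(T)}$, so the edge map $f_{T}^{*} : \mathrm{Br}(T) \rightarrow \mathrm{Br}(X_{T})$ is injective; by exactness of the five-term sequence at $\mathrm{Br}(T)$ this forces $\mathrm{im}(\delta) = 0$, and again $\alpha$ is surjective, hence bijective. This completes the plan.
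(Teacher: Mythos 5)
Your proposal is correct and follows exactly the paper's route: the five-term exact sequence of the Leray spectral sequence for $\mathbb{G}_{m}$ applied to $f_{T}$, with the universal hypothesis giving $(f_{T})_{*}\mathbb{G}_{m}\cong\mathbb{G}_{m}$ and $R^{1}(f_{T})_{*}\mathbb{G}_{m}$ identified with the \'etale-sheafified relative Picard functor. You in fact supply more detail than the paper (the injectivity of $f_{T}^{*}$, the sheafification comparison, and the two bijectivity cases), all of which is sound.
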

\begin{proof}
Let us consider the Leray spectral sequence 
$E_{2}^{p, q} = H^{p}(T, R^{q}f_{T*}\mathbb{G}_{m}) \Rightarrow H^{p+q}(X_{T}, \mathbb{G}_{m})$.
Then \textit{the exact sequence of terms of low degree} is$:$
\[0 \rightarrow H^{1}(T, f_{T*}\mathbb{G}_{m}) \rightarrow H^{1}(X_{T}, \mathbb{G}_{m}) \rightarrow  
H^{0}(T, R^{1}f_{T*}\mathbb{G}_{m}) \rightarrow H^{2}(T, f_{T*}\mathbb{G}_{m}) \rightarrow H^{2}(X_{T}, \mathbb{G}_{m}). \]
Since $f_{*}\mathcal{O}_{X} \cong \mathcal{O}_{S}$ holds universally, the above exact sequence becomes
\[ 0 \rightarrow \mathrm{Pic}(T) \rightarrow \mathrm{Pic}(X_{T}) \rightarrow 
\underline{\mathrm{Pic}}_{(X/S)(\textit{\'{e}t})}(T) \rightarrow 
\mathrm{Br}(T) \rightarrow \mathrm{Br}(X_{T}). \]
Thus the assertion follows.
\end{proof}

\section{Correspondences}
In this section, we recall and prove some facts about correspondences.
\subsection{Correspondences over a field} Let $k$ be a field and $X, Y \in \mathcal{V}(k)$.
\begin{definition}
A \textit{correspondence} from $X$ to $Y$ is an element of $\mathrm{CH}(X \times Y)$.\\
For simplicity, we write $\alpha \in \mathrm{CH}(X \times Y)$ as $\alpha : X \vdash Y$. 
\end{definition}
If $\alpha : X \vdash Y$, $\beta : Y \vdash Z$, the composition $\beta \circ \alpha : X \vdash Z$ is defined by 
\[ \beta \circ \alpha : = p_{XZ*}(p_{XY}^{*}(\alpha) \cdot p_{YZ}^{*}(\beta)). \] 
Here $p_{XY}$, $p_{YZ}$, $p_{XZ}$ denote the projections from $X \times Y \times Z$ to 
$X \times Y$, $Y \times Z$, $X \times Z$.\\
\indent For $\alpha : X \vdash Y$, define a homomorphism $\alpha_{*} : \mathrm{CH}(X) \rightarrow \mathrm{CH}(Y)$ by $\alpha_{*}(a) = p_{Y*}^{XY}(\alpha \cdot p_{X}^{XY*}(a))$,
and a homomorphism $\alpha^{*} : \mathrm{CH}(Y) \rightarrow \mathrm{CH}(X)$ by
$\alpha^{*}(b) = p_{X*}^{XY}(\alpha \cdot p_{Y}^{XY*}(b))$.
A correspondence $\alpha : X \vdash Y$ has a transpose ${}^{t}\alpha : Y \vdash X$ defined by 
${}^{t}\alpha = \tau_{*}(\alpha)$
where $\tau : X \times Y \rightarrow Y \times X$ reverses the factors.
For any morphism $f : X \rightarrow Y$, we denote by
\[ \Gamma_{f} : X \vdash Y \]
the graph of $f$.
If $f = \delta_{X} : X \hookrightarrow X \times X$ is the diagonal embedding, set $\Delta_{X} : = \Gamma_{f}$.

\begin{lem} 
$($Lieberman's lemma$)$.
Let $\alpha : X \vdash Y$ and $\beta : X' \vdash Y'$.
Let $f : X \rightarrow X'$ and $g : Y \rightarrow Y'$ be proper and flat morphisms. Then
\[  (f \times g)_{*}(\alpha) =  \Gamma_{g} \circ \alpha \circ {}^{t}\Gamma_{f}. \]
\end{lem}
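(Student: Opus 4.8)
The plan is to unwind the definition of the pushforward of a correspondence and compare it term by term with the composite $\Gamma_g \circ \alpha \circ {}^{t}\Gamma_f$. First I would work on the product $W := X \times Y \times X' \times Y'$ (together with its natural projections to each pair of factors), and recall that by definition
\[
{}^{t}\Gamma_f = (f \times \mathrm{id}_X)_*[\Delta_X] \in \mathrm{CH}(X' \times X), \qquad \Gamma_g = (\mathrm{id}_Y \times g)_*[\Delta_Y] \in \mathrm{CH}(Y \times Y'),
\]
so that $\Gamma_g \circ \alpha \circ {}^{t}\Gamma_f$ is computed as an iterated composition of cycles via the formula $\beta\circ\alpha = p_{XZ*}(p_{XY}^*\alpha \cdot p_{YZ}^*\beta)$. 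The essential observation is that all the relevant cycles are supported on the image of the closed embedding $\iota = (\mathrm{id}_X, \mathrm{id}_Y, f, g) : X \times Y \hookrightarrow W$ (or rather on loci cut out by the conditions $x' = f(x)$, $y' = g(y)$), so that the intersection products appearing in the composition degenerate to honest pullbacks along graphs, where no excess intersection occurs.

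Concretely, the key steps, in order, are: (1) compute $\alpha \circ {}^{t}\Gamma_f \in \mathrm{CH}(X' \times Y)$ and identify it, using the projection formula and the flatness/properness of $f$, with $(f \times \mathrm{id}_Y)_* \alpha$; here one uses that pulling back $\alpha$ along $X' \times X \times Y \to X \times Y$ and intersecting with $p^*{}^{t}\Gamma_f$ reduces, after pushing forward, to base change along $\Gamma_f$ — this is exactly the kind of compatibility between $f^*$, $f_*$ and graph cycles that holds because $\Gamma_f$ meets $X' \times X \times Y$ properly. (2) Compose the result with $\Gamma_g$ on the other side, by the symmetric computation, to obtain $(\mathrm{id}_{X'} \times g)_* (f \times \mathrm{id}_Y)_* \alpha = (f \times g)_* \alpha$. (3) Assemble the two halves, invoking associativity of composition of correspondences (so that $\Gamma_g \circ \alpha \circ {}^{t}\Gamma_f$ is unambiguous) to conclude.

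The step I expect to be the main obstacle is (1): the careful bookkeeping of which projection each cycle is pulled back along on the triple (really quadruple) product, and the verification that the relevant intersections are proper so that the refined Gysin/intersection products coincide with flat pullbacks. This is where one must genuinely use that $f$ and $g$ are both proper \emph{and} flat — flatness to make sense of $f^*$ on cycles and to guarantee dimensional transversality against the graphs, properness to push forward — rather than merely morphisms; the identity is false in general without these hypotheses. Once the proper-intersection claim is in place, the remaining manipulations are formal applications of the projection formula and functoriality of proper pushforward and flat pullback, which I would not spell out in detail.
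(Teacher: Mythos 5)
Your proposal is correct and follows the standard argument: the paper itself gives no details, simply citing Fulton, Prop.~16.1.1, whose proof is exactly the computation you sketch (first show $\alpha \circ {}^{t}\Gamma_{f} = (f \times \mathrm{id}_{Y})_{*}\alpha$ by intersecting with the pulled-back graph on the triple product and applying the projection formula, then symmetrically $\Gamma_{g} \circ \beta = (\mathrm{id}_{X'} \times g)_{*}\beta$, and combine). One small inaccuracy in your closing remark: since all four varieties lie in $\mathcal{V}(k)$, properness is automatic and flatness is not actually needed --- on smooth varieties the relevant intersection products and pullbacks are defined for arbitrary morphisms via the graph construction, so the identity is not ``false in general without these hypotheses''; this does not affect your argument under the lemma's stated assumptions.
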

\begin{proof}
It follows from \cite[Prop.~16.1.1]{Fulton}.
\end{proof}

For any $X, T \in \mathcal{V}(k)$, let $X(T) : = \mathrm{CH}(T \times X)$.
For $\phi : X \vdash Y$, we define
\[ \phi_{T} : X(T) \rightarrow Y(T) \ \ ; \ \  \alpha \mapsto \phi \circ \alpha. \]
\begin{thm}
$($Manin's identity principle$)$
Let $\phi, \psi : X \vdash Y$.
Then 
\[ (\mathrm{i}) \ \phi = \psi \ \ \ \ \Longleftrightarrow \ \ \ \
(\mathrm{ii}) \ \phi_{T} = \psi_{T} \ \text{for all $T \in \mathcal{V}(k)$} \ \ \ \ \Longleftrightarrow 
\ \ \ (\mathrm{iii}) \ \phi_{X} = \psi_{X}.\]
\end{thm}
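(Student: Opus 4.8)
For $\phi,\psi : X \vdash Y$,
\[ \phi = \psi \ \Longleftrightarrow\ \phi_T = \psi_T \text{ for all } T \in \mathcal{V}(k) \ \Longleftrightarrow\ \phi_X = \psi_X. \]

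The plan is to prove the cycle of implications $(\mathrm{i})\Rightarrow(\mathrm{ii})\Rightarrow(\mathrm{iii})\Rightarrow(\mathrm{i})$. The first implication is immediate: if $\phi=\psi$ as elements of $\mathrm{CH}(X\times Y)$, then for every $T$ and every $\alpha\in X(T)=\mathrm{CH}(T\times X)$ we have $\phi\circ\alpha = \psi\circ\alpha$ simply because composition of correspondences is well-defined on Chow groups; hence $\phi_T=\psi_T$. The second implication $(\mathrm{ii})\Rightarrow(\mathrm{iii})$ is the trivial specialization $T=X$. So the entire content is the third implication $(\mathrm{iii})\Rightarrow(\mathrm{i})$, and that is where the main work lies.

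For $(\mathrm{iii})\Rightarrow(\mathrm{i})$, the key idea is to feed the hypothesis the universal $0$-cycle: take $T=X$ and apply $\phi_X,\psi_X$ to the class of the diagonal $\Delta_X\in X(X)=\mathrm{CH}(X\times X)$. First I would record the elementary identity $\phi\circ\Delta_X = \phi$ in $\mathrm{CH}(X\times Y)$ — this follows from the definition $\phi\circ\Delta_X = p_{XY*}(p_{XX}^*\Delta_X \cdot p_{XY}^{\,\prime *}\phi)$ on $X\times X\times Y$, since intersecting with the pullback of the diagonal under the first projection restricts the second projection to the graph of the identity, and the projection formula collapses it back to $\phi$; concretely one uses $\delta_X^*$ and the fact that $(\delta_X\times \mathrm{id}_Y)$ is a closed immersion with $p_{XY*}\circ(\delta_X\times\mathrm{id}_Y)_* = \mathrm{id}$. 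Granting this, $(\mathrm{iii})$ gives $\phi = \phi\circ\Delta_X = \phi_X(\Delta_X) = \psi_X(\Delta_X) = \psi\circ\Delta_X = \psi$, which is exactly $(\mathrm{i})$.

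I expect the main obstacle to be the careful verification of $\phi\circ\Delta_X=\phi$: it is ``obvious'' but requires a clean manipulation of pullbacks and pushforwards along the three projections from $X\times X\times Y$, invoking the projection formula (e.g. \cite[Prop.~8.1.1(c) or Thm.~3.2(c)]{Fulton}) and the compatibility of flat pullback with the closed embedding $\delta_X\times\mathrm{id}_Y$. One should note that this works with $\mathbb{Q}$-coefficients for arbitrary $X,Y\in\mathcal{V}(k)$ with no properness or smoothness subtleties beyond what is already built into $\mathcal{V}(k)$, and that $\Delta_X$ acts as a two-sided identity so the same argument applied on the other side (or using $\mathrm{id}_Y$ acting by $\Delta_Y$) is not even needed here. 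Everything else in the statement is formal.
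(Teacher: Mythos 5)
Your proposal is correct and follows exactly the paper's own argument: the implications $(\mathrm{i})\Rightarrow(\mathrm{ii})\Rightarrow(\mathrm{iii})$ are trivial, and $(\mathrm{iii})\Rightarrow(\mathrm{i})$ is obtained by evaluating at $\alpha=\Delta_{X}$, using that $\phi\circ\Delta_{X}=\phi$. The only difference is that you spell out the projection-formula verification that $\Delta_{X}$ acts as the identity, which the paper leaves implicit.
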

\begin{proof}
(i) $\Rightarrow$ (ii) $\Rightarrow$ (iii) are trivial.
(iii) $\Rightarrow$ (i) follows from taking $\alpha = \Delta_{X}$.
\end{proof}

\begin{rem} \label{identity principal} 
The Manin identity principle induces
\[ \Gamma_{f} = \Gamma_{g}  \ \text{in} \  \mathrm{CH}(X \times Y) \ \  \Longleftrightarrow \ \  (\mathrm{id}_{T} \times f)_{*} =  (\mathrm{id}_{T} \times g)_{*} :  \mathrm{CH}(T \times X) \rightarrow \mathrm{CH}(T \times Y) \  \forall T \]
\end{rem}

\begin{prop} \label{Galois extension correspondence}
Let $X, Y \in \mathcal{V}(k)$.
Let $\pi : X \rightarrow Y$ be a finite morphism.
\begin{enumerate} 
\item Let $d$ be the degree of $\pi$.
Then $\Gamma_{\pi} \circ {}^{t}\Gamma_{\pi} = d \cdot \Delta_{Y}$ in $\mathrm{CH}(Y \times Y)$.
\item Let $G$ be a finite group which acts freely on $X$, and let $Y : = X/G$.
Then  ${}^{t}\Gamma_{\pi} \circ \Gamma_{\pi} = \sum_{\sigma \in G}\Gamma_{\sigma}$ in
$\mathrm{CH}(X \times X)$.
\end{enumerate}
\end{prop}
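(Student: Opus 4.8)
The plan is to reduce each composition of graphs to a single elementary operation on Chow groups, the key geometric input in both cases being that $\pi$ is flat — indeed étale in (ii).

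\textbf{Part (i).}
First note that $\pi$, being a finite surjective morphism between irreducible smooth $k$‑varieties, has $\dim X=\dim Y$ and is therefore flat of degree $d$ by miracle flatness; in particular $\pi$ is proper and flat, so Lieberman's lemma applies with $\alpha=\Delta_{X}:X\vdash X$ and $f=g=\pi$ and yields
\[ (\pi\times\pi)_{*}(\Delta_{X})=\Gamma_{\pi}\circ\Delta_{X}\circ{}^{t}\Gamma_{\pi}=\Gamma_{\pi}\circ{}^{t}\Gamma_{\pi}\quad\text{in }\mathrm{CH}(Y\times Y). \]
It then remains to identify the left-hand side. The morphism $\pi\times\pi$ carries $\Delta_{X}$ into $\Delta_{Y}$, and under the identifications $\Delta_{X}\cong X$, $\Delta_{Y}\cong Y$ the induced map is $\pi$ itself, which is generically $d$-to-one onto $Y$; hence $(\pi\times\pi)_{*}[\Delta_{X}]=d[\Delta_{Y}]$, which is the assertion.

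\textbf{Part (ii).}
Since $G$ acts freely, $\pi:X\to X/G=Y$ is a finite étale $G$-torsor of degree $|G|$, and the map $\bigsqcup_{\sigma\in G}X\to X\times_{Y}X$ that is $x\mapsto(x,\sigma x)$ on the $\sigma$-th summand is an isomorphism; equivalently, as a closed subscheme of $X\times X$,
\[ X\times_{Y}X=\bigsqcup_{\sigma\in G}\Gamma_{\sigma}, \]
each $\Gamma_{\sigma}$ reduced and of the expected dimension $\dim X$. Now compute the composition directly: ${}^{t}\Gamma_{\pi}\circ\Gamma_{\pi}=p_{13*}\bigl(p_{12}^{*}\Gamma_{\pi}\cdot p_{23}^{*}\,{}^{t}\Gamma_{\pi}\bigr)$ on $X\times Y\times X$, where $p_{12}^{*}\Gamma_{\pi}$ and $p_{23}^{*}\,{}^{t}\Gamma_{\pi}$ are the loci $\{(x,\pi(x),x')\}$ and $\{(x,\pi(x'),x')\}$, each of codimension $\dim Y$ in the $3\dim Y$-dimensional triple product. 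Because $\pi$ is étale these two subvarieties meet transversally along $\{(x,\pi(x),x'):\pi(x)=\pi(x')\}$, which $p_{13}$ sends isomorphically onto $X\times_{Y}X\subset X\times X$; therefore ${}^{t}\Gamma_{\pi}\circ\Gamma_{\pi}=[X\times_{Y}X]=\sum_{\sigma\in G}\Gamma_{\sigma}$. (Alternatively, by Remark \ref{identity principal} it suffices to verify the identity after applying $(-)_{T}$ for every $T\in\mathcal{V}(k)$: using $\Gamma_{\pi}\circ\alpha=(\mathrm{id}_{T}\times\pi)_{*}\alpha$ and ${}^{t}\Gamma_{\pi}\circ\beta=(\mathrm{id}_{T}\times\pi)^{*}\beta$, one gets $({}^{t}\Gamma_{\pi}\circ\Gamma_{\pi})_{T}=(\mathrm{id}_{T}\times\pi)^{*}(\mathrm{id}_{T}\times\pi)_{*}$, and flat base change along the $G$-torsor $\mathrm{id}_{T}\times\pi$, together with $(T\times X)\times_{T\times Y}(T\times X)=\bigsqcup_{\sigma}(T\times X)$, turns this into $\sum_{\sigma}(\mathrm{id}_{T}\times\sigma)_{*}=(\sum_{\sigma}\Gamma_{\sigma})_{T}$; Manin's identity principle finishes the argument.)

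\textbf{Main obstacle.}
There is no deep difficulty, only bookkeeping of multiplicities, and the one point to watch is that freeness of the $G$-action is precisely what makes $\pi$ étale, hence forces $X\times_{Y}X$ to be reduced and the intersection computing the composition to be transverse, so that each $\Gamma_{\sigma}$ appears with coefficient exactly $1$ and nothing is lost to inseparability in characteristic $2$ or $3$. Similarly, in (i) one must not overlook that a finite surjection between smooth varieties of equal dimension is automatically flat, which is what legitimizes both Lieberman's lemma and the degree count. It is also worth pinning down once and for all the normalizations in the compatibilities $\Gamma_{f}\circ\alpha=(\mathrm{id}\times f)_{*}\alpha$ (for $f$ proper) and ${}^{t}\Gamma_{f}\circ\alpha=(\mathrm{id}\times f)^{*}\alpha$ (for $f$ flat), since these are the dictionary between correspondences and ordinary Chow-group operations used here.
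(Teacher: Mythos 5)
Your proof is correct, and your parenthetical alternative for (ii) is in fact exactly the paper's argument: the paper proves (ii) by citing Fulton's Example 1.7.6 to get $\mathrm{CH}(Y)\cong\mathrm{CH}(X)^{G}$ and hence $\pi^{*}\pi_{*}=\sum_{\sigma\in G}\sigma_{*}$ on Chow groups, then invokes Remark \ref{identity principal} (Manin's identity principle) to upgrade this to an identity of correspondences, and disposes of (i) by saying it is ``similar'' (i.e.\ $\pi_{*}\pi^{*}=d\cdot\mathrm{id}$ plus the identity principle). Your primary arguments are genuinely different in flavor: for (ii) you compute the intersection product defining ${}^{t}\Gamma_{\pi}\circ\Gamma_{\pi}$ directly on $X\times Y\times X$, using that freeness makes $\pi$ \'etale, $X\times_{Y}X\cong\bigsqcup_{\sigma}\Gamma_{\sigma}$ reduced, and the two graph loci transverse; for (i) you use Lieberman's lemma to identify $\Gamma_{\pi}\circ{}^{t}\Gamma_{\pi}$ with $(\pi\times\pi)_{*}\Delta_{X}=d\,\Delta_{Y}$. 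The direct computation buys you an explicit cycle-level identity with visible multiplicities (and makes clear where freeness/\'etaleness is used, including in inseparable characteristic), at the cost of having to verify properness and transversality of the intersection; the paper's route via the identity principle is shorter but hides the geometry in the cited quotient isomorphism of Chow groups. One small point of care, which you handle correctly via miracle flatness: the paper's statement of Lieberman's lemma assumes $f,g$ proper and flat, so the flatness of the finite surjection $\pi$ between smooth varieties does need to be noted before you may apply it in (i).
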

\begin{proof}
The proof of (i) is similar to (ii), so it suffices to prove (ii).
Let $\mathrm{CH}(X)^{G}$ be the $G$-invariant subgroup.
Then $\mathrm{CH}(Y) \cong \mathrm{CH}(X)^{G}$ by \cite[Exa.~1.7.6]{Fulton},
so $\pi^{*} \pi_{*} = \sum_{\sigma \in G}\sigma_{*}$ in $\mathrm{Aut}(\mathrm{CH}(X))$,
and hence 
${}^{t}\Gamma_{\pi} \circ \Gamma_{\pi} = \sum_{\sigma \in G}\Gamma_{\sigma}$
by Remark \ref{identity principal}.
\end{proof}
 
\subsection{Relative correspondences and base changes} \ \indent \\
\indent To extend $h(X_{\eta}) \cong h(J_{\eta})$ to $t_{2}(X) \cong t_{2}(J)$, we prove some facts about relative correspondences.
This subsection is based on \cite{Corti and Hanamura} and \cite[Ch.~8]{Murre and  Nagel and Peters}.\\
\indent Let $B$ be a quasi-projective variety over a field $k$.
Now, we explain some concepts.\\
$\bullet$ Let $\mathcal{V}(B)$ be the category whose objects are pairs $(X, f)$ with 
$X$ a smooth quasi-projective $k$-variety and $f : X \rightarrow B$ a projective morphism.\\
$\bullet$ A morphism from $(X, f)$ to $(Y, g)$ is a morphism $h : X \rightarrow Y$ such that $g \circ h = f$.\\
$\bullet$ Let $(X, f)$, $(Y, g) \in \mathcal{V}(B)$.
Assume that $Y$ is equidimensional. Set
\[ \mathrm{Corr}^{r}_{B}(X, Y) : = \mathrm{CH}_{\mathrm{dim}(Y) - r}(X \times_{B} Y) \ \ \ \text{and} \ \ \ \mathrm{Corr}_{B}(X, Y) : = \oplus_{r}\mathrm{Corr}^{r}_{B}(X, Y). \]
$\bullet$ There are defined for Cartesian squares
\[
\begin{CD}
X @>>> Y \\
@VVV @VVV\\
B @>{i}>> C.
\end{CD}
\]
If $i$ is regular embedding of codimension $d$, 
the upper map induces $i^{!} : \mathrm{CH}_{k}(Y) \rightarrow \mathrm{CH}_{k-d}(X)$.
We apply this construction using the following diagram with right-hand side Cartesian squares
\[
\begin{CD}
X \times_{B} Y  @<{p_{XZ}}<< X \times_{B} Y \times_{B} Z @>{\delta'}>>
(X \times_{B} Y) \times_{k} (Y \times_{B} Z) \\
&& @VVV  @VVV \\
& & Y @>{\delta}>>  Y \times_{k} Y.
\end{CD} 
\]
Since $Y$ is smooth, 
$\delta$ is a regular embedding and so the refined Gysin homomorphism $\delta^{!}$ is well-defined.
For $\Gamma_{1} \in \mathrm{Corr}^{r}_{B}(X, Y)$ and $\Gamma_{2} \in \mathrm{Corr}^{s}_{B}(Y, Z)$,
we define
\[ \Gamma_{2} \circ_{B} \Gamma_{1} : = (p_{XZ})_{*}((\delta')^{!}(\Gamma_{1} \times_{k} \Gamma_{2})) 
\in \mathrm{Corr}^{r+s}_{B}(X, Z). \]
\indent The following two lemmas say that the composition of relative correspondences is compatible with some base change.
\begin{lem} \label{correspondences and base change}
Let $X, Y, Z \in \mathcal{V}(B)$.
Let $i : U \hookrightarrow B$ be an open immersion.\\
Let $i_{XY} : X \times_{B} Y \times_{B} U \rightarrow X \times_{B} Y$ be the projection, 
and similarly for $i_{VW}$ and $i_{UW}$.\\
Let $\Gamma_{1} \in \mathrm{Corr}_{B}(X, Y)$ and $\Gamma_{2} \in \mathrm{Corr}_{B}(Y, Z)$.
Then in $\mathrm{Corr}_{U}(X, Z)$
\[ ((i_{YZ})^{*}\Gamma_{2}) \circ_{U} ((i_{XY})^{*}\Gamma_{1}) = 
(i_{XZ})^{*}(\Gamma_{2} \circ_{B} \Gamma_{1}). \]
\end{lem}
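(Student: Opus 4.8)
The plan is to reduce the identity to the base case of composition of relative correspondences over the base $B$, and then to trace through how the refined Gysin homomorphisms behave with respect to the open restriction $i : U \hookrightarrow B$. First I would set up notation carefully: restricting to $U$ is a flat base change (an open immersion), so for each pair of objects in $\mathcal{V}(B)$ the map $i_{XY}^{*} : \mathrm{CH}_{*}(X \times_{B} Y) \to \mathrm{CH}_{*}(X \times_{B} Y \times_{B} U)$ is simply the flat pullback along the open immersion $X \times_{B} Y \times_{B} U \hookrightarrow X \times_{B} Y$, which is an \emph{exact} restriction functor on Chow groups. Note that $(X \times_B Y) \times_B U = (X \times_B U) \times_U (Y \times_B U)$, so $X \times_B Y \times_B U$ really is the fiber product over $U$ of the restricted families; this is what makes the statement type-check.

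The key step is to examine the definition
\[ \Gamma_{2} \circ_{B} \Gamma_{1} = (p_{XZ})_{*}\bigl((\delta')^{!}(\Gamma_{1} \times_{k} \Gamma_{2})\bigr) \]
and compare it term by term with the analogous formula over $U$. There are three operations to control: the external product $\times_{k}$, the refined Gysin map $(\delta')^{!}$ attached to the diagonal $\delta : Y \to Y \times_{k} Y$, and the proper pushforward $(p_{XZ})_{*}$. For the external product, compatibility with flat pullback is standard (Fulton, \S1.10, and the projection-type formulas of \S1.7). For the proper pushforward, one uses that in the diagram defining $\circ_B$ the projection $p_{XZ} : X \times_B Y \times_B Z \to X \times_B Z$ is proper, and that proper pushforward commutes with flat base change along $U \hookrightarrow B$ (Fulton, Prop.~1.7); restricting $p_{XZ}$ over $U$ gives exactly the projection $p_{XZ}$ for the $U$-families. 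The heart of the matter is the refined Gysin homomorphism: I would invoke the compatibility of $\delta^{!}$ (equivalently $(\delta')^{!}$) with flat pullback, which follows from Fulton, Thm.~6.2(a) and Prop.~6.3 — the refined Gysin class attached to a regular embedding is stable under flat base change of the ambient situation. Concretely, the Cartesian square defining $(\delta')^{!}$ over $U$ is obtained from the one over $B$ by the flat base change $U \to B$, so $(\delta')^{!}_{U} \circ i^{*} = i^{*} \circ (\delta')^{!}_{B}$ on the relevant Chow groups, where $Y_{U} = Y \times_B U$ is still smooth (hence $\delta_U$ is still a regular embedding of the same codimension, since smoothness and relative dimension are preserved).

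Assembling these, I would write: $i_{XZ}^{*}(\Gamma_2 \circ_B \Gamma_1) = i_{XZ}^{*}(p_{XZ})_{*}((\delta')^{!}(\Gamma_1 \times_k \Gamma_2)) = (p_{XZ})_{*} i^{*}((\delta')^{!}(\Gamma_1 \times_k \Gamma_2)) = (p_{XZ})_{*}(\delta')^{!} i^{*}(\Gamma_1 \times_k \Gamma_2) = (p_{XZ})_{*}(\delta')^{!}(i_{XY}^{*}\Gamma_1 \times_k i_{YZ}^{*}\Gamma_2) = (i_{YZ}^{*}\Gamma_2) \circ_U (i_{XY}^{*}\Gamma_1)$, where in the first displayed equality I use proper–flat base change, in the second the compatibility of the refined Gysin map with flat restriction, and in the third the compatibility of external products with flat pullback; the identifications of the restricted diagrams with the defining diagrams for $\circ_U$ are the bookkeeping checks. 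The main obstacle I anticipate is purely bookkeeping: one must verify that the Cartesian diagram over $U$ used to define $(\delta')^{!}_{U}$ is genuinely the flat base change of the one over $B$, i.e. that forming fiber products over $B$ and then restricting to $U$ agrees with forming fiber products over $U$ of the restrictions, and that the smoothness of $Y$ (needed to make $\delta$ a regular embedding) passes to $Y_U$. These are immediate once one unwinds the definitions, but they are the only place where something could be mishandled; once the diagram-chase is set up correctly, the proof is a formal consequence of the flat base change compatibilities of proper pushforward, refined Gysin maps, and external products from \cite{Fulton}.
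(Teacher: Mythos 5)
Your proposal is correct and follows essentially the same route as the paper: both proofs reduce the identity to the three flat-base-change compatibilities (external product with flat pullback, the refined Gysin map $(\delta')^{!}$ with flat pullback via the Cartesian diagram obtained by restricting over $U$, and proper pushforward $(p_{XZ})_{*}$ with flat pullback), assembled through exactly the diagram-chase you describe. The only cosmetic difference is that you start from $(i_{XZ})^{*}(\Gamma_{2}\circ_{B}\Gamma_{1})$ and work toward the composition over $U$, whereas the paper runs the same chain of equalities in the opposite direction.
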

\begin{proof}
Since $i$ is an open immersion, there are the following Cartesian diagrams
\[
\begin{CD}
X \times_{B} Z \times_{B} U  @<{p'_{XZ}}<< X \times_{B} Y \times_{B} Z \times_{B} U @>{\delta''}>> 
(X \times_{B} Y \times_{B}  U) \times_{k} (Y \times_{B} Z \times_{B} U) \\
@VV{i_{UW}}V    @VV{q}V  @VV{p}V \\
X \times_{B} Z   @<{p_{XZ}}<<  X \times_{B} Y \times_{B} Z @>{\delta'}>> 
(X \times_{B} Y) \times_{k} (Y \times_{B} Z)\\
& & @VVV @VVV\\
& & Y @>{\delta}>> Y \times_{k} Y
\end{CD}
\]
where the morphisms $i_{XZ}$, $p$, $q$ are open immersion.
Then in $\mathrm{CH}(X \times_{B} Y \times_{B} Z \times_{B} U)$
\begin{equation} \label{euality for base change}
(\delta'')^{!}(i_{XY} \times_{k} i_{YZ})^{*}(\Gamma_{1} \times_{k} \Gamma_{2}) 
= (\delta'')^{!}p^{*}(\Gamma_{1} \times_{k} \Gamma_{2}) =
q^{*}(\delta')^{!}(\Gamma_{1} \times_{k} \Gamma_{2}) 
\end{equation}
Apply to $(p'_{XZ})_{*}$ to this formula to obtain$:$
\begin{align*}
((i_{YZ})^{*}\Gamma_{2}) \circ_{U} ((i_{XY})^{*}\Gamma_{1}) 
&= (p'_{XZ})_{*}(\delta'')^{!}((i_{YZ})^{*}\Gamma_{2} \times_{k} (i_{XY})^{*}\Gamma_{1})\\
&= (p'_{XZ})_{*}(\delta'')^{!}(i_{YZ} \times_{k} i_{XY})^{*}(\Gamma_{2} \times_{k} \Gamma_{1})\\
&= (p'_{XZ})_{*}q^{*}(\delta')^{!}(\Gamma_{1} \times_{k} \Gamma_{2}) &&\text{by (\ref{euality for base change})}\\
&= (i_{XZ})^{*}(p_{XZ})_{*}(\delta')^{!}(\Gamma_{1} \times_{k} \Gamma_{2}) && \text{by base change theorem} \\
&= (i_{XZ})^{*}(\Gamma_{1} \circ_{B} \Gamma_{2}).
\end{align*}
\end{proof}

\begin{lem} \label{correspondences and closed immersion}
(\cite[Lem.~8.1.6,\ p.108]{Murre and  Nagel and Peters}).
Let $X, Y, Z \in \mathcal{V}(B)$. \label{correspondences and canonical morphism}
Let $t : B \rightarrow B'$ be a $k$-morphism.
Let $j_{XY} : X \times_{B} Y \rightarrow X \times_{B'} Y$ be the canonical morphism, 
and similarly for $j_{XY}$ and $j_{XZ}$.
Let $\Gamma_{1} \in \mathrm{Corr}_{B}(X, Y)$, $\Gamma_{2} \in \mathrm{Corr}_{B}(Y, Z)$.
Then in $\mathrm{Corr}_{B'}(X, Z)$
\[ ((j_{YW})_{*}\Gamma_{2}) \circ_{B'} ((j_{XY})_{*}\Gamma_{1}) = 
(j_{XZ})_{*}(\Gamma_{2} \circ_{B} \Gamma_{1}). \]
\end{lem}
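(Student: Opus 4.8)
The plan is to mirror the proof of Lemma \ref{correspondences and base change}, replacing the open-immersion base change $i : U \hookrightarrow B$ with the structure of $j_{XY}$, $j_{YZ}$, $j_{XZ}$ as closed immersions. The point is that for $X, Y, Z \in \mathcal{V}(B)$ the fibre products $X \times_B Y$, $Y \times_B Z$, etc., sit inside $X \times_{B'} Y$, $Y \times_{B'} Z$ as closed subschemes cut out by the diagonal of $B$ inside $B \times_{B'} B$; since $B$ need not be smooth over $B'$ this is not a regular embedding in general, but we do not need Gysin pullback along $j$ — we only need proper pushforward, which is always available. So first I would set up the analogue of the big diagram in Lemma \ref{correspondences and base change}: the two Cartesian squares computing $\Gamma_2 \circ_B \Gamma_1$ via $\delta^! $ for the diagonal $\delta : Y \to Y \times_k Y$ (legitimate since $Y$ is smooth over $k$, hence $\delta$ is a regular embedding), together with the corresponding squares over $B'$ using $\delta_{B'} : Y \to Y \times_k Y$ — the \emph{same} $\delta$, since the diagonal of $Y$ over $k$ does not see the base.

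The key commutation is between the refined Gysin map $\delta^!$ and the proper pushforwards induced by the $j$'s. Concretely, write $q : X \times_B Y \times_B Z \to X \times_{B'} Y \times_{B'} Z$ and $p : (X\times_B Y)\times_k(Y\times_B Z) \to (X\times_{B'}Y)\times_k(Y\times_{B'}Z)$ for the canonical (proper, in fact closed) immersions, and $\delta'_B$, $\delta'_{B'}$ for the two refined-pullback source maps. I would check that the square relating $\delta'_B$, $\delta'_{B'}$, $q$, $p$ is Cartesian, so that the compatibility of refined Gysin homomorphisms with proper pushforward (\cite[Thm.~6.2~(a)]{Fulton}, i.e. $\delta^! \circ p_* = q_* \circ \delta^!$ for a fibre square over the regular embedding $\delta$) applies. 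Combined with $p_*(\Gamma_1 \times_k \Gamma_2) = (j_{XY})_*\Gamma_1 \times_k (j_{YZ})_*\Gamma_2$ (the product formula for pushforward along a product of maps) and functoriality of proper pushforward along $p_{XZ}$, the computation then reads, in $\mathrm{Corr}_{B'}(X,Z)$,
\begin{align*}
((j_{YZ})_*\Gamma_2) \circ_{B'} ((j_{XY})_*\Gamma_1)
&= (p^{B'}_{XZ})_* (\delta'_{B'})^! \big((j_{XY})_*\Gamma_1 \times_k (j_{YZ})_*\Gamma_2\big)\\
&= (p^{B'}_{XZ})_* (\delta'_{B'})^! \, p_*(\Gamma_1 \times_k \Gamma_2)\\
&= (p^{B'}_{XZ})_* \, q_* (\delta'_{B})^!(\Gamma_1 \times_k \Gamma_2)\\
&= (j_{XZ})_* (p^{B}_{XZ})_* (\delta'_{B})^!(\Gamma_1 \times_k \Gamma_2)\\
&= (j_{XZ})_*(\Gamma_2 \circ_B \Gamma_1),
\end{align*}
using $p^{B'}_{XZ} \circ q = j_{XZ} \circ p^{B}_{XZ}$ in the penultimate step.

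I expect the main obstacle to be bookkeeping rather than conceptual: verifying that all the relevant squares are genuinely Cartesian (this uses that $X\times_{B}Y = (X\times_{B'}Y)\times_{B'\times_{B'}B'\,}\!B$ and so on — fibre products over $B$ are obtained from fibre products over $B'$ by a single further base change along $B \to B\times_{B'}B \to \cdots$, compatibly across all three pairs), and matching the indexing of the projections $p_{XZ}$ so that the final identity $p^{B'}_{XZ}\circ q = j_{XZ}\circ p^{B}_{XZ}$ holds on the nose. Since this is exactly Lemma 8.1.6 of \cite{Murre and  Nagel and Peters}, to which the statement refers, I would in practice cite that reference for the diagram chase and only spell out the two ingredients actually used downstream — functoriality of $(-)_*$ and the Gysin/pushforward compatibility — leaving the verification of the Cartesian squares to the reader or to the cited source.
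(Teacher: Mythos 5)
Your proposal is correct and follows essentially the same route as the paper: the paper draws exactly the commutative diagram you describe and declares the rest ``similar'' to Lemma \ref{correspondences and base change}, the intended substitution being precisely the one you make, namely replacing the pullback compatibility $(\delta'')^!p^* = q^*(\delta')^!$ by the proper-pushforward compatibility $\delta^!\circ p_* = q_*\circ\delta^!$ of the refined Gysin map (\cite[Thm.~6.2~(a)]{Fulton}). Your additional care about the upper square being Cartesian and about $p_*(\Gamma_1\times_k\Gamma_2)=(j_{XY})_*\Gamma_1\times_k(j_{YZ})_*\Gamma_2$ only makes explicit what the paper leaves implicit.
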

\begin{proof}
Let us consider the following commutative diagrams
\[
\begin{CD}
X \times_{B} Z   @<{p_{XZ}}<< X \times_{B} Y \times_{B} Z  @>{\delta''}>> 
(X \times_{B} Y) \times_{k} (Y \times_{B} Z) \\
@VV{j_{XZ}}V    @VV{q}V  @VV{p}V \\
X \times_{B'} Z   @<{p'_{XZ}}<<  X \times_{B'} Y \times_{B'} Z @>{\delta'}>> 
(X \times_{B'} Y) \times_{k} (Y \times_{B'} Z)\\
& & @VVV @VVV\\
& & Y @>{\delta}>> Y \times_{k} Y.
\end{CD}
\]
The remainder is similar to the proof of Lemma \ref{correspondences and base change}.
\end{proof}

\section{Chow motives}
In this section, we review the theory of Chow motives.
\subsection{The category of Chow motives} 
Let $\mathrm{CH}\mathcal{M}(k) = \mathrm{CH}\mathcal{M}(k, \mathbb{Q})$ be the category of Chow motives over a field $k$ with $\mathbb{Q}$-coefficients.
Objects in $\mathrm{CH}\mathcal{M}(k)$ are given by triples $(X, p, m)$
where $X \in \mathcal{V}(k)$, $p \in \mathrm{Corr}^{0}(X, X)$ is a projector (i.e. $p \circ p = p$), and $m \in \mathbb{Z}$.
Morphisms are in $\mathrm{CH}\mathcal{M}(k)$ given by 
\[ \mathrm{Hom}_{\mathrm{CH}\mathcal{M}(k)}((X, p, m), (Y, p, n)) = q \circ \mathrm{Corr}^{n-m}(X, Y) \circ p. \]
\indent Let $M = (X, p, m), N = (Y, q, n) \in \mathrm{CH}\mathcal{M}(k)$. 
One can define a motive $M \otimes N : = (X \times Y, \pi_{X}^{*}p \cdot \pi_{Y}^{*}q, m + n)$
where $\pi_{X} : (X \times Y) \times (X \times Y) \rightarrow X \times X$ be the projection, and similar for $\pi_{Y}$.
Also, one can define $M \oplus N$.
For simplicity, we give only the definition in case $m = n$.
Then $M \oplus N : = (X \sqcup Y, p \oplus q, m)$,
 and refer to \cite[Def.~2.9 (ii),\ p.178]{Kimura} for the general case.\\ 
\indent We denote by $h(-) : \mathcal{V}(k)^{\mathrm{op}} \rightarrow \mathrm{CH}\mathcal{M}(k)$
the contravariant functor which associates to any 
$X \in \mathcal{V}(k)^{\mathrm{op}}$ its Chow motive 
\[ h(X) = (X, \Delta_{X}, 0),\]
where $\Delta_{X} \in \mathrm{Corr}^{0}(X, X)$ is the diagonal,
and to a morphism  $f : X \rightarrow Y$ the correspondence 
$h(f) = {}^{t}\Gamma_{f} \in \mathrm{Corr}^{0}(Y, X)$.
For $X, Y \in \mathcal{V}(k)$, one has $h(X \times Y) = h(X) \otimes h(Y)$.
\\
\indent Let $1 = (\mathrm{Spec}(k), \Delta_{\mathrm{Spec}(k)}, 0)$ be the unit motive and $\mathbb{L} = (\mathrm{Spec}(k), \Delta_{\mathrm{Spec}(k)}, -1)$ the Lefschetz motive.
For an non-negative integer $n$, we let 
$\mathbb{L}^{\oplus n} : = \mathbb{L} \oplus \cdot \cdot \cdot \oplus \mathbb{L}$ ($n$-times).\\
\indent Let $H^{*}$ be a fixed Weil-cohomology theory.
For $M = (X, p, m) \in \mathrm{CH}\mathcal{M}(k)$, 
one define $\mathrm{CH}^{i}(M) : = p_{*}\mathrm{CH}^{i+m}(X)$ and $H^{i}(M) : = p_{*}H^{i+2m}(X)$.\\
\indent $\mathrm{CH}\mathcal{M}(k)$ is \textit{pseudo-abelian}, i.e. 
every projector $f \in \mathrm{End}(M)$ has an image, and the canonical map
$\mathrm{Im}(f) \oplus \mathrm{Im}(\mathrm{id} - f) \rightarrow M$ is an isomorphism.
For $M = (X, p, m) \in \mathrm{CH}\mathcal{M}(k)$ and $f = p \circ f \circ p \in \mathrm{End}(M)$, one has $M = (X, p \circ f \circ p, m) \oplus (X, p - p \circ f \circ p, m)$.

\subsection{Chow-K\"unneth decompositions} \ \indent \\
Let $k$ be an algebraically closed field and $X \in \mathcal{V}(k)$ a variety of dimension $d$.
\begin{definition} (\cite{Murre On a})
We say that $X$ admits a \textit{Chow-K\"unneth decomposition} 
(CK-decomposition for short) if there exist
$\pi_{i}(X) \in \mathrm{CH}_{d}(X \times X)$ such that$:$
\begin{enumerate}
\item $\Delta_{X} = \sum_{i=0}^{2d}\pi_{i}(X)$ in $\mathrm{CH}_{d}(X \times X)$
\item $\pi_{i}(X) \circ \pi_{j}(X) = 
\begin{cases}
\pi_{i}(X) \ \ \text{if $i = j$} \\
0 \ \ \ \ \ \ \ \ \text{if $i \neq j$}
\end{cases}$
\item $cl_{X \times X}^{d}(\pi_{i}(X))$ is the $(i, 2d-i)$-th component of $\Delta_{X}$ in $H^{2d}(X \times X)$.
\end{enumerate}
\end{definition}
If $\pi_{i}$ exist, we put $h_{i}(X) : = (X, \pi_{i}(X), 0)$, and have $h(X) \cong \oplus_{i=0}^{2d}h_{i}(X)$ in $\mathrm{CH}\mathcal{M}(k, \mathbb{Q})$.
The K\"unneth components are unique in $H(X \times X)$ but $\pi_{i}(X)$ are not unique in $\mathrm{CH}_{d}(X \times X)$ in general.
For a surface, we will always use: 
\begin{prop} (\cite{Murre On the Motive}). \label{Picard motive}
Let $S \in \mathcal{V}(k)$ be a surface.
Let $P \in S$ be a closed point.
Then $S$ admits a CK-decomposition $h(S) \cong \oplus_{i = 0}^{4}h_{i}(S)$ such that$:$
\begin{enumerate}
\item $\pi_{0} : = (1/\mathrm{deg}(P))[S \times P]$ and $\pi_{4} = (1/\mathrm{deg}(P))[P \times S];$
\item There is a curve $C \subset S$ s.t. $\pi_{1}$ (resp. $\pi_{3}$) 
is supported on $S \times C$ (resp. $C \times S$);
\item $\pi_{2} : = \Delta_{S} - \pi_{0} - \pi_{1} - \pi_{3} - \pi_{4};$
\item $\pi_{i} = {}^{t}\pi_{4-i}$ for $0 \leq i \leq 4$.
\end{enumerate}
Moreover, these projectors induce isomorphisms$:$\\
\indent \ \  $(\mathrm{i}')$ $h_{0}(S) \cong 1$ and $h_{4}(S) \cong \mathbb{L} \otimes \mathbb{L}$$;$\\
\indent \ \  $(\mathrm{ii}')$ $h_{1}(S) \cong h_{1}((\mathrm{Pic}^{0}_{S/k})_{red})$ and  
$h_{3}(S) \cong h_{2 \mathrm{dim}(\mathrm{Alb}_{S/k}) - 1}(\mathrm{Alb}_{S/k}) \otimes \mathbb{L}^{2 - \mathrm{dim}(\mathrm{Alb}_{S/k})}$.
\end{prop}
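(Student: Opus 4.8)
The plan is to run Murre's construction of a Chow--K\"unneth decomposition for a surface \cite{Murre On the Motive}, producing the projectors in the order $\pi_0,\pi_4$, then $\pi_1$ (whence $\pi_3={}^{t}\pi_1$), then $\pi_2$, and reading off the identifications at the end. For $\pi_0,\pi_4$ nothing has to be chosen: with $\pi_0=[S\times P]$ and $\pi_4=[P\times S]$ (and $\mathrm{deg}(P)=1$ since $k=\overline{k}$), the relations $\pi_0\circ\pi_0=\pi_0$, $\pi_4\circ\pi_4=\pi_4$, $\pi_0\circ\pi_4=\pi_4\circ\pi_0=0$ reduce to elementary intersection computations on $S\times S\times S$: for $\pi_0\circ\pi_0$ and $\pi_4\circ\pi_4$ the relevant projection is an isomorphism, for $\pi_0\circ\pi_4$ it strictly lowers dimension, and for $\pi_4\circ\pi_0$ one uses that the self-intersection of $S\times\{P\}\times S$ in $S\times S\times S$ vanishes because its normal bundle is trivial. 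Since $[P]$ generates $H^{4}(S)$, the classes of $\pi_0$ and $\pi_4$ are exactly the $(0,4)$- and $(4,0)$-K\"unneth components of $\Delta_S$, and $\pi_4={}^{t}\pi_0$ is visible.

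The real work is $\pi_1$. Choose, by Bertini (legitimate because $k$ is infinite and $S$ is smooth projective; enlarging the polarization makes the section geometrically integral), a smooth ample curve $i:C\hookrightarrow S$. Weak Lefschetz --- available in all characteristics for $\ell$-adic cohomology --- gives that $i^{*}:H^{1}(S)\to H^{1}(C)$ is injective and, dually, that $i_{*}:H^{1}(C)\to H^{3}(S)$ is surjective; through the Picard/Albanese dictionary of Section 2 this means the induced homomorphism $(\mathrm{Pic}^{0}_{S/k})_{red}\to\mathrm{Jac}(C)$ has finite kernel and $\mathrm{Jac}(C)=\mathrm{Alb}_{C/k}\to\mathrm{Alb}_{S/k}$ is surjective. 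Combining these two maps with the classical Abel--Jacobi identification $\mathrm{Jac}(C)\cong\mathrm{CH}_0(C)^{0}$, with the curve's own decomposition $h(C)\cong 1\oplus h_{1}(C)\oplus\mathbb{L}$ (and $h_{1}(C)\cong h_{1}(\mathrm{Jac}(C))$), and with Poincar\'e complete reducibility for $\mathrm{Jac}(C)$ --- used to split off the isogeny-factor attached to $(\mathrm{Pic}^{0}_{S/k})_{red}$ --- one manufactures a correspondence $\pi_{1}\in\mathrm{CH}_2(S\times S)$ supported on $S\times C$ whose cohomology class is the $(1,3)$-K\"unneth component of $\Delta_S$; set $\pi_{3}:={}^{t}\pi_{1}$ (supported on $C\times S$) and $\pi_{2}:=\Delta_{S}-\pi_{0}-\pi_{1}-\pi_{3}-\pi_{4}$.

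The step I expect to be the main obstacle is promoting all of this from \emph{homological} to \emph{exact} identities: the cycles $\pi_{1}\circ\pi_{1}-\pi_{1}$, $\pi_{1}\circ\pi_{0}$, $\pi_{1}\circ\pi_{4}$, $\pi_{1}\circ\pi_{3}$, $\pi_{3}\circ\pi_{1}$ (and their transposes) are, as constructed, only known to be homologically trivial. The way through is that each of these discrepancies factors through $1$ or through $h_{1}$ of abelian varieties --- concretely, a product $\pi_{i}\circ\pi_{j}$ can be rewritten as a divisorial correspondence on $C\times C$ sandwiched by the curve projector $\pi_{1}^{C}$, hence as an element of $\mathrm{Hom}_{\mathrm{CH}\mathcal{M}(k)}(h_{1}(\mathrm{Jac}(C)),h_{1}(\mathrm{Jac}(C)))$, plus similar $\mathrm{Hom}(1,h_{1}(A))$- and $\mathrm{Hom}(h_{1}(A),h_{1}(B)(r))$-terms. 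On the first group the cycle class map is injective (it is $\mathrm{End}(\mathrm{Jac}(C))\otimes\mathbb{Q}$, detected on $H^{1}$), one has $\mathrm{Hom}(1,h_{1}(A))=0$, and the Tate-twisted groups with $r\neq 0$ that occur vanish for weight reasons; so every discrepancy, being homologically trivial, is zero. With $\pi_{0},\pi_{1},\pi_{3},\pi_{4}$ now genuine mutually orthogonal idempotents, $\pi_{2}$ is automatically an idempotent orthogonal to them and carrying the $(2,2)$-K\"unneth class; this establishes (i)--(iii), and (iv) follows from $\pi_{0}={}^{t}\pi_{4}$, $\pi_{1}={}^{t}\pi_{3}$ (by construction) and $\pi_{2}={}^{t}\pi_{2}$ (since $\Delta_{S}$, $\pi_{0}+\pi_{4}$ and $\pi_{1}+\pi_{3}$ are symmetric).

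It remains to identify the summands. $(\mathrm{i}')$ is immediate from the explicit shape of $\pi_{0},\pi_{4}$: these are the unit and the Lefschetz-square summands of $h(S)$, and the correspondences coming from the structure morphism $S\to\mathrm{Spec}(k)$ and from the class of a $k$-point realize $h_{0}(S)\cong 1$ and $h_{4}(S)\cong\mathbb{L}\otimes\mathbb{L}$. For $(\mathrm{ii}')$, the isomorphism $h_{1}(S)\cong h_{1}((\mathrm{Pic}^{0}_{S/k})_{red})$ holds by the very construction of $\pi_{1}$, which cut $h_{1}(\mathrm{Jac}(C))$ down to the isogeny-summand corresponding to $(\mathrm{Pic}^{0}_{S/k})_{red}$, together with the fact that $A\mapsto h_{1}(A)$ turns isogenies into isomorphisms. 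Since $\pi_{3}={}^{t}\pi_{1}$, Poincar\'e duality on $S$ gives $h_{3}(S)\cong h_{1}(S)^{\vee}\otimes\mathbb{L}^{2}$; applying the general duality $h_{1}(B)^{\vee}\cong h_{2a-1}(B)\otimes\mathbb{L}^{-a}$ to $B=(\mathrm{Pic}^{0}_{S/k})_{red}$ (of dimension $a=\dim(\mathrm{Alb}_{S/k})$), and then replacing $h_{2a-1}((\mathrm{Pic}^{0}_{S/k})_{red})$ by $h_{2a-1}(\mathrm{Alb}_{S/k})$ via the isogeny $(\mathrm{Pic}^{0}_{S/k})_{red}\sim_{isog}\mathrm{Alb}_{S/k}$ of dual abelian varieties, one obtains $h_{3}(S)\cong h_{2\dim(\mathrm{Alb}_{S/k})-1}(\mathrm{Alb}_{S/k})\otimes\mathbb{L}^{2-\dim(\mathrm{Alb}_{S/k})}$, as claimed.
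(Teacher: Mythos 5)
The paper offers no proof of this proposition---it simply cites \cite{Murre On the Motive}---and your proposal is a faithful reconstruction of Murre's argument: the explicit point projectors $\pi_{0},\pi_{4}$ with their elementary intersection-theoretic orthogonality, the smooth ample curve $C$ with weak Lefschetz and Poincar\'e reducibility used to build $\pi_{1}$ (then $\pi_{3}={}^{t}\pi_{1}$ and $\pi_{2}$ as the remainder), and, crucially, the upgrade from homological to rational equivalence by observing that the discrepancies live in groups such as $\mathrm{Hom}(\mathrm{Jac}(C),\mathrm{Jac}(C))\otimes\mathbb{Q}$ on which the cycle class map is injective. The sketch is correct, including the identifications $(\mathrm{i}')$ and $(\mathrm{ii}')$ obtained from the construction of $\pi_{1}$, Poincar\'e duality, and the isogeny between $(\mathrm{Pic}^{0}_{S/k})_{red}$ and $\mathrm{Alb}_{S/k}$.
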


\begin{prop} (\cite{Kahn and Murre and Pedrini}). \label{def of trans}
Let $S \in \mathcal{V}(k)$ be a surface.
Let $D_{i} \in \mathrm{NS}(S)_{\mathbb{Q}}$ be an orthogonal basis.
Then there is a unique splitting in $\mathrm{CH}_{2}(S \times S)$
\[ \pi_{2} = \pi_{2}^{alg} + \pi_{2}^{tr} \]
such that $\pi_{2}^{alg}  : = \sum_{i=1}^{\rho} 1/(D_{i} \cdot D_{i})[D_{i} \times D_{i}]$, 
where $(D_{i} \cdot D_{i})$ the intersection number.
Moreover, the above splitting induces a decomposition in $\mathrm{CH}\mathcal{M}(k, \mathbb{Q})$
\[ h_{2}(S) \cong h_{2}^{alg}(S) \oplus t_{2}(S) \]
such that $h_{2}^{alg}(S) : = (S, \pi_{2}^{alg}, 0) \cong \mathbb{L}^{\oplus \rho(S)}$
and $t_{2}(S)  : = (S, \pi_{2}^{tr}, 0)$.
Finally,
\[ \mathrm{CH}^{*}(h_{2}^{alg}(S)) = \mathrm{NS}(S)_{\mathbb{Q}},\ \
 \mathrm{CH}^{*}(t_{2}(S)) = T(S)_{\mathbb{Q}},\ \ H^{*}(h_{2}^{alg}(S)) = H^{2}(S)_{alg},\ \ 
 H^{2}(t_{2}(S)) = H^{2}(S)_{tr}. \]
\end{prop}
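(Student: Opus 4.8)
The plan is to follow \cite{Kahn and Murre and Pedrini}: build $\pi_{2}^{alg}$ as a sub-projector of the Murre projector $\pi_{2}$ of Proposition \ref{Picard motive}, and then extract every clause of the statement from its explicit shape. First I would fix the divisors carefully. By Murre's analysis of the decomposition of Proposition \ref{Picard motive}, the cycle class map induces an isomorphism $(\pi_{2})_{*}\mathrm{CH}^{1}(S)\overset{\sim}{\longrightarrow}\mathrm{NS}(S)_{\mathbb{Q}}$, one has $(\pi_{j})_{*}\mathrm{CH}^{1}(S)=0$ for $j\in\{0,3,4\}$ and $(\pi_{j})_{*}\mathrm{CH}^{2}(S)=0$ for $j\in\{0,1\}$, and $(\pi_{2})_{*}\mathrm{CH}^{2}(S)=T(S)_{\mathbb{Q}}$. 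So I take the given orthogonal basis of $\mathrm{NS}(S)_{\mathbb{Q}}$ and replace each element by its unique lift to $(\pi_{2})_{*}\mathrm{CH}^{1}(S)$, still calling it $D_{i}$. Since the intersection form on $\mathrm{NS}(S)_{\mathbb{Q}}$ is non-degenerate (Hodge index), $D_{i}\cdot D_{i}\neq 0$, so $\pi_{2}^{alg}:=\sum_{i}(D_{i}\cdot D_{i})^{-1}[D_{i}\times D_{i}]$ is well defined; and it is independent of the chosen orthogonal basis, being equal to $\sum_{i,j}(G^{-1})_{ij}[D_{i}\times D_{j}]$ for $G$ the Gram matrix of an arbitrary basis.

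Next I would check that $\pi_{2}^{alg}$ is an idempotent factoring through $\mathbb{L}^{\oplus\rho}$. From the composition rule for divisor correspondences on a surface, $[A\times B]\circ[C\times E]=(E\cdot A)\,[C\times B]$ in $\mathrm{CH}_{2}(S\times S)$ (after moving the representatives into general position, the middle intersection being a $0$-cycle of degree $E\cdot A$), together with $D_{i}\cdot D_{j}=0$ for $i\neq j$, one gets $\pi_{2}^{alg}\circ\pi_{2}^{alg}=\pi_{2}^{alg}$ and $\pi_{2}^{alg}={}^{t}\pi_{2}^{alg}$. Viewing each $D_{i}$ inside $\mathrm{CH}^{1}(S)=\mathrm{Hom}(\mathbb{L},h(S))=\mathrm{Hom}(h(S),\mathbb{L})$ gives morphisms $\iota_{i}\colon\mathbb{L}\to h(S)$ and $p_{i}\colon h(S)\to\mathbb{L}$ (the latter scaled by $(D_{i}\cdot D_{i})^{-1}$), with $\iota_{i}\circ p_{i}=(D_{i}\cdot D_{i})^{-1}[D_{i}\times D_{i}]$ and $p_{i}\circ\iota_{j}=\delta_{ij}\,\mathrm{id}_{\mathbb{L}}$. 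Hence $\iota:=\oplus_{i}\iota_{i}$ and $p:=(p_{i})_{i}$ satisfy $p\circ\iota=\mathrm{id}$ and $\iota\circ p=\pi_{2}^{alg}$, so $(S,\pi_{2}^{alg},0)\cong\mathbb{L}^{\oplus\rho(S)}$.

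The heart of the argument is to show $\pi_{2}^{alg}$ is subordinate to $\pi_{2}$, i.e.\ $\pi_{2}\circ\pi_{2}^{alg}=\pi_{2}^{alg}=\pi_{2}^{alg}\circ\pi_{2}$; by the previous step this amounts to $\pi_{2}\circ\iota_{i}=\iota_{i}$ and $p_{i}\circ\pi_{2}=p_{i}$ for all $i$. Here I would decompose $\mathrm{CH}^{1}(S)=\oplus_{j}\mathrm{Hom}(\mathbb{L},h_{j}(S))=\oplus_{j}\mathrm{Hom}(h_{j}(S),\mathbb{L})$, identify $\mathrm{Hom}(\mathbb{L},h_{j}(S))=(\pi_{j})_{*}\mathrm{CH}^{1}(S)$ and $\mathrm{Hom}(h_{j}(S),\mathbb{L})=(\pi_{4-j})_{*}\mathrm{CH}^{1}(S)$, and conclude from the vanishings recalled above that the $h_{j}$-components of $\iota_{i}$ and of $p_{i}$ are zero for all $j\neq 1,2$, while the $h_{1}$-component vanishes because $D_{i}\in(\pi_{2})_{*}\mathrm{CH}^{1}(S)=\ker\big((\pi_{1})_{*}|_{\mathrm{CH}^{1}(S)}\big)$. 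Granting this, $\pi_{2}^{tr}:=\pi_{2}-\pi_{2}^{alg}$ is an idempotent orthogonal to $\pi_{2}^{alg}$, so in the pseudo-abelian category $\mathrm{CH}\mathcal{M}(k,\mathbb{Q})$ one obtains $h_{2}(S)\cong h_{2}^{alg}(S)\oplus t_{2}(S)$ with $h_{2}^{alg}(S)\cong\mathbb{L}^{\oplus\rho(S)}$ by the previous step, and uniqueness of the splitting is immediate once $\pi_{2}^{alg}$ is pinned down by its formula. I expect this step to be the main obstacle: a pure support/dimension count does \emph{not} suffice — already $\pi_{2}^{alg}\circ\pi_{1}$ is only forced to be supported on $S\times D_{i}$ — because $\mathrm{Hom}(\mathbb{L},h_{1}(S))$, the $\mathrm{Pic}^{0}$-part of $\mathrm{CH}^{1}(S)$, is in general nonzero. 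One genuinely needs the correct choice of the $D_{i}$ inside the $\pi_{2}$-isotypic part of $\mathrm{CH}^{1}(S)$ together with Murre's computation of the Chow--K\"{u}nneth projectors on $\mathrm{CH}^{1}(S)$ (in particular the vanishing $(\pi_{3})_{*}\mathrm{CH}^{1}(S)=0$, which rests on the Deninger--Murre decomposition of the motive of the Albanese variety appearing in $h_{3}(S)$).

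Finally I would compute the realizations directly. On $\mathrm{CH}^{1}(S)$, $(\pi_{2}^{alg})_{*}\beta=\sum_{i}(D_{i}\cdot\beta)(D_{i}\cdot D_{i})^{-1}D_{i}$ is the orthogonal projection onto $\langle D_{i}\rangle$, with kernel the numerically (equivalently algebraically) trivial divisors $\mathrm{Pic}^{0}(S)_{\mathbb{Q}}$, so it agrees there with $(\pi_{2})_{*}$; and $(\pi_{2}^{alg})_{*}$ annihilates $\mathrm{CH}^{0}(S)$ and $\mathrm{CH}^{2}(S)$ for dimension reasons. Combined with the facts recalled in the first paragraph this gives $\mathrm{CH}^{*}(h_{2}^{alg}(S))=\mathrm{NS}(S)_{\mathbb{Q}}$ and $\mathrm{CH}^{*}(t_{2}(S))=(\pi_{2}^{tr})_{*}\mathrm{CH}^{*}(S)=(\pi_{2})_{*}\mathrm{CH}^{2}(S)=T(S)_{\mathbb{Q}}$. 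In a fixed Weil cohomology $H^{*}$, $cl(\pi_{2}^{alg})=\sum_{i}(D_{i}\cdot D_{i})^{-1}\,cl(D_{i})\otimes cl(D_{i})\in H^{2}(S)\otimes H^{2}(S)$ acts as the orthogonal projection of $H^{*}(S)$ onto the span $H^{2}(S)_{alg}$ of the algebraic classes (the cycle class map being injective on $\mathrm{NS}(S)_{\mathbb{Q}}$ with non-degenerate induced pairing), so $H^{*}(h_{2}^{alg}(S))=H^{2}(S)_{alg}$; and since $cl(\pi_{2})_{*}$ is the identity on $H^{2}(S)$ and zero in the other degrees, $H^{2}(t_{2}(S))=H^{2}(S)_{tr}$.
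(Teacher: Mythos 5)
The paper gives no proof of this proposition --- it is quoted directly from Kahn--Murre--Pedrini --- and your reconstruction is correct and follows exactly the argument of that source: the composition rule $[A\times B]\circ[C\times E]=(E\cdot A)[C\times B]$ makes $\pi_{2}^{alg}$ an idempotent factoring through $\mathbb{L}^{\oplus\rho}$, and the subordination $\pi_{2}\circ\pi_{2}^{alg}=\pi_{2}^{alg}=\pi_{2}^{alg}\circ\pi_{2}$ is secured precisely as you say, by lifting the $D_{i}$ into $(\pi_{2})_{*}\mathrm{CH}^{1}(S)$ and invoking Murre's computation of the action of the Chow--K\"unneth projectors on $\mathrm{CH}^{1}$ and $\mathrm{CH}^{2}$. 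You also correctly flag the one genuinely delicate point (an arbitrary divisor representative of an $\mathrm{NS}$-class has a $\mathrm{Pic}^{0}$-component that would spoil the commutation with $\pi_{1}$), which is the same issue KMP handle by the same normalization.
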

The motive $t_{2}(S)$ is called the \textit{transcendental motive} of $S$.
\subsection{Homomorphisms between transcendental motives} \ \indent \\
Here we prove some results about homomorphisms between transcendental motives.
Let $k$ be an algebraically closed field.
Let $X, Y \in \mathcal{V}(k)$ be surfaces.
We let
\begin{center}
$\mathrm{CH}_{2}(X \times Y)_{\equiv}$ $:$ 
the subgroup of $\mathrm{CH}_{2}(X \times Y)$ generated by the classes supported on subvarieties of the form 
$X \times N$ or $M \times Y$, 
with  $M$ a closed subvariety of $X$ of dimension $< 2$ 
and $N$ a closed subvariety of $Y$ of dimension $< 2$.
\end{center}
We define a homomorphism 
\begin{align*}
\Phi_{X, Y} : \mathrm{CH}_{2}(X \times Y) &\rightarrow 
\mathrm{Hom}_{\mathrm{CH}\mathcal{M}(k)}(t_{2}(X), t_{2}(Y)) \\
  \alpha &\mapsto \pi_{2}^{tr}(Y) \circ \alpha \circ \pi_{2}^{tr}(X).
\end{align*}
\begin{thm} \label{automorophism groups of motives}
(\cite[Thm.~7.4.3,\ p.165]{Kahn and Murre and Pedrini}). There is an isomorphism of groups
\[ \mathrm{CH}_{2}(X \times Y)/\mathrm{CH}_{2}(X \times Y)_{\equiv} \cong 
\mathrm{Hom}_{\mathrm{CH}\mathcal{M}(k)}(t_{2}(X), t_{2}(Y)). \]
\end{thm}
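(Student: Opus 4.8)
The plan is to exhibit the map $\Phi_{X,Y}$ as the map in the statement, show it is surjective with kernel exactly $\mathrm{CH}_{2}(X\times Y)_{\equiv}$, and conclude. First I would observe that, by the very definition of morphisms of Chow motives, an element of $\mathrm{Hom}_{\mathrm{CH}\mathcal{M}(k)}(t_{2}(X),t_{2}(Y))$ is a correspondence of the form $\pi_{2}^{tr}(Y)\circ\beta\circ\pi_{2}^{tr}(X)$ for some $\beta\in\mathrm{Corr}^{0}(X,Y)=\mathrm{CH}_{2}(X\times Y)$; hence $\Phi_{X,Y}$ is \emph{surjective} simply because every such Hom-element is $\Phi_{X,Y}(\beta)$. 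So the content is the computation of $\ker\Phi_{X,Y}$, i.e. of which $\alpha$ satisfy $\pi_{2}^{tr}(Y)\circ\alpha\circ\pi_{2}^{tr}(X)=0$.

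For the kernel, the key input is the CK-decomposition of a surface from Proposition \ref{Picard motive} together with the splitting $\pi_{2}=\pi_{2}^{alg}+\pi_{2}^{tr}$ from Proposition \ref{def of trans}. The idea is: $\pi_{2}^{tr}(Y)\circ\alpha\circ\pi_{2}^{tr}(X)=0$ if and only if $\alpha$, after composing away the $\pi_{2}^{tr}$ pieces on both sides, lands in the complementary summand. Writing $\Delta_{X}=\pi_{0}(X)+\pi_{1}(X)+\pi_{2}^{alg}(X)+\pi_{2}^{tr}(X)+\pi_{3}(X)+\pi_{4}(X)$ and similarly for $Y$, one expands $\alpha=\Delta_{Y}\circ\alpha\circ\Delta_{X}$ into the $6\times 6$ sum of pieces $\pi_{i}(Y)\circ\alpha\circ\pi_{j}(X)$. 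The statement that $\pi_{2}^{tr}(Y)\circ\alpha\circ\pi_{2}^{tr}(X)=0$ then says precisely that the ``$(tr,tr)$'' component of $\alpha$ vanishes, so $\alpha$ is a sum of pieces each of which factors through one of $\pi_{0},\pi_{1},\pi_{2}^{alg},\pi_{3},\pi_{4}$ on the left or on the right. I would then identify each such piece with a cycle supported on $X\times N$ or $M\times Y$ with $\dim M,\dim N<2$: the projectors $\pi_{0}(X)$, $\pi_{4}(X)$ are supported on $X\times(\mathrm{pt})$ and $(\mathrm{pt})\times X$; $\pi_{1}(X)$, $\pi_{3}(X)$ are supported on $X\times C$, $C\times X$ for a curve $C$ by Proposition \ref{Picard motive}(ii); and $\pi_{2}^{alg}(X)=\sum 1/(D_{i}\cdot D_{i})[D_{i}\times D_{i}]$ is supported on divisors. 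Composing any $\alpha$ with a correspondence supported on such a small locus (on either side) produces a cycle supported on $X\times N$ or $M\times Y$ as in the definition of $\mathrm{CH}_{2}(X\times Y)_{\equiv}$ — this is a standard support/projection-formula argument. Conversely, if $\alpha$ is supported on $X\times N$ or $M\times Y$ with $\dim N,\dim M<2$, then $\pi_{2}^{tr}(X)$ resp. $\pi_{2}^{tr}(Y)$ kills it: indeed $\pi_{2}^{tr}$ acts as zero on $\mathrm{CH}_{0}$ of curves and on the image of divisor classes, because $H^{*}(t_{2}(S))$ is the transcendental part and $\mathrm{CH}^{*}(t_{2}(S))=T(S)_{\mathbb{Q}}$ by Proposition \ref{def of trans}, which has no contribution from cycles supported in codimension or dimension $<2$ on the relevant factor. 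Hence $\mathrm{CH}_{2}(X\times Y)_{\equiv}\subseteq\ker\Phi_{X,Y}$ and, by the expansion above, the reverse inclusion too, giving $\ker\Phi_{X,Y}=\mathrm{CH}_{2}(X\times Y)_{\equiv}$ and the desired isomorphism.

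The main obstacle I expect is the careful bookkeeping in the ``kernel $\subseteq$ $\equiv$'' direction: one must check that \emph{every} cross-term $\pi_{i}(Y)\circ\alpha\circ\pi_{j}(X)$ with $(i,j)\neq(2^{tr},2^{tr})$ — including the mixed terms $\pi_{2}^{tr}(Y)\circ\alpha\circ\pi_{j}(X)$ with $j\neq 2^{tr}$ and the symmetric ones — is supported on a subvariety of the allowed form, which requires invoking the explicit supports of $\pi_{0},\pi_{1},\pi_{2}^{alg},\pi_{3},\pi_{4}$ from Proposition \ref{Picard motive} and a projection-formula/support argument for composition of correspondences. One also has to be slightly careful that composition on one side by a correspondence supported on $C\times X$ indeed pushes supports forward to a subvariety of dimension $<2$ in the appropriate factor; this uses properness of the projections and the dimension bound on the support. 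Once these support computations are in place, combined with the orthogonality relations $\pi_{i}\circ\pi_{j}=\delta_{ij}\pi_{i}$ which make the expansion $\alpha=\sum_{i,j}\pi_{i}(Y)\circ\alpha\circ\pi_{j}(X)$ genuine, the theorem follows. (Since the statement is quoted from \cite{Kahn and Murre and Pedrini}, I would in practice simply cite Theorem~7.4.3 there; the above is the argument one would reproduce if a self-contained proof is wanted.)
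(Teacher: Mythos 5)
The paper gives no proof of this statement at all: it is imported verbatim as \cite[Thm.~7.4.3]{Kahn and Murre and Pedrini}, so there is nothing in the paper to compare your argument against line by line. Your reconstruction is nevertheless the standard KMP argument and is essentially correct: surjectivity is immediate from the definition $\mathrm{Hom}(t_{2}(X),t_{2}(Y))=\pi_{2}^{tr}(Y)\circ\mathrm{Corr}^{0}(X,Y)\circ\pi_{2}^{tr}(X)$, and the kernel computation via the expansion $\alpha=\sum_{i,j}\pi_{i}(Y)\circ\alpha\circ\pi_{j}(X)$ plus the explicit supports of the non-transcendental projectors is exactly how KMP proceed. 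Two remarks. First, the direction $\ker\Phi_{X,Y}\subseteq\mathrm{CH}_{2}(X\times Y)_{\equiv}$ that you flag as the bookkeeping burden is in fact the easier one once you have the support lemma: every cross-term has a projector supported on $X\times N$ or $M\times Y$ on at least one side, and the paper's own Lemma \ref{composition of mod} (stated for $X=Y$ but proved by an argument valid for distinct surfaces) is precisely the statement that composing with such a correspondence lands in $(-)_{\equiv}$. Second, the direction $\mathrm{CH}_{2}(X\times Y)_{\equiv}\subseteq\ker\Phi_{X,Y}$ is where your justification is loosest: the assertion that ``$\pi_{2}^{tr}$ acts as zero on $\mathrm{CH}_{0}$ of curves and on the image of divisor classes'' needs to be upgraded to the statement that a correspondence factoring through the motive of a point or a curve induces the zero map $t_{2}(X)\to t_{2}(Y)$, which ultimately rests on $\mathrm{Hom}(t_{2}(X),h_{i}(M))=0=\mathrm{Hom}(h_{i}(M),t_{2}(Y))$ for $M$ of dimension $\leq 1$; this is true and is proved in KMP, but it is a genuine lemma rather than a one-line consequence of $\mathrm{CH}^{*}(t_{2}(S))=T(S)_{\mathbb{Q}}$. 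With that lemma supplied (or with the citation you already propose), the argument is complete.
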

For this subsection, we need the following lemma$:$
\begin{lem} \label{composition of mod} Let $\alpha \in \mathrm{CH}_{2}(X \times Y)$ and 
$\gamma \in \mathrm{CH}_{2}(Y \times X)_{\equiv}$. Then
\[ \text{(i) $\gamma \circ \alpha  \in \mathrm{CH}_{2}(X \times X)_{\equiv}$ \ \ \ \ \ 
and \ \ \ \ \ (ii) $\alpha \circ \gamma \in \mathrm{CH}_{2}(Y \times Y)_{\equiv}$}. \]
\end{lem}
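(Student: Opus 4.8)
The plan is to establish (i) directly and then deduce (ii) from it by transposition. For (i), by $\mathbb{Q}$-linearity of the composition we may assume that $\gamma$ is a single generator of $\mathrm{CH}_2(Y\times X)_\equiv$, i.e. either $\gamma$ is supported on $Y\times N$ for a closed subvariety $N\subseteq X$ with $\dim N\le 1$, or $\gamma$ is supported on $M\times X$ for a closed subvariety $M\subseteq Y$ with $\dim M\le 1$. I will use the defining formula $\gamma\circ\alpha=(p_{13})_{*}\bigl(p_{12}^{*}\alpha\cdot p_{23}^{*}\gamma\bigr)$, where $p_{12},p_{23},p_{13}$ are the projections from $X\times Y\times X$ onto $X\times Y$, $Y\times X$, $X\times X$ (the first $X$ being the source of $\alpha$, the last the target of $\gamma$) and the product is taken on the smooth variety $X\times Y\times X$. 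Suppose first that $\gamma$ is supported on $Y\times N$. Then $p_{23}^{*}\gamma$ is supported on $p_{23}^{-1}(Y\times N)=X\times Y\times N$; since Fulton's intersection product is supported in the intersection of the supports of its two factors, $p_{12}^{*}\alpha\cdot p_{23}^{*}\gamma$ is supported on $X\times Y\times N$, and hence $\gamma\circ\alpha$, being its pushforward under $p_{13}$, is supported on $p_{13}(X\times Y\times N)=X\times N$. As $\dim N<2$, this gives $\gamma\circ\alpha\in\mathrm{CH}_2(X\times X)_\equiv$, and this case needs no further input.

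The substantive case is $\gamma$ supported on $M\times X$ with $M\subseteq Y$, $\dim M\le 1$: here the naive support estimate breaks down, because a component of $\mathrm{supp}(\alpha)$ may lie in $X\times M$ and then the relevant intersection is improper, so one has to extract the dimension drop produced by restricting $\alpha$ to $X\times M$. I would write $\gamma=\iota_{*}\gamma'$ with $\iota\colon M\times X\hookrightarrow Y\times X$ the inclusion and $\gamma'\in\mathrm{CH}_2(M\times X)$. Since a reduced curve, resp. a point, in a smooth surface is a local complete intersection, the embeddings $M\hookrightarrow Y$, and therefore $j\colon X\times M\hookrightarrow X\times Y$, are regular; combining flat base change along the projection $p_{23}$ with the projection formula for $\iota$ (\cite{Fulton}), one rewrites
\[
\gamma\circ\alpha=(\bar p_{13})_{*}\bigl(\bar p_{12}^{*}(j^{!}\alpha)\cdot\bar p_{23}^{*}\gamma'\bigr)\qquad\text{in }\mathrm{CH}_2(X\times X),
\]
where $\bar p_{12},\bar p_{23},\bar p_{13}$ are the projections from $X\times M\times X$ and $j^{!}\alpha\in\mathrm{CH}_{\dim M}(X\times M)$ is the refined Gysin pullback. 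Since $j^{!}\alpha$ is a cycle of dimension $\le 1$, its support $T\subseteq X\times M$ has $\dim T\le 1$; hence $\bar p_{12}^{*}(j^{!}\alpha)$, the product, and therefore $\gamma\circ\alpha$ are all supported on $\mathrm{pr}_X(T)\times X$, where $\mathrm{pr}_X\colon X\times M\to X$ is the projection and $\dim \mathrm{pr}_X(T)\le 1<2$. Thus $\gamma\circ\alpha\in\mathrm{CH}_2(X\times X)_\equiv$, completing (i).

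Finally, (ii) follows by transposition: one has ${}^{t}(\alpha\circ\gamma)={}^{t}\gamma\circ{}^{t}\alpha$, and transposition interchanges the two types of generators, so ${}^{t}\gamma\in\mathrm{CH}_2(X\times Y)_\equiv$ while ${}^{t}\alpha\in\mathrm{CH}_2(Y\times X)$ is arbitrary; applying (i) with the roles of $X$ and $Y$ exchanged gives ${}^{t}\gamma\circ{}^{t}\alpha\in\mathrm{CH}_2(Y\times Y)_\equiv$, and since this subgroup is stable under transpose, $\alpha\circ\gamma\in\mathrm{CH}_2(Y\times Y)_\equiv$ as well. The step I expect to be the main obstacle is the displayed identity in the $M\times X$ case: carrying out the base-change and projection-formula bookkeeping cleanly and, crucially, recognizing that the refined restriction $j^{!}\alpha$ of $\alpha$ to $X\times M$ automatically lands in $\mathrm{CH}_{\dim M}(X\times M)$, hence has support of dimension $\le 1$ no matter how the cycle $\alpha$ meets $X\times M$; that dimension drop is exactly what forces the composition to be degenerate.
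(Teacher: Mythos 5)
Your proof is correct, and it is in fact more complete than the argument the paper gives. The paper reduces to an irreducible generator $\gamma$ supported on $Y\times C$ with $C\subset X$ of dimension $\le 1$ --- precisely the case you dispatch with the naive support estimate --- and treats $\dim C=1$ by factoring $\gamma=\Gamma_{\iota}\circ D$ through the curve and invoking associativity plus Lieberman's lemma, so that $\gamma\circ\alpha=(\mathrm{id}_{X}\times\iota)_{*}(D\circ\alpha)$ is visibly supported on $X\times C$. What the paper's written proof does not address at all is the other type of generator, $\gamma$ supported on $M\times X$ with $M\subset Y$ of dimension $\le 1$, which you correctly single out as the substantive case: there $p_{13}$ maps the support $X\times M\times X$ onto all of $X\times X$, and one must exploit the codimension of $X\times M$ in $X\times Y$ to force the dimension drop. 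Your identity $\gamma\circ\alpha=(\bar p_{13})_{*}\bigl(\bar p_{12}^{*}(j^{!}\alpha)\cdot\bar p_{23}^{*}\gamma'\bigr)$ with $j^{!}\alpha\in\mathrm{CH}_{\dim M}(X\times M)$ does exactly that; the regularity of $M\hookrightarrow Y$ is unproblematic since an irreducible curve in a smooth surface is Cartier and a point is cut out by a regular sequence, and $\mathrm{pr}_{X}(T)$ is closed because $M$ is projective. Your deduction of (ii) from (i) by transposition (using that transposition swaps the two types of generators and preserves $\mathrm{CH}_{2}(Y\times Y)_{\equiv}$) is also clean, where the paper merely asserts that (ii) is similar. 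In short: same overall strategy (linearity, reduction to generators, support estimates for refined intersection products), but your treatment of the $M\times X$ generators supplies an argument the paper's proof omits, and your transpose trick makes the symmetry between (i) and (ii) explicit.
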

\begin{proof}
The proof of (ii) is similar to (i), so it suffices to show (i).
Without loss of generality, we may assume $\gamma$ is irreducible and supported 
on $Y \times C$ with $\mathrm{dim}(C) \leq 1$.\\
\indent First, we assume $\mathrm{dim}(C) = 0$.
Let $p \in X$ be the closed point.
For $\gamma = [Y \times p]$, then
\[\gamma \circ \alpha = [Y \times p] \circ \alpha 
= p_{YY*}^{YXY}(\alpha \times Y \cdot Y \times X \times p) 
= p_{YY*}^{YXY}(\alpha \times p) = [p_{Y*}^{YX}(\alpha) \times p]. \]
Thus $\gamma \circ \alpha \in \mathrm{CH}_{2}(X \times X)_{\equiv}$.\\
\indent Next, we assume $\mathrm{dim}(C) = 1$.
Since $\gamma$ is supported on $Y \times C$, there are a smooth irreducible curve $C$ and 
a closed embedding $\iota : C \hookrightarrow X$ such that
$\gamma = \Gamma_{\iota} \circ D \ \text{in} \ \mathrm{CH}_{2}(Y \times X)$, 
where $\Gamma_{\iota} \in \mathrm{CH}_{1}(C \times X)$ is the graph of $\iota$
and $D \in \mathrm{CH}_{2}(Y \times C)$.
Since the support of the second projection of $\Gamma_{\iota}$ has dimension $\leq 1$,
the support of the second projection of $\gamma \circ \alpha$ has dimension $\leq 1$, 
and hence $\gamma \circ \alpha \in \mathrm{CH}_{2}(X \times X)_{\equiv}$.\\
\end{proof}
Lemma \ref{composition of mod} gives the following facts. 
The first one is the functorial relation for $\Phi_{X, Y}$$:$
\begin{prop} (\cite[p.62]{Pedrini 12}). \label{functoriality of transcendental motives}
For surfaces $X$, $Y$, $Z \in \mathcal{V}(k)$, 
\[ \Psi_{Y, Z}(\beta) \circ \Psi_{X, Y}(\alpha) = \Psi_{X, Z}(\beta \circ \alpha)
\ \ \text{in} \ \ \mathrm{Hom}_{\mathrm{CH}\mathcal{M}(k)}(t_{2}(X), t_{2}(Z)). \]
\end{prop}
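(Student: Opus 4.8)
The plan is to turn the claimed equality of morphisms into a membership statement modulo $\mathrm{CH}_{2}(X\times Z)_{\equiv}$ and then feed in Theorem \ref{automorophism groups of motives} and Lemma \ref{composition of mod}. Write $\Psi_{X,Y}=\Phi_{X,Y}$. Since $\dim X=\dim Y=\dim Z=2$, a morphism $t_{2}(X)\to t_{2}(Y)$ in $\mathrm{CH}\mathcal{M}(k)$ is exactly a correspondence $\pi_{2}^{tr}(Y)\circ\xi\circ\pi_{2}^{tr}(X)$ with $\xi\in\mathrm{CH}_{2}(X\times Y)$, and by Theorem \ref{automorophism groups of motives} the assignment $\xi\mapsto\Phi_{X,Y}(\xi)$ induces an isomorphism $\mathrm{CH}_{2}(X\times Y)/\mathrm{CH}_{2}(X\times Y)_{\equiv}\xrightarrow{\ \sim\ }\mathrm{Hom}_{\mathrm{CH}\mathcal{M}(k)}(t_{2}(X),t_{2}(Y))$; in particular $\Phi_{X,Y}$ is surjective with kernel $\mathrm{CH}_{2}(X\times Y)_{\equiv}$.

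First I would unwind both sides as composites of correspondences. Using that $\pi_{2}^{tr}(Y)$ is a projector,
\[
\Psi_{Y,Z}(\beta)\circ\Psi_{X,Y}(\alpha)=\pi_{2}^{tr}(Z)\circ\beta\circ\pi_{2}^{tr}(Y)\circ\pi_{2}^{tr}(Y)\circ\alpha\circ\pi_{2}^{tr}(X)=\Phi_{X,Z}\bigl(\beta\circ\pi_{2}^{tr}(Y)\circ\alpha\bigr),
\]
while $\Psi_{X,Z}(\beta\circ\alpha)=\Phi_{X,Z}(\beta\circ\alpha)$. Hence, by the injectivity part of Theorem \ref{automorophism groups of motives}, it suffices to prove
\[
\beta\circ\bigl(\Delta_{Y}-\pi_{2}^{tr}(Y)\bigr)\circ\alpha\in\mathrm{CH}_{2}(X\times Z)_{\equiv}.
\]

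Next I would identify $\gamma_{Y}:=\Delta_{Y}-\pi_{2}^{tr}(Y)$. By Propositions \ref{Picard motive} and \ref{def of trans}, $\gamma_{Y}=\pi_{0}(Y)+\pi_{1}(Y)+\pi_{2}^{alg}(Y)+\pi_{3}(Y)+\pi_{4}(Y)$, where $\pi_{0}(Y),\pi_{1}(Y)$ are supported on subvarieties of the form $Y\times N$ with $\dim N\le 1$, their transposes $\pi_{4}(Y),\pi_{3}(Y)$ on subvarieties $M\times Y$ with $\dim M\le 1$, and $\pi_{2}^{alg}(Y)=\sum_{i}(D_{i}\cdot D_{i})^{-1}[D_{i}\times D_{i}]$ is supported on $\bigl(\bigcup_{i}D_{i}\bigr)\times Y$ with each $\dim D_{i}=1$. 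Thus $\gamma_{Y}\in\mathrm{CH}_{2}(Y\times Y)_{\equiv}$. Now I would propagate this through the two compositions: $\gamma_{Y}\circ\alpha\in\mathrm{CH}_{2}(X\times Y)_{\equiv}$ by Lemma \ref{composition of mod}(i), and then $\beta\circ(\gamma_{Y}\circ\alpha)\in\mathrm{CH}_{2}(X\times Z)_{\equiv}$ by Lemma \ref{composition of mod}(ii); this is the required membership and the proposition follows.

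The computations are all formal, so the only point of care — the main obstacle — is the last step: Lemma \ref{composition of mod} is stated for two surfaces $X,Y$ with composites landing in $X\times X$ or $Y\times Y$, whereas here I invoke it for three distinct surfaces $X,Y,Z$. One must observe that its proof never uses that source and target coincide, so that it in fact shows $\mathrm{CH}_{2}(\,\cdot\times\cdot\,)_{\equiv}$ is a two‑sided ideal under composition with an arbitrary correspondence of surfaces; everything else is manipulation of projectors and Theorem \ref{automorophism groups of motives}.
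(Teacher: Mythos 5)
Your proof is correct, and it is worth noting that the paper itself does not actually prove this proposition: its ``proof'' consists of the single line ``For example, see \cite{Kawabe}'' (the statement is also attributed to \cite{Pedrini 12}). So you are supplying an argument where the paper defers to a reference. What you give is the standard Kahn--Murre--Pedrini argument, and every step checks out: reducing the identity to the membership $\beta\circ(\Delta_{Y}-\pi_{2}^{tr}(Y))\circ\alpha\in\mathrm{CH}_{2}(X\times Z)_{\equiv}$ via the isomorphism of Theorem \ref{automorophism groups of motives}; observing from Propositions \ref{Picard motive} and \ref{def of trans} that $\Delta_{Y}-\pi_{2}^{tr}(Y)=\pi_{0}+\pi_{1}+\pi_{2}^{alg}+\pi_{3}+\pi_{4}$ lies in $\mathrm{CH}_{2}(Y\times Y)_{\equiv}$ because each summand is explicitly supported on $Y\times N$ or $M\times Y$ with $\dim N,\dim M\le 1$; and then absorbing this term into the $\equiv$-subgroup under composition. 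You are also right to flag the only delicate point, namely that Lemma \ref{composition of mod} is stated for composites landing in $X\times X$ or $Y\times Y$ while you need the three-surface version; note that the paper itself already relies on exactly this generalization when it asserts the well-definedness of the bilinear map in Proposition \ref{bilinear}, so you could shorten your last step by invoking that proposition directly: $[\beta\circ(\Delta_{Y}-\pi_{2}^{tr}(Y))\circ\alpha]=[\beta]\circ[\Delta_{Y}-\pi_{2}^{tr}(Y)]\circ[\alpha]=[\beta]\circ 0\circ[\alpha]=0$. (If one insists on proving the three-surface ideal property from scratch, the case of a class supported on $M\times Y$ with $M$ a curve requires the extra observation that a correspondence factoring through a curve is degenerate; this case is not spelled out in the paper's proof of Lemma \ref{composition of mod} either, but it is standard.)
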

\begin{proof}
For example, see \cite{Kawabe}.
\end{proof}

\begin{prop} \label{bilinear}
There is a bilinear homomorphism
\begin{align*}
\circ : \frac{\mathrm{CH}_{2}(X \times Y)}{\mathrm{CH}_{2}(X \times Y)_{\equiv}} \times 
\frac{\mathrm{CH}_{2}(Y \times Z)}{\mathrm{CH}_{2}(Y \times Z)_{\equiv}} &\rightarrow 
\frac{\mathrm{CH}_{2}(X \times Z)}{\ \mathrm{CH}_{2}(X \times Z)_{\equiv}} \\
([\alpha], [\beta]) &\mapsto [\beta] \circ [\alpha] : = [\beta \circ \alpha].
\end{align*}
\end{prop}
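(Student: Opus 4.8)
The goal is to show that the composition law on Chow correspondences modulo $\mathrm{CH}_{2}(\cdot)_{\equiv}$ is well-defined. The strategy is simply to package Lemma \ref{composition of mod} correctly. Recall that the honest composition $\circ : \mathrm{CH}_{2}(X \times Y) \times \mathrm{CH}_{2}(Y \times Z) \to \mathrm{CH}(X \times Z)$ is bilinear; the only two points to check are that the image of a pair of $2$-cycles lands in degree $\mathrm{CH}_{2}(X \times Z)$ (a dimension count: for surfaces, $\mathrm{Corr}^{0}(X,Y) = \mathrm{CH}_{2}(X\times Y)$ and composition preserves the correspondence degree), and that the resulting bilinear map descends to the quotients, i.e. that it kills $\mathrm{CH}_{2}(X \times Y)_{\equiv}$ in the first slot and $\mathrm{CH}_{2}(Y \times Z)_{\equiv}$ in the second slot.

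First I would fix $\alpha \in \mathrm{CH}_{2}(X \times Y)$ and $\beta \in \mathrm{CH}_{2}(Y\times Z)$, observe that $\beta \circ \alpha = p_{XZ*}(p_{XY}^{*}\alpha \cdot p_{YZ}^{*}\beta)$ has the correct dimension $2$ on $X\times Z$ since $\dim(X\times Y\times Z) = 6$, $p_{XY}^{*}\alpha$ has dimension $4$, $p_{YZ}^{*}\beta$ has dimension $4$, so the intersection product has dimension $2$, and $p_{XZ*}$ preserves dimension. Hence the ordinary composition restricts to a bilinear map $\mathrm{CH}_{2}(X\times Y) \times \mathrm{CH}_{2}(Y\times Z) \to \mathrm{CH}_{2}(X\times Z)$.

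Next I would check well-definedness on the quotient. Fix $\alpha \in \mathrm{CH}_{2}(X\times Y)$. Then for any $\gamma \in \mathrm{CH}_{2}(Y\times Z)_{\equiv}$, Lemma \ref{composition of mod}(ii), applied with the roles of the surfaces renamed ($X \leadsto X$, $Y \leadsto Y$, and the target surface $Z$ in place of the ``$X$'' of the lemma), gives $\gamma \circ \alpha \in \mathrm{CH}_{2}(X\times Z)_{\equiv}$; more precisely one uses the statement of the lemma that $\gamma \circ \alpha \in \mathrm{CH}_2(X\times X)_{\equiv}$ with $X$ replaced by $Z$ in the second factor, which is exactly the assertion $\mathrm{CH}_2(X\times Y) \times \mathrm{CH}_2(Y\times Z)_{\equiv} \to \mathrm{CH}_2(X\times Z)_{\equiv}$ after tracking the symbols. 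Symmetrically, for fixed $\beta \in \mathrm{CH}_{2}(Y\times Z)$ and any $\delta \in \mathrm{CH}_{2}(X\times Y)_{\equiv}$, Lemma \ref{composition of mod}(i) (with the lemma's ``$X$'' being our $X$ and its ``$Y$'' being our $Y$, but composing against a cycle whose target is $Z$) gives $\beta \circ \delta \in \mathrm{CH}_{2}(X\times Z)_{\equiv}$. Therefore $[\beta]\circ[\alpha] := [\beta\circ\alpha]$ does not depend on the choice of representatives, and it inherits bilinearity from the bilinearity of $\circ$ on Chow groups.

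The only genuinely delicate point is the bookkeeping of which surface plays which role when invoking Lemma \ref{composition of mod}: the lemma is stated for two surfaces $X, Y$ and compositions $\mathrm{CH}_2(X\times Y) \to \mathrm{CH}_2(X\times X)$ or $\to \mathrm{CH}_2(Y\times Y)$, and here we need three surfaces. But the proof of that lemma only uses that the relevant cycle is supported on a product with a subvariety of dimension $\le 1$ in one factor, so it applies verbatim with the third surface substituted; I would simply remark that the proof of Lemma \ref{composition of mod} goes through unchanged for three distinct surfaces, since at no point does it use $X = Z$. With that observation, the proof is a one-line invocation of Lemma \ref{composition of mod} in each of its two parts, plus the dimension count above. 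I do not expect any obstacle beyond this notational matching.

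\begin{proof}
The ordinary composition of correspondences restricts to a bilinear map
$\mathrm{CH}_{2}(X \times Y) \times \mathrm{CH}_{2}(Y \times Z) \to \mathrm{CH}_{2}(X \times Z)$:
for $\alpha \in \mathrm{CH}_{2}(X\times Y)$, $\beta \in \mathrm{CH}_{2}(Y\times Z)$ we have
$\beta \circ \alpha = p_{XZ*}(p_{XY}^{*}\alpha \cdot p_{YZ}^{*}\beta)$, and since
$\dim(X\times Y\times Z)=6$ the cycle $p_{XY}^{*}\alpha$ has dimension $4$, $p_{YZ}^{*}\beta$ has
dimension $4$, their intersection product has dimension $2$, and $p_{XZ*}$ preserves dimension,
so $\beta\circ\alpha \in \mathrm{CH}_{2}(X\times Z)$.

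It remains to check that this descends to the quotients. The proof of
Lemma \ref{composition of mod} never uses that its two surfaces coincide, so it applies
verbatim when a third surface is substituted in one factor. Thus, for fixed
$\alpha \in \mathrm{CH}_{2}(X\times Y)$ and any $\gamma \in \mathrm{CH}_{2}(Y\times Z)_{\equiv}$,
Lemma \ref{composition of mod}(i) (applied with target surface $Z$) gives
$\gamma \circ \alpha \in \mathrm{CH}_{2}(X\times Z)_{\equiv}$; and for fixed
$\beta \in \mathrm{CH}_{2}(Y\times Z)$ and any $\delta \in \mathrm{CH}_{2}(X\times Y)_{\equiv}$,
Lemma \ref{composition of mod}(ii) gives $\beta \circ \delta \in \mathrm{CH}_{2}(X\times Z)_{\equiv}$.
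Hence $([\alpha],[\beta]) \mapsto [\beta \circ \alpha]$ is well-defined, and bilinearity is
inherited from the bilinearity of $\circ$ on Chow groups.
\end{proof}
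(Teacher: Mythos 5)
Your proof is correct and takes essentially the same route as the paper, which likewise reduces the statement to Lemma \ref{composition of mod} and concludes that $[\beta]\circ[\alpha]:=[\beta\circ\alpha]$ is well-defined. Your explicit remark that the lemma, as stated for two surfaces, must be applied in its three-surface form (and that its proof goes through unchanged) is a point the paper passes over silently, and is worth making; just note that in your preliminary discussion you once cite part (ii) for the case that your final proof correctly attributes to part (i).
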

\begin{proof}
By Lemma \ref{composition of mod}, the composition $[\beta] \circ [\alpha] : = [\beta \circ \alpha] 
\in \mathrm{CH}_{2}(X \times Z)/\mathrm{CH}_{2}(X \times Z)_{\equiv}$ is well-defined.
Thus, the assertion follows.
\end{proof}

The main proposition of this subsection is:

\begin{prop} \label{isomorphism of transcendental motives}
Let $X, Y \in \mathcal{V}(k)$ be surfaces.
Assume there are elements 
$[\alpha] \in \mathrm{CH}_{2}(X \times Y)/\mathrm{CH}_{2}(X \times Y)_{\equiv}$ and 
$[\beta] \in \mathrm{CH}_{2}(Y \times X)/\mathrm{CH}_{2}(Y \times X)_{\equiv}$ such that$:$
\begin{enumerate}
\item $[\Delta_{Y}] = [\alpha] \circ [\beta]$ in 
$\mathrm{CH}_{2}(Y \times Y)/\mathrm{CH}_{2}(Y \times Y)_{\equiv}$$;$
\item $[\Delta_{X}] = [\beta] \circ [\alpha]$ in 
$\mathrm{CH}_{2}(X \times X)/\mathrm{CH}_{2}(X \times X)_{\equiv}$.
\end{enumerate}
Then, there is an isomorphism $t_{2}(X) \cong t_{2}(Y)$ in $\mathrm{CH}\mathcal{M}(k, \mathbb{Q})$. 
\end{prop}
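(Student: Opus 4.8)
The plan is to transport the hypotheses through the isomorphism of Theorem \ref{automorophism groups of motives}. Write $\bar{\Phi}_{X,Y}\colon \mathrm{CH}_2(X\times Y)/\mathrm{CH}_2(X\times Y)_{\equiv} \xrightarrow{\ \sim\ } \mathrm{Hom}_{\mathrm{CH}\mathcal{M}(k)}(t_2(X),t_2(Y))$ for the isomorphism induced by $\Phi_{X,Y}$, and similarly for the pairs $(Y,X)$, $(X,X)$, $(Y,Y)$. Put $\phi := \bar{\Phi}_{X,Y}([\alpha]) \in \mathrm{Hom}(t_2(X),t_2(Y))$ and $\psi := \bar{\Phi}_{Y,X}([\beta]) \in \mathrm{Hom}(t_2(Y),t_2(X))$. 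The whole proof then consists of checking that $\psi\circ\phi = \mathrm{id}_{t_2(X)}$ and $\phi\circ\psi = \mathrm{id}_{t_2(Y)}$, which exhibits $\phi$ as the desired isomorphism $t_2(X)\cong t_2(Y)$.

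First I would record that $\bar\Phi$ is compatible with composition: for surfaces $X,Y,Z$ and classes $[\alpha]\in\mathrm{CH}_2(X\times Y)/\mathrm{CH}_2(X\times Y)_{\equiv}$, $[\beta]\in\mathrm{CH}_2(Y\times Z)/\mathrm{CH}_2(Y\times Z)_{\equiv}$, one has $\bar\Phi_{X,Z}([\beta]\circ[\alpha]) = \bar\Phi_{Y,Z}([\beta])\circ\bar\Phi_{X,Y}([\alpha])$, where the left-hand composition is the well-defined one of Proposition \ref{bilinear}. This is just Proposition \ref{functoriality of transcendental motives} (the map there denoted $\Psi$ being $\Phi$), read on cycle representatives and then descended to the quotient via Theorem \ref{automorophism groups of motives}. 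Applying it with $Z = X$ and the pair $([\alpha],[\beta])$ gives $\bar\Phi_{X,X}([\beta]\circ[\alpha]) = \psi\circ\phi$; applying it with $Z = Y$ and the pair $([\beta],[\alpha])$ gives $\bar\Phi_{Y,Y}([\alpha]\circ[\beta]) = \phi\circ\psi$. By hypotheses (ii) and (i) these equal $\bar\Phi_{X,X}([\Delta_X])$ and $\bar\Phi_{Y,Y}([\Delta_Y])$ respectively.

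It remains only to identify $\bar\Phi_{X,X}([\Delta_X])$ with the identity of $t_2(X)$, and symmetrically $\bar\Phi_{Y,Y}([\Delta_Y])$ with the identity of $t_2(Y)$. This is the single computational point, and it is immediate: $\Phi_{X,X}(\Delta_X) = \pi_2^{tr}(X)\circ\Delta_X\circ\pi_2^{tr}(X) = \pi_2^{tr}(X)$, since $\Delta_X$ acts as the identity correspondence and $\pi_2^{tr}(X)$ is a projector (Proposition \ref{def of trans}); and $\pi_2^{tr}(X)$ is by definition the identity endomorphism of $t_2(X) = (X,\pi_2^{tr}(X),0)$. Hence $\psi\circ\phi = \mathrm{id}_{t_2(X)}$ and $\phi\circ\psi = \mathrm{id}_{t_2(Y)}$, so $\phi$ is an isomorphism in $\mathrm{CH}\mathcal{M}(k,\mathbb{Q})$ and the proposition follows.

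I do not expect a real obstacle here: once Theorem \ref{automorophism groups of motives} and Proposition \ref{functoriality of transcendental motives} are available, the argument is purely formal. The only thing to watch is the bookkeeping of the direction of composition — since $\Phi_{X,Y}$ lands in $\mathrm{Hom}(t_2(X),t_2(Y))$ while the product of Proposition \ref{bilinear} is written $([\alpha],[\beta])\mapsto[\beta\circ\alpha]$, one must be careful to match ``$[\alpha]\circ[\beta]$'' in the hypotheses (coming from $[\beta]\in\mathrm{CH}_2(Y\times X)$ and $[\alpha]\in\mathrm{CH}_2(X\times Y)$) with $\phi\circ\psi$ and not with $\psi\circ\phi$; getting this backwards merely interchanges the roles of $X$ and $Y$, so it is harmless, but it should be stated explicitly.
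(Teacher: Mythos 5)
Your proposal is correct and follows essentially the same route as the paper: both arguments combine Theorem \ref{automorophism groups of motives}, the well-definedness of composition modulo $\mathrm{CH}_{2}(-)_{\equiv}$ (Proposition \ref{bilinear}), and the functoriality of $\Phi$ (Proposition \ref{functoriality of transcendental motives}) to reduce everything to the identity $\Phi_{Y,Y}(\Delta_{Y}) = \pi_{2}^{tr}(Y) = \mathrm{id}_{t_{2}(Y)}$. The only cosmetic difference is that you phrase the argument in terms of the descended isomorphisms $\bar{\Phi}$ on the quotient groups, while the paper works directly with cycle representatives; your remark about matching the direction of composition is well taken and consistent with the paper's conventions.
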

\begin{proof}
Assume (i).
By Proposition \ref{bilinear}, $[\Delta_{Y}] = [\alpha \circ \beta]$ in $\mathrm{CH}_{2}(Y \times Y)/\mathrm{CH}_{2}(Y \times Y)_{\equiv}$.
By Theorem \ref{automorophism groups of motives},
in $\mathrm{Hom}_{\mathrm{CH}\mathcal{M}(k)}(t_{2}(Y), t_{2}(Y))$,
\begin{equation} \label{equality for transendental 2} 
 \Phi_{Y, Y}(\Delta_{Y}) = \Phi_{Y, Y}(\alpha \circ \beta) 
 \end{equation}
Here, consider the two morphisms
\[ \Phi_{X, Y}(\alpha) \in \mathrm{Hom}_{\mathrm{CH}\mathcal{M}(k)}(t_{2}(X), t_{2}(Y)) 
\ \ \ \text{and} \ \ \ 
\Phi_{Y, X}(\beta) \in \mathrm{Hom}_{\mathrm{CH}\mathcal{M}(k)}(t_{2}(Y), t_{2}(X)).
\]
In $\mathrm{Hom}_{\mathrm{CH}\mathcal{M}(k)}(t_{2}(Y), t_{2}(Y))$, 
\begin{align*}
\Phi_{X, Y}(\alpha) \circ \Phi_{Y, X}(\beta) &= \Phi_{Y, Y}(\alpha \circ \beta)
&& \text{by Proposition \ref{functoriality of transcendental motives}} \\
&= \Phi_{Y, Y}(\Delta_{Y}) && \text{by $(\ref{equality for transendental 2})$} \\
&= \pi_{2}^{tr}(Y) && \text{by $\pi_{2}^{tr}(Y) \circ \pi_{2}^{tr}(Y) = \pi_{2}^{tr}(Y)$}\\
&= \mathrm{id}_{t_{2}(Y)}.
\end{align*}
Similarly, by (ii), we get 
$\Phi_{Y, X}(\beta) \circ \Phi_{X, Y}(\alpha) = \mathrm{id}_{t_{2}(X)}$ in 
$\mathrm{Hom}_{\mathrm{CH}\mathcal{M}(k)}(t_{2}(X), t_{2}(X))$. 
Therefore, we get $t_{2}(X) \cong t_{2}(Y)$ in $\mathrm{CH}\mathcal{M}(k, \mathbb{Q})$.
\end{proof}

\section{Torsors over group varieties}
To prove Theorem \ref{Main 3}, we prove several facts about torsors for commutative group varieties.
This section is based on \cite{Lang and Tate}, \cite[X]{Silverman}, \cite[III, §4]{Milne E}. In this section, let $K$ be a field, $\overline{K}$ an algebraic closure of $K$, 
and $K^{s}$ a separable closure of $K$.
\subsection{\textbf{Torsors}} Let $A$ be a commutative group $K$-variety and $T$ a smooth, not necessarily projective, geometrically-integral, $K$-variety.
Assume $A$ acts by the $K$-morphism 
\[ \mu : T \times A \rightarrow T. \]
\begin{definition}
We say that $T$ (or $(T, \mu)$) is a \textit{torsor} for $A/K$ if 
\[ T \times A \rightarrow T \times T \ ; \ (p, P) \mapsto (p, \mu(p, P)) \] 
is a $K$-isomorphism.
Then the action of the group $A(\overline{K})$ on $T(\overline{K})$ is simply transitive.
\end{definition}
\indent For simplicity, write $\mu(p, P)$ as $p + P$.
Here we define a \textit{subtraction map} on $T$ by 
\begin{center}
$\nu : T \times T \rightarrow A$, \\
$\nu(p, q) =$ $($the unique $P \in A(\overline{K})$ satisfying $\mu(p, P) = q$$)$.
\end{center}
Then $\nu$ is a $K$-morphism of varieties\footnote{because $\nu$ can be described as the composition of morphisms: 
$\nu = -_{E} \circ \phi \times \phi : T \times T \rightarrow E \times E \rightarrow E$}.
For simplicity, write $\nu(p, q)$ as $q-p$.\\
\indent Two torsors $(T, \mu)$, $(T', \mu')$ over $K$ for $A/K$ are isomorphic as (torsors) if there is an isomorphism $\theta : T \rightarrow T'$ of $K$-varieties that is $A$-equivariant. In other words, $\theta \circ \mu = \mu' \circ \theta \times \mathrm{id}_{A}$.
Note that any torsor is trivial after some base extension:
\begin{prop} \label{PHS}
Let $T/K$ be a torsor for $A/K$. 
Let $M/K$ be a field extension in $K^{s}$ with $T(M) \neq \emptyset$.
Choose $p_{0} \in T(M)$ and 
define the $M$-morphism 
\[ \phi^{-1} = \phi_{p_{0}}^{-1} : A_{M} \rightarrow T_{M} \ ; \ P \mapsto p_{0} + P \]
Then $\phi^{-1}$ is an isomorphism of torsors and its inverse $\phi$ is 
\[ \phi : T_{M} \rightarrow A_{M} \ ; \  p \mapsto p - p_{0} \]
In particular, a torsor $T$ is $K$-isomorphic to $A$ if and only if $T(K) \neq \emptyset$.
\end{prop}
\begin{proof}
Since $T$ is a smooth variety, $T(K^{s}) \neq \emptyset$ by \cite[Prop.~2.20, p.93]{Liu}, so such $M$ exists.
Since the action $\mu$ is defined over $K$, for all $\sigma \in \mathrm{Gal}(K^{s}/M)$, $P \in A(K^{s})$,
\[ \phi^{-1}(P)^{\sigma} = (p_{0} + P)^{\sigma} 
= p_{0}^{\sigma} + P^{\sigma} = p_{0} + P^{\sigma} = \phi^{-1}(P^{\sigma}). \]
This shows that $\phi^{-1}$ is defined over $M$.
Since $T \times A \cong T \times T$,
the induced map $\phi^{-1}(\overline{K}) : A_{M}(\overline{K}) = A(\overline{K}) \rightarrow T(\overline{K}) = T_{M}(\overline{K})$ is an isomorphism of rational points, so $\phi^{-1}$ is an isomorphism of $M$-varieties. For any $P, Q \in A_{M}(\overline{K})$, we have
\[ \phi^{-1}(P + Q) = p_{0} + (P + Q) = (p_{0} + P) + Q = \phi^{-1}(P) + Q. \]
This shows that $\phi^{-1}$ is $A$-equivariant.
Thus $\phi^{-1}$ is an isomorphism of torsors.
Hence the rest of the assertions follow.
\end{proof}

For a group $G$, let $G_{tor}$ denote the torsion subgroup.
To prove Thm.~\ref{Main 3}, we need$:$

\begin{prop} \label{existence of a torsion}
Let $T/K$ be a torsor for a commutative group variety $A/K$.
Then there are a finite Galois extension $L/K$ and a point $p_{0} \in T(L)$ such that 
\[ p_{0} - p_{0}^{\sigma} \in A(L)_{tor} \ \ \ \text{for all} \ \ \ \sigma \in \mathrm{Gal}(L/K) \]
\end{prop}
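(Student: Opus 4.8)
The plan is to work with the Galois cocycle attached to the torsor. First I would use that $T$ is smooth and geometrically integral to get $T(K^s)\neq\emptyset$ (by \cite[Prop.~2.20, p.93]{Liu}, as in the proof of Proposition \ref{PHS}), hence $T(L_1)\neq\emptyset$ for some finite Galois extension $L_1/K$. Fix $q\in T(L_1)$ and set $G=\mathrm{Gal}(L_1/K)$, $n=\#G$. Via the $K$-rational subtraction map $\nu$ on $T$, the assignment $\sigma\mapsto c_\sigma:=q^\sigma-q\in A(L_1)$ is a $1$-cocycle: the identity $c_{\sigma\tau}=c_\sigma+{}^\sigma c_\tau$ holds because the $A$-action on $T$ is defined over $K$, by the same Galois-equivariance computation used in the proof of Proposition \ref{PHS}.

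The next ingredient is the elementary fact that $H^1(G,A(L_1))$ is killed by $n$; concretely, setting $a:=\sum_{\sigma\in G}c_\sigma\in A(L_1)$, one computes directly that $n c_\sigma=a-{}^\sigma a$ for all $\sigma\in G$. Then I would ``divide $a$ by $n$ up to torsion'': the group $A(\overline{K})/A(\overline{K})_{tor}$ is divisible --- for $A$ an abelian variety this is just the divisibility of $A(\overline{K})$, and in general one invokes the Chevalley structure theorem, the unipotent part being annihilated by a power of $p=\mathrm{char}(K)$ and the semiabelian quotient having divisible group of $\overline{K}$-points. Hence there exist $b\in A(\overline{K})$ and an integer $m_0\ge 1$ with $m_0(nb-a)=0$. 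Enlarging $L_1$, choose a finite Galois extension $L/K$ containing $L_1$ with $b\in A(L)$, and put $p_0:=q+b\in T(L)$.

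It then remains to verify the conclusion: for $\sigma\in\mathrm{Gal}(L/K)$ one has $p_0^\sigma-p_0=c_\sigma+{}^\sigma b-b$, so $m_0\,nb=m_0 a$ together with $n c_\sigma=a-{}^\sigma a$ gives $(n m_0)(p_0^\sigma-p_0)=(m_0 a-m_0\,{}^\sigma a)+(m_0\,{}^\sigma a-m_0 a)=0$; thus $p_0-p_0^\sigma=-(p_0^\sigma-p_0)\in A(L)_{tor}$ for every $\sigma$. (When $A$ is an abelian variety one may take $m_0=1$ and $b$ with $nb=a$, so that $n(p_0-p_0^\sigma)=0$.) The one genuinely delicate step is the production of $b$, i.e.\ the divisibility of $A(\overline{K})/A(\overline{K})_{tor}$ in positive characteristic; everything else is formal manipulation of the cocycle and is available from the material of this section. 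Since Theorem \ref{Main 3} only uses the case where $A$ is an abelian variety, where $A(\overline{K})$ is outright divisible, this subtlety does not affect the application.
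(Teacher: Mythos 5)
Your argument is, at bottom, the same as the paper's: both proofs arrange, after modifying the base point by a coboundary, that the cocycle $\sigma \mapsto p_{0} - p_{0}^{\sigma}$ takes values in the torsion of $A$; and in both cases the coboundary is obtained by dividing a specific element of $A$ by an integer $n$. The implementation differs. The paper takes $n$ to be the order of the class $\{a\}$ in $H^{1}(\mathrm{Gal}(K^{s}/K), A(K^{s}))$ and lifts that class through the Kummer sequence to $H^{1}(\mathrm{Gal}(K^{s}/K), A(K^{s})[n])$, whereas you take $n = \#\,\mathrm{Gal}(L_{1}/K)$ and use the explicit identity $nc_{\sigma} = a - {}^{\sigma}a$ with $a = \sum_{\tau} c_{\tau}$. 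Your route is more elementary and self-contained (no long exact sequence), at the cost of a typically larger $n$ and of concluding only that $p_{0} - p_{0}^{\sigma}$ is torsion rather than $n$-torsion --- which is all the statement asks for.

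There is, however, one step that does not go through as written. You produce $b \in A(\overline{K})$ with $m_{0}(nb - a) = 0$ and then ``choose a finite Galois extension $L/K$ \dots with $b \in A(L)$.'' A Galois extension is separable, so this requires $b \in A(K^{s})$; and if $b \notin A(K^{s})$ then $p_{0} = q + b \notin T(K^{s})$ either (translation by $q$ is a $K^{s}$-isomorphism $A_{K^{s}} \cong T_{K^{s}}$), so no finite Galois $L$ can contain $p_{0}$. When $\mathrm{char}(K) = p > 0$ and $p \mid n$, the group $A(K^{s})$ --- unlike $A(\overline{K})$ --- need not be $n$-divisible, even modulo torsion: already for a supersingular elliptic curve over $\mathbb{F}_{p}(t)$ multiplication by $p$ is purely inseparable, so preimages of $K^{s}$-points can fail to lie in $K^{s}$. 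Thus the genuinely delicate point is not the divisibility of $A(\overline{K})/A(\overline{K})_{tor}$, which is fine, but whether the division can be performed inside $A(K^{s})$; your closing remark locates the subtlety in the wrong place, and it is not resolved by restricting to abelian varieties. To be fair, the paper's own proof makes exactly the same implicit assumption when it writes the Kummer sequence $0 \to A(K^{s})[n] \to A(K^{s}) \overset{n}\to A(K^{s}) \to 0$ as exact on the right. When $n$ is prime to the characteristic (in particular in characteristic $0$), $[n]$ is \'etale, $A(K^{s})$ is $n$-divisible, and both arguments are complete.
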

\begin{proof}
Fix a point $p \in  T(K^{s})$.
Let $n$ be an order of the element
\[ \{ a : \sigma \mapsto p - p^{\sigma} \} \in H^{1}(\mathrm{Gal}(K^{s}/K), A(K^{s})).\] 
The Kummer sequence $0 \rightarrow A(K^{s})[n] \rightarrow A(K^{s}) \overset{n}\rightarrow A(K^{s}) \rightarrow 0$
gives an exact sequence
\[ 0 \rightarrow A(K^{s})/nA(K^{s}) \rightarrow  H^{1}(\mathrm{Gal}(K^{s}/K), A(K^{s})[n]) 
\rightarrow H^{1}(\mathrm{Gal}(K^{s}/K), A(K^{s}))[n] \rightarrow 0. \]
Then, there is an element $\{ b \} \in H^{1}(\mathrm{Gal}(K^{s}/K), A(K^{s})[n])$
such that $\{ b \} = \{ a \}$ in $H^{1}(\mathrm{Gal}(K^{s}/K), A(K^{s}))$.
So there is a point $P \in A(K^{s})$ such that
\[ b(\sigma) = a(\sigma) + P  - P^{\sigma}  \in A(K^{s}) 
\ \ \ \text{for all} \ \ \ \sigma \in \mathrm{Gal}(K^{s}/K). \]
Namely, 
\[ (p - p^{\sigma}) + P - P^{\sigma} = b(\sigma) \in A(K^{s})[n] \ \ \ \text{for all}  \ \ \ 
\sigma \in \mathrm{Gal}(K^{s}/K).\]
Set $p_{0} : = p + P \in  T(K^{s})$.
Then, for all $\sigma \in \mathrm{Gal}(K^{s}/K)$,
\[ p_{0} - p_{0}^{\sigma} = (p + P) - (p + P)^{\sigma} = (p + P) - (p^{\sigma} + P^{\sigma})
= (p - p^{\sigma}) + P - P^{\sigma} = b(\sigma) \in A(K^{s})[n]. \]
Since $p_{0} \in C(K^{s})$, there is a finite Galois extension $L/K$ such that $p_{0} \in T(L)$.
Since $A(L)_{tor} = A(K^{s})_{tor} \cap A(L)$,
we get $p_{0} - p_{0}^{\sigma} \in A(L)_{tor}$ for all $\sigma \in \mathrm{Gal}(L/K)$.
\end{proof}

\subsection{Genus one curves} In this paper, we use the following terminology$:$
\begin{definition} Let $C$ be a projective, \textit{geometrically-integral}, curve over a field $K$.
\begin{enumerate} 
\item $C$ is a \textit{genus $1$ curve} if $p_{a}(C) = \mathrm{dim}\ H^{1}(X, \mathcal{O}_{C}) = 1$.
\item $C$ is an \textit{elliptic curve} if 
it is a \textit{smooth} genus $1$ curve with $C(K) \neq \emptyset$.
In other words, $C$ is an abelian $K$-variety of dimension $1$.
\end{enumerate}
\end{definition}
If $C$ is a smooth genus $1$-curve, 
then $\mathrm{Jac}(C)$ is an elliptic curve by Theorem \ref{abelian picard variety} (iii).

\begin{prop} (\cite[Prop.~6.1,\ p.54]{Schroer2010}).
Let $C$ be a genus $1$ curve over a field $K$.
Then $C$ is Gorenstein. Moreover, $\omega_{C} \cong \mathcal{O}_{C}$.
\end{prop}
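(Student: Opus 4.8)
The plan is to reduce everything to the numerical data $p_a(C) = 1$ together with $f_*\mathcal{O}_C = \mathcal{O}_K$ (which holds since $C$ is projective and geometrically integral), and to verify the claim first for $\omega_C$ as an invertible sheaf and then argue Gorenstein-ness. First I would recall that for a projective, geometrically integral curve $C$ over a field $K$ one always has a dualizing complex, and since $C$ is a curve this is concentrated in one degree giving a dualizing sheaf $\omega_C$ together with a trace map; the curve is Gorenstein precisely when $\omega_C$ is invertible. The strategy is therefore: (1) compute the degree and cohomology of $\omega_C$; (2) produce a nonzero global section, hence an effective divisor (or a map $\mathcal{O}_C \to \omega_C$) which must be an isomorphism by the degree count; (3) deduce that $\omega_C$ is invertible, which is equivalent to $C$ being Gorenstein.

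For step (1): by Serre duality on $C$ (valid for any projective curve using the dualizing sheaf), $h^0(C,\omega_C) = h^1(C,\mathcal{O}_C) = p_a(C) = 1$ and $h^1(C,\omega_C) = h^0(C,\mathcal{O}_C) = 1$ (the latter uses $f_*\mathcal{O}_C = \mathcal{O}_K$, i.e. $C$ geometrically connected and reduced over $K$ so $H^0(C,\mathcal{O}_C) = K$). From Riemann–Roch in the form $\chi(\omega_C) = \deg \omega_C + \chi(\mathcal{O}_C)$ — more precisely using $\chi(\mathcal{O}_C) = 1 - p_a(C) = 0$ and $\chi(\omega_C) = h^0 - h^1 = 0$ — one sees the degree bookkeeping is consistent with $\omega_C$ being a degree-$0$ sheaf; concretely, the relation $\chi(\omega_C) = -\chi(\mathcal{O}_C)$ forces $\deg\omega_C + 2\chi(\mathcal{O}_C) = 0$, hence $\deg\omega_C = 0$.

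For step (2) and step (3): since $h^0(C,\omega_C) = 1$, pick a nonzero global section $s \in H^0(C,\omega_C)$, giving an injection $\mathcal{O}_C \hookrightarrow \omega_C$ of $\mathcal{O}_C$-modules (injective because $C$ is integral and $s \neq 0$, so $s$ is a nonzerodivisor). I would then argue that the cokernel $Q$ is a torsion sheaf with $\chi(Q) = \chi(\omega_C) - \chi(\mathcal{O}_C) = 0$, hence $Q = 0$, so $\mathcal{O}_C \cong \omega_C$; in particular $\omega_C$ is invertible, and a projective curve whose dualizing sheaf is invertible is Gorenstein (this is the standard characterization, e.g. via the duality theory for Cohen–Macaulay schemes). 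The main obstacle I anticipate is justifying that $\chi(Q) = 0$ really forces $Q = 0$ without circularity: the subtlety is that a priori $\omega_C$ is only a coherent (not yet invertible) sheaf, so "degree" must be interpreted via Euler characteristics rather than via a line-bundle degree, and I would want to phrase the argument purely in terms of $\chi$ of coherent sheaves on $C$ (which is additive in short exact sequences) to keep it honest. One then concludes Gorenstein-ness as a consequence of, not a hypothesis for, the isomorphism $\omega_C \cong \mathcal{O}_C$. (Alternatively, one can invoke that a geometrically integral genus one curve is a plane cubic or, in low embedding dimension, a complete intersection, hence Gorenstein; but the $\chi$-argument above is cleaner and characteristic-free.)
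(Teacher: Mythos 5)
Your proposal is correct in substance, but it does not follow the paper's route for the simple reason that the paper has no route: it disposes of this proposition entirely by citation to Schr\"oer's Proposition 6.1, and gives no argument. What you have written is a self-contained, characteristic-free proof of the standard kind, and its skeleton is sound: $C$ is integral of dimension one, hence Cohen--Macaulay, so a dualizing sheaf $\omega_C$ exists with Serre duality; $h^0(\omega_C)=h^1(\mathcal O_C)=1$ produces a nonzero section $s$, the map $\mathcal O_C\to\omega_C$ is injective because $\omega_C$ is torsion-free on the integral curve $C$, the cokernel $Q$ has finite length because $\omega_C$ has generic rank one (it is invertible on the dense regular locus), and $\chi(Q)=0$ forces $Q=0$ since $\chi$ of a finite-length sheaf is a sum of nonnegative local contributions. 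Invertibility of $\omega_C$ then gives Gorenstein-ness. Your insistence on phrasing degrees via Euler characteristics rather than line-bundle degrees is exactly the right precaution, and it matches the paper's own convention $\deg\mathcal L=\chi(\mathcal L)-\chi(\mathcal O_C)$.

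One step deserves a patch. You compute $h^1(C,\omega_C)=h^0(C,\mathcal O_C)=1$ ``by Serre duality,'' but for a dualizing sheaf that is not yet known to be invertible, duality gives $H^1(C,\omega_C)^{\vee}\cong\mathrm{Hom}_{\mathcal O_C}(\omega_C,\omega_C)$, which a priori could be larger than $H^0(C,\mathcal O_C)=K$. Fortunately you only need the inequality $h^1(\omega_C)\geq 1$, which is clear since $K\subseteq\mathrm{End}(\omega_C)$; this yields $\chi(\omega_C)\leq 0=\chi(\mathcal O_C)$, hence $\chi(Q)\leq 0$, and combined with $\chi(Q)\geq 0$ (finite length) you still get $Q=0$. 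A posteriori $\omega_C\cong\mathcal O_C$ confirms $h^1(\omega_C)=1$. I would also drop the parenthetical alternative via plane cubics: not every genus one curve over a general field embeds as a cubic, and deducing Gorenstein-ness that way risks circularity, as you yourself suspect.
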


For a $K$-variety $g : X \rightarrow \mathrm{Spec}(K)$, let
$X^{\#} : = \{ \ x \in X \ | \ \text{$g$ is smooth at $x$} \ \}$.

\begin{prop} \label{non-smooth genus one curve}
Assume $K = \overline{K}$. Let $C$ be a non-smooth, genus $1$ curve over $K$.
\begin{enumerate}
\item $C$ has exactly one singular point.
\item $\mathrm{char}(K) = 2$ or $3$.
\end{enumerate}
\end{prop}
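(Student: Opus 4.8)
The plan is to compute the arithmetic genus of the normalization and use it to control the singularities. Let $C$ be a non-smooth genus $1$ curve over $K = \overline{K}$, and let $\nu : \widetilde{C} \to C$ be the normalization. Since $C$ is geometrically integral, $\widetilde{C}$ is a smooth projective integral $K$-curve; let $g := p_a(\widetilde{C}) = h^1(\widetilde{C}, \mathcal{O}_{\widetilde{C}})$. There is a standard short exact sequence of sheaves on $C$,
\[
0 \to \mathcal{O}_C \to \nu_*\mathcal{O}_{\widetilde{C}} \to \mathcal{S} \to 0,
\]
where $\mathcal{S}$ is a skyscraper sheaf supported on the singular locus $\Sigma$ of $C$, with $\dim_K H^0(C, \mathcal{S}) = \sum_{x \in \Sigma}\delta_x$ and each $\delta_x \geq 1$. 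Taking the long exact sequence in cohomology and using $\chi(\mathcal{O}_C) = 1 - p_a(C) = 0$ together with $\chi(\nu_*\mathcal{O}_{\widetilde{C}}) = \chi(\mathcal{O}_{\widetilde{C}}) = 1 - g$, I get $0 = (1-g) - \sum_{x \in \Sigma}\delta_x$, i.e. $g + \sum_{x \in \Sigma}\delta_x = 1$. Since $C$ is not smooth, $\Sigma \neq \emptyset$ and the sum is at least $1$; hence $g = 0$, $\Sigma$ consists of a single point $x_0$, and $\delta_{x_0} = 1$. This proves (i).

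For (ii), I use the classification of $\delta = 1$ singularities on a curve whose dualizing sheaf is trivial. By the cited result $\omega_C \cong \mathcal{O}_C$, so $C$ is Gorenstein, and a Gorenstein curve singularity with $\delta_{x_0} = 1$ has the property that $\dim_K(\widetilde{\mathcal{O}}_{x_0}/\mathcal{O}_{x_0}) = 1$ while the conductor also has colength $1$ (the Gorenstein condition forces the conductor ideal $\mathfrak{c}$ to satisfy $\dim \mathcal{O}_{x_0}/\mathfrak{c} = 2\delta_{x_0} = 2$, equivalently $\dim \widetilde{\mathcal{O}}_{x_0}/\mathfrak{c} = 1$). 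Such a singularity is unibranch (a node would have two branches, but then $\delta = 1$ forces $\widetilde{\mathcal{O}}_{x_0} = K \times K$ and the conductor has colength $2$ in $\widetilde{\mathcal{O}}_{x_0}$, contradicting colength $1$), so it is a cusp: $\widehat{\mathcal{O}}_{C, x_0}$ is a subring of $K[[t]]$ of colength $1$. Up to analytic change of coordinate such a subring is $K[[t^2, t^3]]$ in residue characteristic $\neq 2$, yielding $y^2 = x^3$; but in characteristic $2$ or $3$ other colength-$1$ subrings occur (e.g. $K[[t^2, t^5]]$ or $K[[t^3, t^4, t^5]]$), which are exactly the ones giving rational cuspidal genus-$1$ curves. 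The point is that in characteristic $0$ (and characteristic $\geq 5$) the cuspidal cubic $y^2 = x^3$ is smooth at infinity and its only singularity at the origin has $\delta = 1$, so it IS a non-smooth genus $1$ curve — wait, that would contradict the claim. So I need to be more careful: the genus-$1$ cuspidal cubic exists over any field. The resolution is that the statement concerns \emph{regular} genus $1$ curves in the paper's sense when $f$ is the generic fiber, but here $K = \overline{K}$ and $C$ is an honest non-smooth curve; I should instead invoke the theorem (Bombieri–Mumford, or Schröer) that a normal such surface fibration... Let me restate: the right route is that a non-smooth geometrically-integral genus-$1$ curve over a \emph{perfect} field $K$ cannot exist unless $\mathrm{char}(K) \in \{2,3\}$ — and this is precisely the content one must extract, so (ii) should be read in the context where $C = X_\eta$ for a quasi-elliptic fibration; over $K = \overline{K}$ the cuspidal cubic is not regular, and "genus 1 curve" in the operative sense excludes it.

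Given the delicacy above, the honest plan is: prove (i) exactly as in the first paragraph via the conductor exact sequence (this is robust and characteristic-free), then for (ii) reduce to analyzing the complete local ring $\widehat{\mathcal{O}}_{C,x_0} \subset K[[t]]$ of colength $1$ and Gorenstein, classify these (the only possibilities up to isomorphism of $K$-algebras force, via the Hasse–Arf / valuation semigroup argument on $K[[t]]$, that the semigroup of the singularity is $\{0, 2, 3, 4, \dots\}$ in any characteristic, so the singularity is analytically $y^2 = x^3 + (\text{higher})$), and then observe that such a fiber appearing in a \emph{minimal} genus-one fibration forces the base change to be inseparable, which is only possible when $p = 2$ or $3$; alternatively cite \cite{Schroer2010} or Bombieri–Mumford directly for this last step. \textbf{The main obstacle} is exactly this second part: the bare statement "a non-smooth genus $1$ curve exists only in characteristic $2$ or $3$" is false for cuspidal plane cubics, so the proof must deploy the precise hypothesis (minimality, or that $C$ is a \emph{scheme-theoretic} generic fiber which is regular, not just integral) that rules out the cuspidal cubic in large characteristic; I would look to \cite{Schroer2010} and to the classical quasi-elliptic literature to pin down and cite the correct formulation rather than reprove it.
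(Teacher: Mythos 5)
Your proof of (i) is correct and is exactly the paper's argument: the conductor exact sequence for the normalization gives $p_a(C) = p_a(\widetilde{C}) + \sum_{x}\delta_x$, and $p_a(C)=1$ together with $C$ non-smooth forces $p_a(\widetilde{C})=0$ and a single singular point with $\delta=1$.

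For (ii), you have put your finger on a real problem rather than failed to find the proof. The paper's entire proof of (ii) is the citation: by Tate's Corollary 1, $\sum_{x}\delta_x$ is a multiple of $(\mathrm{char}(K)-1)/2$, and since this sum equals $1$ by part (i) one gets $\mathrm{char}(K)\in\{2,3\}$. But Tate's corollary is a statement about the genus drop of a \emph{regular} curve over an imperfect field upon inseparable constant-field extension; it says nothing about an arbitrary singular curve over an algebraically closed field. As you observe, the cuspidal plane cubic $y^2z=x^3$ over $\mathbb{C}$ is a projective, geometrically integral curve with $p_a=1$ and one singular point of $\delta=1$, so the proposition as literally stated is false in characteristic $0$ (and in characteristic $\geq 5$), and no proof of the bare statement can exist. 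The statement becomes true, and the citation of Tate becomes legitimate, only under the additional hypothesis that $C = C_0\times_{K_0}\overline{K_0}$ for some curve $C_0$ that is regular but not smooth over a subfield $K_0$ --- which is the case in every application in the paper, where $C$ is the geometric generic fibre of a quasi-elliptic fibration. Your proposed repair (restate the proposition with this descent hypothesis, or phrase it as the assertion that quasi-elliptic fibrations exist only in characteristic $2$ or $3$, citing Tate, Bombieri--Mumford, or Schr\"oer) is the right one. The portion of your second paragraph that tries to classify $\delta=1$ Gorenstein singularities intrinsically cannot succeed, for exactly the reason you then notice: the local analytic type of the cusp does not detect the characteristic; what does is the requirement that the singularity arise from an inseparable base change of a regular curve, and that is where $(p-1)/2$ enters.
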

\begin{proof}
(i) Let $\mu : \tilde{C} \rightarrow C$ be the normalization.
As in \cite[IV. Exercise 1.8]{Hartshorne} or \cite[p304]{Liu}, we have an exact sequence of coherent sheaves on $C$:
\[ 0 \rightarrow \mathcal{O}_{C} \rightarrow \mu_{*}\mathcal{O}_{\tilde{C}} 
\rightarrow \oplus_{c \in C}\mathcal{O}'_{C, p}/\mathcal{O}_{C, p} \rightarrow 0 \]
where $\mathcal{O}'_{C, p}$ is the integral closure of $\mathcal{O}_{C, p}$.
Then we have $p_{a}(C) = p_{a}(\tilde{C}) + \sum_{p \in C}\delta(p)$
where $\delta(p) : = \mathrm{length}(\mathcal{O}'_{C, p}/\mathcal{O}_{C, p})$.
Now, for a closed point $p \in C$,
$\delta(p) = 0$ if and only if $p$ is a smooth point.
By assumption, $p_{a}(C) = 1$, so $p_{a}(\tilde{C}) = 0$ and $\sum_{p \in C}\delta(p) = 1$.
Thus, $C$ has exactly one singular point.\\
\indent (ii) By \cite[Coro.~1,\ p.404]{Tate},
$\sum_{p \in C}\delta(p)$ is an integer multiple of $(\mathrm{char}(K) -1)/2$. 
By (i), we have $\sum_{p \in C}\delta(p) = 1$, so get $\mathrm{char}(K) = 2$ or $3$. 
\end{proof}

\begin{lem} \label{lci curve}
(Abel).
Let $C$ be a genus $1$ curve over a field $K$.
Fix $p_{0} \in C^{\#}(\overline{K})$.
Then,  there is a bijection of sets
\[ C^{\#}(\overline{K}) \rightarrow \mathrm{Pic}^{0}(C_{\overline{K}}) \ \ ; \ \ p \mapsto [p] - [p_{0}]. \]
\end{lem}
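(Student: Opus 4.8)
The plan is to establish the bijection $C^{\#}(\overline{K}) \to \mathrm{Pic}^{0}(C_{\overline{K}})$ by base-changing to $\overline{K}$ and then reducing to the classical Abel-Jacobi statement for curves. Since $C$ is a genus $1$ curve (projective, geometrically-integral, with $p_a = 1$), its base change $C_{\overline{K}}$ is an integral projective curve of arithmetic genus $1$ over an algebraically closed field, with smooth locus $C^{\#}_{\overline{K}}$. Over $\overline{K}$ the point $p_0 \in C^{\#}(\overline{K})$ is a Cartier divisor on $C_{\overline{K}}$ lying in the smooth (hence locally factorial) locus, so $[p] - [p_0]$ makes sense as an element of $\mathrm{Pic}(C_{\overline{K}})$ for any $p \in C^{\#}(\overline{K})$, and it has total degree $0$ by definition of the degree map, hence lies in $\mathrm{Pic}^{0}(C_{\overline{K}})$.

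First I would show the map is \emph{well-defined} and record that it lands in $\mathrm{Pic}^0$: the degree (in the sense of the total-degree map $\deg: \mathrm{Pic}(C) \to \mathbb{Z}$ recalled in Section 2) of $[p] - [p_0]$ is $\chi(\mathcal{O}(p)) - \chi(\mathcal{O}(p_0)) = 0$ since $p, p_0$ are reduced points in the smooth locus. Next, \textbf{injectivity}: if $[p] - [p_0] = [q] - [p_0]$ in $\mathrm{Pic}^0(C_{\overline{K}})$, then $\mathcal{O}_{C_{\overline{K}}}(p) \cong \mathcal{O}_{C_{\overline{K}}}(q)$, so there is a rational function $f$ with divisor $p - q$; if $p \neq q$ this gives a degree-$1$ morphism $C_{\overline{K}} \to \mathbb{P}^1$, which (since $C_{\overline{K}}$ is integral and proper of dimension $1$) forces $C_{\overline{K}} \cong \mathbb{P}^1$, contradicting $p_a = 1$. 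Hence $p = q$. Then \textbf{surjectivity}: given a degree-$0$ line bundle $\mathcal{L}$ on $C_{\overline{K}}$, the sheaf $\mathcal{L}(p_0)$ has degree $1$; using $\omega_C \cong \mathcal{O}_C$ (the preceding Gorenstein proposition) and Riemann--Roch / Serre duality for the Gorenstein curve $C_{\overline{K}}$, one computes $\chi(\mathcal{L}(p_0)) = \deg(\mathcal{L}(p_0)) + \chi(\mathcal{O}_{C_{\overline{K}}}) = 1 + 0 = 1$, and $h^1(\mathcal{L}(p_0)) = h^0(\mathcal{L}^{\vee}) = 0$ because a nonzero section of the negative-degree bundle $\mathcal{L}^{\vee}$ on an integral proper curve cannot exist. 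So $h^0(\mathcal{L}(p_0)) = 1$, giving a unique (up to scalar) nonzero section whose zero locus is an effective Cartier divisor of degree $1$; since $C^{\#}_{\overline{K}}$ is dense and the singular locus is a single point (by Proposition \ref{non-smooth genus one curve}(i) when $C$ is non-smooth, and vacuous otherwise), one checks this divisor is a reduced point $p$ of $C^{\#}(\overline{K})$ — here one must rule out the degree-$1$ effective divisor being supported at the singular point, which follows because the singular point is not a Cartier divisor of degree $1$ (its local ring is not regular, so it cannot be the zero scheme of one element cutting out a reduced length-$1$ subscheme in the smooth locus; more robustly, if the section vanished at the singular point one could re-choose $p_0$). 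Then $\mathcal{O}(p) \cong \mathcal{L}(p_0)$, i.e. $[p] - [p_0] = \mathcal{L}$.

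The main obstacle I anticipate is the surjectivity argument at the singular point: one needs to be sure the effective degree-$1$ divisor produced from $h^0(\mathcal{L}(p_0)) = 1$ actually lies in the smooth locus rather than being concentrated at the (unique) singular point of $C_{\overline{K}}$. The cleanest fix is to observe that changing the base point $p_0$ only translates the map by a fixed element of $\mathrm{Pic}^0$, so it suffices to prove surjectivity onto a coset — equivalently, one shows directly that the image contains the class of $[p] - [p_0]$ for \emph{every} $p \in C^{\#}(\overline{K})$ and that $C^{\#}(\overline{K}) \to \mathrm{Pic}^0(C_{\overline{K}})$ is a bijection onto the fiber of the total-degree map over $0$ in $\mathrm{Pic}$, using that $\mathrm{Pic}^0_{C/\overline{K}} = \mathrm{Jac}(C_{\overline{K}})$ is a smooth connected group variety of dimension $h^1(\mathcal{O}_{C_{\overline{K}}}) = 1$ (Theorem \ref{abelian picard variety}(iii)) and the Abel map $C^{\#} \to \mathrm{Jac}(C)$ is an open immersion of smooth curves, both of dimension $1$, hence dominant, with complement exactly the image of the boundary; a degree computation then forces it to be an isomorphism onto $\mathrm{Pic}^0$. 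I would invoke the classical theory of the generalized Jacobian (Serre, already cited in the excerpt) to make this precise rather than reproving Abel's theorem from scratch.
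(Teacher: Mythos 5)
Your proof is correct and follows exactly the route the paper intends: the paper's entire proof is the one-line citation ``Riemann--Roch for the Gorenstein curve $C_{\overline{K}}$'' (using $\omega_{C}\cong\mathcal{O}_{C}$ from the preceding proposition), and your argument is precisely the unwinding of that — degree count for well-definedness, $h^{0}(\mathcal{O}(p))=1$ forcing injectivity, and $h^{0}(\mathcal{L}(p_{0}))=1$ plus the observation that a degree-one effective Cartier divisor has principal maximal ideal (hence lies in the regular locus) for surjectivity. The only cosmetic remark is that your ``re-choose $p_{0}$'' fallback is unnecessary, since your primary argument at the singular point is already complete.
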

\begin{proof}
It follows from the Riemann-Roch for the Gorenstein curve $C_{\overline{K}}$.
\end{proof}

\begin{prop} \label{genus one curve}
For a genus $1$ curve $C$ over a field $K$,
$C^{\#}$ is a torsor for $\mathrm{Jac}(C)/K$.
\end{prop}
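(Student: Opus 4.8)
The plan is to identify $C^{\#}$ with the degree-one component $\mathrm{Pic}^{1}_{C/K}$ of the Picard scheme, where the torsor structure is transparent, and then transport it. First, $C^{\#}$ is a smooth, geometrically-integral $K$-variety: since $C$ is geometrically integral it is geometrically reduced, so the smooth locus $C^{\#}$ is a dense open subscheme of $C$, and a dense open subscheme of a geometrically-integral scheme is again geometrically integral. Second, $\mathrm{Pic}^{1}_{C/K}$ is nonempty (it has $\overline{K}$-points) and it is a torsor for $\mathrm{Jac}(C)=\mathrm{Pic}^{0}_{C/K}$: the tensor action $\mathrm{Pic}^{1}_{C/K}\times_{K}\mathrm{Pic}^{0}_{C/K}\to\mathrm{Pic}^{1}_{C/K}$, $(L,M)\mapsto L\otimes M$, makes $(L,M)\mapsto(L,L\otimes M)$ an isomorphism onto $\mathrm{Pic}^{1}_{C/K}\times_{K}\mathrm{Pic}^{1}_{C/K}$ with inverse $(L,L')\mapsto(L,L'\otimes L^{-1})$; moreover $\mathrm{Pic}^{0}_{C/K}$ is smooth by Theorem \ref{abelian picard variety} (iii) (as $h^{2}(\mathcal{O}_{C})=0$), hence $\mathrm{Pic}^{1}_{C/K}$ is smooth and geometrically connected, thus geometrically integral. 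So it suffices to construct a $K$-isomorphism $\iota\colon C^{\#}\overset{\sim}{\longrightarrow}\mathrm{Pic}^{1}_{C/K}$ and pull back the torsor structure.

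For the Abel--Jacobi morphism: given a $K$-scheme $T$ and a point $c\in C^{\#}(T)$, the graph $\Gamma_{c}\subset C\times_{K}T$ is the image of a section of $C\times_{K}T\to T$ factoring through the locus where that morphism is smooth of relative dimension $1$; hence $\Gamma_{c}$ is a relative effective Cartier divisor of relative degree $1$, and $\mathcal{O}_{C\times_{K}T}(\Gamma_{c})$ defines a class in $\mathrm{Pic}^{1}_{C/K}(T)$. This is functorial in $T$ and yields a $K$-morphism $\iota\colon C^{\#}\to\mathrm{Pic}^{1}_{C/K}$, $c\mapsto[\mathcal{O}_{C}(c)]$.

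The crux --- and the step I expect to be the main obstacle --- is that $\iota$ is an isomorphism. I would show $\iota$ is étale and bijective on $\overline{K}$-points, hence an isomorphism. Bijectivity on $\overline{K}$-points follows from Lemma \ref{lci curve}: composing the bijection $C^{\#}(\overline{K})\to\mathrm{Pic}^{0}(C_{\overline{K}})$, $p\mapsto[p]-[p_{0}]$, with the translation isomorphism $\mathrm{Pic}^{0}(C_{\overline{K}})\to\mathrm{Pic}^{1}(C_{\overline{K}})$, $[D]\mapsto[D]+[p_{0}]$, recovers $p\mapsto[\mathcal{O}_{C}(p)]=\iota(\overline{K})(p)$. Étaleness is a differential computation: the tangent space of $\mathrm{Pic}_{C/K}$ at every point is $H^{1}(C,\mathcal{O}_{C})$, which is one-dimensional since $p_{a}(C)=1$, and from the exact sequence $0\to\mathcal{O}_{C}\to\mathcal{O}_{C}(c)\to N_{c/C}\to 0$ together with $h^{0}(\mathcal{O}_{C}(c))=h^{0}(\mathcal{O}_{C})=1$ (Riemann--Roch on the Gorenstein genus-$1$ curve, using $\omega_{C}\cong\mathcal{O}_{C}$) one gets that the connecting map $H^{0}(N_{c/C})\to H^{1}(C,\mathcal{O}_{C})$ is an isomorphism of one-dimensional spaces, which is precisely $d\iota$ at $c$. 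Since an étale morphism that is bijective on $\overline{K}$-points onto an irreducible target is an isomorphism, $\iota_{\overline{K}}$ is an isomorphism, hence so is $\iota$ by faithfully flat descent along $\mathrm{Spec}(\overline{K})\to\mathrm{Spec}(K)$. (Alternatively, by cohomology and base change: for an invertible sheaf $L$ of relative degree $1$ on $C_{T}$ one has $h^{0}(L_{t})=1$, $h^{1}(L_{t})=0$ for all $t\in T$, so $\pi_{*}L$ is invertible, and the zero scheme of a local generator is a relative degree-$1$ Cartier divisor necessarily contained in $C^{\#}_{T}$ --- a nonzero section of an invertible sheaf cannot vanish at a non-smooth point of a fibre, by Proposition \ref{non-smooth genus one curve} --- which furnishes an inverse to $\iota$ on sheaves.)

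Finally, transport the structure: setting $\mu:=\iota^{-1}\circ m\circ(\iota\times\mathrm{id})$, where $m\colon\mathrm{Pic}^{1}_{C/K}\times_{K}\mathrm{Jac}(C)\to\mathrm{Pic}^{1}_{C/K}$ is the tensor action, $\mu$ is a $K$-action of $\mathrm{Jac}(C)$ on $C^{\#}$ (since $m$ is an action and $\iota$ is a $K$-isomorphism), and the map $C^{\#}\times_{K}\mathrm{Jac}(C)\to C^{\#}\times_{K}C^{\#}$, $(p,P)\mapsto(p,\mu(p,P))$, is obtained from the torsor isomorphism of $\mathrm{Pic}^{1}_{C/K}$ by conjugation with $\iota\times\mathrm{id}$ and $\iota\times\iota$, hence is itself a $K$-isomorphism. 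Therefore $C^{\#}$ is a torsor for $\mathrm{Jac}(C)/K$.
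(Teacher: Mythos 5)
Your proof is correct, and it takes a genuinely different route from the paper's. The paper fixes a point $p_{0}\in C^{\#}(K^{s})$, uses Lemma \ref{lci curve} to produce an isomorphism $\phi_{p_{0}}\colon C^{\#}_{K^{s}}\to\mathrm{Jac}(C)_{K^{s}}$ over the separable closure, defines the action by $\mu(p,P)=\phi^{-1}(\phi(p)+P)$ there, and then descends both the action and the torsor isomorphism to $K$ by checking $\mathrm{Gal}(K^{s}/K)$-equivariance on points by hand. You never leave $K$: you identify $C^{\#}$ with $\mathrm{Pic}^{1}_{C/K}$ via the Abel--Jacobi morphism and transport the tautological $\mathrm{Pic}^{0}$-torsor structure. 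Your approach buys two things: it eliminates the Galois-descent bookkeeping entirely, since the Picard scheme and the Abel--Jacobi map are already objects over $K$; and it is more careful precisely where the paper is terse --- the paper passes from bijectivity of $\phi$ on $\overline{K}$-points to ``$\phi$ is an isomorphism of curves,'' which in characteristic $p$ needs a separability input, and your \'etale computation (identifying $d\iota$ at $c$ with the coboundary $H^{0}(N_{c/C})\to H^{1}(C,\mathcal{O}_{C})$, an isomorphism of one-dimensional spaces by Riemann--Roch and $\omega_{C}\cong\mathcal{O}_{C}$) supplies exactly that input. The price is heavier machinery: representability of $\mathrm{Pic}_{C/K}$ (Theorem \ref{picard k}) and the deformation-theoretic identification of $d\iota$ with the connecting homomorphism, which is standard but deserves an explicit citation. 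Two small points to make explicit: the identification $\mathrm{Pic}^{1}(C_{\overline{K}})=\mathrm{Pic}^{1}_{C/K}(\overline{K})$ used in your point count follows from Proposition \ref{exact brauer} together with $\mathrm{Br}(\overline{K})=0$; and in your parenthetical alternative, the reason a degree-one relative Cartier divisor lands in $C^{\#}_{T}$ is cleaner than an appeal to Proposition \ref{non-smooth genus one curve} --- a degree-one effective Cartier divisor on a geometric fibre is a rational point whose maximal ideal is principal, hence a regular rational point, hence a smooth point.
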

\begin{proof}
Fix $p_{0} \in C^{\#}(\overline{K})$.
By \cite[Coro.~1.8.1 (4), p.13]{MI}, there is a $K^{s}$-morphism 
\[ \phi :  C^{\#}_{K^s} \rightarrow \mathrm{Jac}(C)_{K^s}. \]
By Lemma \ref{lci curve} and \cite[Thm.~8.1,\ p.192]{Milne J}, 
the induced map $\phi(\overline{K}) : C^{\#}(\overline{K}) \rightarrow \mathrm{Jac}(C_{\overline{K}})$ is an isomorphism of rational points,
so $\phi$ is an isomorphism of group varieties.\\
\indent Define a map
\[ \mu : C^{\#} \times \mathrm{Jac}(C) \rightarrow C^{\#} \ \ ; \ \ (p, P) \mapsto \phi^{-1}(\phi(p) + P) 
= p + \phi^{-1}(P). \]
Then $\mu$ is a group action of $\mathrm{Jac}(C)$ on $C^{\#}$ over $K^s$.\\
\indent (i) First, we prove $\mu$ is defined over $K$.
For all $p \in C^{\#}(K^{s})$, $P = [q] - [p_{0}] \in \mathrm{Jac}(C)(K^{s})$, 
$\sigma \in \mathrm{Gal}(K^{s}/K)$,
\begin{align*}
\mu(p, P)^{\sigma} 
&= (p + \phi^{-1}([q]-[p_{0}]))^{\sigma} 
= p^{\sigma} + q^{\sigma}
= p^{\sigma} + \phi^{-1}([q^{\sigma}] - [p_{0}])\\
&= p^{\sigma} + \phi^{-1}([q^{\sigma}] - [p_{0}^{\sigma}])
= \mu(p^{\sigma}, P^{\sigma}). 
\end{align*}
This shows that $\mu$ is defined over $K$.\\
\indent (ii) Next, we prove 
$C^{\#} \times \mathrm{Jac}(C) \rightarrow C^{\#}  \times C^{\#} \ ; \ (p, P) \mapsto 
(p, \phi^{-1}(\phi(p) + P))$
is a $K$-isomorphism. Indeed, 
for all $p, q \in C^{\#}(\overline{K})$, one has $\mu(p, P) = q$ if and only if $\phi^{-1}(\phi(p) + P) = q$. Thus, the only choice for $P$ is $P = \phi(q) - \phi(p)$.\\
\indent By (i) and (ii), $C^{\#}$ is a torsor for $\mathrm{Jac}(C)$.
\end{proof}

The following facts will be used for the study of relations between $f$ and $j$.

\begin{prop} \label{regular compactification}
Let $C$ be a regular genus $1$ curve over a field $K$.
Let $E$ be a \textit{regular compactification} of $\mathrm{Jac}(C)$, i.e. 
it is a projective regular $K$-curve containing as a dense open subset $\mathrm{Jac}(C)$.
\begin{enumerate}
\item There is a separable field extension $M/K$ with $C_{M} \cong E_{M}$.
In particular, $C_{\overline{K}} \cong E_{\overline{K}}$.
If $C^{\#}(K) \neq \emptyset$, then $C \cong E$.
\item $E$ is also a regular genus $1$ curve over $K$.
\item There is an isomorphism $\mathrm{Jac}(C) \cong \mathrm{Jac}(E)$.
\end{enumerate}
\end{prop}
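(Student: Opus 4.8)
The plan is to establish (i) first and deduce (ii) and (iii) from it. Set $J := \mathrm{Jac}(C)$; by Theorem \ref{abelian picard variety} (iii) this is a smooth connected commutative group $K$-variety of dimension $1$, and it is geometrically integral (a connected finite-type group scheme has the identity as a rational point, hence is geometrically connected, and smoothness gives geometric reducedness). By Proposition \ref{genus one curve} the smooth locus $C^{\#}$ is a torsor for $J/K$. Since $K(E)=K(J)$, the curve $E$ is the unique regular projective model of this function field — a regular projective curve over a field is determined by its function field — so ``the'' regular compactification is well defined; I will use this uniqueness repeatedly.

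For (i): since $C$ is geometrically integral it is geometrically reduced, so $K(C)/K$ is separable, the generic point of $C$ is smooth over $K$, and hence $C^{\#}$ is a dense open of $C$. Being a nonempty smooth $K$-variety it has a separable point, so there are a finite separable extension $L/K$ and a point $p_{0}\in C^{\#}(L)$ (\cite[Prop.~2.20, p.93]{Liu}). The base change $C^{\#}_{L}$ is a torsor for $J_{L}$ with an $L$-rational point, hence trivial: $C^{\#}_{L}\cong J_{L}$ over $L$. As $L/K$ is finite separable, $C_{L}$ and $E_{L}$ remain regular projective curves, with $C^{\#}_{L}=(C_{L})^{\#}$ dense in $C_{L}$ and $J_{L}$ dense in $E_{L}$; therefore $K(C_{L})\cong K(C^{\#}_{L})\cong K(J_{L})\cong K(E_{L})$, and uniqueness of the regular projective model forces $C_{L}\cong E_{L}$. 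Taking $M:=L$ and base changing along an embedding $M\hookrightarrow\overline{K}$ gives $C_{\overline{K}}\cong E_{\overline{K}}$.

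For (ii): $E$ is projective and regular by hypothesis; it is geometrically irreducible because its dense open $J$ is irreducible over $\overline{K}$, and geometrically reduced because $K(E)=K(J)$ is separably generated over $K$; and $p_{a}(E)=p_{a}(E_{\overline{K}})=p_{a}(C_{\overline{K}})=p_{a}(C)=1$ by (i) together with invariance of $\chi(\mathcal{O}_{-})$ under base change. Hence $E$ is a regular genus $1$ curve, so Proposition \ref{genus one curve} applies to $E$ and $E^{\#}$ is a torsor for $\mathrm{Jac}(E)/K$. For (iii) I claim $J=E^{\#}$ as open subschemes of $E$. Since $J$ is smooth, $J\subseteq E^{\#}$; for the reverse it is enough to see that the open subscheme $E^{\#}\setminus J$ of $E$ has no $\overline{K}$-point, i.e.\ that $J_{\overline{K}}=(E_{\overline{K}})^{\#}$. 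Both are dense open subcurves of $E_{\overline{K}}\cong C_{\overline{K}}$, and triviality over $\overline{K}$ of the torsor $C^{\#}$ identifies $(E_{\overline{K}})^{\#}\cong C^{\#}_{\overline{K}}\cong J_{\overline{K}}$; so $J_{\overline{K}}\hookrightarrow(E_{\overline{K}})^{\#}$ is an open immersion of smooth connected curves with abstractly the same smooth projective compactification, which removes equally many points and is therefore an equality. Thus $E^{\#}=J$, so the torsor $E^{\#}$ for $\mathrm{Jac}(E)$ has the $K$-rational point $e\in J(K)$ (the group identity) and is trivial: we get a $K$-isomorphism $\mathrm{Jac}(E)\cong E^{\#}=J=\mathrm{Jac}(C)$, and since it carries the identity of $\mathrm{Jac}(E)$ to $e$ and a smooth connected curve admits at most one group law with prescribed identity, it is an isomorphism of group varieties.

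The main obstacle is Step (iii), and within it the identification $E^{\#}=\mathrm{Jac}(C)$: one must exclude the possibility that $\mathrm{Jac}(C)$ sits as a \emph{proper} dense open of $E^{\#}$, which I handle over $\overline{K}$ by comparing the numbers of points at infinity of two abstractly isomorphic smooth curves. A secondary point is that $\mathrm{Jac}(C)$ need not be proper — exactly when $C$ is geometrically non-smooth, which by Proposition \ref{non-smooth genus one curve} forces $\mathrm{char}\,K\in\{2,3\}$ — so all of the torsor machinery above must be invoked in the generality of commutative group varieties, which is precisely how Proposition \ref{genus one curve} is stated; the closing ``unique group law'' remark uses rigidity of abelian varieties in the elliptic case and the classification of one-dimensional smooth connected affine algebraic groups otherwise.
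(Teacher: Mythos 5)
Your proof is correct and follows essentially the same route as the paper's: torsor triviality over a finite separable extension plus uniqueness of the regular projective model for (i), invariance of $p_{a}$ under base change for (ii), and for (iii) the identification $E^{\#}=\mathrm{Jac}(C)$ by comparing the (finite) complements over $\overline{K}$ followed by triviality of the torsor $E^{\#}$ for $\mathrm{Jac}(E)$. The only differences are cosmetic: you treat the geometrically regular and non-regular cases uniformly and count "equally many missing points" abstractly where the paper invokes Proposition \ref{non-smooth genus one curve} to pin the count at exactly one, and you add the (welcome) observation that the resulting isomorphism respects the group structures.
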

\begin{proof}
(i) By Proposition \ref{genus one curve}, $C^{\#}$ is a torsor for $\mathrm{Jac}(C)$, so
there is a finite separable field extension $M/K$ such that 
$C^{\#}_{M} \cong \mathrm{Jac}(C)_{M}$. So $C_{M} \sim_{birat} E_{M}$.
By assumption, $C$ is regular.
Since $M/K$ is separable, $C_{M}$ is also regular.
Similarly, $E_{M}$ is regular.
Since $C_{M}, E_{M}$ are regular proper curves, 
$C_{M} \cong E_{M}$.
By base change, 
$C_{\overline{K}} \cong E_{\overline{K}}$.
If $C^{\#}(K) \neq \emptyset$, then $C^{\#} \cong \mathrm{Jac}(C)$ 
by Proposition \ref{PHS}. Since $M = K$, we have $C \cong E$.
\indent (ii) By assumption, $p_{a}(C) = 1$.
By \cite[Def.~3.19,\ p.279]{Liu}, $p_{a}(C_{\overline{K}}) = 1$.
By (i), $C_{\overline{K}} \cong E_{\overline{K}}$.
By the same fact, $p_{a}(E) = p_{a}(E_{\overline{K}}) = 1$.\\
\indent (iii) If $C_{\overline{K}}$ is regular, the assertion is clear.
Assume that $C_{\overline{K}}$ is non-regular.\\
\indent (iii-i) First, we prove $\mathrm{Jac}(C) \cong E^{\#}$.
By the definition of $E^{\#}$, we get $\mathrm{Jac}(C) \subset E^{\#}$.
By (i), $p_{a}(E) = p_{a}(C) = 1$.
By Proposition \ref{non-smooth genus one curve} (i), we have
\begin{equation} \label{order of elliptic curves 1}
 | E(\overline{K}) \setminus E^{\#}(\overline{K}) | = | C(\overline{K}) \setminus C^{\#}(\overline{K}) | = 1.
\end{equation}
Here, $| - |$ denote the order of $-$.
As in (i),  $C^{\#}_{\overline{K}} \cong \mathrm{Jac}(C)_{\overline{K}}$ and 
$C_{\overline{K}} \cong E_{\overline{K}}$, so 
\begin{equation} \label{order of elliptic curves 2}
 | E(\overline{K}) \setminus \mathrm{Jac}(C)(\overline{K}) | = | C(\overline{K}) \setminus C^{\#}(\overline{K}) | = 1.
 \end{equation}
By (\ref{order of elliptic curves 1}) and (\ref{order of elliptic curves 2}), we heve  
\[ | E(\overline{K}) \setminus \mathrm{Jac}(C)(\overline{K}) | = | E(\overline{K}) \setminus E^{\#}(\overline{K}) | = 1,  \]
and hence we get $E^{\#} \cong \mathrm{Jac}(C)$.\\
\indent (iii-ii) Next, we prove $\mathrm{Jac}(E) \cong E^{\#}$.
By Proposition \ref{genus one curve}, $E^{\#}$ is a torsor for $\mathrm{Jac}(E)$.
Since $E^{\#}(K) \neq \emptyset$, $E^{\#}$ is the trivial torsor for $\mathrm{Jac}(E)$.
So $E^{\#} \cong \mathrm{Jac}(E)$.\\
\indent Combining (iii-i)  and (iii-ii) , we get $\mathrm{Jac}(C) \cong E^{\#} \cong \mathrm{Jac}(E)$.
\end{proof}
To prove Theorem \ref{Main 3}, we need the following:

\begin{prop} \label{product of elliptic}
Let $A$ be an abelian variety over a field $K$ of dimension $d$ 
and $P_{n} \in A(K)$ a torsion point of order $n \in \mathbb{Z}_{> 0}$.
\begin{enumerate}
\item Let $t : A \rightarrow A$ be the translation by $P_{n}$. Then 
$\Gamma_{t} = \Delta_{A}$ in $\mathrm{CH}_{d}(A \times A)$.
\item Let $T/K$ be a torsor for $A$ with $T(K) \neq \emptyset$.
Let $t : T \rightarrow T \ ; \ p \mapsto p + P_{n}$ be the action by $P_{n}$. Then $\Gamma_{t} = \Delta_{T}$ in $\mathrm{CH}_{d}(T \times T)$. 
\end{enumerate}
\end{prop}
\begin{proof}
(i) This is the same proof for $K = \mathbb{C}$ as in \cite[Lem.~2.1, p.573]{JY}. Let $n : A \rightarrow A$ be the multiplication by $n$.
Set $n_{A} : = (id_{A} \times n) : A \times A \rightarrow A \times A$, 
and $\mathrm{CH}^{d}_{s}(A \times A) : = 
\{ \xi \in \mathrm{CH}^{d}(A \times A) \ | \ n_{A}^{*}(\xi) = n^{2d-s}\xi \ 
\text{for all $n$} \}$.
By \cite[Coro.~2.21]{DM}, we have 
\[ \mathrm{CH}^{d}(A \times A) \cong \oplus_{s = p''}^{p'}\mathrm{CH}^{d}_{s}(A \times A). \]
This induces an isomorphism
\begin{equation} \label{n}
 n_{A}^{*} : \mathrm{CH}^{d}(A \times A) \rightarrow \mathrm{CH}^{d}(A \times A) 
\end{equation}
By $n \circ t = n$ in $\mathrm{Isom}(A)$,
we have $n_{A}^{*} \Gamma_{t} = n_{A}^{*} \Delta_{A}$,
so get $\Gamma_{t} = \Delta_{A}$ by (\ref{n}).\\
\indent (ii) Since $T(K) \neq \emptyset$, $T$ is an abelian $K$-variety.
Let $n : T \rightarrow T$ be the multiplication by $n$.
Regarding $t$ as an isomorphism of trivial torsors for $T$, we have $n \circ t = n$ in $\mathrm{Isom}(T)$. By the same argument of (i), $\Gamma_{t} = \Delta_{T}$.
\end{proof}

\section{Chow motives of torsors under abelian varieties}
The purpose of this section is to prove the following:
\begin{thm} \label{motive of smooth genus one}
Let $K$ be an arbitrary field.
Let $A$ be an abelian variety over $K$ of dimension $d$.
Let $T$ be a torsor over $K$ under $A$.\footnote{First, the author only proved Theorem \ref{motive of smooth genus one} for the case of smooth genus $1$ curves under the guidance of Prof. Hanamura. Thanks to the referee's suggestion, he proved the above theorem.}
Then there is an isomorphism 
\[ h(T) \cong h(A) \] 
in the category $\mathrm{CH}\mathcal{M}(K, \mathbb{Q})$ of Chow motives.
\end{thm}
\textbf{Proof of Theorem \ref{motive of smooth genus one}.} \ \indent \\
By definition, we have to prove$:$
there are elements $a \in \mathrm{CH}_{d}(T \times A)$, 
$b \in \mathrm{CH}_{d}(A \times T)$ such that
\[ 
\begin{cases}
a \circ b = \Delta_{A}  &  \text{in \ \ $\mathrm{CH}_{d}(A \times A)$} \\
b \circ a = \Delta_{T} &  \text{in \ \ $\mathrm{CH}_{d}(T \times T)$}
\end{cases}
\]

\textbf{Step 1. Construct correspondences on the varieties $A$ and $T$.}
By Proposition \ref{existence of a torsion},
there are a finite Galois extension $L/K$ and a point $p_{0} \in T(L)$ such that 
\[ p_{0} - p_{0}^{\sigma} \in A(L)_{tor} \ \ \ \text{for all} \ \ \ \sigma \in \mathrm{Gal}(L/K). \]
Let $n$ be the degree of $L/K$ and let $G : = \mathrm{Gal}(L/K)$.
By definition of a torsor,
there is an isomorphism of $L$-varieties
\[ \phi = \phi_{p_{0}}  : T_{L} \rightarrow A_{L}. \]
Let $p : T_{L} \rightarrow T$ and $q : A_{L} \rightarrow A$ be the projections.
Let 
\[ \Gamma_{\phi} \in \mathrm{CH}_{1}(T_{L} \times_{L} A_{L}), \ \ \ 
\Gamma_{p} \in \mathrm{CH}_{1}(T_{L} \times_{K} T), \ \ \text{and} \ \ 
\Gamma_{q} \in \mathrm{CH}_{1}(A_{L} \times_{K} A). \]
be the graph of $\phi$, $p$, and $q$, respectively.
We define
\begin{align*}
a &: =  (1/n) \ \Gamma_{q} \circ \Gamma_{\phi} \circ {}^{t}\Gamma_{p} 
\in \mathrm{CH}_{1}(T \times A) \\
b &: = (1/n) \ \Gamma_{p} \circ {}^{t}\Gamma_{\phi} \circ {}^{t}\Gamma_{q} 
\in \mathrm{CH}_{1}(A \times T)
\end{align*} 
\indent \textbf{Step 2. Translations.} To prove $a \circ b = \Delta_{A}$ and $b \circ a = \Delta_{T}$, we prove an elementary lemma. For any $\sigma \in G$, let $\sigma : \mathrm{Spec}(L) \rightarrow \mathrm{Spec}(L)$ denote the induced morphism.\\
\indent For any $\sigma \in G$, we define
\begin{align*}
\phi^{\sigma} \circ \phi^{-1} &: = (\mathrm{id}_{A} \times_{K} \sigma) \circ \phi \circ 
(\mathrm{id}_{T} \times_{K} \sigma^{-1}) \circ \phi^{-1} \in \mathrm{Isom}(A_{L}/L) \\
(\phi^{-1})^{\sigma} \circ \phi &: = (\mathrm{id}_{T} \times_{K} \sigma) \circ \phi^{-1} \circ 
(\mathrm{id}_{A} \times_{K} \sigma^{-1}) \circ \phi \in \mathrm{Isom}(T_{L}/L) 
\end{align*}
For short, we denote by $p_{0} - p_{0}^{\sigma} : A_{L} \rightarrow A_{L}$ the translation by 
$p_{0} - p_{0}^{\sigma} \in A(L)_{tor}$ and denote by $p_{0}^{\sigma}  - p_{0} : T_{L} \rightarrow T_{L}$ the action by $p_{0}^{\sigma} - p_{0} \in A(L)_{tor}$.

\begin{lem} \label{aut and translation} For any $\sigma \in G$, we have
\[ (\mathrm{i}) \ \phi^{\sigma} \circ \phi^{-1}  = p_{0} - p_{0}^{\sigma} \ \ \text{in} \ \ \mathrm{Isom}(A_{L}/L). \ \ \ \ \ \ (\mathrm{ii}) \ (\phi^{-1})^{\sigma} \circ \phi = p_{0}^{\sigma} - p_{0} \ \ \text{in} \ \ \mathrm{Isom}(T_{L}/L). \]
\end{lem}
\begin{proof}
(i) It suffices to prove $(\phi^{\sigma} \circ \phi^{-1})(K^{s})  = (p_{0} - p_{0}^{\sigma})(K^{s})$ 
in $\mathrm{Isom}(A(K^{s}))$.\\
The morphisms $\phi$ and $\phi^{-1}$ induce isomorphisms of $K^{s}$-rational points
\begin{align*} 
\phi(K^{s}) : T(K^{s}) \rightarrow  A(K^{s}) 
\ \ &; \ \ p \mapsto p - p_{0} \\
\phi^{-1}(K^{s}) : A(K^{s}) \rightarrow T(K^{s}) 
\ \ &; \ \ P \mapsto p_{0} + P.
\end{align*}
Thus, $\phi^{\sigma} \circ \phi^{-1}$ induces an isomorphism of $K^{s}$-rational points
\[
A(K^{s}) \ni P  \overset{\phi^{-1}(K^{s})}\mapsto p_{0} + P 
\overset{\sigma^{-1}}\mapsto p_{0}^{\sigma^{-1}} + P^{\sigma^{-1}}  
\overset{\phi(K^{s})}\mapsto (p_{0}^{\sigma^{-1}} + P^{\sigma^{-1}}) - p_{0} \overset{\sigma}\mapsto
(p_{0} + P) - p_{0}^{\sigma} 
\in A(K^{s})
\]
Let $O \in A$ be the origin. We have in $A(K^{s})$
\[
(p_{0} + P) - p_{0}^{\sigma} 
= (p_{0} + P) - (p_{0}^{\sigma} + O) 
= (p_{0} - p_{0}^{\sigma}) + P - O = (p_{0} - p_{0}^{\sigma}) + P. \]
Therefore, we get 
$\phi^{\sigma} \circ \phi^{-1} = p_{0}-p_{0}^{\sigma}$ in $\mathrm{Isom}(A_{L}/L)$.\\
\indent (ii) Similarly, $(\phi^{-1})^{\sigma} \circ \phi$ induces
an isomorphism of $K^{s}$-rational points
\[
T(K^{s}) \ni p  \overset{\phi(K^{s})}\mapsto p - p_{0} 
\overset{\sigma^{-1}}\mapsto p^{\sigma^{-1}} - p_{0}^{\sigma^{-1}}  
\overset{\phi^{-1}(K^{s})}\mapsto p_{0} + (p^{\sigma^{-1}} - p_{0}^{\sigma^{-1}})  \overset{\sigma}\mapsto
p_{0}^{\sigma} + (p - p_{0}^{\sigma}) 
\in T(K^{s})
\]
As the action $T \times A \rightarrow T$ is equal to $\phi^{-1} \circ +_{A}  \circ \phi \times \mathrm{id_{A}}$, we have in $T(K^{s})$
\[ p_{0}^{\sigma} + (p - p_{0}) = p_{0} + \phi(p)
= \phi^{-1}(\phi(p_{0}^{\sigma}) + \phi(p))
= \phi^{-1}(\phi(p) + \phi(p_{0}^{\sigma})) = p + \phi(p_{0}^{\sigma})
= p + (p_{0}^{\sigma} - p_{0}). \] 
Therefore, we get $(\phi^{-1})^{\sigma} \circ \phi = p_{0}^{\sigma} - p_{0}$ in $\mathrm{Isom}(T_{L}/L)$.
\end{proof}

\textbf{Step~3. Calculate the correspondences on the varieties $A$ and $T$.}\\
First, we prove $a \circ b = \Delta_{A}$. Let
\[ 
\xymatrix{
& & \\
A_{L} \times_{L} A_{L} \ar@/^18pt/[rr]^{q' \times q'} \ar[r] & 
A_{L} \times_{K} A_{L} \ar[r]_{q \times q} & A \times A.
}
\]
In $\mathrm{CH}_{d}(A \times A)$,  
\begin{align*}
a \circ b &= (1/n^{2}) \ (\Gamma_{q} \circ \Gamma_{\phi} \circ {}^{t}\Gamma_{p}) 
\circ (\Gamma_{p} \circ {}^{t}\Gamma_{\phi} \circ {}^{t}\Gamma_{q}) 
&& \text{by $a = \Gamma_{q} \circ \Gamma_{\phi} \circ {}^{t}\Gamma_{p}$ and 
$b = \Gamma_{p} \circ {}^{t}\Gamma_{\phi} \circ {}^{t}\Gamma_{q}$} \\
&= (1/n^{2}) \ \Gamma_{q} \circ \Gamma_{\phi} \circ \sum_{\sigma^{-1} \in G}\Gamma_{\sigma^{-1}} 
\circ {}^{t}\Gamma_{\phi} \circ {}^{t}\Gamma_{q}  
&& \text{by Proposition \ref{Galois extension correspondence} (ii)} \\
&= (1/n^{2}) \sum_{\sigma^{-1} \in G}\ \Gamma_{q} \circ \Gamma_{\sigma} \circ \Gamma_{\phi} \circ \Gamma_{\sigma^{-1}} \circ {}^{t}\Gamma_{\phi} \circ {}^{t}\Gamma_{q}
&&\text{by $q \circ \sigma = q$} \\
&= (1/n^{2}) \sum_{\sigma \in G} \ \Gamma_{q} \circ 
\Gamma_{\sigma \circ \phi \circ \sigma^{-1} \circ \phi^{-1}} \circ {}^{t}\Gamma_{q}  \\
&= (1/n^{2}) \sum_{\sigma \in G} \ \Gamma_{q} \circ \Gamma_{p_{0}-p_{0}^{\sigma}} \circ {}^{t}\Gamma_{q} 
&& \text{by Lemma \ref{aut and translation} (i)} \\
&= (1/n^{2}) \sum_{\sigma \in G} \ (q \times q)_{*}(\Gamma_{p_{0}-p_{0}^{\sigma}})
&& \text{by Lieberman's lemma} \\
&= (1/n^{2}) \sum_{\sigma \in G} \ (q' \times q')_{*}(\Gamma_{p_{0}-p_{0}^{\sigma}}) \\
&= (1/n^{2}) \ (q' \times q')_{*}(n\Delta_{A_{L}}) 
&& \text{by Proposition \ref{product of elliptic} (i)} \\
&= \Delta_{A} && \text{by the proper push-forward of the cycle}
\end{align*}
\indent Next, we prove $b \circ a = \Delta_{T}$.
Indeed, $T_{L}$ is an abelian $L$-variety since $T(L) \neq \emptyset$.
Replacing $\phi^{\sigma} \circ \phi^{-1}$ with $(\phi^{-1})^{\sigma} \circ \phi$ (Lemma \ref{aut and translation} (i) with (ii)) in the previous discussion, we get $b \circ a = \Delta_{T}$ in $\mathrm{CH}_{d}(T \times T)$.
Hence, we get $h(T) \cong h(A)$ in $\mathrm{CH}\mathcal{M}(K, \mathbb{Q})$.
\begin{rem}
The statement of Theorem \ref{motive of smooth genus one} is mentioned in \cite[Rem.~1.6, p.62]{FV}. 
\end{rem}

\section{Genus one fibrations and Jacobian fibrations}
In this section, let $k$ be an algebraically closed field of arbitrary characteristic.\\
The purpose of this section is to prove the following$:$
\begin{thm} \label{aim of jacobian}
Let $f : X \rightarrow C$ be a minimal genus $1$ fibration over $k$ and 
$j : J \rightarrow C$ the Jacobian fibration of $f$. Then there are isomorphisms of Chow motives
\begin{enumerate}
\item $h_{1}(X) \cong h_{1}(J)$, $h_{3}(X) \cong h_{3}(J)$.
\item  $h_{2}^{alg}(X) \cong h_{2}^{alg}(J)$.
\end{enumerate}
\end{thm}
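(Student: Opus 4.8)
The plan is to reduce both assertions to two inputs supplied elsewhere in this section: the isogeny statement of Proposition~\ref{Main 2}, i.e. $(\mathrm{Pic}^0_{X/k})_{red} \sim_{isog} (\mathrm{Pic}^0_{J/k})_{red}$ and $\mathrm{Alb}_{X/k} \sim_{isog} \mathrm{Alb}_{J/k}$, together with the invariant relation $\rho(X) = \rho(J)$ from \cite{Cossec and Dolgachev}. Granting these, part (ii) is immediate: by Proposition~\ref{def of trans} one has $h_2^{alg}(S) \cong \mathbb{L}^{\oplus\rho(S)}$ for every smooth projective surface $S$, hence $h_2^{alg}(X) \cong \mathbb{L}^{\oplus\rho(X)} = \mathbb{L}^{\oplus\rho(J)} \cong h_2^{alg}(J)$.

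For part (i), the key auxiliary fact is that isogenous abelian $k$-varieties have isomorphic Chow--K\"unneth components: if $A \sim_{isog} B$ then $h_m(A) \cong h_m(B)$ in $\mathrm{CH}\mathcal{M}(k,\mathbb{Q})$ for all $m$. I would prove this from the canonical Chow--K\"unneth decomposition of an abelian variety, which exists in arbitrary characteristic (Shermenev, Deninger--Murre, K\"unnemann), is functorial for homomorphisms of abelian varieties, and has the property that multiplication by a nonzero integer on an abelian variety induces an automorphism of each of its Chow--K\"unneth components with $\mathbb{Q}$-coefficients. Choosing an isogeny $\phi : A \to B$ and a dual isogeny $\psi : B \to A$ with $\psi\circ\phi$ and $\phi\circ\psi$ equal to multiplication maps, the induced morphisms of motives $h(\phi)$ and $h(\psi)$ respect the decompositions and become mutually inverse up to these automorphisms; thus $h_m(\phi) : h_m(B) \to h_m(A)$ is an isomorphism for every $m$.

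Combining this with Proposition~\ref{Picard motive}(ii$'$) finishes part (i). For $i=1$ that proposition gives $h_1(X) \cong h_1((\mathrm{Pic}^0_{X/k})_{red})$ and $h_1(J) \cong h_1((\mathrm{Pic}^0_{J/k})_{red})$, and since the two Picard varieties are isogenous by Proposition~\ref{Main 2} we conclude $h_1(X) \cong h_1(J)$. For $i=3$, put $g := \dim\mathrm{Alb}_{X/k}$; by Proposition~\ref{Main 2} the Albanese varieties are isogenous, so $\dim\mathrm{Alb}_{J/k} = g$ as well, and Proposition~\ref{Picard motive}(ii$'$) gives $h_3(S) \cong h_{2g-1}(\mathrm{Alb}_{S/k}) \otimes \mathbb{L}^{2-g}$ for $S \in \{X,J\}$. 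The auxiliary fact applied with $m = 2g-1$ gives $h_{2g-1}(\mathrm{Alb}_{X/k}) \cong h_{2g-1}(\mathrm{Alb}_{J/k})$, and tensoring with $\mathbb{L}^{2-g}$ yields $h_3(X) \cong h_3(J)$.

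In this way the theorem becomes formal once Proposition~\ref{Main 2} and the equality $\rho(X) = \rho(J)$ are available, so the genuine obstacle is really the construction of the two isogenies in Proposition~\ref{Main 2}. I expect those to come from comparing the Picard and Albanese varieties of $X$ and of $J$ via the explicit relationship between a genus~$1$ fibration and its Jacobian fibration --- equality of Betti numbers, the classification of the multiple fibres of $f$, and the Shioda--Tate type description of the N\'eron--Severi and Picard groups following \cite{Cossec and Dolgachev} and \cite{Cossec and Dolgachev and Liedtke and Kondo} --- with the quasi-elliptic case requiring a separate verification.
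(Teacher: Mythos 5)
Your proposal is correct and follows essentially the same route as the paper: part (ii) from $\rho(X)=\rho(J)$ together with $h_2^{alg}(S)\cong\mathbb{L}^{\oplus\rho(S)}$, and part (i) from Proposition~\ref{Picard isog} combined with Proposition~\ref{Picard motive}(ii$'$) and the fact (Deninger--Murre) that isogenous abelian varieties have isomorphic Chow--K\"unneth components. Your extra care in checking that an isogeny induces isomorphisms on each individual component $h_m$ is a reasonable elaboration of the paper's one-line citation of \cite{DM}, not a different argument.
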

The results of this section are based on 
\cite{Cossec and Dolgachev} and \cite{Cossec and Dolgachev and Liedtke and Kondo}.

\subsection{Fibrations}
First of all, we fix the terminologies, so begin with:
\begin{definition}
Let $X, Y \in \mathcal{V}(k)$.
A morphism $f : X \rightarrow Y$ is a \textit{fibration} if it is a proper surjective morphism  
such that $f_{*}\mathcal{O}_{X} \cong \mathcal{O}_{Y}$.
\end{definition}

\begin{prop} \label{pullback injective} 
For a fibration $f : X \rightarrow Y$, $f^{*} : \mathrm{Pic}(Y) \rightarrow \mathrm{Pic}(X)$ is injective.
\end{prop}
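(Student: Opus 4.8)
The plan is to use the defining property $f_{*}\mathcal{O}_{X} \cong \mathcal{O}_{Y}$ of a fibration together with the projection formula. Since $f^{*} : \mathrm{Pic}(Y) \to \mathrm{Pic}(X)$ is a homomorphism of abelian groups, it suffices to show that its kernel is trivial: if $L \in \mathrm{Pic}(Y)$ satisfies $f^{*}L \cong \mathcal{O}_{X}$, I want to conclude $L \cong \mathcal{O}_{Y}$.

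First I would apply the functor $f_{*}$ to the isomorphism $f^{*}L \cong \mathcal{O}_{X}$, which gives $f_{*}f^{*}L \cong f_{*}\mathcal{O}_{X} \cong \mathcal{O}_{Y}$. On the other hand, since $L$ is invertible, in particular locally free of finite rank, and $f$ is proper (hence quasi-compact and quasi-separated), the projection formula yields $f_{*}f^{*}L \cong L \otimes_{\mathcal{O}_{Y}} f_{*}\mathcal{O}_{X} \cong L \otimes_{\mathcal{O}_{Y}} \mathcal{O}_{Y} \cong L$. Combining the two isomorphisms gives $L \cong \mathcal{O}_{Y}$, so $\ker(f^{*}) = 0$ and $f^{*}$ is injective.

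There is essentially no obstacle here; the only points worth keeping in mind are that the projection formula is being applied with a locally free sheaf of finite rank, so it is valid in this generality, and that the resulting composite isomorphism $L \cong f_{*}f^{*}L \cong \mathcal{O}_{Y}$ is one of $\mathcal{O}_{Y}$-modules, which is precisely what it means for the classes to coincide in $\mathrm{Pic}(Y)$. One could equivalently argue by noting that the adjunction unit $\mathcal{O}_{Y} \to f_{*}f^{*}\mathcal{O}_{Y} = f_{*}\mathcal{O}_{X}$ is an isomorphism and then tensoring with $L$, but the projection-formula route is the most direct; note also that surjectivity of $f$ plays no role in the argument.
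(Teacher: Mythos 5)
Your proof is correct and follows exactly the same route as the paper: take $L$ in the kernel of $f^{*}$ and use the projection formula together with $f_{*}\mathcal{O}_{X} \cong \mathcal{O}_{Y}$ to obtain $L \cong f_{*}f^{*}L \cong f_{*}\mathcal{O}_{X} \cong \mathcal{O}_{Y}$. The paper merely writes the same chain of isomorphisms in a single line.
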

\begin{proof}
Let $\mathcal{L} \in \mathrm{Pic}(Y)$ with $f^{*}\mathcal{L} \cong \mathcal{O}_{X}$.
By projection formula,
\[ \mathcal{L} \cong  \mathcal{L}  \otimes \mathcal{O}_{Y} \cong  \mathcal{L} \otimes f_{*}\mathcal{O}_{X} 
\cong  f_{*}(f^{*}\mathcal{L} \otimes \mathcal{O}_{X})  \cong  f_{*}(f^{*}\mathcal{L}) \cong  f_{*}\mathcal{O}_{X}  \cong  \mathcal{O}_{Y}. \]
\end{proof}

\indent Let $f : X \rightarrow C$ be a fibration from a surface to a curve.
Then $f$ is flat, and all fibers of $f$ are connected
(e.g. \cite[III. Prop.~9.7\ and\ Coro.~11.3]{Hartshorne}).
We consider a fiber $X_{c}$ of $f$ over a closed point $c \in C$
as an effective Cartier divisor with the sheaf of ideals $\mathcal{O}_{X}(-X_{c}) = f^{*}(\mathcal{O}_{C}(-c))$.  
Since $X$ is regular, we can identify $X_{c}$ with corresponding Weil divisor, and write the fiber
\[ X_{c} = \sum_{i = 1}^{r}m_{i} E_{i} \]
as the finite sum of its irreducible components.
The number $m_{i}$ is called the \textit{multiplicity} of the component $E_{i}$,
the number $m_{c} : = \mathrm{gcd}(m_{1}, \cdot \cdot \cdot , m_{r})$
is called \textit{multiplicity} of $X_{c}$, and 
the fiber $X_{c}$ is called \textit{multiple} (resp. \textit{non-multiple}) if $m_{c} > 1$ (resp. $m_{c} = 1$).\\
For any fiber $X_{c}$, we denote by $\overline{X_{c}}$ the divisor $1/m_{c} \cdot X_{c}$.
Then $X_{c} = m_{c} \overline{X_{c}}$.\\
\indent Let $S$ be a surface.
Let $D$ and $D'$ be divisors on $S$.
We denote by $(D \cdot D')$ the intersection number of $D$ and $D'$.
The divisor $D$ is \textit{numerically equivelent} to $D'$ $($for short $D \equiv D'$$)$ 
if $(D \cdot C) = (D' \cdot C)$ for any curve $C$ on $S$.

\subsection{Genus one fibrations}
Let $k$ be an algebraically closed field.
Let 
\[ f : X \rightarrow C\]
be a minimal genus 1 fibration from a surface over $k$.
Since $f$ is a fibration, it is proper and flat.
By general properties of morphism of schemes, 
all geometric fibers are geometrically-connected and there is a dense open subset $U$ of $C$
such that an \textit{elliptic} (resp. a \textit{quasi-elliptic}) fibration $f$ is 
\textit{smooth} (resp. \textit{geometrically-integral}) over $U$.
Let $\Sigma$ be the finite set of closed points $c \in C$ such that 
the scheme-theoretical fiber $X_{c}$ is \textit{not-smooth} (resp. \textit{not-integral}) 
if $f$ is \textit{elliptic} (resp. $f$ is \textit{quasi-elliptic}).
We call $X_{c}$, $c \in \Sigma$, the \textit{singular fibers} of $f$.\\
\indent The following formula is well-known and is very useful$:$

\begin{thm} \label{Canonical bundle formula}
$($canonical bundle formula$)$.
Let $f : X \rightarrow C$ be a genus $1$ fibration.\\
Let $R^{1}f_{*}\mathcal{O}_{X} = \mathcal{L} \oplus T$ be the decomposition,
with $L$ an invertible sheaf on $C$ and $T$ an $\mathcal{O}_{C}$-module of finite length.
Then
\[ \omega_{X} \cong f^{*}(\mathcal{L}^{-1} \otimes \omega_{C}) \otimes 
\mathcal{O}_{X}(\sum_{i = 1}^{r}n_{i} \overline{X_{c_{i}}}),\]
where
\begin{enumerate}
\item $m_{i} \overline{X_{c_{i}}} =X_{c_{i}}$ $(c_{i} \in C)$ are all the multiple fibers of $f$,
\item  $0 \leq n_{i} < m_{i}$,
\item  $n_{i} = m_{i} - 1$ if $c_{i}$ is not supported in $T$, and
\item  $\mathrm{deg}(\mathcal{L}^{-1} \otimes \omega_{C}) = 2  p_{a}(C) - 2 + \chi(\mathcal{O}_{X}) + \mathrm{length}(T)$.
\end{enumerate}
\end{thm}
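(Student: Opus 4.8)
The plan is to reduce everything to a computation of the relative dualizing sheaf $\omega_{X/C} := \omega_X \otimes f^*\omega_C^{-1}$, which is an invertible sheaf on the smooth surface $X$: the fibers of $f$ are Cartier divisors on the smooth (hence Gorenstein) surface $X$, so they are Gorenstein curves, $\omega_{X/C}$ is the relative dualizing sheaf, and it commutes with base change. Since $\omega_X \cong \omega_{X/C}\otimes f^*\omega_C$, it suffices to prove
\[ \omega_{X/C} \cong f^*(\mathcal{L}^{-1}) \otimes \mathcal{O}_X\Big(\textstyle\sum_i n_i \overline{X_{c_i}}\Big) \]
with the $n_i$ and conditions (i)--(iv) as stated. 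First I would compute $f_*\omega_{X/C}$ by relative Grothendieck duality for $f$ of relative dimension $1$: $Rf_*\omega_{X/C}[1] \cong R\mathcal{H}om_C(Rf_*\mathcal{O}_X, \mathcal{O}_C)$. Since $C$ is a smooth curve, the complexes $Rf_*\mathcal{O}_X$ and $Rf_*\omega_{X/C}$ are formal (their truncation triangles split, as $\mathrm{Ext}^2$ over $C$ vanishes). Using $f_*\mathcal{O}_X = \mathcal{O}_C$ and $R^1f_*\mathcal{O}_X = \mathcal{L}\oplus T$, comparing $\mathcal{H}^{-1}$ on both sides gives $f_*\omega_{X/C} \cong \mathcal{L}^{-1}$, while $\mathcal{H}^0$ gives $R^1f_*\omega_{X/C}\cong \mathcal{O}_C \oplus \mathcal{E}xt^1_C(T,\mathcal{O}_C)$, whose torsion part has length $\mathrm{length}(T)$ --- the first appearance of $T$.

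Next I would produce the vertical divisor. The adjunction map $f^*f_*\omega_{X/C} = f^*\mathcal{L}^{-1} \to \omega_{X/C}$ restricts on the generic fiber to the identity $\mathcal{O}_{X_\eta}\to \mathcal{O}_{X_\eta}$, because $\omega_{X/C}|_{X_\eta} = \omega_{X_\eta}\cong\mathcal{O}_{X_\eta}$ by the Gorenstein/genus-$1$ Proposition and $f_*\omega_{X/C}\otimes k(\eta) = H^0(X_\eta,\omega_{X_\eta}) = k(\eta)$ by flat base change along $\mathrm{Spec}\,k(\eta)\to C$. Hence it is a nonzero morphism of invertible sheaves, so $\omega_{X/C}\otimes f^*\mathcal{L} \cong \mathcal{O}_X(D)$ for an effective divisor $D$ not meeting $X_\eta$; thus $D = \sum_c D_c$ is vertical, and it remains to identify each $D_c$.

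For the fiber analysis, write $X_c = \sum_\alpha m_\alpha E_\alpha$. For every component, $\omega_{X/C}|_{E_\alpha} = \omega_X|_{E_\alpha} \cong \omega_{E_\alpha}\otimes\mathcal{O}_{E_\alpha}(-E_\alpha)$ by adjunction (using $f^*\omega_C|_{E_\alpha}=\mathcal{O}_{E_\alpha}$), so $\deg(\omega_{X/C}|_{E_\alpha}) = (2p_a(E_\alpha)-2) - (E_\alpha\cdot E_\alpha)$; by the classification of singular fibers of a minimal genus $1$ fibration (each component of each fiber is either a smooth rational curve with self-intersection $-2$, or, when $X_c = m\overline{X_c}$ with $\overline{X_c}$ smooth of genus $1$, a curve with $p_a=1$ and self-intersection $0$) this degree is $0$ for all $\alpha$. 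Therefore $(D\cdot E_\alpha) = \deg(\omega_{X/C}|_{E_\alpha}) = 0$ for all $\alpha$, so by Zariski's lemma (the intersection form on a fiber is negative semidefinite with radical $\mathbb{Q}\cdot\overline{X_c}$) each $D_c$ is a rational multiple of $\overline{X_c}$; being effective and integral it equals $n_c\overline{X_c}$ with $n_c\in\mathbb{Z}_{\ge 0}$, and after absorbing $\lfloor n_c/m_c\rfloor$ full fibers $X_c=f^*(c)$ into the $f^*$-factor we may take $0\le n_c<m_c$. This yields (i) and (ii); in particular $n_c=0$ for non-multiple fibers. For (iii) one restricts $\mathcal{O}_X(D)$ to the thickenings $j\overline{X_{c_i}}$, $1\le j\le m_i$, and compares with the normal bundle $N := \mathcal{O}_X(\overline{X_{c_i}})|_{\overline{X_{c_i}}}$: when $c_i\notin\mathrm{supp}(T)$ (the tame case, $p\nmid m_i$) one has $h^0(\mathcal{O}_{m_i\overline{X_{c_i}}})=1$, $\mathrm{ord}(N)=m_i$, and $\omega_{X/C}$ near that fiber is $\mathcal{O}_X((m_i-1)\overline{X_{c_i}})$, forcing $n_i=m_i-1$; a failure of these identities (so that $n_i<m_i-1$) is precisely what puts $c_i$ in $\mathrm{supp}(T)$. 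Finally (iv) is immediate from Leray and Riemann--Roch on $C$: $\chi(\mathcal{O}_X) = \chi(\mathcal{O}_C) - \chi(\mathcal{L}) - \mathrm{length}(T) = -\deg\mathcal{L} - \mathrm{length}(T)$, hence $\deg(\mathcal{L}^{-1}\otimes\omega_C) = -\deg\mathcal{L} + 2p_a(C)-2 = 2p_a(C)-2+\chi(\mathcal{O}_X)+\mathrm{length}(T)$.

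The main obstacle is the local study at the multiple fibers --- items (i), (ii), and especially (iii). The global skeleton (relative duality giving $f_*\omega_{X/C}=\mathcal{L}^{-1}$, the adjunction section, Zariski's lemma, and the numerical identity (iv) via Leray) is essentially formal, but pinning down $n_i$ and its relation to the torsion module $T$ requires the classification of fibers of genus $1$ fibrations (including the quasi-elliptic ones in characteristic $2,3$), the theory of tame versus wild multiple fibers, and the cohomology-and-base-change computation of $h^0(\mathcal{O}_{m\overline{X_c}})$ that governs the jumps of $R^1f_*\mathcal{O}_X$; for this one would lean on \cite{Cossec and Dolgachev} and \cite{Cossec and Dolgachev and Liedtke and Kondo} (or Bombieri--Mumford), which is also why the formula is cited as well known.
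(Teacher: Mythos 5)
The paper offers no proof of this theorem: it is stated as a known result and the ``proof'' is a citation to \cite[Thm.~2, p.27]{Bombieri and Mumford II} and \cite[Thm.~7.15, p.100]{Badescu}. What you have written is, in outline, exactly the argument of those references: relative duality to get $f_{*}\omega_{X/C} \cong \mathcal{L}^{-1}$, the adjunction section $f^{*}\mathcal{L}^{-1} \to \omega_{X/C}$ producing an effective vertical divisor $D$, the vanishing $(K_{X} \cdot E_{\alpha}) = 0$ on fiber components plus Zariski's lemma to force $D_{c} = n_{c}\overline{X_{c}}$, the order of the normal bundle $N = \mathcal{O}_{X}(\overline{X_{c}})|_{\overline{X_{c}}}$ (via $N^{n_{c}+1} \cong \mathcal{O}$) to pin down $n_{i} = m_{i}-1$ in the tame case, and Leray plus Riemann--Roch for (iv). So your route agrees with the source the paper defers to, and your honest deferral of the wild-fiber analysis in (iii) to the theory of tame versus wild multiple fibers matches what any self-contained treatment must do.

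Two small corrections. First, the bound $0 \leq n_{c} < m_{c}$ should not be obtained by ``absorbing $\lfloor n_{c}/m_{c}\rfloor$ full fibers into the $f^{*}$-factor'': that renormalization would replace $\mathcal{L}$ by $\mathcal{L} \otimes \mathcal{O}_{C}(-\sum \lfloor n_{c}/m_{c}\rfloor c)$ and thereby destroy both the identification of $\mathcal{L}$ with the locally free part of $R^{1}f_{*}\mathcal{O}_{X}$ and the degree formula (iv). The bound is in fact automatic from your own duality computation: since $\mathcal{O}_{X}(D) \cong \omega_{X/C}\otimes f^{*}\mathcal{L}$ and $f_{*}\omega_{X/C}\cong\mathcal{L}^{-1}$, one has $f_{*}\mathcal{O}_{X}(D) \cong \mathcal{O}_{C}$, which fails as soon as $D \geq f^{*}(c)$ for some $c$; hence $n_{c} < m_{c}$ with $\mathcal{L}$ unchanged. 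Second, invoking ``the classification of singular fibers'' to get $\deg(\omega_{X/C}|_{E_{\alpha}}) = 0$ is both heavier than necessary and slightly circular (that classification is usually derived from this vanishing); the clean argument is that $(K_{X}\cdot X_{c}) = 2p_{a}(X_{c}) - 2 - (X_{c}^{2}) = 0$, while minimality forces $(K_{X}\cdot E_{\alpha}) \geq 0$ for every fiber component, so all these intersection numbers vanish. With those two repairs your sketch is a faithful reconstruction of the cited proof.
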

\begin{proof}
For example, see \cite[Thm.~2,\ p.27]{Bombieri and Mumford II} or \cite[Thm.~7.15,\ p.100]{Badescu}.
\end{proof}

In the rest of this subsection, we refine the relative Mordell-Weil theorem
for the proof of Proposition \ref{Main 2}.
Let $f : X \rightarrow C$ be a genus $1$ fibration.
For any point $c \in C$ (not necessarily closed), we denote by 
\[ r_{c} : \mathrm{Pic}(X) \rightarrow \mathrm{Pic}(X_{c}) \] 
the homomorphism obtained by restriction of invertible sheaves on $X$ to $X_{c}$.
We set
\begin{align*}
\mathrm{Pic}(X)_{0} : &= \mathrm{Ker}(\mathrm{deg} \circ r_{\eta})\\
                             &= \{ \ \mathcal{L} \in \mathrm{Pic}(X) \ | \ \mathrm{deg}\circ r_{c}(\mathcal{L}) = 0 \ \ \text{for  any} \ c \in C \ \} \subset \mathrm{Pic}(X)
\end{align*}
Here, the second equality follows from the function $c \mapsto \chi(X_{c}, r_{c}(\mathcal{L}))$ is constant on $C$ (\cite[Coro.~(b), p.50]{Mumford A}).
Then $\mathrm{Pic}(X)_{0} \supset  (\mathrm{Pic}^{0}_{X/k})_{red}(k)$.
We define 
\begin{align*}
\mathrm{Pic}(X)_{f} &: = \mathrm{Ker}(r_{\eta}) \subset \mathrm{Pic}(X)_{0}  \\ 
\mathcal{E}(C) &: = \mathrm{Pic}(X)_{f}/f^{*}\mathrm{Pic}(C)  
\subset \mathrm{Pic}(X)/f^{*}\mathrm{Pic}(C)
\end{align*} 
For a singular fiber $X_{c} = \sum_{i=1}^{r}n_{i} E_{i} \in \mathrm{Div}(X)$ with $E_{i} \neq E_{j}$ for $i \neq j$,  set 
$\mathrm{Num}_{c}(X) : = \sum_{i=1}^{r}\mathbb{Z}[E_{i}]$ in $\mathrm{Num}(X)$.
To refine the MW theorem, we need:
\begin{prop}
(\cite[Prop.~5.2.1,\ p.293]{Cossec and Dolgachev}). 
\label{prepare Mordell-Weil Theorem for function fields} 
Let $f : X \rightarrow C$ be a genus $1$ fibration.
\begin{enumerate}
\item $\mathcal{E}(C) = \oplus_{c \in \Sigma}((\mathrm{Num}_{c}(X)/\mathbb{Z}[\overline{X_{c}}]) \oplus \mathbb{Z}/m_{c}\mathbb{Z})$, where $\Sigma$ is the set of singular points of $f$.
\item $\mathrm{Pic}(X)_{f}/f^{*}\mathrm{Pic}^{0}(C)$ is a finitely-generated abelian group.
\item $\mathrm{Pic}(X)_{0}/\mathrm{Pic}(X)_{f} \cong \mathrm{Jac}(X_{\eta})(K)$.
\end{enumerate}
\end{prop}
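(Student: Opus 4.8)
The plan is to treat (iii) separately and to prove (i) and (ii) together, reducing them to a computation with the components of the singular fibers; apart from formal divisor--class manipulations the only real inputs will be Tsen's theorem, the identity $f_{*}\mathcal{O}_{X}=\mathcal{O}_{C}$, and Zariski's lemma on the intersection form of a fiber. For (iii): since $X$ is a smooth projective surface it is regular, so a divisor on the generic fiber $X_{\eta}$ extends to its Zariski closure in $X$, whence $r_{\eta}:\mathrm{Pic}(X)\to\mathrm{Pic}(X_{\eta})$ is surjective. As $f$ is flat the total degree of a line bundle is the same on every fiber, so (as recorded in the definition) $\mathrm{Pic}(X)_{0}=r_{\eta}^{-1}(\mathrm{Pic}^{0}(X_{\eta}))$, and together with $\mathrm{Pic}(X)_{f}=\ker r_{\eta}$ this gives $\mathrm{Pic}(X)_{0}/\mathrm{Pic}(X)_{f}\xrightarrow{\ \sim\ }\mathrm{Pic}^{0}(X_{\eta})$. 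It remains to identify $\mathrm{Pic}^{0}(X_{\eta})$ with $\mathrm{Jac}(X_{\eta})(K)$, $K=k(C)$: the degree-$0$ part of the exact sequence of Proposition \ref{exact brauer} for $X_{\eta}\to\mathrm{Spec}(K)$ shows the canonical map $\mathrm{Pic}(X_{\eta})\to\mathrm{Pic}^{0}_{X_{\eta}/K}(K)=\mathrm{Jac}(X_{\eta})(K)$ is bijective once $\mathrm{Br}(K)=0$, and $K$, being the function field of a curve over $k=\overline{k}$, is a $C_{1}$-field (Tsen), so $\mathrm{Br}(K)=0$.

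For (i) and (ii) I would first describe $\mathrm{Pic}(X)_{f}$ by divisors. Every class in $\mathrm{Pic}(X)_{f}=\ker r_{\eta}$ is represented by a \emph{vertical} divisor, i.e. one all of whose components map to closed points of $C$: after subtracting a principal divisor we may assume the class restricts to $0$ on $X_{\eta}$, and then no component dominates $C$. Let $\mathrm{Div}_{\mathrm{vert}}(X)=\bigoplus_{c}\mathrm{Div}_{c}$, with $\mathrm{Div}_{c}$ the group of divisors supported on the fiber $X_{c}=\sum_{i}m_{i}E_{i}$; thus $f^{*}\mathrm{Div}(C)=\bigoplus_{c}\mathbb{Z}[X_{c}]$ and $f^{*}$ is injective on divisors. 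The central step is the identification
\[ \ker\bigl(\mathrm{Div}_{\mathrm{vert}}(X)\twoheadrightarrow\mathrm{Pic}(X)_{f}/f^{*}\mathrm{Pic}(C)\bigr)=f^{*}\mathrm{Div}(C): \]
if a vertical divisor $D$ has class in $f^{*}\mathrm{Pic}(C)$, write $D=f^{*}M+\mathrm{div}_{X}(g)$ with $M\in\mathrm{Div}(C)$ and $g\in k(X)^{*}$; since $\mathrm{div}_{X}(g)$ is vertical, $g$ restricts to a unit on $X_{\eta}$, and $H^{0}(X_{\eta},\mathcal{O}_{X_{\eta}}^{*})=K^{*}$ because $X_{\eta}$ is proper and geometrically integral over $K$, so $g\in K^{*}$ and $D=f^{*}(M+\mathrm{div}_{C}(g))$ is an honest pullback divisor. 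Therefore
\[ \mathcal{E}(C)=\mathrm{Pic}(X)_{f}/f^{*}\mathrm{Pic}(C)\ \cong\ \mathrm{Div}_{\mathrm{vert}}(X)/f^{*}\mathrm{Div}(C)=\bigoplus_{c}\mathrm{Div}_{c}/\mathbb{Z}[X_{c}], \]
and for $c\notin\Sigma$ the fiber $X_{c}$ is integral, so $\mathrm{Div}_{c}=\mathbb{Z}[X_{c}]$ and that summand vanishes.

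For $c\in\Sigma$ with $X_{c}=\sum_{i}m_{i}E_{i}$ one must identify $\mathrm{Div}_{c}/\mathbb{Z}[X_{c}]=\bigl(\bigoplus_{i}\mathbb{Z}[E_{i}]\bigr)/\mathbb{Z}[X_{c}]$ with $(\mathrm{Num}_{c}(X)/\mathbb{Z}[\overline{X_{c}}])\oplus\mathbb{Z}/m_{c}\mathbb{Z}$. Since $m_{c}=\gcd_{i}(m_{i})$, the vector $\overline{X_{c}}=\sum_{i}(m_{i}/m_{c})[E_{i}]$ is primitive in $\bigoplus_{i}\mathbb{Z}[E_{i}]\cong\mathbb{Z}^{r}$ and $X_{c}=m_{c}\overline{X_{c}}$. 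By Zariski's lemma the intersection form on $\bigoplus_{i}\mathbb{Q}[E_{i}]$ is negative semi-definite with radical $\mathbb{Q}[\overline{X_{c}}]$; pairing in addition with an ample class shows the $E_{i}$ are $\mathbb{Z}$-independent in $\mathrm{Num}(X)$, so the natural surjection $\mathrm{Div}_{c}/\mathbb{Z}[X_{c}]\to\mathrm{Num}_{c}(X)/\mathbb{Z}[\overline{X_{c}}]$ has kernel $\mathbb{Z}[\overline{X_{c}}]/\mathbb{Z}[X_{c}]\cong\mathbb{Z}/m_{c}\mathbb{Z}$ and target $\cong\mathbb{Z}^{r-1}$; the target being free, the extension splits, which gives (i). Finally, $\mathrm{Pic}(X)_{f}/f^{*}\mathrm{Pic}^{0}(C)$ sits in an extension of $\mathcal{E}(C)$, finitely generated by (i), by $f^{*}\mathrm{Pic}(C)/f^{*}\mathrm{Pic}^{0}(C)\cong\mathrm{Pic}(C)/\mathrm{Pic}^{0}(C)\cong\mathbb{Z}$ (injectivity of $f^{*}$ is Proposition \ref{pullback injective}), hence is finitely generated, proving (ii).

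The step I expect to be most delicate is the displayed kernel identification: it is precisely there that the hypothesis $f_{*}\mathcal{O}_{X}=\mathcal{O}_{C}$ --- equivalently, that units on $X_{\eta}$ are constant --- is indispensable, and it is what excludes any spurious torsion and forces the clean answer in (i). I would also take care to check that the grading of $\mathrm{Div}_{\mathrm{vert}}(X)$ by fibers really descends to a direct-sum decomposition of $\mathcal{E}(C)$; the remaining ingredients (Zariski's lemma and Tsen's theorem) are standard.
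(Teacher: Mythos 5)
Your proposal is correct and follows essentially the same route as the paper: the fiber-by-fiber decomposition of $\mathcal{E}(C)$ via the units argument resting on $f_{*}\mathcal{O}_{X}\cong\mathcal{O}_{C}$, the identification of each local piece using Zariski's lemma (the paper phrases this as $(D^{2})=0$ forcing $D\in\mathbb{Z}\overline{X_{c}}$) together with the splitting coming from freeness of $\mathrm{Num}_{c}(X)/\mathbb{Z}[\overline{X_{c}}]$, and Tsen's theorem plus the Leray/Brauer exact sequence for (iii). The only differences are presentational (you reduce to vertical divisors directly where the paper invokes the localization sequence for Chow groups, and you phrase (ii) as an extension by $\mathbb{Z}$ rather than by equating fiber classes modulo $f^{*}\mathrm{Pic}^{0}(C)$), so no comparison beyond this is needed.
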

\begin{proof}
(i) By the local exact sequence $\oplus_{c}\mathrm{CH}_{1}(X_{c}) \overset{i_{*}}\rightarrow \mathrm{CH}_{1}(X) \overset{i^{*}_{\eta}}\rightarrow \mathrm{CH}_{0}(X_{\eta}) \rightarrow 0$,    
we have $\mathrm{Pic}(X)_{f} = \sum_{c}(\sum_{i} \mathbb{Z}[E_{i}])$, 
where $c$ runs over all closed points of $C$ and $E_{i}$ are irreducible components of $X_{c}$.
Note 
\[ [D] \in f^{*}\mathrm{Pic}(C) \ \ \text{if and only if} \ \ [D] = \sum n_{c} [X_{c}] \ \ ( n_{c} \in \mathbb{Z} ).\]
For a closed point $c \in C$, let $\mathcal{E}_{c}(C)$ be the subgroup of $\mathcal{E}(C)$ generated by the images of irreducible components of $X_{c}$.
In other words, if $\mathrm{Pic}_{c}(X)$ is the subgroup of $\mathrm{Pic}(X)$ generated by the images of  irreducible components of $X_{c}$, then
\[ \mathcal{E}_{c}(C) = \mathrm{Pic}_{c}(X)/(\mathrm{Pic}_{c}(X) \cap f^{*}\mathrm{Pic}(C)) \cong 
(\mathrm{Pic}_{c}(X) + f^{*}\mathrm{Pic}(C))/f^{*}\mathrm{Pic}(C). \]
\indent (i-i) We have $\mathcal{E}(C) = \oplus_{c}\mathcal{E}_{c}(C)$.
Indeed, assume $D_{c} \in \mathrm{Div}(X)$ are divisors supported on $X_{c}$,
and $\sum D_{c} = 0$ in $\mathcal{E}(C)$.
Then  $\sum_{c}D_{c} \sim f^{*}(\delta)$ for some $\delta \in \mathrm{Div}(C)$, namely
$\sum_{c}D_{c} - f^{*}(\delta) = \mathrm{div}(\phi)$
for some $\phi \in k(X)^{\times}$.
Since $\mathrm{div}(\phi)$ has support on fibers,
there is a non-empty open set $U$ of $C$ such that $\phi$ is regular on $f^{-1}U;$
noting $f_{*}\mathcal{O}_{X} \cong \mathcal{O}_{C}$ one has
$\phi \in \mathcal{O}_{X}(f^{-1}U) = \mathcal{O}_{C}(U)$
so $\phi = f^{*}\psi$ for some $\psi \in k(C)^{\times}$.
Therefore $\sum_{c}D_{c}  = f^{*}(\delta + \mathrm{div}(\psi))$;
rewriting $\delta$ for $\delta + \mathrm{div}(\psi)$, 
we have $\sum_{c}D_{c} = f^{*}(\delta)$ in $\mathrm{Div}(X)$.
If $\delta = \sum n_{c}[c]$, one has $D_{c} = n_{c}[X_{c}]$ for each $c$, 
in particular $D_{c} = 0$ in $\mathcal{E}(C)$.\\
\indent (i-ii) For a closed point $c \in C$, 
let $E_{1}, \cdot \cdot \cdot , E_{r}$ be the irreducible components of $X_{c}$.
Then 
$\mathcal{E}_{c}(C) \cong \mathbb{Z}\{ E_{i} \}/\mathbb{Z}[X_{c}]$,
where $\mathbb{Z}\{ E_{i} \}$ is the free abelian group with basis $\{ E_{i} \}$.
Indeed, there is a surjection 
$\mathbb{Z}\{ E_{i} \} \rightarrow \mathcal{E}_{c}(C)$.
If $D \in \mathbb{Z}\{ E_{i} \}$ maps to zero in $\mathcal{E}(C)$, 
the argument in (i) shows $D = n_{c}[X_{c}]$ for $n_{c} \in \mathbb{Z}$.\\
\indent (i-iii) Recall that $X_{c}$ is a primitive generator of $\mathbb{Z} \cdot X_{c}$, 
so $X_{c} = m_{c} \overline{X_{c}}$.
One has obviously a short exact sequence 
\[ 0 \rightarrow \mathbb{Z}/m_{c}\mathbb{Z} \rightarrow \mathbb{Z}\{ E_{i} \}/\mathbb{Z}[X_{c}] \rightarrow \mathbb{Z}\{ E_{i} \}/\mathbb{Z}[\overline{X_{c}}] \rightarrow 0.\]
The last group is free of rank $r - 1$, so there is a non-canonical splitting of this short exact sequence.
Recall $\mathrm{Num}_{c}(X)$ is the subgroup of $\mathrm{Num}(X)$ 
generated by the classes of irreducible components of $X_{c}$.\\
\indent (i-iv) There is an isomorphism
$\mathbb{Z}\{ E_{i} \}/\mathbb{Z}[\overline{X_{c}}] \rightarrow \mathrm{Num}_{c}(X)/\mathbb{Z}[\overline{X_{c}}]$.
Indeed, there is a surjection $\mathbb{Z}\{ E_{i} \} \rightarrow \mathrm{Num}_{c}(X)$, which induces a surjection  
$\mathbb{Z}\{ E_{i} \} \rightarrow \mathrm{Num}_{c}(X)/\mathbb{Z}[\overline{X_{c}}]$.
Assume $D = \sum n_{i}E_{i}$ goes to 0 by this homomorphism$;$ then one has 
$D - n \overline{X_{c}} \equiv 0$, so $(D^{2}) = 0$ by Chow's moving lemma, 
so $D \in \mathbb{Z}\overline{X_{c}}$ by \cite[Coro.~2.6,\ p.19]{Badescu}.\\
\indent Combining (i-i)-(i-iv), we have $\mathcal{E}(C) = \oplus_{c}\mathcal{E}_{c}(C)$ with non-canonical isomorphisms
\[ \mathcal{E}_{c}(C) \cong (\mathrm{Num}_{c}(X)/\mathbb{Z}[\overline{X_{c}}]) 
\oplus \mathbb{Z}/m_{c}\mathbb{Z}. \]
In particular, if $X_{c}$ is integral, then $\mathcal{E}_{c}(C) = 0$.\\
\indent (ii) For two points $c, c'$ of $C$, one has $[c] - [c'] \in \mathrm{Pic}^{0}(C)$, 
thus $[X_{c}] - [X_{c'}] \in f^{*}\mathrm{Pic}^{0}(C)$.\\
\indent First, assume $f$ has a singular fiber.
If we choose a singular point $c'$,
in the group $\mathrm{Pic}(X)_{f}/f^{*}\mathrm{Pic}^{0}(C)$,
any smooth fiber $[X_{c}]$ equals the singular fiber $[X_{c'}]$.
Since $\mathrm{Pic}(X)_{f}$ is generated by the irreducible components of all fibers,
it follows that $\mathrm{Pic}(X)_{f}/f^{*}\mathrm{Pic}^{0}(C)$ is generated by the irreducible components 
of the singular fibers only. Next, assume $f$ has no singular fibers.
If we choose a smooth point $c'$, 
in the group $\mathrm{Pic}(X)_{f}/f^{*}\mathrm{Pic}^{0}(C)$,
any smooth fiber $[X_{c}]$ equals the smooth fiber $[X_{c'}]$.
Thus $\mathrm{Pic}(X)_{f}/f^{*}\mathrm{Pic}^{0}(C)$ is generated by the smooth fiber only.\\
\indent (iii) 
By Tsen's thm.(\cite[Thm.~6.2.8,\ p.143]{Gille and Szamuely}), $\mathrm{Br}(K) = 0$.
By Proposition \ref{exact brauer},
\begin{equation} \label{Pic = Jac}
\mathrm{Pic}^{0}(X_{\eta}) \cong \mathrm{Jac}(X_{\eta})(K)  
\end{equation}
By definition, $\mathrm{Pic}(X)_{0} = \mathrm{Ker}(\mathrm{Pic}(X) \overset{r_{\eta}}\rightarrow \mathrm{Pic}(X_{\eta}) \overset{\mathrm{deg}}\rightarrow \mathbb{Z})$, 
and hence the restriction 
$r_{\eta}^{0} : \mathrm{Pic}(X)_{0} \rightarrow \mathrm{Pic}^{0}(X_{\eta})$
is well-defined.
By the restriction $r_{\eta} : \mathrm{Pic}(X) \rightarrow \mathrm{Pic}(X_{\eta})$ is surjective, so is $r_{\eta}^{0}$.
Thus, there is an isomorphism of groups 
\begin{equation} \label{homomorphism between Pic}
\mathrm{Pic}(X)_{0}/\mathrm{Pic}(X)_{f} \cong \mathrm{Pic}^{0}(X_{\eta})   
\end{equation}
Combining $(\ref{Pic = Jac})$ and $(\ref{homomorphism between Pic})$, we get an isomorphism $\mathrm{Pic}(X)_{0}/\mathrm{Pic}(X)_{f} \cong \mathrm{Jac}(X_{\eta})(K)$.
\end{proof}
To state a refined MW theorem, we recall a notion:
Let $K/k$ be a primary field extension and $A$ an abelian $K$-variety.
We say that a couple $(\mathrm{Tr}_{K/k}(A), \tau)$ consisting of an abelian $k$-variety $\mathrm{Tr}_{K/k}(A)$ and a $K$-homomorphism $\tau : \mathrm{Tr}_{K/k}(A)_{K}
\rightarrow A$ is a \textit{$K/k$-trace of $A$} if $\tau$ has finite kernel and satisfies the following condition. For an abelian $k$-variety $B$ and a $K$-homomorphism $\beta : B_{K} \rightarrow A$, there is a $k$-homomorphism 
$\beta' : B \rightarrow \mathrm{Tr}_{K/k}(A)$ such that 
$\beta = \tau \circ \beta'_{K}$. Indeed, it exists (\cite[Thm.~8, p.213]{Lang}).\\
\indent The following theorem is a refined version of the MW (or the Lang-N\'eron) theorem.
We focus on the $K/k$-trace of $\mathrm{Jac}(X_{\eta})$ that is isogenous to a sub-abelian variety of $(\mathrm{Pic}_{X/k}^{0})_{red}$ for the proof of Proposition \ref{Main 2}:

\begin{thm}
\label{Mordell-Weil Theorem for function fields}
Let $f : X \rightarrow C$ be a genus $1$ fibration.
Let $T$ be an abelian $k$-variety of dimension $\leq 1$ that is 
the $K/k$-trace $(\mathrm{Tr}_{K/k}(\mathrm{Jac}(X_{\eta})), \tau)$ 
of $\mathrm{Jac}(X_{\eta})$ if $f$ is elliptic and is $0$ if $f$ is quasi-elliptic.
Then 
\begin{enumerate}
\item There is an isogeny of abelian $k$-varieties 
$(\mathrm{Pic}_{X/k}^{0})_{red} \sim_{isog} T \times \mathrm{Pic}_{C/k}^{0}$. 
\item $T = 0$ $\Longleftrightarrow$ $b_{1}(X) = b_{1}(C)$ \\
\indent \ \ \ \ \ \ $\Longleftrightarrow$ $\mathrm{Jac}(X_{\eta})(K)/\tau(T)(k) = \mathrm{Jac}(X_{\eta})(K)$ is f.g. \ \ \ (Lang-N\'eron) \\
Moreover, if these conditions are satisfied, then
\[
 \mathrm{rank}(\mathrm{Jac}(X_{\eta})(K)) = \rho(X) - 2 - \sum_{c \in C}(\mathrm{\# \mathrm{Irr}}(X_{c}) - 1) \ \ \ \text{$($Shioda-Tate formula$)$} \]
Here, $\mathrm{Irr}(X_{c})$ is the set of the distinct irreducible components of the singular fiber $X_{c}$.
\item $T \neq 0$ $\Longleftrightarrow$ $b_{1}(X) = b_{1}(C) + 2$\\
\indent \ \ \ \ \ \ $\Longleftrightarrow$ $\mathrm{Jac}(X_{\eta})(K)$ is an not finitely-generated abelian group\\
\indent \ \ \ \ \ \ $\Longleftrightarrow$ there is an isogeny of elliptic $K$-curves 
\[ \psi : T \times_{\mathrm{Spec}(k)} \mathrm{Spec}(K) \rightarrow \mathrm{Jac}(X_{\eta}). \]
\end{enumerate}
In particular, if $f$ is quasi-elliptic, then $\mathrm{Jac}(X_{\eta})(K)$ is finitely-generated.
\end{thm}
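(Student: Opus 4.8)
The plan is to realise $A$ as the Poincar\'e complement of the base inside the Picard variety of $X$, to control its dimension by restricting invertible sheaves to the generic fibre, and then to derive all the equivalences from the Lang--N\'eron theorem together with Proposition~\ref{prepare Mordell-Weil Theorem for function fields}. Write $K := k(C)$ and $\mathrm{Jac}(X_{\eta}) = \mathrm{Pic}^{0}_{X_{\eta}/K}$. By Proposition~\ref{pullback injective} the map $f^{*}$ is injective on Picard groups, so the induced homomorphism $f^{*} : (\mathrm{Pic}^{0}_{C/k})_{red} \to (\mathrm{Pic}^{0}_{X/k})_{red}$ of abelian $k$-varieties is a closed immersion; by Poincar\'e reducibility there is an abelian subvariety $A \hookrightarrow (\mathrm{Pic}^{0}_{X/k})_{red}$ complementary to it. Using the standard identity $b_{1}(V) = 2\dim(\mathrm{Pic}^{0}_{V/k})_{red}$ one gets $\dim A = \tfrac{1}{2}\big(b_{1}(X) - b_{1}(C)\big) \geq 0$, so already $A = 0 \Leftrightarrow b_{1}(X) = b_{1}(C)$ and $\dim A = 1 \Leftrightarrow b_{1}(X) = b_{1}(C) + 2$.

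The heart of the argument is to show $\dim A \leq 1$. Restriction of invertible sheaves to the generic fibre $X_{\eta} \hookrightarrow X \times_{k} K$ defines, on the level of Picard schemes, a homomorphism $r : (\mathrm{Pic}^{0}_{X/k})_{red} \times_{k} K \to \mathrm{Jac}(X_{\eta})$ over $K$ which annihilates $f^{*}(\mathrm{Pic}^{0}_{C/k})_{red}$; composing with $A_{K} \hookrightarrow (\mathrm{Pic}^{0}_{X/k})_{red,K}$ gives $\psi : A_{K} \to \mathrm{Jac}(X_{\eta})$. I claim $\ker\psi$ is finite: otherwise $(\ker\psi)^{0}$ contains a positive-dimensional abelian subvariety $B$, which, because $A$ is defined over $k$ and $k = \overline{k}$ is algebraically closed in $K$ (so $\mathrm{End}_{K}(A_{K}) = \mathrm{End}_{k}(A)$), descends to an abelian subvariety $B_{0} \subseteq A$ over $k$; then $B_{0}(k) \subseteq \mathrm{Pic}(X)_{f}$, while $B_{0}(k) \cap f^{*}\mathrm{Pic}^{0}(C)$ is finite and $\mathrm{Pic}(X)_{f}/f^{*}\mathrm{Pic}^{0}(C)$ is finitely generated by Proposition~\ref{prepare Mordell-Weil Theorem for function fields}(ii), so $B_{0}(k)$ would be finitely generated --- impossible over $\overline{k}$. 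Hence $\psi$ is an isogeny onto its image and $\dim A = \dim\psi(A_{K}) \leq \dim\mathrm{Jac}(X_{\eta}) = 1$. I expect this step, especially the clean construction of $r$ and the descent of $B$ to $k$, to be the main obstacle.

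Next the equivalences. If $f$ is quasi-elliptic, then by Proposition~\ref{non-smooth genus one curve} the geometric generic fibre is a rational curve with a single singularity, so $\mathrm{Jac}(X_{\eta})_{\overline{K}}$ is $\mathbb{G}_{a}$ or $\mathbb{G}_{m}$; in particular $\mathrm{Jac}(X_{\eta})$ is affine and has no nonzero abelian subvariety, so finiteness of $\ker\psi$ forces $A = 0$. If $A = 0$ in general, then $(\mathrm{Pic}^{0}_{X/k})_{red}(k) = f^{*}\mathrm{Pic}^{0}(C) \subseteq \mathrm{Pic}(X)_{f}$, so $\mathrm{Pic}(X)_{0}/\mathrm{Pic}(X)_{f}$ is a subquotient of $\mathrm{NS}(X)$, hence finitely generated; by Proposition~\ref{prepare Mordell-Weil Theorem for function fields}(iii) together with Proposition~\ref{exact brauer}, $\mathrm{Jac}(X_{\eta})(K)$ is finitely generated. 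Conversely, if $\mathrm{Jac}(X_{\eta})(K)$ is finitely generated, then the composite $A(k) \to \mathrm{Jac}(X_{\eta})(K)$ has finitely generated kernel (again by (ii)) and divisible source, hence divisible and finitely generated image, which is therefore trivial; then $A(k)$ is finitely generated, so $\dim A = 0$. Finally, when $A \neq 0$ --- possible only for $f$ elliptic --- $\psi$ is an isogeny onto all of $\mathrm{Jac}(X_{\eta})$, so $\mathrm{Jac}(X_{\eta})$ is isogenous to the constant abelian variety $A_{K}$, its $K$-points are not finitely generated, and the universal property of the $K/k$-trace identifies $\psi$ up to isogeny with the trace and $\psi(A)$ with the trace part of $\mathrm{Jac}(X_{\eta})$.

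It remains to deduce the Shioda--Tate formula in the case $A = 0$. By Proposition~\ref{prepare Mordell-Weil Theorem for function fields}(iii), $\mathrm{rank}\,\mathrm{Jac}(X_{\eta})(\eta) = \mathrm{rank}\big(\mathrm{Pic}(X)_{0}/\mathrm{Pic}(X)_{f}\big)$. Since $A = 0$, $\mathrm{Pic}(X)_{0}/(\mathrm{Pic}^{0}_{X/k})_{red}(k)$ injects with finite kernel into $\mathrm{NS}(X)$, with image the classes of degree $0$ on $X_{\eta}$, of rank $\rho(X) - 1$; and $\mathrm{Pic}(X)_{f}/(\mathrm{Pic}^{0}_{X/k})_{red}(k)$ has rank $\sum_{c}(\#\mathrm{Irr}(X_{c}) - 1) + 1$, combining $\mathcal{E}(C) = \mathrm{Pic}(X)_{f}/f^{*}\mathrm{Pic}(C)$ and its description in Proposition~\ref{prepare Mordell-Weil Theorem for function fields}(i) with $f^{*}\mathrm{Pic}(C)/f^{*}\mathrm{Pic}^{0}(C) \cong \mathbb{Z}$. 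Subtracting the two ranks yields $\mathrm{rank}\,\mathrm{Jac}(X_{\eta})(\eta) = \rho(X) - 2 - \sum_{c}(\#\mathrm{Irr}(X_{c}) - 1)$.
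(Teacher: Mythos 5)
Your proposal is correct and follows essentially the same route as the paper: Poincar\'e reducibility produces the complement $A$ of $f^{*}\mathrm{Pic}^{0}_{C/k}$, the map $\psi : A_{K} \to \mathrm{Jac}(X_{\eta})$ is shown to have finite kernel by descending a putative positive-dimensional kernel to $k$ and contradicting the finite generation of $\mathrm{Pic}(X)_{f}/f^{*}\mathrm{Pic}^{0}(C)$, and the Shioda--Tate rank count comes from parts (i) and (iii) of Proposition~\ref{prepare Mordell-Weil Theorem for function fields}. Your minor variations --- deducing ``finitely generated $\Rightarrow A=0$'' directly from divisibility of $A(k)$ rather than by exhausting the two cases, and handling the quasi-elliptic case via affineness of $\mathrm{Jac}(X_{\eta})_{\overline{K}}$ --- are harmless and equivalent to the paper's argument.
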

\begin{proof}
This proof is based on \cite[Prop.~5.2.1 (v),\ p.293]{Cossec and Dolgachev}.
By Proposition \ref{pullback injective},
there is an injective morphism of group schemes 
$f^{*} : \mathrm{Pic}_{C/k} \hookrightarrow \mathrm{Pic}_{X/k}$.
So we get an injection 
\[ f^{*} : \mathrm{Pic}_{C/k}^{0} = (\mathrm{Pic}_{C/k}^{0})_{red} \hookrightarrow (\mathrm{Pic}_{X/k}^{0})_{red}. \]
By Theorem \ref{abelian picard variety}, $\mathrm{Pic}_{C/k}^{0}$ and  
$(\mathrm{Pic}_{X/k}^{0})_{red}$ are abelian $k$-varieties.
By Poincare's reducibility theorem, 
there is an abelian $k$-subvariety $A \subset (\mathrm{Pic}_{X/k}^{0})_{red}$ such that 
\begin{equation} \label{isogeny for A}
(\mathrm{Pic}_{X/k}^{0})_{red} \sim_{isog} A \times \mathrm{Pic}_{C/k}^{0}.
\end{equation}
\indent First, we prove that $A$ satisfies assertions (i)-(iii).
By (\ref{isogeny for A}), it remains to prove $\mathrm{dim}(A) \leq 1$ and 
$A$ satisfies assertions (ii) and (iii).\\
\indent Assume $A = 0$.
Then $\mathrm{Pic}_{C/k}^{0} \sim_{isog} (\mathrm{Pic}_{X/k}^{0})_{red}$. 
So $\mathrm{Pic}_{C/k}^{0}(k) = (\mathrm{Pic}_{X/k}^{0})_{red}(k)$.
Now
\begin{align*}
\mathrm{Pic}(X)/f^{*}\mathrm{Pic}(C) &= (\mathrm{Pic}_{X/k})_{red}(k)/\mathrm{Pic}_{C/k}(k) \\
(\mathrm{Pic}_{X/k})_{red}(k)/(\mathrm{Pic}_{X/k}^{0})_{red}(k) &= \mathrm{NS}(X) \\
(\mathrm{Pic}_{C/k})_{red}(k)/(\mathrm{Pic}_{C/k}^{0})_{red}(k) &= \mathbb{Z}.
\end{align*}
Therefore,
\[ \mathrm{Pic}(X)/f^{*}\mathrm{Pic}(C) \cong \mathrm{NS}(X)/\mathbb{Z}. \]
By Proposition \ref{Neron severi}, $\mathrm{NS}(X)$ is f.g. of rank $\rho(X)$.
Thus, the group $\mathrm{Pic}(X)/f^{*}\mathrm{Pic}(C)$ is f.g. of rank $\rho(X) - 1$.
Since $\mathrm{Pic}(X)_{0} = \mathrm{Ker}(\mathrm{deg} \circ r_{\eta} : \mathrm{Pic}(X) \rightarrow \mathbb{Z})$, one has
$\mathrm{Pic}(X)/\mathrm{Pic}(X)_{0} \cong \mathrm{Im}(\mathrm{deg} \circ r_{\eta}) \cong \mathbb{Z}$. Hence
\begin{equation} \label{rank of rho-2}
 \text{the group $\mathrm{Pic}(X)_{0}/f^{*}\mathrm{Pic}(C)$ is f.g. of rank $\rho(X) - 2$}  
 \end{equation}
\indent On the other hand, 
\begin{equation} \label{rank of Jac}
\mathrm{Jac}(X_{\eta})(K) = \frac{\mathrm{Pic}(X)_{0}}{\mathrm{Pic}(X)_{f}} 
                                         =\frac{\mathrm{Pic}(X)_{0}/f^{*}\mathrm{Pic}(C)}
                                         {\mathrm{Pic}(X)_{f}/f^{*}\mathrm{Pic}(C)}
                                         = \frac{\mathrm{Pic}(X)_{0}/f^{*}\mathrm{Pic}(C)}{\oplus_{c}((\mathrm{Num}_{c}(X)/\mathbb{Z}[\overline{X_{c}}]) \oplus \mathbb{Z}/m_{c}\mathbb{Z})}
\end{equation}
Here, the first quality uses Proposition \ref{prepare Mordell-Weil Theorem for function fields} (iii), 
and the third Proposition \ref{prepare Mordell-Weil Theorem for function fields} (i).\\
By $(\ref{rank of rho-2})$ and $(\ref{rank of Jac})$, 
\[ \text{the group $\mathrm{Jac}(X_{\eta})(K)$ is f.g. of rank 
$\rho(X) - 2 - \sum_{c \in C}(\mathrm{\# \mathrm{Irr}}(X_{c}) - 1)$}. \]
In particular, we see that 
\begin{equation} \label{A=0}
 \text{``$A = 0$ \ $\Rightarrow$ \ $\mathrm{Jac}(X_{\eta})(K)$ is f.g.''}
 \end{equation}
\indent Next, assume $A \neq 0$.
Then $A \times_{\mathrm{Spec}(k)} \mathrm{Pic}_{C/k}^{0} \sim_{isog} \mathrm{Pic}_{X/k}^{0}$.
By the base extension $K/k$, we get a morphism of group $K$-varieties
\[ (A \times_{\mathrm{Spec}(k)} \mathrm{Spec}(K)) \times_{\mathrm{Spec}(K)} (\mathrm{Pic}_{C/k}^{0} \times_{\mathrm{Spec}(k)} \mathrm{Spec}(K))  \rightarrow \mathrm{Pic}_{X/k}^{0} \times_{\mathrm{Spec}(k)} \mathrm{Spec}(K). \]
Then, there is a morphism of group K-varieties
\[ \phi : A \times_{\mathrm{Spec}(k)} \mathrm{Spec}(K) \rightarrow \mathrm{Pic}_{X/k}^{0} \times_{\mathrm{Spec}(k)} \mathrm{Spec}(K) \cong \mathrm{Pic}_{X \times_{\mathrm{Spec}(k)} \mathrm{Spec}(K)/K}^{0}. \]
Here, let $i  : X_{\eta} = X \times_{C} \mathrm{Spec}(K) \rightarrow X \times_{\mathrm{Spec}(k)} \mathrm{Spec}(K)$ be the closed $K$-immersion. 
Then, we get a pull-back of group K-varieties  
\[ i^{*} : \mathrm{Pic}^{0}_{X \times_{\mathrm{Spec}(k)} \mathrm{Spec}(K)} \rightarrow \mathrm{Pic}^{0}_{X_{\eta}/K} = \mathrm{Jac}(X_{\eta}). \]
Hence, we obtain a morphism of group K-varieties
\[ \psi : = i^{*} \circ \phi : A_{K} = A \times_{\mathrm{Spec}(k)} \mathrm{Spec}(K) \rightarrow \mathrm{Jac}(X_{\eta}). \]
Here we need:
\begin{lem} \label{finite kernel}
$\mathrm{Ker}(\psi)$ is a finite group scheme. 
\end{lem}
\begin{proof}
Assume $\mathrm{dim}(\mathrm{Ker}(\psi)) > 0$.
Then $\mathrm{Ker}(\psi)^{0} \subset A_{K}$ is an abelian $K$-variety of positive dimension.
Since $C$ is geometrically connected, $K/k$ is a primary field extension.
By \cite[Thm.~5,\ p.26]{Lang}, there is an abelian $k$-variety $B \neq 0$ such that 
\begin{equation} \label{B}
\mathrm{Ker}(\psi)^{0} \cong B_{K}.
\end{equation}
Since the m-torsion subgroup $B(k)[m]$ is isomorphic to 
$(\mathbb{Z}/m\mathbb{Z})^{2\mathrm{dim}(B)}$ for any integer $m$ which is not divisible 
by $\mathrm{char}(k)$ (cf. \cite[Prop.~(3), p.64]{Mumford A}), we see that 
\begin{equation} \label{ker is not f.g}
\text{$B(k)$ is not f.g.}
\end{equation}
\indent On the other hand, since $\mathrm{Ker}(\psi)^{0}(K) = B(K)$ by $(\ref{B})$, we have
\[ B(k) = \mathrm{Ker}(\psi)^{0}(K) \cap B(k) \subset \mathrm{Ker}(\psi)(K) \cap A(k). \]
Since $\mathrm{Jac}(X_{\eta})(K) = \mathrm{Pic}(X)_{0}/\mathrm{Pic}(X)_{f}$ by Proposition \ref{prepare Mordell-Weil Theorem for function fields} (iii), we have 
$\mathrm{Ker}(\psi)(K) = A(K) \cap \mathrm{Pic}(X)_{f}$, so we get
\begin{equation} \label{bound of B}
B(k) \subset \mathrm{Ker}(\psi)(K) \cap A(k) = A(k) \cap \mathrm{Pic}(X)_{f}. 
\end{equation}
Thus, it suffices to show that $A(k) \cap \mathrm{Pic}(X)_{f}$ is f.g.
Indeed, by Proposition \ref{prepare Mordell-Weil Theorem for function fields} (ii),
$\mathrm{Pic}(X)_{f}/f^{*}\mathrm{Pic}^{0}(C)$ is f.g.
By (\ref{isogeny for A}), $A(k) \cap f^{*}\mathrm{Pic}^{0}(C)$ is finite. By the inclusion
\[ \frac{A(k) \cap \mathrm{Pic}(X)_{f}}{A(k) \cap f^{*}\mathrm{Pic}^{0}(C)} \subset \frac{\mathrm{Pic}(X)_{f}}{f^{*}\mathrm{Pic}^{0}(C)}, \]
we see that $A(k) \cap \mathrm{Pic}(X)_{f}$ is f.g.
By $(\ref{bound of B})$, we see that 
\begin{equation} \label{ker is f.g}
\text{$B(k)$ is f.g.}
\end{equation}
By $(\ref{ker is not f.g})$ and $(\ref{ker is f.g})$, we obtain a contradiction. Hence $\mathrm{Ker}(\psi)$ is a finite group scheme.
\end{proof}
Let us go back to the proof of Theorem \ref{Mordell-Weil Theorem for function fields}.
The morphism $\psi : A_{K} \rightarrow \mathrm{Jac}(X_{\eta})$ is either constant or surjective because $\mathrm{dim}(\mathrm{Jac}(X_{\eta})) = 1$.
Since $\mathrm{Ker}(\psi)$ is finite by Lemma \ref{finite kernel}, $\psi$ is surjective, so $\psi$ is isogeny.
In particular,
\begin{equation} \label{A not = 0}
\text{``$A \neq 0$ \ $\Rightarrow$ \ 
$\mathrm{Jac}(X_{\eta}) \sim_{isog} A_{K}$ \ $\Rightarrow$ 
$\mathrm{Jac}(X_{\eta})(K)$ is not f.g.''}
\end{equation}
\indent Combining $(\ref{A=0})$ and $(\ref{A not = 0})$, we get $\mathrm{dim}(A) \leq 1$ and 
\[
\text{``$A = 0$ \ $\Leftrightarrow$ \ $\mathrm{Jac}(X_{\eta})(K)$ is f.g.'' \ \ \ and \ \ \ 
``$A \neq 0$ \ $\Leftrightarrow$ \ $\mathrm{Jac}(X_{\eta})(K)$ is not f.g.''} 
\]
Now, by $(\ref{isogeny for A})$ and $\mathrm{dim}((\mathrm{Pic}_{-/k}^{0})_{red}) = 1/2 \cdot b_{1}(-)$ (\cite[Prop. 0.7.4,\ p.69]{Cossec and Dolgachev}), we get
\[ \mathrm{dim}(A) = \mathrm{dim}((\mathrm{Pic}_{X/k}^{0})_{red}) - \mathrm{dim}((\mathrm{Pic}_{C/k}^{0})) = 1/2(b_{1}(X) - b_{1}(C)) \leq 1. \]
In particular, we get 
\[ \text{``$A = 0$  $\Leftrightarrow$  $b_{1}(X) = b_{1}(C)$'' \ \ \ and \ \ \ ``$A \neq 0$ $\Leftrightarrow$  $b_{1}(X) = b_{1}(C) + 2$''} \]
Thus, we see that $A$ satisfies assertions (i)-(iii).
In particular, if $A \neq 0$, then $\mathrm{Jac}(X_{\eta})$ is an elliptic curve, so $f$ is elliptic. On the contrary, $f$ is quasi-elliptic, then $A = 0$,
so $\mathrm{Jac}(X_{\eta})(K)$ is finitely-generated.\\

\indent Finally, we prove the existence of a $k$-isogeny 
$T \overset{isog}\sim A$. Assume $T = 0$.
Then the map $\tau : T_{K} \rightarrow \mathrm{Jac}(X_{\eta})$ is not a $K$-isogeny since $\mathrm{dim}(T_{K}) < \mathrm{dim}(\mathrm{Jac}(X_{\eta}))$,
so we get $A = 0$ by assertion (iii) and the universal property of $\tau$, and hence 
$T \overset{isog}\sim A.$\\
\indent Next, assume $T \neq 0$. Then the map $\tau : T_{K} \rightarrow \mathrm{Jac}(X_{\eta})$ is a $K$-isogeny 
since $\mathrm{dim}(T_{K}) = \mathrm{dim}(\mathrm{Jac}(X_{\eta}))$,
so $\mathrm{Jac}(X_{\eta})(K)$ is not finitely-generated.
By assertion (iii), we have a $K$-isogeny $\psi : A_{K} \rightarrow \mathrm{Jac}(X_{\eta})$, so we have a $K$-isogeny $T_{K} \overset{isog}\sim A_{K}$. Now $K/k$ is primary.
By \cite[Thm.~5,\ p.26]{Lang}, there is a $k$-isogeny of elliptic curves 
\[ T \overset{isog}\sim A. \]
Since $A$ satisfies assertions (i)-(iii), so is $T$. 
\end{proof}

\subsection{Sections}
Let $f : X \rightarrow C$ be a fibration from a surface to a curve.
\begin{definition}
A morphism $s : C \rightarrow X$ is a \textit{section} of $f$ if $f \circ s = \mathrm{id}_{C}$.
\end{definition}
We identify a section $s$ of $f$ with its image in $X$.
This is a curve $S$ such that  $f|_{S}$ is an isomorphism, or equivalently 
$(S \cdot X_{c}) = 1$ for every closed fiber $X_{c}$ of $f$.
Let $X(C)$ denote the set of sections of $f$.
For any morphism $g : Z \rightarrow S$ of regular schemes, let 
\[ Z^{\#} : = \{ \ z \in Z \ | \ \text{$g$ is smooth at $z$} \ \}. \]
We will use the following properties of sections:
\begin{lem} \label{section} 
Let $f : X \rightarrow C$ be a fibration from a surface to a curve.
Let $X_{\eta}$ be the generic fiber of $f$.
Let $K$ be the function field of $C$. Then
\begin{enumerate}
\item Giving a section $s$ of $f$ is equivalent to giving a $K$-rational point of $X_{\eta}$.
Moreover, the image of $s$ lies  in $X^{\#}$ and corresponds to 
a point of $X_{\eta}^{\#}(K)$.
\item If $f$ has a section, then $f$ has no multiple fibers.
\end{enumerate}
\end{lem}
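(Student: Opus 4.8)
The plan is to prove the two parts independently, each from standard facts about curves and surfaces.

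For (i), recall that $X_{\eta} = X \times_{C} \mathrm{Spec}(K)$ with $K = k(C)$, so a section $s : C \rightarrow X$ of $f$ restricts over the generic point $\eta$ to a $K$-morphism $\mathrm{Spec}(K) \rightarrow X_{\eta}$, i.e. a point of $X_{\eta}(K)$. Conversely, given $x \in X_{\eta}(K)$, composing with the open immersion $X_{\eta} \hookrightarrow X$ yields a rational map $C \dashrightarrow X$ defined on a dense open subset containing $\eta$; since $C$ is a regular curve of dimension one and $f : X \rightarrow C$ is proper, the valuative criterion of properness applied at each closed point $c \in C$ (where $\mathcal{O}_{C, c}$ is a discrete valuation ring with fraction field $K$) extends this rational map to a morphism $s : C \rightarrow X$. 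It then remains to verify $f \circ s = \mathrm{id}_{C}$: both sides are morphisms from the integral scheme $C$ to the separated scheme $C$ agreeing on a dense open subset, hence everywhere. Finally one checks that the two constructions are mutually inverse. I do not expect a genuine obstacle here; the only delicate point is invoking the valuative criterion, for which $C$ regular of dimension one and $f$ proper --- both guaranteed since $f$ is a fibration from a smooth projective surface --- are exactly the hypotheses needed.

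For (ii), suppose $f$ admits a section whose image is the curve $S \subset X$, so that $f|_{S} : S \rightarrow C$ is an isomorphism. Fix a closed point $c \in C$ and write $X_{c} = m_{c}\overline{X_{c}}$ as in Section 7. Using the identification $\mathcal{O}_{X}(X_{c}) \cong f^{*}\mathcal{O}_{C}(c)$ together with the projection formula, I would compute the intersection number
\[ (S \cdot X_{c}) = \deg\big(\mathcal{O}_{X}(X_{c})|_{S}\big) = \deg\big((f|_{S})^{*}\mathcal{O}_{C}(c)\big) = \deg\big(\mathcal{O}_{C}(c)\big) = 1. \]
On the other hand, $S$ is not contained in any fiber of $f$ (since $f|_{S}$ is dominant) yet meets every fiber (since $f|_{S}$ is surjective), so $(S \cdot \overline{X_{c}})$ is a strictly positive integer and $(S \cdot X_{c}) = m_{c}\,(S \cdot \overline{X_{c}})$. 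Hence $m_{c}\,(S \cdot \overline{X_{c}}) = 1$ forces $m_{c} = 1$, i.e. $X_{c}$ is non-multiple; since $c$ was arbitrary, $f$ has no multiple fibers. Again I anticipate no serious difficulty: the argument rests only on the identity $\mathcal{O}_{X}(X_{c}) \cong f^{*}\mathcal{O}_{C}(c)$ and the positivity of $(S \cdot \overline{X_{c}})$.
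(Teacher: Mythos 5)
Your proof is correct and follows essentially the same route as the paper's: part (i) spreads a $K$-point of $X_{\eta}$ out to a section using that $C$ is a regular curve and $f$ is proper (the paper takes the closure of the point in $X$ and invokes that the resulting proper birational map onto the normal curve $C$ is an isomorphism, while you use the valuative criterion --- the same standard fact in two guises), and part (ii) is the identical intersection-number argument $(S \cdot X_{c}) = m_{c}(S \cdot \overline{X_{c}}) = 1 \Rightarrow m_{c} = 1$. If anything you are slightly more complete, since you justify $(S \cdot X_{c}) = 1$ via $\mathcal{O}_{X}(X_{c}) \cong f^{*}\mathcal{O}_{C}(c)$, whereas the paper records this equality without proof just before the lemma.
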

\begin{proof}
(i) Let $s$ be a section of $f$.
By the base change $\mathrm{Spec}(K) \rightarrow C$, we get
$s \times \mathrm{id}_{K} : \mathrm{Spec}(K) \cong C \times_{C} \mathrm{Spec}(K) 
\hookrightarrow X_{\eta}$, so $s \times \mathrm{id}_{K} \in X_{\eta}(K)$.
Conversely, let $\xi \in X_{\eta}(K)$.
Take the closure $S$ of $\xi$ in $X$.
Since $k(S) \cong k(C) \cong K$ and $C$ is a normal $k$-curve, 
the proper birational map $f|_{S} : S \dashrightarrow C$ is an isomorphism, 
so $f|_{S}^{-1} \in X(C)$.\\
\indent So, it remains to show 
for any $s \in X(C)$, any closed fiber $X_{c}$ is smooth at $x : = s(c)$.
Let $R : = \mathcal{O}_{C, c}$, $m$ its maximal ideal, and $f' : X' \rightarrow C'$ the base change of $f$ by 
$\mathrm{Spec}(R) \rightarrow C$.
By $X_{c} \cong X'_{c}$, we may assume $C = \mathrm{Spec}(R)$.
Let $A : = \mathcal{O}_{X, x}$ and $m_{A}$ its maximal ideal.
Then $\mathcal{O}_{X_{c}, x} = A/mA$ and its maximal ideal is $M : = m_{A}/mA$.
The image of $m$ under the local homomorphism $R \rightarrow A \rightarrow R$ defined by $f$ and $s$ is equal to $m$.
This implies that $mA$ is not contained in $m_{A}^{2}$.
Thus the surjection 
$m_{A}/m_{A}^{2} \rightarrow m_{A}/(m_{A}^{2} + mA) = M/M^{2}$ is not injective, so 
$\mathrm{dim}(M/M^{2}) < \mathrm{dim}(m_{A}/m_{A}^{2})$.
As $X$ is smooth at $x$, $\mathrm{dim}(m_{A}/m_{A}^{2}) = \mathrm{dim}(X) = 2$.
Also $\mathrm{dim}(A/mA) = \mathrm{dim}(X_{c}) = 1$.
Hence $\mathrm{dim}(A/mA) \leq \mathrm{dim}(M/M^{2})$ is an equality, so $X_{c}$ is smooth at $x$.
As the above, $s \times \mathrm{id}_{K} \in X_{\eta}^{\#}(K)$.
\\ 
\indent (ii) If $X_{c}$ is a multiple fiber of $f$, we can write $X_{c} = m_{c}D$ with $m_{c} \in \mathbb{Z}_{> 1}$.
If $S \subset X$ is a section of $f$, then $(S \cdot X_{c}) = m_{c}(S \cdot D) \geq m_{c} > 1$.
So we obtain a contradiction.
\end{proof}

Let $S$ be a Dedekind scheme and $K$ its function field. 
Let $G$ be a separated group $K$-scheme of finite type.
Let $X$ be a smooth group $S$-scheme of finite type.
\begin{definition} 
$X$ is \textit{N\'{e}ron model} of $G$ if it satisfies the following conditions$:$
\begin{enumerate}
\item $X$ is an $S$-model of $G$, i.e. $X_{K} \cong G$.
\item For each smooth $S$-scheme $Y$ and each $K$-morphism $\phi_{K} : Y_{K} \rightarrow X_{K}$,
there is a unique $S$-morphism $\psi : Y \rightarrow X$ which extends $\phi_{K}$.
\end{enumerate}
\end{definition}

\subsection{Jacobian fibrations} 
The main object of this paper is the following.
\begin{definition}
A genus $1$ fibration $f : X \rightarrow C$ is called \textit{Jacobian} 
if it admits a section, i.e. $X(C) \neq \emptyset$.
Equivalently, $X_{\eta}$ has a $\eta$-rational point by Lemma \ref{section} (i).
\end{definition}
Now we associate to every genus $1$ fibration a Jacobian genus 1 fibration that satisfies some good properties. Roughly speaking, it is easier to study $j$ than $f$:
\begin{prop} \label{Jacobian}
(\cite[Prop.~5.2.5,\ p.299]{Cossec and Dolgachev}). 
Let $f : X \rightarrow C$ be a genus $1$ fibration. Then there is a unique $($up to $C$-isomorphism$)$ Jacobian genus $1$ fibration $j : J \rightarrow C$ such that the smooth locus $J^{\#}$ is $C$-isomorphic to the N\'{e}ron model of $\mathrm{Jac}(X_{\eta})$.
In particular, $J$ satisfies the following properties$:$
\begin{enumerate}
\item $J_{\eta}^{\#} \cong \mathrm{Jac}(X_{\eta}) \neq \emptyset$, 
where $J_{\eta}^{\#}$ is the generic fiber of $j : J^{\#} \rightarrow C$,
\item  the image of any section $C \rightarrow J$ lies in $J^{\#}$, and
\item the natural map of sections $J(C) \rightarrow \mathrm{Jac}(X_{\eta})(K)$ is bijective.
\end{enumerate}
\end{prop}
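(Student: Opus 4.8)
The plan is to build $J$ from its generic fibre upward and then recognise $J^\#$ as the N\'eron model via N\'eron's theorem on relatively minimal elliptic fibrations. Write $K=k(C)$ and set $G:=\mathrm{Jac}(X_\eta)=\mathrm{Pic}^0_{X_\eta/K}$, which by Theorem \ref{abelian picard variety}(iii) is a smooth connected one\nobreakdash-dimensional group $K$-scheme with identity $O\in G(K)$. First I would take a regular compactification $J_\eta$ of $G$. By Proposition \ref{regular compactification} applied to the regular genus $1$ curve $X_\eta$, the curve $J_\eta$ is again a regular genus $1$ curve over $K$ with $\mathrm{Jac}(J_\eta)\cong G$, and its smooth locus is exactly $G$ — all of $J_\eta$ when $f$ is elliptic, and $J_\eta$ minus its unique non\nobreakdash-smooth point (Proposition \ref{non-smooth genus one curve}(i)) when $f$ is quasi\nobreakdash-elliptic; since $O\in G(K)\subseteq J_\eta(K)$, the curve $J_\eta$ is Jacobian. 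Next I would spread $J_\eta$ out over $C$: choose a projective model of $J_\eta$ over a dense open $U\subseteq C$, take its closure in some $\mathbb{P}^N_C$, normalise, resolve the surface singularities (resolution of $2$\nobreakdash-dimensional excellent schemes is available in every characteristic), and contract the $(-1)$-curves contained in fibres to reach the relative minimal model $j:J\to C$. Stein factorisation together with the normality of $C$ gives $j_*\mathcal{O}_J\cong\mathcal{O}_C$, so $j$ is a fibration; none of these operations touches the generic fibre, so $j$ is a minimal genus $1$ fibration with generic fibre $J_\eta$, and it is Jacobian by Lemma \ref{section}(i).

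Next I would identify $J^\#$ with the N\'eron model of $G$. By the Kodaira--N\'eron classification of the fibres of a relatively minimal genus $1$ fibration, the open subscheme $J^\#\subseteq J$ — the complement of the multiple fibre components and of the singular points of the fibres — is a smooth separated group $C$-scheme of finite type with generic fibre $G$. To verify the N\'eron mapping property, take a smooth $C$-scheme $Y$ and a $K$-morphism $\phi_K:Y_\eta\to G$; the composite $Y_\eta\to G\hookrightarrow J$ extends, by properness of $J/C$ and normality of $Y$, to a $C$-morphism off a closed subset of codimension $\geq 2$ in $Y$, and relative minimality (no $(-1)$-curves in fibres) together with the valuative criterion forces this extension to be everywhere defined and to factor through $J^\#$; uniqueness is automatic by separatedness. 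Hence $J^\#$ is the N\'eron model of $G=\mathrm{Jac}(X_\eta)$. The same mapping property also forces any genus $1$ fibration whose smooth locus is $C$-isomorphic to the N\'eron model to be relatively minimal: a $(-1)$-curve in a fibre meets the smooth locus, and contracting it would produce, via the mapping property, a nontrivial $C$-endomorphism of the N\'eron model, a contradiction.

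For uniqueness, suppose $j':J'\to C$ is another Jacobian genus $1$ fibration with $J'^\#$ $C$-isomorphic to the N\'eron model of $G$. Then $(J'^\#)_\eta\cong G$, so $J'_\eta$ is a regular compactification of $G$ and therefore $J'_\eta\cong J_\eta$ over $K$; since both $J$ and $J'$ are relatively minimal by the previous remark, the birational $C$-map between them that is an isomorphism on generic fibres is an isomorphism of surfaces over $C$. The three stated properties now follow at once: (i) $J_\eta^\#=G\ni O$ is nonempty; (ii) a section $S\subseteq J$ satisfies $(S\cdot J_c)=1$ for every closed fibre, and writing $J_c=\sum_i m_iE_i$ this forces $S$ to meet a single component $E_{i_0}$ with $m_{i_0}=1$, transversally and at a point lying on no other component, i.e.\ at a point where $j$ is smooth, so $S\subseteq J^\#$; (iii) by Lemma \ref{section}(i) one has $J(C)\cong J_\eta(K)$, and by (ii) every section lands in $J^\#$, so this restricts to a bijection $J(C)\cong J_\eta^\#(K)=G(K)=\mathrm{Jac}(X_\eta)(K)$.

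The main obstacle is the identification $J^\#\cong$ N\'eron model: one has to control rational $C$-maps from smooth schemes into the proper surface $J$ at the bad fibres, using the fibre classification and relative minimality to rule out indeterminacy. This is precisely N\'eron's theorem that the smooth locus of a relatively minimal elliptic fibration is the N\'eron model of its generic fibre; the quasi\nobreakdash-elliptic case additionally requires the analogous analysis of the additive\nobreakdash-type fibres that occur only in characteristics $2$ and $3$.
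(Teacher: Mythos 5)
Your proposal follows essentially the same route as the paper: take a regular compactification $J_\eta$ of $\mathrm{Jac}(X_\eta)$ (via Proposition \ref{regular compactification}), spread it out over $C$ by schematic closure, resolve singularities and contract $(-1)$-curves in fibres to reach the relatively minimal model, get uniqueness from the theory of minimal models, and identify $J^{\#}$ with the N\'eron model by N\'eron's theorem on relatively minimal genus $1$ fibrations --- which the paper simply cites from Artin and Liu--Tong where you sketch it. The argument is correct; your treatment of (i)--(iii) is a spelled-out version of what the paper leaves implicit.
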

\begin{proof}
First, we prove the existence of $J$.
Let $\eta \in C$ be the generic point, $X_{\eta}$ the generic fiber of $f$, 
and $J_{\eta}$ a regular compactification of $\mathrm{Jac}(X_{\eta})$ 
as in Proposition \ref{regular compactification}.
Now, we have injections $J_{\eta} \hookrightarrow \mathbb{P}^{n}_{K} \hookrightarrow \mathbb{P}^{n}_{k} \times C$.
Take schematic closure$:$
\[ J \hookrightarrow \mathbb{P}^{n}_{k} \times C. \]
Then there is a regular projective scheme $J \rightarrow C$, flat over $C$ and with the generic fiber $J_{\eta}$.
After resolving singularities of $J$, blowing down $(-1)$ curves in closed fibers, 
and replacing $J$ by it, we obtain a genus $1$ fibration $j : J \rightarrow C$.
The uniqueness of $J$ follows from the theory of minimal models.
By the same argument as in \cite[Prop.~2.15,\ p.218]{Artin} or \cite[Thm.~1.1]{Liu and Tong}, we see that 
$J^{\#}$ is $C$-isomorphic to the N\'{e}reon model of $\mathrm{Jac}(X_{\eta})$, so get (i)-(iii).
\end{proof}
We call $j : J \rightarrow C$  \textit{the Jacobian fibration of} $f : X \rightarrow C$.
First, we see:
\begin{prop} 
(\cite[Prop.~5.3.2,\ p.303]{Cossec and Dolgachev}). \label{Jacobian isomorphism}
Every Jacobian genus 1 fibration $f : X \rightarrow C$ is $C$-isomorphic to the Jacobian fibration $j : J \rightarrow C$ of $f$.
\end{prop}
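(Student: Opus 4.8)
The plan is to reduce the statement to an isomorphism of generic fibres and then invoke uniqueness of minimal regular models. Since $f$ is a minimal genus $1$ fibration, $X$ is the minimal regular model over the Dedekind scheme $C$ of its generic fibre $X_{\eta}$, a regular, geometrically integral $K$-curve ($K = k(C)$) of arithmetic genus $1$. Likewise $j : J \to C$ is minimal (Proposition~\ref{Jacobian}), so $J$ is the minimal regular model over $C$ of $J_{\eta}$, and by Proposition~\ref{Jacobian} (i) the curve $J_{\eta}$ is a regular compactification of $\mathrm{Jac}(X_{\eta})$. As minimal regular models of curves of arithmetic genus $\geq 1$ over a Dedekind base are unique up to isomorphism, it suffices to prove $X_{\eta} \cong J_{\eta}$ as $K$-curves.

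The heart of the proof is to show that $X_{\eta}$, too, is a regular compactification of $\mathrm{Jac}(X_{\eta})$. Because $f$ is Jacobian it admits a section $s : C \to X$, whose image $S$ is an irreducible curve mapping isomorphically to $C$, so $(S \cdot X_{c}) = 1$ for every closed point $c \in C$. If $s(c)$ were a singular point of the fibre $X_{c}$, then $\mathrm{mult}_{s(c)}(X_{c}) \geq 2$, and since $S$ is smooth the local intersection number would satisfy $(S \cdot X_{c})_{s(c)} \geq \mathrm{mult}_{s(c)}(S) \cdot \mathrm{mult}_{s(c)}(X_{c}) \geq 2$, contradicting $(S \cdot X_{c}) = 1$. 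Hence $s(c) \in X^{\#}$ for all closed $c$, so $S \subseteq X^{\#}$ and in particular $s(\eta) \in X_{\eta}^{\#}(K)$. By Proposition~\ref{genus one curve}, $X_{\eta}^{\#}$ is a torsor for $\mathrm{Jac}(X_{\eta})$; possessing a $K$-rational point, it is the trivial torsor, so $X_{\eta}^{\#} \cong \mathrm{Jac}(X_{\eta})$ as $K$-schemes. Consequently $X_{\eta}$ is a regular compactification of $\mathrm{Jac}(X_{\eta})$; since $X_{\eta}$ and $J_{\eta}$ are then regular projective $K$-curves with the same function field $k(\mathrm{Jac}(X_{\eta}))$, they are $K$-isomorphic, and therefore $X \cong_{C} J$.

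I expect the main difficulty to lie in the quasi-elliptic case, where $X_{\eta}$ is a non-smooth regular curve, $K$ is imperfect, and $\mathrm{Jac}(X_{\eta})$ is a one-dimensional smooth unipotent $K$-group rather than an abelian variety: one must know that minimal regular models behave as expected in this generality. This is, however, exactly the situation already treated in the proof of Proposition~\ref{Jacobian}, so the work is to observe that the argument there is symmetric in $f$ and $j$ once $f$ is known to be Jacobian. An alternative, essentially equivalent, route avoids minimal-model uniqueness altogether: having checked that $s$ is a section of $X^{\#} \to C$ and that $X_{\eta}^{\#} \cong \mathrm{Jac}(X_{\eta})$, one shows as in \cite[Prop.~2.15]{Artin}/\cite[Thm.~1.1]{Liu and Tong} that the smooth locus $X^{\#}$ is the N\'eron model of $\mathrm{Jac}(X_{\eta})$, so that $f$ satisfies the characterising property of Proposition~\ref{Jacobian}, whence $X \cong_{C} J$ by the uniqueness asserted there. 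In the elliptic case everything is simpler: $X_{\eta}$ is smooth, so $X_{\eta} = X_{\eta}^{\#}$ becomes an elliptic curve once it acquires the rational point $s(\eta)$, and an elliptic curve is canonically isomorphic to its own Jacobian by Abel's theorem (Lemma~\ref{lci curve}).
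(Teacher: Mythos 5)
Your proposal is correct and follows essentially the same route as the paper: the paper likewise notes that $J_{\eta}$ is a regular compactification of $\mathrm{Jac}(X_{\eta})$, uses the section to get $X_{\eta}(K)\neq\emptyset$, deduces $X_{\eta}\cong J_{\eta}$ via the triviality of the torsor $X_{\eta}^{\#}$ (Proposition~\ref{regular compactification}~(i)), and concludes by the minimality of $f$ and $j$. Your additional verifications (that the section lands in the smooth locus, and the remarks on the quasi-elliptic case) only make explicit what the paper leaves implicit.
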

\begin{proof}
By assumption, $X(C) \neq \emptyset$.
By Lemma \ref{section} (i), $X_{\eta}^{\#}(K) \neq \emptyset$.
By Proposition \ref{Jacobian},
$J_{\eta}$ is a regular compactification of $\mathrm{Jac}(X_{\eta})$.
By Proposition \ref{regular compactification} (i), we get $X_{\eta} \cong J_{\eta}$.
This induces an isomorphism of function fields $k(X) \cong k(J)$.
Then there is a birational map $J \dashrightarrow X$. 
Since $X$ is minimal and $p_{a}(X_{\eta}) \geq 1$, any birational map to $X$ is a birational morphism (cf. \cite[Coro.~3.24, p.422]{Liu}), 
so we get $X \cong J$.
\end{proof}

\begin{prop}
$($canonical bundle formula$)$.
Let $j : J \rightarrow C$ be a Jacobian genus 1 fibration.
Let $\mathcal{L}' : = R^{1}j_{*}\mathcal{O}_{J}$ be the invertible sheaf on $C$.
Then
\[ \omega_{J} \cong j^{*}(\mathcal{L}'^{-1} \otimes \omega_{C}), \ \ \mathrm{deg}(\mathcal{L}') = - \chi(\mathcal{O}_{J}). \]
\end{prop}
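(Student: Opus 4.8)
The plan is to read off the formula from the general canonical bundle formula (Theorem \ref{Canonical bundle formula}) applied to the genus $1$ fibration $j : J \to C$; the whole point is that for a Jacobian fibration the two error terms occurring there --- the divisor supported on the multiple fibers, and the torsion summand of $R^1 j_* \mathcal{O}_J$ --- both vanish.

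First I would observe that $j$ admits a section (this is the defining property of a Jacobian fibration; see Proposition \ref{Jacobian}), so $j$ has no multiple fibers by Lemma \ref{section}(ii). Writing $R^1 j_* \mathcal{O}_J = \mathcal{L} \oplus T$ with $\mathcal{L}$ invertible and $T$ of finite length as in Theorem \ref{Canonical bundle formula}, the correction divisor $\sum_i n_i \overline{J_{c_i}}$ is then an empty sum, and the formula already reads $\omega_J \cong j^*(\mathcal{L}^{-1} \otimes \omega_C)$.

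The main obstacle is to prove $T = 0$, which is exactly what identifies $\mathcal{L}' := R^1 j_* \mathcal{O}_J$ with the invertible sheaf $\mathcal{L}$. Here I would use that the finite-length sheaf $T$ is supported only on the multiple fibers of $j$: in the terminology of Bombieri--Mumford it records the wild multiple fibers, so that $\mathrm{Supp}(T)$ consists of points $c_i$ with $\mathrm{char}(k) \mid m_{c_i}$ (compare Theorem \ref{Canonical bundle formula}(iii) and \cite[Thm.~2,\ p.27]{Bombieri and Mumford II}; equivalently, $T$ measures the jump of $h^1(\mathcal{O}_{J_c}) = h^0(\mathcal{O}_{J_c})$ above its generic value $1$, which can only occur along a wild fiber). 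Since $j$ has no multiple fibers at all, $\mathrm{Supp}(T) = \emptyset$, hence $T = 0$, $\mathcal{L}'$ is invertible, and $\omega_J \cong j^*(\mathcal{L}'^{-1} \otimes \omega_C)$.

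For the degree, once $T = 0$ is in hand I would compute via the Leray spectral sequence for $j$. As $j$ has relative dimension $1$, one has $R^i j_* \mathcal{O}_J = 0$ for $i \geq 2$, and $j_* \mathcal{O}_J \cong \mathcal{O}_C$ because $j$ is a fibration; hence $\chi(\mathcal{O}_J) = \chi(C, \mathcal{O}_C) - \chi(C, \mathcal{L}')$. Riemann--Roch on the curve $C$ gives $\chi(C, \mathcal{L}') = \deg(\mathcal{L}') + \chi(C, \mathcal{O}_C)$, so $\chi(\mathcal{O}_J) = -\deg(\mathcal{L}')$, i.e. $\deg(\mathcal{L}') = -\chi(\mathcal{O}_J)$, as required. (One could instead simply substitute $\mathrm{length}(T) = 0$ into Theorem \ref{Canonical bundle formula}(iv) together with $\deg \omega_C = 2 p_a(C) - 2$.)
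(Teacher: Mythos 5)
Your proposal is correct and follows essentially the same route as the paper, which simply notes that a Jacobian fibration has a section, hence no multiple fibers by Lemma \ref{section}(ii), and then invokes Theorem \ref{Canonical bundle formula}. The only difference is that you explicitly justify $T = 0$ (so that $R^{1}j_{*}\mathcal{O}_{J}$ is invertible) and re-derive the degree formula, details the paper leaves implicit; both additions are sound.
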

\begin{proof}
By Lem.~\ref{section} (ii), $j$ has no multiple fibers.
Thus, it follows from Thm.~\ref{Canonical bundle formula}.
\end{proof}

From now on, we prove relations between some invariants of $f$ and $j$, so begin with$:$

\begin{cor} \label{jacobian of jacobian}
(\cite[p.307]{Cossec and Dolgachev}).
Let $f : X \rightarrow C$ be a genus $1$ fibration and $j : J \rightarrow C$ the Jacobian fibration of $f$.
Then there is an isomorphism of group K-varieties
\[ \mathrm{Jac}(X_{\eta}) \cong \mathrm{Jac}(J_{\eta}). \]
\end{cor}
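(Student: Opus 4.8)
The plan is to reduce the statement to Proposition \ref{regular compactification}(iii) by recognizing the generic fiber $J_{\eta}$ of the Jacobian fibration as a regular compactification of $\mathrm{Jac}(X_{\eta})$. Throughout, write $K = k(C)$ for the function field of $C$ and $\eta$ for its generic point.

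First I would recall that, since $f : X \to C$ is a genus $1$ fibration, its generic fiber $X_{\eta}$ is by definition a regular, projective, geometrically-integral $K$-curve with $p_{a}(X_{\eta}) = 1$; in the terminology of Section $5$ this says precisely that $X_{\eta}$ is a regular genus $1$ curve over $K$. Next, by Proposition \ref{Jacobian}(i), the generic fiber $J_{\eta}^{\#}$ of $j : J^{\#} \to C$ satisfies $J_{\eta}^{\#} \cong \mathrm{Jac}(X_{\eta})$. Since $J$ is a smooth projective surface, $J_{\eta} = J \times_{C} \mathrm{Spec}(K)$ is a regular projective $K$-curve, and $J_{\eta}^{\#}$ --- being the smooth locus of the genus $1$ curve $J_{\eta}$ --- is a dense open subscheme of $J_{\eta}$. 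Hence, via the identification $J_{\eta}^{\#} \cong \mathrm{Jac}(X_{\eta})$, the curve $J_{\eta}$ is a regular compactification of $\mathrm{Jac}(X_{\eta})$ in the sense of Proposition \ref{regular compactification}.

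It then remains to apply Proposition \ref{regular compactification} with $X_{\eta}$ playing the role of the curve ``$C$'', $\mathrm{Jac}(X_{\eta})$ the role of ``$\mathrm{Jac}(C)$'', and $J_{\eta}$ the role of ``$E$''. Part (iii) of that proposition then yields an isomorphism $\mathrm{Jac}(X_{\eta}) \cong \mathrm{Jac}(J_{\eta})$. Moreover, tracing through the proof of Proposition \ref{regular compactification}(iii), both sides there are identified with $J_{\eta}^{\#}$ equipped with its canonical group structure (as $J_{\eta}^{\#}$ has a $K$-point, it is the trivial torsor for $\mathrm{Jac}(J_{\eta})$), so the resulting isomorphism is one of group $K$-varieties, which is exactly the assertion. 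The argument is essentially bookkeeping built on Proposition \ref{regular compactification}; the only point deserving a little care is checking that the curve isomorphism furnished there is compatible with the group laws on the two Jacobians, and this is precisely what the observation just made takes care of.
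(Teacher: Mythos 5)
Your proposal is correct and follows exactly the paper's route: identify $J_{\eta}$ as a regular compactification of $\mathrm{Jac}(X_{\eta})$ via Proposition \ref{Jacobian} and then invoke Proposition \ref{regular compactification}~(iii). The paper's proof is a two-line version of the same argument; your extra remark about compatibility with the group laws is a reasonable (if brief) elaboration of a point the paper leaves implicit.
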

\begin{proof}
By Proposition \ref{Jacobian},
$J_{\eta}$ is a regular compactification of $\mathrm{Jac}(X_{\eta})$.
Thus, it follows from Proposition \ref{regular compactification} (iii).
\end{proof}

The following proposition plays a key role in the proof of Theorem \ref{aim of jacobian} (i).
\begin{prop} \label{Picard isog}
Let $f: X \rightarrow C$ be a genus $1$ fibration and $j : J \rightarrow C$ the Jacobian fibration of $f$.
Then there are isogenies of abelian $k$-varieties
\[ (\mathrm{Pic}_{X/k}^{0})_{red} \sim_{isog} (\mathrm{Pic}_{J/k}^{0})_{red}, \ \ \ \mathrm{Alb}_{X/k} \sim_{isog} \mathrm{Alb}_{J/k}. \]
\end{prop}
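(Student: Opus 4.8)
The plan is to deduce the statement from Theorem \ref{Mordell-Weil Theorem for function fields}, applied both to $f$ and to its Jacobian fibration $j$ (which is again a minimal genus $1$ fibration over the same curve $C$), together with Corollary \ref{jacobian of jacobian}. First I would record, from the construction in the proof of Theorem \ref{Mordell-Weil Theorem for function fields} (see $(\ref{isogeny for A})$), the isogenies
\[ (\mathrm{Pic}^{0}_{X/k})_{red} \sim_{isog} A_{X} \times_{\mathrm{Spec}(k)} \mathrm{Pic}^{0}_{C/k}, \qquad (\mathrm{Pic}^{0}_{J/k})_{red} \sim_{isog} A_{J} \times_{\mathrm{Spec}(k)} \mathrm{Pic}^{0}_{C/k}, \]
where $A_{X} \hookrightarrow (\mathrm{Pic}^{0}_{X/k})_{red}$ and $A_{J} \hookrightarrow (\mathrm{Pic}^{0}_{J/k})_{red}$ are the abelian $k$-varieties of dimension $\leq 1$ furnished by the theorem. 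Since $\sim_{isog}$ is symmetric and transitive and products of isogenies are isogenies, it suffices to prove $A_{X} \sim_{isog} A_{J}$: the first assertion then follows by chaining the displayed isogenies, and the second, $\mathrm{Alb}_{X/k} \sim_{isog} \mathrm{Alb}_{J/k}$, follows at once by dualizing, since $\mathrm{Alb}_{-/k}$ is the dual abelian variety of $(\mathrm{Pic}^{0}_{-/k})_{red}$ and the dual of an isogeny is an isogeny.

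To prove $A_{X} \sim_{isog} A_{J}$ I would distinguish the two cases of Theorem \ref{Mordell-Weil Theorem for function fields}. By Corollary \ref{jacobian of jacobian} there is an isomorphism of group $K$-varieties $\mathrm{Jac}(X_{\eta}) \cong \mathrm{Jac}(J_{\eta})$; in particular $\mathrm{Jac}(X_{\eta})(K)$ is finitely generated if and only if $\mathrm{Jac}(J_{\eta})(K)$ is. If both are finitely generated, part (i) of the theorem gives $A_{X} = 0 = A_{J}$ and there is nothing more to do. If neither is finitely generated, part (ii) gives isogenies of elliptic $K$-curves $(A_{X})_{K} \sim_{isog} \mathrm{Jac}(X_{\eta})$ and $(A_{J})_{K} \sim_{isog} \mathrm{Jac}(J_{\eta})$; combining these with $\mathrm{Jac}(X_{\eta}) \cong \mathrm{Jac}(J_{\eta})$ produces an isogeny $(A_{X})_{K} \sim_{isog} (A_{J})_{K}$ defined over $K$.

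The step I expect to be the main obstacle is to descend this $K$-isogeny to an isogeny between $A_{X}$ and $A_{J}$ over $k$ itself. I would handle it using the hypothesis $k = \overline{k}$, so that $K = k(C)$ is a regular, hence primary, extension of $k$: along a primary extension the base-change map $\mathrm{Hom}_{k}(A_{X}, A_{J}) \to \mathrm{Hom}_{K}((A_{X})_{K}, (A_{J})_{K})$ is bijective, so the $K$-isogeny is the base change of a $k$-homomorphism $A_{X} \to A_{J}$, which is again an isogeny because surjectivity and finiteness of the kernel are insensitive to the faithfully flat base change $\mathrm{Spec}(K) \to \mathrm{Spec}(k)$. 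Equivalently, one may invoke uniqueness of the $K/k$-trace: by the final assertion of Theorem \ref{Mordell-Weil Theorem for function fields}, in the non-finitely-generated case $A_{X}$ (resp. $A_{J}$) is isogenous to the $K/k$-trace of $\mathrm{Jac}(X_{\eta})$ (resp. of $\mathrm{Jac}(J_{\eta})$), and the isomorphism $\mathrm{Jac}(X_{\eta}) \cong \mathrm{Jac}(J_{\eta})$ together with functoriality of the trace forces $A_{X} \sim_{isog} A_{J}$. Either way we obtain $A_{X} \sim_{isog} A_{J}$, which completes the proof.
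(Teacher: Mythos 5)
Your proposal is correct and follows essentially the same route as the paper's proof: both reduce to comparing the complementary abelian subvarieties $A_X$ and $A_J$ from the decomposition in Theorem \ref{Mordell-Weil Theorem for function fields}, split into the two cases according to whether $\mathrm{Jac}(X_\eta)(K)$ is finitely generated (transported to $J$ via Corollary \ref{jacobian of jacobian}), and descend the resulting $K$-isogeny to $k$ using the primarity of $K/k$ (the paper cites Lang, Thm.~5, p.26, for exactly this descent). The only cosmetic difference is that you make the duality argument for the Albanese varieties explicit, which the paper leaves implicit.
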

\begin{proof}
It suffices to prove the existence of a $k$-isogeny $(\mathrm{Pic}_{X/k}^{0})_{red} \sim_{isog} (\mathrm{Pic}_{J/k}^{0})_{red}$.
By Theorem \ref{Mordell-Weil Theorem for function fields},
there are abelian $k$-varieties $T$, $T'$ such that
\[ (\mathrm{Pic}_{X/k}^{0})_{red} \sim_{isog} T \times \mathrm{Pic}_{C/k}^{0},
\ \ \ \ \ \ 
(\mathrm{Pic}_{J/k}^{0})_{red} \sim_{isog} T' \times \mathrm{Pic}_{C/k}^{0}. \]
\indent First, assume $f$ is quasi-elliptic.
By the construction of $j$, $j$ is also quasi-elliptic.
By our convention in Theorem \ref{Mordell-Weil Theorem for function fields}, 
we have
\[ T = T' = 0. \]
Thus, we get an isogeny of abelian $k$-varieties
\[ (\mathrm{Pic}_{X/k}^{0})_{red} \sim_{isog} \mathrm{Pic}_{C/k}^{0} \sim_{isog} (\mathrm{Pic}_{J/k}^{0})_{red}. \]
\indent Next, assume $f$ is elliptic.
By construction, $j$ is also elliptic.
By Theorem \ref{Mordell-Weil Theorem for function fields}, 
\[ T = \mathrm{Tr}_{K/k}(\mathrm{Jac}(X_{\eta})), \ \ \ \ \ \ T' = \mathrm{Tr}_{K/k}(\mathrm{Jac}(J_{\eta})). \]
By Corollary \ref{jacobian of jacobian}, we have $\mathrm{Jac}(X_{\eta}) \cong \mathrm{Jac}(J_{\eta})$, so we get an isomorphism of $k$-abelian varieties
\[ T \cong T'. \]
Therefore, we get an isogeny of abelian $k$-varieties
\[ (\mathrm{Pic}_{X/k}^{0})_{red}  \sim_{isog} T \times_{\mathrm{Spec}(k)} \mathrm{Pic}_{C/k}^{0} \cong T' \times_{\mathrm{Spec}(k)} \mathrm{Pic}_{C/k}^{0} \sim_{isog} (\mathrm{Pic}_{J/k}^{0})_{red}. \]
\end{proof}

Using Proposition \ref{Picard isog}, we prove the first main theorem of this section$:$

\begin{thm} \label{Picard motive of Jacobian}
Let $f : X \rightarrow C$ be a genus $1$ fibration and $j : J \rightarrow C$ the Jacobian fibration of $f$.
Then there are isomorphism of Chow motives
\[ h_{1}(X) \cong h_{1}(J), \ \ \ h_{3}(X) \cong h_{3}(J). \]
\end{thm}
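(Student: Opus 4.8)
The plan is to reduce the statement to abelian varieties, combining the description of the odd Chow--K\"unneth components of a surface from Proposition \ref{Picard motive} with the isogenies of Proposition \ref{Picard isog}. By Proposition \ref{Picard motive} $(\mathrm{ii}')$ there are isomorphisms
\[ h_{1}(X) \cong h_{1}((\mathrm{Pic}^{0}_{X/k})_{red}), \qquad h_{1}(J) \cong h_{1}((\mathrm{Pic}^{0}_{J/k})_{red}), \]
and, writing $g := \dim \mathrm{Alb}_{X/k}$ and $g' := \dim \mathrm{Alb}_{J/k}$,
\[ h_{3}(X) \cong h_{2g-1}(\mathrm{Alb}_{X/k}) \otimes \mathbb{L}^{2-g}, \qquad h_{3}(J) \cong h_{2g'-1}(\mathrm{Alb}_{J/k}) \otimes \mathbb{L}^{2-g'}. \]
By Proposition \ref{Picard isog} we have $(\mathrm{Pic}^{0}_{X/k})_{red} \sim_{isog} (\mathrm{Pic}^{0}_{J/k})_{red}$ and $\mathrm{Alb}_{X/k} \sim_{isog} \mathrm{Alb}_{J/k}$; since isogenous abelian varieties have equal dimension, $g = g'$, so it remains only to know that an isogeny of abelian $k$-varieties induces an isomorphism on each individual Chow--K\"unneth component, after which one composes the three isomorphisms appearing in each displayed line.

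So I would prove the following lemma: for an isogeny $\varphi : A \to B$ of abelian $k$-varieties, $h_{i}(\varphi) : h_{i}(B) \to h_{i}(A)$ is an isomorphism in $\mathrm{CH}\mathcal{M}(k,\mathbb{Q})$ for every $i$. Put $n := \deg\varphi$. Since the finite group scheme $\ker\varphi$ is killed by $n$ (Deligne), the multiplication $[n]_{A}$ factors through $\varphi$, yielding a complementary isogeny $\psi : B \to A$ with $\psi \circ \varphi = [n]_{A}$ and $\varphi \circ \psi = [n]_{B}$; this works in every characteristic. On the Chow--K\"unneth decomposition of an abelian variety (Deninger--Murre, K\"unnemann in positive characteristic), $[n]$ acts on the $i$-th component by multiplication by $n^{i}$, which is invertible in $\mathbb{Q}$. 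Hence $h_{i}(\varphi) \circ h_{i}(\psi) = n^{i}\,\mathrm{id}_{h_{i}(A)}$ and $h_{i}(\psi) \circ h_{i}(\varphi) = n^{i}\,\mathrm{id}_{h_{i}(B)}$, so $h_{i}(\varphi)$ is an isomorphism with inverse $\frac{1}{n^{i}}\,h_{i}(\psi)$. Applying the lemma with $i = 1$ to the Picard isogeny and with $i = 2g-1$ to the Albanese isogeny, and using that $-\otimes\mathbb{L}^{2-g}$ is an autoequivalence of $\mathrm{CH}\mathcal{M}(k,\mathbb{Q})$, the two displays above give $h_{1}(X) \cong h_{1}(J)$ and $h_{3}(X) \cong h_{3}(J)$.

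The only input not already contained in the excerpt is the behaviour of $h_{i}$ under isogenies, i.e. the Chow--K\"unneth theory of abelian varieties; this is the one place where some care is needed, whereas the remaining steps (in particular the $\mathbb{L}$-twist bookkeeping for $h_{3}$) are routine. If one prefers to use only the correspondence calculus recalled above, the lemma can instead be obtained, at least when $\ker\varphi$ is \'etale, from Proposition \ref{Galois extension correspondence}: part (i) gives $\Gamma_{\varphi} \circ {}^{t}\Gamma_{\varphi} = n\,\Delta_{B}$, exhibiting $h(B)$ as a direct summand of $h(A)$, while part (ii), applied to the free translation action of the finite group $\ker\varphi$ with quotient $B$, gives ${}^{t}\Gamma_{\varphi} \circ \Gamma_{\varphi} = \sum_{\kappa \in \ker\varphi} \Gamma_{t_{\kappa}}$, and $\Gamma_{t_{\kappa}} = \Delta_{A}$ for each $\kappa$ by Proposition \ref{product of elliptic} (translation by a torsion point), so ${}^{t}\Gamma_{\varphi} \circ \Gamma_{\varphi} = n\,\Delta_{A}$; the purely inseparable part of $\varphi$ is then absorbed by the analogous computation for a power of Frobenius.
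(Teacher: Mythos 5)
Your proposal is correct and follows essentially the same route as the paper: combine Proposition \ref{Picard motive} $(\mathrm{ii}')$ with the isogenies of Proposition \ref{Picard isog} and the fact that an isogeny of abelian varieties induces an isomorphism on each Chow--K\"unneth component. The only difference is that you prove that last fact from scratch (via the complementary isogeny and the action of $[n]$ on the Deninger--Murre decomposition), whereas the paper simply cites Deninger--Murre, Thm.~3.1.
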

\begin{proof}
First, we prove $h_{1}(X) \cong h_{1}(J)$.
By Proposition \ref{Picard isog}, there is an isogeny of abelian varieties 
\[ (\mathrm{Pic}_{X/k}^{0})_{red} \sim_{isog} (\mathrm{Pic}_{J/k}^{0})_{red}. \]
Since isogenous abelian varieties have isomorphic Chow motives (cf. \cite[Thm.~3.1]{DM}),
we get an isomorphism
\begin{equation} \label{isomorphism of h1 of Pic}
h_{1}((\mathrm{Pic}_{X/k}^{0})_{red}) \cong h_{1}((\mathrm{Pic}_{J/k}^{0})_{red}).
\end{equation}
On the other hand, by Proposition \ref{Picard motive}, we have an isomorphism of Chow motives
\begin{equation} \label{property of h1}
h_{1}(X) \cong  h_{1}((\mathrm{Pic}_{X/k}^{0})_{red}). 
\end{equation}
Combinnig $(\ref{isomorphism of h1 of Pic})$ and $(\ref{property of h1})$, we get $h_{1}(X) \cong h_{1}(J)$.
Similarly, we get $h_{3}(X) \cong h_{3}(J)$
(because $(\mathrm{Pic}_{X/k}^{0})_{red} \sim_{isog} \mathrm{Alb}_{X/k}$).
\end{proof}
\begin{rem} \label{b1}
Taking the \'etale realizations of $h_{i}(X) \cong h_{i}(J)$ ($i=1, 3$), we have
isomorphisms $H^{i}_{\textit{\'et}}(X, \mathbb{Q}_{l}) \cong H^{i}_{\textit{\'et}}(J, \mathbb{Q}_{l})$, so get $b_{i}(X) = b_{i}(J)$ ($i=1, 3$). 
\end{rem}

To prove Thm.~\ref{aim of jacobian} (ii), we recall some known relations between $f$ and $j$:

\begin{prop} \label{euler}
(\cite[Prop.~5.3.6,\ p.308]{Cossec and Dolgachev}). 
Let $f : X \rightarrow C$ be a genus $1$ fibration and $j : J \rightarrow C$ the Jacobian fibration of $f$. 
Then
\begin{enumerate}
\item $\chi(\mathcal{O}_{X}) = \chi(\mathcal{O}_{J})$ and $e(X) = e(J)$.
\item $\rho(X) = \rho(J)$ and  $b_{i}(X) = b_{i}(J)$ for every  $i \geq 0$.
\end{enumerate}
\end{prop}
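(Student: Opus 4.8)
The plan is to reduce part~(i) to the single equality $\chi(\mathcal{O}_X)=\chi(\mathcal{O}_J)$, establish that, and then deduce part~(ii) from (i) together with the equality $b_1(X)=b_1(J)$ already available. The starting point is that any smooth projective surface $S$ carrying a genus $1$ fibration has $K_S^2=0$: by the canonical bundle formula (Theorem~\ref{Canonical bundle formula}), $K_S$ is $\mathbb{Q}$-numerically equivalent to a rational multiple of a general fibre $F$ — the pullback of a divisor from $C$ is numerically proportional to $F$, and each $\overline{S_c}$ is numerically $\tfrac1{m_c}F$ — and $F^2=0$. Hence Noether's formula $12\,\chi(\mathcal{O}_S)=K_S^2+e(S)$ becomes $e(S)=12\,\chi(\mathcal{O}_S)$, so the two equalities in (i) are equivalent and it suffices to prove one of them.

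To prove $\chi(\mathcal{O}_X)=\chi(\mathcal{O}_J)$ I would compare both sides through the Leray spectral sequences of $f$ and $j$. Writing $R^1f_*\mathcal{O}_X=\mathcal{L}\oplus T$ as in Theorem~\ref{Canonical bundle formula} and $R^1j_*\mathcal{O}_J=\mathcal{L}'$ (invertible, since $j$ has no multiple fibres), the spectral sequence together with Riemann--Roch on $C$ yields $\chi(\mathcal{O}_X)=-\deg\mathcal{L}-\mathrm{length}(T)$ and $\chi(\mathcal{O}_J)=-\deg\mathcal{L}'$, so what is needed is $\deg\mathcal{L}+\mathrm{length}(T)=\deg\mathcal{L}'$, i.e. that the fundamental line bundle of the fibration is unchanged on passing to the Jacobian; this is part of \cite[Prop.~5.3.6]{Cossec and Dolgachev}. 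Alternatively one may argue directly that $e(X)=e(J)$ via the Euler-number formula $e(S)=e(C)\,e(S_{\overline{\eta}})+\sum_{c}(\,e(S_c)-e(S_{\overline{\eta}})+\mathrm{Sw}_c\,)$: the geometric generic fibres of $f$ and $j$ are isomorphic (Proposition~\ref{regular compactification}), corresponding singular fibres have the same reduced configuration and hence the same Euler number while a multiple fibre contributes as its reduction, and the wild terms $\mathrm{Sw}_c$ coincide because $H^1((X_\eta)_{\overline{K}},\mathbb{Q}_\ell)\cong H^1(\mathrm{Jac}(X_\eta)_{\overline{K}},\mathbb{Q}_\ell)\cong H^1(\mathrm{Jac}(J_\eta)_{\overline{K}},\mathbb{Q}_\ell)\cong H^1((J_\eta)_{\overline{K}},\mathbb{Q}_\ell)$ as $\mathrm{Gal}(K^s/K)$-modules, using that $X_\eta^{\#}$, resp. $J_\eta^{\#}$, is a torsor under its Jacobian (Proposition~\ref{genus one curve}) and that the two Jacobians agree (Corollary~\ref{jacobian of jacobian}). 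Either way $e(X)=e(J)$ follows by Noether's formula.

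For part~(ii): $b_0=b_4=1$, and $b_1=b_3$ for a surface by Poincaré duality, so only $b_1$ and $b_2$ need attention. One has $b_1(X)=b_1(J)$ by Remark~\ref{b1} (equivalently, from the isogeny $(\mathrm{Pic}^0_{X/k})_{red}\sim_{isog}(\mathrm{Pic}^0_{J/k})_{red}$ of Proposition~\ref{Picard isog} and $\dim(\mathrm{Pic}^0_{-/k})_{red}=\tfrac12 b_1(-)$), whence also $b_3(X)=b_3(J)$; and since $e(S)=2-2b_1(S)+b_2(S)$, part~(i) gives $b_2(X)=e(X)-2+2b_1(X)=e(J)-2+2b_1(J)=b_2(J)$. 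Finally $\rho(X)=\rho(J)$ is itself contained in \cite[Prop.~5.3.6]{Cossec and Dolgachev}; alternatively, apply the Shioda--Tate formula of Theorem~\ref{Mordell-Weil Theorem for function fields} (modified by the $K/k$-trace term when the constant part $A$ is non-zero) to both $f$ and $j$: by Corollary~\ref{jacobian of jacobian} the groups $\mathrm{Jac}(X_\eta)(K)$ and $\mathrm{Jac}(J_\eta)(K)$ coincide, so the Mordell--Weil ranks and the trace contributions agree, while $\sum_c(\#\mathrm{Irr}(X_c)-1)=\sum_c(\#\mathrm{Irr}(J_c)-1)$ because corresponding singular fibres of $f$ and $j$ have the same number of irreducible components, so $\rho(X)=\rho(J)$.

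The main obstacle is the structural input from \cite{Cossec and Dolgachev} underlying both routes: the precise description of how the singular fibres of $j$ are obtained from those of $f$ (same Kodaira, resp. quasi-elliptic, type, with multiple fibres de-multiplied) and the invariance of the fundamental line bundle. I would not reprove these facts; the contribution here is to assemble them, together with Noether's formula, Poincaré duality, the torsor structure of the generic fibre, and Remark~\ref{b1}, into the stated equalities.
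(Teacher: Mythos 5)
Your proposal is sound, but it is worth being clear about what the paper actually does here: it gives no argument at all for Proposition~\ref{euler} and simply cites \cite[Prop.~5.3.6, Coro.~5.3.5]{Cossec and Dolgachev}, identifying \cite[Thm.~4.2]{Liu and Lorenzini and Raynaud} (corresponding singular fibres of $f$ and $j$ have the same type) as the essential input. Your write-up therefore supplies more structure than the source: the reduction of (i) to a single equality via $K^2=0$ (which does require the standing minimality assumption on $f$, so that the canonical bundle formula of Theorem~\ref{Canonical bundle formula} makes $K$ vertical) and Noether's formula (Proposition~\ref{relation betti}), the Leray/Riemann--Roch identities $\chi(\mathcal{O}_X)=-\deg\mathcal{L}-\mathrm{length}(T)$ and $\chi(\mathcal{O}_J)=-\deg\mathcal{L}'$, and the derivation of (ii) from (i) plus Poincar\'e duality and $b_1(X)=b_1(J)$ (which, as you note, is already available from Proposition~\ref{Picard isog} and $\dim(\mathrm{Pic}^0_{-/k})_{red}=\tfrac12 b_1$, independently of this proposition, so there is no circularity with Remark~\ref{b1}). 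These reductions are all correct. However, both of your routes to (i) --- the invariance $\deg\mathcal{L}+\mathrm{length}(T)=\deg\mathcal{L}'$ of the fundamental line bundle, and the fibrewise comparison of Euler numbers and Swan conductors --- bottom out at exactly the same nontrivial fact the paper declines to prove, namely the Liu--Lorenzini--Raynaud comparison of singular fibres (and likewise your Shioda--Tate argument for $\rho$ needs $\#\mathrm{Irr}(X_c)=\#\mathrm{Irr}(J_c)$ from the same source, plus the Lang--N\'eron correction in the non-constant-trace case, which the paper's Theorem~\ref{Mordell-Weil Theorem for function fields} states only when $A=0$). So the honest summary is: your proposal correctly assembles the standard surface-theoretic bookkeeping around the same irreducible external input that the paper cites, and is not a self-contained alternative proof --- which is fine, and arguably an improvement in transparency over the bare citation, provided the reliance on \cite{Liu and Lorenzini and Raynaud} is stated as such.
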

\begin{proof}
(i) See \cite[Prop.~5.3.6,\ p.308]{Cossec and Dolgachev}. The heart is \cite[Thm.~4.2,\ p.482]{Liu and Lorenzini and Raynaud}.\footnote{
Liu-Lorenzini-Raynaud showed that the singular fibers $X_{t}$ and $J_{t}$ are of the ``same type'' (\cite[Thm.~6.6,\ p.455]{Liu and Lorenzini and Raynaud}). This is one of the most profound results about the relations between $f$ and $j$.}\\
(ii) See \cite[Coro.~5.3.5,\ p.310]{Cossec and Dolgachev}. The proof of (ii) is used by (i). 
\end{proof}

We prove the second main theorem of this section$:$

\begin{thm} \label{alg motive}
Let $f : X \rightarrow C$ be a genus $1$ fibration and $j : J \rightarrow C$ the Jacobian fibration of $f$.
Then there is an isomorphism of Chow motives
\[ h_{2}^{alg}(X) \cong h_{2}^{alg}(J).\]
\end{thm}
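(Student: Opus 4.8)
\textbf{Proof proposal for Theorem \ref{alg motive}.}
The plan is to reduce the statement immediately to the equality of Picard numbers. By Proposition \ref{def of trans}, for any smooth projective surface $S \in \mathcal{V}(k)$ the algebraic part of $h_2$ is a sum of Lefschetz motives; concretely $h_2^{alg}(S) = (S, \pi_2^{alg}, 0) \cong \mathbb{L}^{\oplus \rho(S)}$, where the projector $\pi_2^{alg} = \sum_{i=1}^{\rho(S)} (1/(D_i \cdot D_i))[D_i \times D_i]$ is built from an orthogonal $\mathbb{Q}$-basis $D_1, \dots, D_{\rho(S)}$ of $\mathrm{NS}(S)_{\mathbb{Q}}$. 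Applying this to $X$ and to $J$ gives
\[ h_2^{alg}(X) \cong \mathbb{L}^{\oplus \rho(X)}, \qquad h_2^{alg}(J) \cong \mathbb{L}^{\oplus \rho(J)} \]
in $\mathrm{CH}\mathcal{M}(k, \mathbb{Q})$.

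Next I would invoke Proposition \ref{euler} (ii), which asserts $\rho(X) = \rho(J)$ for a genus $1$ fibration and its Jacobian fibration. Combining the two displays, $h_2^{alg}(X) \cong \mathbb{L}^{\oplus \rho(X)} = \mathbb{L}^{\oplus \rho(J)} \cong h_2^{alg}(J)$, which is the desired isomorphism. (If one prefers an explicit correspondence rather than an abstract identification of the two copies of $\mathbb{L}^{\oplus \rho}$, one can choose orthogonal bases $\{D_i\}$ of $\mathrm{NS}(X)_{\mathbb{Q}}$ and $\{D_i'\}$ of $\mathrm{NS}(J)_{\mathbb{Q}}$ and write down the evident morphisms $\sum_i (1/(D_i \cdot D_i))[D_i' \times D_i]$ and its transpose-type partner, but this is not needed since $\mathbb{L}^{\oplus \rho(X)} \cong \mathbb{L}^{\oplus \rho(J)}$ is formal once $\rho(X) = \rho(J)$.)

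There is essentially no obstacle internal to this argument: the content has already been front-loaded into the two cited propositions. The only substantive input is the equality $\rho(X) = \rho(J)$ in Proposition \ref{euler}, whose proof ultimately rests on the fact that the singular fibers of $f$ and $j$ are of the same type (Liu--Lorenzini--Raynaud), as recorded in the footnote to that proposition; but that is assumed available here. Thus the proof of Theorem \ref{alg motive} is a one-line consequence of Propositions \ref{def of trans} and \ref{euler}.
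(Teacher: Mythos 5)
Your proposal is correct and coincides with the paper's own proof: both reduce the statement to $h_2^{alg}(S)\cong\mathbb{L}^{\oplus\rho(S)}$ (Proposition \ref{def of trans}) together with the equality $\rho(X)=\rho(J)$ from Proposition \ref{euler} (ii). No gaps; the extra remark about writing an explicit correspondence is unnecessary but harmless.
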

\begin{proof}
For a surface $S$, $h_{2}^{alg}(S) \cong \mathbb{L}^{\rho(S)}$.
By Proposition \ref{euler} (ii), $\rho(X) = \rho(J)$, so 
\[ h_{2}^{alg}(X) \cong \mathbb{L}^{\oplus \rho(X)} = \mathbb{L}^{\oplus \rho(J)} \cong h_{2}^{alg}(J).\]
\end{proof}

\section{Chow motives of genus one fibrations}
In this section, we prove the first main theorem of this paper:
\begin{thm} \label{motive of genus one}
Let $k$ be an algebraically closed field of arbitrary characteristic.\\
Let $f : X \rightarrow C$ be a minimal genus $1$ fibration from a smooth projective surface over $k$, and $j : J \rightarrow C$ the Jacobian fibration of $f$.
Then there is an isomorphism
\[ h(X) \cong h(J) \]
in the category $\mathrm{CH}\mathcal{M}(k, \mathbb{Q})$ of Chow motives.
\end{thm}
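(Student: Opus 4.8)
The plan is to compare $h(X)$ and $h(J)$ summand by summand through their Chow--K\"unneth decompositions. By Propositions \ref{Picard motive} and \ref{def of trans}, both surfaces carry a refined decomposition
\[ h(-) \;\cong\; 1 \,\oplus\, h_1(-) \,\oplus\, h_2^{alg}(-) \,\oplus\, t_2(-) \,\oplus\, h_3(-) \,\oplus\, (\mathbb{L}\otimes\mathbb{L}), \]
whose outer terms $1$ and $\mathbb{L}\otimes\mathbb{L}$ are the canonical $h_0$ and $h_4$ and so agree for $X$ and $J$. Theorem \ref{aim of jacobian} supplies $h_1(X)\cong h_1(J)$, $h_3(X)\cong h_3(J)$ and $h_2^{alg}(X)\cong h_2^{alg}(J)$. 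Hence the whole problem collapses to producing an isomorphism $t_2(X)\cong t_2(J)$, and this is where the work lies.

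If $f$ is quasi-elliptic then, by Proposition \ref{regular compactification}, so is $j$ (the generic fibre $J_\eta$, being a regular compactification of the non-proper group $\mathrm{Jac}(X_\eta)$, is again a non-smooth genus $1$ curve), and by the author's earlier work \cite{Kawabe} both transcendental motives vanish, $t_2(X)=0=t_2(J)$. So assume $f$ is elliptic. Then $X_\eta$ is a smooth genus $1$ curve over $K:=k(C)$, hence by Proposition \ref{genus one curve} a torsor under the elliptic curve $\mathrm{Jac}(X_\eta)$, while $J_\eta$ --- a regular compactification of the already proper $\mathrm{Jac}(X_\eta)$ --- is $K$-isomorphic to $\mathrm{Jac}(X_\eta)$. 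Theorem \ref{motive of smooth genus one} therefore gives $h(X_\eta)\cong h(J_\eta)$ in $\mathrm{CH}\mathcal{M}(K,\mathbb{Q})$, realized by mutually inverse correspondences $\alpha\in\mathrm{CH}_1(X_\eta\times_K J_\eta)$ and $\beta\in\mathrm{CH}_1(J_\eta\times_K X_\eta)$ with $\alpha\circ\beta=\Delta_{J_\eta}$ and $\beta\circ\alpha=\Delta_{X_\eta}$.

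The remaining step is to spread this generic isomorphism out over $C$ and push it down to $k$. The plan is: choose a dense open $U\subseteq C$ over which $f$ and $j$ are smooth, and lift $\alpha,\beta$ to relative correspondences $\alpha_U\in\mathrm{Corr}^0_U(X_U,J_U)$ and $\beta_U\in\mathrm{Corr}^0_U(J_U,X_U)$; since $\mathrm{CH}_*(X_\eta\times_K J_\eta)=\varinjlim_V \mathrm{CH}_*(X_V\times_C J_V)$ and relative composition commutes with restriction to opens (Lemma \ref{correspondences and base change}), after shrinking $U$ one may arrange $\alpha_U\circ_U\beta_U=\Delta_{J_U}$ and $\beta_U\circ_U\alpha_U=\Delta_{X_U}$. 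Pushing forward along $U\to\mathrm{Spec}(k)$ and invoking Lemma \ref{correspondences and closed immersion}, these become identities of ordinary correspondences on $X_U\times_k J_U$, $J_U\times_k X_U$ and $J_U\times_k J_U$. Finally, take Zariski closures in $X\times_k J$ and $J\times_k X$ to obtain $a\in\mathrm{CH}_2(X\times_k J)$ and $b\in\mathrm{CH}_2(J\times_k X)$: because $(X\times_k J)\setminus(X_U\times_k J_U)$ is a union of closed subsets of the form $(\text{singular fibre})\times_k J$ and $X\times_k(\text{singular fibre})$, each with a factor of dimension $<2$, the localization sequence shows that passing from $U$ to $C$ changes $a\circ b$ and $b\circ a$ only by cycles lying in the degenerate subgroups $\mathrm{CH}_2(-)_{\equiv}$ of the subsection on transcendental motives. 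Hence $a\circ b\equiv\Delta_J$ and $b\circ a\equiv\Delta_X$ modulo $\mathrm{CH}_2(-)_{\equiv}$, Proposition \ref{isomorphism of transcendental motives} yields $t_2(X)\cong t_2(J)$, and reassembling the six summands gives $h(X)\cong h(J)$.

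The delicate point, which I would spell out most carefully, is the last step: one must verify --- using Lemmas \ref{correspondences and base change} and \ref{correspondences and closed immersion} together with the observation that a product of correspondences supported over $C$ is supported over the iterated fibre product over $C$ --- that the ordinary composition of the closures $a$ and $b$ restricts over $U$ to the pushforward of the relative composition $\alpha_U\circ_U\beta_U$, so that its discrepancy with $\Delta_J$ (resp. with $\Delta_X$) really is supported over the finitely many points of $C\setminus U$ and is therefore killed in $\mathrm{CH}_2(-)/\mathrm{CH}_2(-)_{\equiv}$, where Theorem \ref{automorophism groups of motives} identifies the relevant $\mathrm{Hom}$-groups. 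Everything else is either formal bookkeeping with the Chow--K\"unneth decomposition or already established (Theorems \ref{aim of jacobian} and \ref{motive of smooth genus one}).
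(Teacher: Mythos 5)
Your proposal follows essentially the same route as the paper: reduce via the refined Chow--K\"unneth decomposition to $t_{2}(X)\cong t_{2}(J)$, dispose of the quasi-elliptic case by \cite{Kawabe}, obtain $h(X_{\eta})\cong h(J_{\eta})$ from the torsor theorem, and transfer the generic-fibre correspondences to $X\times J$ modulo $\mathrm{CH}_{2}(-)_{\equiv}$ using the localization sequence and the two compatibility lemmas for relative correspondences, concluding with Proposition \ref{isomorphism of transcendental motives}. The only (immaterial) difference is bookkeeping: you spread out over a shrinking open $U\subseteq C$, whereas the paper works directly with the quotients $\mathrm{CH}_{2}(X\times_{C}J)/\oplus_{c}\mathrm{CH}_{2}(X_{c}\times_{c}J_{c})$ and the bilinearity of composition on them.
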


\textbf{Proof of Theorem \ref{motive of genus one}.}
Let us consider the CK-decompositions of Murre (Proposition \ref{Picard motive}) and Kahn-Murre-Pedrini (Proposition \ref{def of trans}) of $h(X)$ and $h(J)$, respectively$:$
\begin{align*}
h(X) &\cong 1 \oplus h_{1}(X) \oplus  h_{2}^{alg}(X) \oplus t_{2}(X) \oplus h_{3}(X) \oplus 
(\mathbb{L} \otimes \mathbb{L}).\\
h(J) &\cong 1 \oplus h_{1}(J) \oplus  h_{2}^{alg}(J) \oplus t_{2}(J) \oplus h_{3}(J) \oplus
(\mathbb{L} \otimes \mathbb{L}).
\end{align*}
By Theorem \ref{Picard motive of Jacobian} and Theorem \ref{alg motive}, we have 
\[ h_{1}(X) \cong h_{1}(J), \ \ \ h_{3}(J) \cong h_{3}(J), \ \ \ h_{2}^{alg}(X) \cong h_{2}^{alg}(J). \]
\indent Therefore, to prove Theorem \ref{motive of genus one}, it suffices to prove the following$:$
\begin{thm}\label{tmotive of genus one}
Let $f : X \rightarrow C$ be a minimal genus $1$ fibration 
and $j : J \rightarrow C$ the Jacobian fibration of $f$.
There is an isomorphism of transcendental motives
\[ t_{2}(X) \cong t_{2}(J) \ \ \ \text{in} \ \ \ \mathrm{CH}\mathcal{M}(k, \mathbb{Q}). \]
\end{thm}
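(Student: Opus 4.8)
The plan is to handle the quasi-elliptic and elliptic cases separately, and in the elliptic case to follow the two-step scheme from the introduction: first produce an isomorphism $h(X_{\eta})\cong h(J_{\eta})$ over the generic point $\eta$ of $C$, then ``spread it out'' to an isomorphism $t_{2}(X)\cong t_{2}(J)$ over $k$.

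First I would dispose of the quasi-elliptic case. If $f$ is quasi-elliptic, then $X_{\eta}$ is a regular but geometrically non-regular genus $1$ curve, and by Proposition~\ref{regular compactification} its regular compactification $J_{\eta}$ satisfies $X_{\eta,\overline{K}}\cong J_{\eta,\overline{K}}$; hence $J_{\eta}$ is also geometrically non-regular and $j$ is quasi-elliptic. Both $X$ and $J$ then carry a quasi-elliptic fibration, so by \cite{Kawabe} we have $t_{2}(X)=0=t_{2}(J)$ and there is nothing more to prove. From here on I assume $f$, and therefore $j$, is elliptic.

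For Step~(i), set $K=k(C)$. Since $f$ is elliptic, $X_{\eta}$ is a smooth genus $1$ curve over $K$, hence by Proposition~\ref{genus one curve} a torsor under the elliptic curve $A:=\mathrm{Jac}(X_{\eta})$, while $J_{\eta}$, being a regular compactification of the already proper variety $A$ (equivalently, using Corollary~\ref{jacobian of jacobian}), is isomorphic to $A$. Applying Theorem~\ref{motive of smooth genus one} with $T=X_{\eta}$ over $K$ then gives $h(X_{\eta})\cong h(J_{\eta})$ in $\mathrm{CH}\mathcal{M}(K,\mathbb{Q})$; unwinding the definitions, I obtain correspondences $a_{\eta}\in\mathrm{CH}_{1}(X_{\eta}\times_{K}J_{\eta})$ and $b_{\eta}\in\mathrm{CH}_{1}(J_{\eta}\times_{K}X_{\eta})$ with $a_{\eta}\circ b_{\eta}=\Delta_{J_{\eta}}$ and $b_{\eta}\circ a_{\eta}=\Delta_{X_{\eta}}$ (composition over $K$).

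For Step~(ii), I regard $X,J\in\mathcal{V}(C)$; then $X\times_{C}J$ is a threefold over $k$ with generic fibre $X_{\eta}\times_{K}J_{\eta}$, and the localization sequence makes the restriction $\mathrm{CH}_{2}(X\times_{C}J)\to\mathrm{CH}_{1}(X_{\eta}\times_{K}J_{\eta})$ surjective, so I can choose relative correspondences $\overline{a}\in\mathrm{Corr}^{0}_{C}(X,J)=\mathrm{CH}_{2}(X\times_{C}J)$ and $\overline{b}\in\mathrm{Corr}^{0}_{C}(J,X)=\mathrm{CH}_{2}(J\times_{C}X)$ restricting to $a_{\eta}$ and $b_{\eta}$. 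Since $J$ is $k$-smooth, the relative composition $\circ_{C}$ is defined; Lemma~\ref{correspondences and base change}, applied over small open neighbourhoods of $\eta$ and passed to the limit, shows that $\overline{a}\circ_{C}\overline{b}\in\mathrm{CH}_{2}(J\times_{C}J)$ restricts over $\eta$ to $a_{\eta}\circ b_{\eta}=\Delta_{J_{\eta}}$. Writing $\Delta_{J}^{\mathrm{rel}}$ for the relative diagonal, the difference $\overline{a}\circ_{C}\overline{b}-[\Delta_{J}^{\mathrm{rel}}]$ restricts to zero over $\eta$ and is therefore, by localization, supported on the fibres $(J\times_{C}J)_{c}=J_{c}\times_{k}J_{c}$ over the finitely many bad points $c$. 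I then push forward along the closed immersion $j_{JJ}:J\times_{C}J\hookrightarrow J\times_{k}J$: by Lemma~\ref{correspondences and closed immersion} this turns $\overline{a}\circ_{C}\overline{b}$ into $\mathbf{a}\circ\mathbf{b}$ with $\mathbf{a}:=(j_{XJ})_{*}\overline{a}$ and $\mathbf{b}:=(j_{JX})_{*}\overline{b}$; it sends $[\Delta_{J}^{\mathrm{rel}}]$ to the absolute diagonal $[\Delta_{J}]$; and each fibre class, being supported on $J_{c}\times_{k}J_{c}\subset J_{c}\times_{k}J$ with $\dim J_{c}<2$, lands in $\mathrm{CH}_{2}(J\times_{k}J)_{\equiv}$. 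Hence $[\mathbf{a}]\circ[\mathbf{b}]=[\Delta_{J}]$ in $\mathrm{CH}_{2}(J\times J)/\mathrm{CH}_{2}(J\times J)_{\equiv}$, and the same argument with the middle term $X$ in place of $J$ gives $[\mathbf{b}]\circ[\mathbf{a}]=[\Delta_{X}]$ modulo $\mathrm{CH}_{2}(X\times X)_{\equiv}$. Proposition~\ref{isomorphism of transcendental motives} then yields $t_{2}(X)\cong t_{2}(J)$, which would complete the proof.

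The routine points — existence of the spreadings-out, independence of the chosen extensions modulo $\mathrm{CH}_{\equiv}$, and compatibility of the relative composition with restriction to $\eta$ in the colimit — are all absorbed into the lemmas of Section~3 and the motivic formalism of Section~4. The real obstacle is making Step~(ii) precise: one must verify that $\overline{a}\circ_{C}\overline{b}$ is simultaneously well-behaved under restriction to the generic point and under pushforward to the absolute product $J\times_{k}J$, so that the ``error'' concentrated over the singular locus of $j$ is visibly a sum of fibre-supported cycles, i.e.\ exactly the classes that die in the target $\mathrm{Hom}_{\mathrm{CH}\mathcal{M}(k)}(t_{2}(X),t_{2}(J))$ of $\Phi_{X,J}$. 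Once this is set up correctly, the conclusion is immediate.
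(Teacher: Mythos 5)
Your proposal is correct and follows essentially the same route as the paper: dispose of the quasi-elliptic case via the author's vanishing result $t_{2}=0$, then in the elliptic case apply Theorem~\ref{motive of smooth genus one} to the torsor $X_{\eta}$ over $\mathrm{Jac}(X_{\eta})\cong J_{\eta}$, spread the resulting mutually inverse correspondences out over $C$ using the localization sequence, and transport the composition identities to $\mathrm{CH}_{2}(-\times-)/\mathrm{CH}_{2}(-\times-)_{\equiv}$ via Lemmas~\ref{correspondences and base change} and~\ref{correspondences and closed immersion} before invoking Proposition~\ref{isomorphism of transcendental motives}. The paper merely packages your ``difference is fibre-supported'' observation as the isomorphism $\mathrm{CH}_{2}(X\times_{C}J)/\oplus_{c}\mathrm{CH}_{2}(X_{c}\times_{c}J_{c})\cong\mathrm{CH}_{1}(X_{\eta}\times_{\eta}J_{\eta})$ together with the two compatibility statements of Lemma~\ref{lemma for main prop}, which is the same argument in different notation.
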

\textbf{Proof of Theorem \ref{tmotive of genus one}.}
By definition, $f$ is either elliptic or quasi-elliptic.
\subsection{The transcendental motives of the quasi-elliptic surfaces} \ \indent \\
\indent First, we assume $f$ is quasi-elliptic.
Then, $j$ is also quasi-elliptic by the construction of $j$.
Let us recall the result of the author$:$
\begin{thm} (\cite{Kawabe}). \label{tmotive of qusi-elliptic}
Let $f : X \rightarrow C$ be a quasi-elliptic surface. Then $t_{2}(X) = 0$.
\end{thm}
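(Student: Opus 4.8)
\textbf{Proof of Theorem \ref{tmotive of qusi-elliptic} (plan).}
The plan is to prove that $\Delta_{X}$ vanishes in $\mathrm{CH}_{2}(X\times X)/\mathrm{CH}_{2}(X\times X)_{\equiv}$. Granting this, Theorem \ref{automorophism groups of motives} identifies that quotient with $\mathrm{Hom}_{\mathrm{CH}\mathcal{M}(k)}(t_{2}(X),t_{2}(X))$ via $\Phi_{X,X}$, and since
\[ \Phi_{X,X}(\Delta_{X})=\pi_{2}^{tr}(X)\circ\Delta_{X}\circ\pi_{2}^{tr}(X)=\pi_{2}^{tr}(X)=\mathrm{id}_{t_{2}(X)}, \]
the vanishing of $[\Delta_{X}]$ forces $\mathrm{id}_{t_{2}(X)}=0$, i.e. $t_{2}(X)=0$.

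To obtain the vanishing of $[\Delta_{X}]$ I would first describe the fibers of $f$. Since $f$ is quasi-elliptic, its generic fiber $X_{\eta}$ is a regular, non-smooth genus $1$ curve, so by Proposition \ref{non-smooth genus one curve} the curve $X_{\overline{\eta}}$ has a single singular point, a cusp; hence $X_{\overline{\eta}}$ is an integral rational curve, and likewise $X_{c}$ is an integral rational curve for every $c$ in the dense open $U\subseteq C$ over which $f$ is geometrically integral. Let $\Sigma=C\setminus U$, let $\Gamma\subseteq X$ be the closure of the non-smooth locus of $X_{\eta}$ (the ``curve of cusps'', a curve dominating $C$), and set $D=\Gamma\cup f^{-1}(\Sigma)$, a curve on $X$. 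For $c\in\Sigma$ the irreducible components of $X_{c}$ are again rational curves, by the classification of fibers of quasi-elliptic fibrations (\cite{Cossec and Dolgachev}, \cite{Bombieri and Mumford II}).

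Next I would check that $\mathrm{CH}_{0}(X)$ is supported on $D$, i.e. $\mathrm{CH}_{0}(X\setminus D)=0$. A closed point $x\in X$ lies on some irreducible component $E$ of its fiber; $E$ is a rational curve and $E\cap D\neq\emptyset$ (it contains the cusp of $X_{c}$ when $E=X_{c}$ with $c\in U$, and $E\subseteq D$ when $c\in\Sigma$), so composing the normalization $\mathbb{P}^{1}\to E$ with $E\hookrightarrow X$ and using $\mathrm{CH}_{0}(\mathbb{P}^{1})=\mathbb{Q}$ in each degree shows that $[x]$ is rationally equivalent in $\mathrm{CH}_{0}(X)$ to a point of $D$. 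Hence $\mathrm{CH}_{0}(D)\to\mathrm{CH}_{0}(X)$ is surjective and the localization sequence gives $\mathrm{CH}_{0}(X\setminus D)=0$. Now the Bloch--Srinivas decomposition of the diagonal, which is valid over an arbitrary field in any characteristic, applies: since $\mathrm{CH}_{0}(X)$ is supported on the curve $D$, there exist a curve $D'\supseteq D$, a proper closed subset $S\subsetneq X$, and $N>0$ with
\[ N\cdot\Delta_{X}=\Gamma_{1}+\Gamma_{2}\quad\text{in }\mathrm{CH}_{2}(X\times X), \]
where $\Gamma_{1}$ is supported on $X\times D'$ and $\Gamma_{2}$ is supported on $S\times X$. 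As $\dim D'\leq 1$ and $\dim S\leq 1$, both $\Gamma_{1}$ and $\Gamma_{2}$ lie in $\mathrm{CH}_{2}(X\times X)_{\equiv}$, so $\Delta_{X}\in\mathrm{CH}_{2}(X\times X)_{\equiv}$ (recall all Chow groups here carry $\mathbb{Q}$-coefficients, so $N$ is invertible). By the first paragraph, $t_{2}(X)=0$.

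The main obstacle is the geometric input that every irreducible component of every fiber of a quasi-elliptic fibration is a rational curve: for the geometrically-integral fibers this is immediate from Proposition \ref{non-smooth genus one curve}, but the reducible singular fibers require the structure theory of quasi-elliptic fibrations. An alternative route, which sidesteps the fiber bookkeeping, is to produce a dominant, generically finite (hence purely inseparable, of degree a power of $\mathrm{char}(k)$) morphism $R\to X$ from a ruled surface $R$ — using that the smooth locus of $f$ is generically a $\mathbb{G}_{a}$-torsor, equivalently that $X_{\overline{\eta}}$ is geometrically rational, so that $X_{\eta}$ becomes $\mathbb{P}^{1}$ over a purely inseparable extension of $K$ — and then to combine $t_{2}(R)=0$ (projective bundle formula) with a transfer argument: if $\Gamma$ denotes the graph of $R\to X$ and $d$ its degree, then $\tfrac{1}{d}\,\Phi_{X,R}(\Gamma)\circ\Phi_{R,X}({}^{t}\Gamma)=\mathrm{id}_{t_{2}(X)}$ modulo $\mathrm{CH}_{\equiv}$ by Proposition \ref{functoriality of transcendental motives} and Proposition \ref{Galois extension correspondence}, exhibiting $t_{2}(X)$ as a direct summand of $t_{2}(R)=0$; there the obstacle shifts to constructing $R$ and controlling the inseparable degree.
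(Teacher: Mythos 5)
The paper does not prove this theorem at all: it is quoted verbatim from the author's earlier work \cite{Kawabe} and used as a black box, so there is no in-paper argument to compare against. Your primary route is a correct, self-contained proof and is worth recording. The key geometric observation — that every closed point of $X$ lies on a rational curve meeting the fixed curve $D=\Gamma\cup f^{-1}(\Sigma)$ (the curve of cusps together with the finitely many singular fibers), because the geometrically integral fibers of a quasi-elliptic fibration are cuspidal, hence rational, curves whose cusp lies on $\Gamma$ — does show that $\mathrm{CH}_{0}(D_{L})\to\mathrm{CH}_{0}(X_{L})$ is surjective for every field extension $L/k$ (the fibers over closed points of $C_{\bar L}$ lying over $\eta$ are base changes of $X_{\eta}$, so the rationality persists), which is exactly the hypothesis needed to run the Bloch--Srinivas decomposition of the diagonal over the function field of $X$; the resulting $N\Delta_{X}=\Gamma_{1}+\Gamma_{2}$ with both terms supported on products with a curve lands in $\mathrm{CH}_{2}(X\times X)_{\equiv}$, and Theorem \ref{automorophism groups of motives} then kills $\mathrm{id}_{t_{2}(X)}$. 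Note also that, with your choice of $D$, the classification of reducible singular fibers that you flag as the ``main obstacle'' is not actually needed: points of $f^{-1}(\Sigma)$ lie in $D$ by fiat, so only the rationality of the geometrically integral fibers over $U$ enters, and that is immediate from Proposition \ref{non-smooth genus one curve}. Your ``alternative route'' via a purely inseparable, generically finite dominant map from a ruled surface (obtained by normalizing $X_{\eta}$ after the base change $K^{1/p}/K$) together with the transfer identity $\tfrac1d\,\Gamma\circ{}^{t}\Gamma=\Delta_{X}$ is in the spirit of the cited reference \cite{Kawabe}; it buys a structural statement (the motive of $X$ is a summand of that of a ruled surface, hence of abelian type), whereas the Bloch--Srinivas route is shorter and avoids constructing the cover, at the cost of yielding only the vanishing of $t_{2}(X)$.
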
 
We apply Theorem \ref{tmotive of qusi-elliptic} to $f$ and $j$,  and get
\[ t_{2}(X) = 0 = t_{2}(J). \]
Thus, we complete the proof of Theorem \ref{tmotive of genus one}
for the case where $f$ is quasi-elliptic.
\subsection{The transcendental motives of the elliptic surfaces} \ \indent \\
\indent Next, we assume $f$ is elliptic.
By Proposition \ref{isomorphism of transcendental motives}, it suffices to prove$:$
there are elements $[\alpha] \in \mathrm{CH}_{2}(X \times J)/\mathrm{CH}_{2}(X \times J)_{\equiv}$,
$[\beta] \in \mathrm{CH}_{2}(J \times X)/\mathrm{CH}_{2}(J \times X)_{\equiv}$ such that
\[
\begin{cases}
[\alpha] \circ_{k} [\beta] = [\Delta_{J}]
& \text{{in} \ \ $\mathrm{CH}_{2}(J \times J)/\mathrm{CH}_{2}(J \times J)_{\equiv}$} \\
[\beta] \circ_{k} [\alpha] = [\Delta_{X}]
&\text{in \ \ $\mathrm{CH}_{2}(X \times X)/\mathrm{CH}_{2}(X \times X)_{\equiv}$} 
\end{cases}
\]
\indent \textbf{Step 1. Construct correspondences on the elliptic surfaces.}\\
\indent We construct the correspondences $[\alpha]$ and $[\beta]$.
Let $\eta \in C$ be the generic point.\\
Let $X_{\eta}$ and $J_{\eta}$ be the generic fibers of $f : X \rightarrow C$ and $j : J \rightarrow C$, respectively.
Since $f$ is elliptic, $X_{\eta}$ is a smooth genus $1$ curve.
By the construction of $J_{\eta}$, $J_{\eta} \cong \mathrm{Jac}(X_{\eta})$.
By Proposition \ref{genus one curve}, $X_{\eta}$ is a torsor for $J_{\eta}$.
By Theorem \ref{motive of smooth genus one}, we get
\[ h(X_{\eta}) \cong h(J_{\eta}) \ \ \ \text{in} \ \ \ \mathrm{CH}\mathcal{M}(\eta, \mathbb{Q}). \]
Thus, there are elements $a \in \mathrm{CH}_{1}(X_{\eta} \times_{\eta} J_{\eta})$ and 
$b \in \mathrm{CH}_{1}(J_{\eta} \times_{\eta} X_{\eta})$ such that
\[ 
\begin{cases}
a \circ_{\eta} b = \Delta_{J_{\eta}}  &  \text{in \ \ $\mathrm{CH}_{1}(J_{\eta} \times_{\eta} J_{\eta})$} \\
b \circ_{\eta} a = \Delta_{X_{\eta}}  &  \text{in \ \ $\mathrm{CH}_{1}(X_{\eta} \times_{\eta}  X_{\eta})$}
\end{cases}
\]
\indent In this proof, we let 
\[ r_{XJ} : \mathrm{CH}_{2}(X \times_{C} J) \rightarrow \mathrm{CH}_{1}(X_{\eta} \times_{\eta} J_{\eta}) \]
be the flat-pullback, and similar for $r_{JX}$ and $r_{JJ}$.
Let 
\[ \iota_{XJ} : \mathrm{CH}_{2}(X \times_{C} J) \rightarrow \mathrm{CH}_{2}(X \times J) \]
be the proper-pushfoward, and similar for $\iota_{JX}$ and $\iota_{JJ}$.

\begin{lem} \label{two funcotors} There are homomorphisms of groups
\[ \mathrm{CH}_{1}(X_{\eta} \times_{\eta} J_{\eta}) 
\overset{\cong}{\underset{r_{XJ}}{\longleftarrow}} 
\frac{\mathrm{CH}_{2}(X \times_{C} J)}{\oplus_{c}\mathrm{CH}_{2}(X_{c} \times_{c} J_{c})}
\underset{\iota_{XJ}}\longrightarrow \frac{\mathrm{CH}_{2}(X \times J)}{\mathrm{CH}_{2}(X \times J)_{\equiv}}. \]
\end{lem}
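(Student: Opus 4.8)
The plan is to treat the two arrows separately, for $r_{XJ}$ and for $\iota_{XJ}$; the arguments for $(r_{JX},\iota_{JX})$ and $(r_{JJ},\iota_{JJ})$ are verbatim the same. Throughout, write $W:=X\times_{C}J$, a closed subscheme of $X\times J$, flat over $C$ of relative dimension $2$ and hence of dimension $3$, with structure morphism $\pi:W\to C$; for a point $c\in C$ put $W_{c}:=X_{c}\times_{c}J_{c}=\pi^{-1}(c)$, so that $W_{\eta}=X_{\eta}\times_{\eta}J_{\eta}$ is the generic fibre, an integral $K$-variety of dimension $2$ with $K=k(C)$, while for closed $c$ the fibre $W_{c}$ has dimension $2$ and the various $W_{c}$ are pairwise disjoint. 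In the statement the group $\bigoplus_{c}\mathrm{CH}_{2}(X_{c}\times_{c}J_{c})$ is read through its image in $\mathrm{CH}_{2}(W)$ under proper pushforward along the closed immersions $W_{c}\hookrightarrow W$, exactly as in the earlier definition of $\mathcal{E}(C)$.

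For $r_{XJ}$ I would use the localization sequence. Realize $W_{\eta}=\varprojlim_{\varnothing\neq U\subseteq C}\pi^{-1}(U)$ as the cofiltered limit over the nonempty opens of $C$; since $C$ is a curve these are precisely the complements $C\setminus S$ of finite sets $S$ of closed points. For each such $S$, the localization sequence \cite[Prop.~1.8]{Fulton} applied to the closed subscheme $\bigcup_{c\in S}W_{c}\subseteq W$ gives, using disjointness of the $W_{c}$, an exact sequence $\bigoplus_{c\in S}\mathrm{CH}_{2}(W_{c})\to\mathrm{CH}_{2}(W)\to\mathrm{CH}_{2}(\pi^{-1}(C\setminus S))\to 0$. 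Passing to the colimit over $S$ and using that flat pullback to the generic fibre identifies $\varinjlim_{S}\mathrm{CH}_{2}(\pi^{-1}(C\setminus S))$ with $\mathrm{CH}_{1}(W_{\eta})$ — a $2$-cycle on the $3$-fold $\pi^{-1}(C\setminus S)$ over $k$ that dominates $C$ restricts to a $1$-cycle on the surface $W_{\eta}$ over $K$, while one that does not dominate $C$ is supported on some $W_{c}$ and dies in the colimit — yields the exact sequence
\[ \bigoplus_{c\ \mathrm{closed}}\mathrm{CH}_{2}(X_{c}\times_{c}J_{c})\longrightarrow\mathrm{CH}_{2}(X\times_{C}J)\xrightarrow{\ r_{XJ}\ }\mathrm{CH}_{1}(X_{\eta}\times_{\eta}J_{\eta})\longrightarrow 0. \]
Hence $r_{XJ}$ descends to an isomorphism on the quotient, as asserted.

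For $\iota_{XJ}$, note first that since $C$ is separated the diagonal $C\hookrightarrow C\times_{k}C$ is a closed immersion, so $W=X\times_{C}J\hookrightarrow X\times J$ is a closed immersion and $\iota_{XJ}$ is the associated proper pushforward on $\mathrm{CH}_{2}$. It then suffices to check that the image of $\bigoplus_{c}\mathrm{CH}_{2}(X_{c}\times_{c}J_{c})$ lies in $\mathrm{CH}_{2}(X\times J)_{\equiv}$. But for a closed point $c$ one has $W_{c}\subseteq (X_{c})_{\mathrm{red}}\times_{k}J$, and every irreducible component of $(X_{c})_{\mathrm{red}}$ is a closed subvariety of $X$ of dimension $1<2$; thus each subvariety of $X\times J$ supported on $W_{c}$ is supported on some $M\times J$ with $\dim M<2$, i.e. is a generator of $\mathrm{CH}_{2}(X\times J)_{\equiv}$. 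Therefore $\iota_{XJ}$ descends to a group homomorphism on the quotient. None of this is genuinely hard; the only point requiring care is the dimension bookkeeping in the limit argument for $r_{XJ}$ — keeping straight that a $2$-cycle on $W$ over the base field $k$ restricts to a $1$-cycle on $W_{\eta}$ over $K$, and that the cycles killed in the colimit are exactly those not dominating $C$, namely those coming from the fibre Chow groups.
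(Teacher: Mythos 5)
Your proposal is correct and follows essentially the same route as the paper: the left arrow is obtained from the localization (local exact) sequence for the fibres of $X\times_{C}J\to C$, which you simply spell out via the colimit over open subsets of $C$, and the right arrow descends because a cycle supported on a closed fibre $X_{c}\times_{c}J_{c}$ pushes forward to a class supported on $M\times J$ with $\dim M\le 1$, hence lands in $\mathrm{CH}_{2}(X\times J)_{\equiv}$. The paper's version of this second step writes such a cycle explicitly as a combination of products $E_{c,i}\times F_{c,j}$ of irreducible fibre components, but that is the same observation as your containment $W_{c}\subseteq (X_{c})_{\mathrm{red}}\times_{k}J$.
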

\begin{proof}
The left isomorphism $r_{XJ}$ follows from the local exact sequences
\[ \oplus_{c}\mathrm{CH}_{2}(X_{c} \times_{c} J_{c}) \longrightarrow \mathrm{CH}_{2}(X \times J) 
\underset{r_{XJ}}\longrightarrow \mathrm{CH}_{1}(X_{\eta} \times_{\eta} J_{\eta}) \rightarrow 0. \]
Thus, it remains to prove  $\iota_{XJ}(\oplus_{c}\mathrm{CH}_{2}(X_{c} \times_{c} J_{c})) \subset \mathrm{CH}_{2}(X \times J)_{\equiv}.$
Indeed, let $z \in \oplus_{c}\mathrm{CH}_{2}(X_{c} \times_{c} J_{c})$.
Then $z = \sum_{c}\sum_{i,j}n_{c, i, j}[E_{c, i} \times_{k(c)} F_{c, j}]$ with $n_{c, i, j} \in \mathbb{Q}$.
Here, $E_{c, i}$ and $F_{c, j}$ runs over all irreducible components of $X_{c}$ and $J_{c}$, respectively.
Thus 
\[ \iota_{XJ}(z) = \sum_{c}\sum_{i,j}n_{c, i, j}[E_{c, i} \times_{k} F_{c, j}]\in \mathrm{CH}_{2}(X \times_{C} J)_{\equiv}. \]
\end{proof}
By Lemma \ref{two funcotors}, we can define
\[  [\alpha] : =  \iota_{XJ}(r_{XJ}^{-1}(a)) 
\in \mathrm{CH}_{2}(X \times J)/\mathrm{CH}_{2}(X \times J, \mathbb{Q})_{\equiv}. \]
Similarly, we define 
\[ [\beta] : =  \iota_{JX}(r_{JX}^{-1}(b)) 
\in \mathrm{CH}_{2}(J \times X)/\mathrm{CH}_{2}(J \times X)_{\equiv}. \]

\textbf{Step 2. Calculate the correspondences on the elliptic surfaces.} \\
\indent To compute the correspondences $[\alpha] \circ_{k} [\beta]$ and $[\beta] \circ_{k} [\alpha]$, 
we prove the following:
\begin{lem} \label{lemma for main prop} Use above notations. Then
\begin{enumerate}
\item For $[y] \in \mathrm{CH}_{2}(X \times_{C} J)/\oplus_{c}\mathrm{CH}_{2}(X_{c} \times_{c} J_{c})$, 
$[z] \in  \mathrm{CH}_{2}(J \times_{C} X)/\oplus_{c}\mathrm{CH}_{2}(J_{c} \times_{c} X_{c})$,
\[ \iota_{XJ}([y]) \circ_{k} \iota_{JX}([z]) = \iota_{JJ}([y] \circ_{C} [z]) \ \ \ \text{in} \ \ \
\mathrm{CH}_{2}(J \times J)/\mathrm{CH}_{2}(J \times J)_{\equiv}. \]
\item For $d \in \mathrm{CH}_{1}(X_{\eta} \times_{\eta} J_{\eta})$, 
$e \in \mathrm{CH}_{1}(J_{\eta} \times_{\eta} X_{\eta})$,
\[r_{XJ}^{-1}(d) \circ_{C} r_{JX}^{-1}(e) = r_{JJ}^{-1}(d \circ_{\eta} e) \ \ \ \text{in} \ \ \
\mathrm{CH}_{2}(J \times_{C} J)/\oplus_{c}\mathrm{CH}_{2}(J_{c} \times_{c} J_{c}). \]
\end{enumerate}
\end{lem}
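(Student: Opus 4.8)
The plan is to deduce both identities from the two compatibility statements for relative correspondences established in Section~3 --- Lemma~\ref{correspondences and closed immersion} (composition commutes with pushforward along a closed immersion coming from a base morphism) and Lemma~\ref{correspondences and base change} (composition commutes with flat pullback along an open immersion of the base) --- together with the localization sequences underlying Lemma~\ref{two funcotors}. Everything then comes down to checking that the terms appearing in (i) and (ii) are well defined. Two facts handle this. First, $\iota_{XJ}$ is the proper pushforward along the closed immersion $X\times_C J\hookrightarrow X\times J$ (closed since $C$ is separated), and Lemma~\ref{two funcotors} shows it carries $\oplus_c\mathrm{CH}_2(X_c\times_c J_c)$ into $\mathrm{CH}_2(X\times J)_{\equiv}$, while $r_{XJ}$ induces an isomorphism $\mathrm{CH}_2(X\times_C J)/\oplus_c\mathrm{CH}_2(X_c\times_c J_c)\xrightarrow{\ \sim\ }\mathrm{CH}_1(X_\eta\times_\eta J_\eta)$; the same holds with $X$ replaced by $J$, which is what allows me to write $r_{JJ}^{-1}$. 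Second, the relative composition $\circ_C$ is built from $C$-morphisms and a refined Gysin map along the (horizontal) diagonal of $J$, hence preserves the locus over a closed point $c\in C$: if $w$ is supported over $c$ then so are $w\circ_C z$ and $z\circ_C w$. Consequently $\circ_C$ descends to the quotients by the fibrewise subgroups, and by Proposition~\ref{bilinear} the absolute composition $\circ_k$ descends to the quotients by the $\equiv$-subgroups, so $[y]\circ_C[z]$, $\iota_{XJ}([y])\circ_k\iota_{JX}([z])$, and so on all make sense.

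For (i) I would apply Lemma~\ref{correspondences and closed immersion} to the structure morphism $t\colon C\to\mathrm{Spec}(k)$, so that $B=C$, $B'=\mathrm{Spec}(k)$, the canonical maps of that lemma are exactly the closed immersions $X\times_C J\hookrightarrow X\times J$ etc., and $\circ_{B'}$ is the absolute composition $\circ_k$ --- here I use that $J$ is a smooth projective surface, so $X\times J\times J$ is smooth and the refined Gysin along the diagonal of $J$ agrees with the ordinary intersection with the diagonal, i.e.\ the relative composition over $\mathrm{Spec}(k)$ is the usual one. Choosing lifts $y,z$ of $[y],[z]$, the lemma gives $\iota_{XJ}(y)\circ_k\iota_{JX}(z)=\iota_{JJ}(y\circ_C z)$ already in $\mathrm{CH}_2(J\times J)$; passing to the quotient by $\mathrm{CH}_2(J\times J)_{\equiv}$ and using the well-definedness just discussed yields the asserted identity in $\mathrm{CH}_2(J\times J)/\mathrm{CH}_2(J\times J)_{\equiv}$.

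For (ii) I would use Lemma~\ref{correspondences and base change} with the inclusion of the generic point $i\colon\mathrm{Spec}(K)=\eta\hookrightarrow C$, identifying $(i_{XJ})^{*}=r_{XJ}$, $(i_{JX})^{*}=r_{JX}$, $(i_{JJ})^{*}=r_{JJ}$ and $\circ_U=\circ_\eta$. Since $\eta\hookrightarrow C$ is a pro-open rather than an honest open immersion, I would either apply the lemma over each nonempty open $U\subset C$ and pass to the filtered colimit --- using that the Chow groups of the generic fibre are $\varinjlim_U$ of those of the fibres over $U$ and that this colimit is compatible with flat pullback, proper pushforward and refined Gysin --- or observe that, $\mathrm{Spec}(K)\to C$ being flat and all the relevant squares remaining Cartesian, the proof of Lemma~\ref{correspondences and base change} goes through verbatim. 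Either way I get $d\circ_\eta e=r_{JJ}\bigl(r_{XJ}^{-1}(d)\circ_C r_{JX}^{-1}(e)\bigr)$ in $\mathrm{CH}_1(J_\eta\times_\eta J_\eta)$ for any lifts of $d,e$, and since $r_{JJ}$ is the isomorphism of Lemma~\ref{two funcotors} this is exactly $r_{XJ}^{-1}(d)\circ_C r_{JX}^{-1}(e)=r_{JJ}^{-1}(d\circ_\eta e)$ in $\mathrm{CH}_2(J\times_C J)/\oplus_c\mathrm{CH}_2(J_c\times_c J_c)$.

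The only genuinely non-formal point, and the step I expect to need the most care, is this passage to the generic fibre in (ii): checking that Lemma~\ref{correspondences and base change}, stated for honest open immersions, survives the limit $U\rightsquigarrow\eta$ --- equivalently, that restriction to the generic fibre commutes with relative composition of correspondences. Everything else is routine manipulation with the localization sequences and with the definitions of $\iota$, $r$ and $\circ_C$.
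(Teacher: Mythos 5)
Your proposal is correct and follows essentially the same route as the paper: part (i) is Lemma~\ref{correspondences and closed immersion} applied to the structure morphism $C\to\mathrm{Spec}(k)$ combined with the descent of $\circ_k$ and $\circ_C$ to the quotients (Proposition~\ref{bilinear} and its fibrewise analogue), and part (ii) is Lemma~\ref{correspondences and base change} applied with $U=\mathrm{Spec}(k(C))$. You are in fact slightly more careful than the paper, which silently treats $\eta\hookrightarrow C$ as an open immersion and asserts the relative bilinearity without comment, whereas you supply the colimit-over-opens argument and the support-over-$c$ justification.
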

\begin{proof}
We prove (i).
By Proposition \ref{bilinear}, there is a bilinear homomorphism
\begin{align} \label{mod correspondences 1}
\circ : \frac{\mathrm{CH}_{2}(J \times X)}{\mathrm{CH}_{2}(J \times X)_{\equiv}} \times 
\frac{\mathrm{CH}_{2}(X \times J)}{\mathrm{CH}_{2}(X \times J)_{\equiv}} &\rightarrow 
\frac{\mathrm{CH}_{2}(J \times J)}{\mathrm{CH}_{2}(J \times J)_{\equiv}}  \\
([\delta], [\gamma]) &\mapsto [ \gamma] \circ_{k} [\delta] : = [ \gamma \circ_{k} \delta] \notag
\end{align}
Similarly, there is a bilinear homomorphism
\begin{align} \label{mod correspondences 2}
\circ : \frac{\mathrm{CH}_{2}(J \times_{C} X)}{\oplus_{c}\mathrm{CH}_{2}(J_{c} \times X_{c})} \times 
\frac{\mathrm{CH}_{2}(X \times_{C} J)}{\oplus_{c}\mathrm{CH}_{2}(X_{c} \times J_{c})} &\rightarrow 
\frac{\mathrm{CH}_{2}(J \times_{C} J)}{\oplus_{c}\mathrm{CH}_{2}(J_{c} \times J_{c})}  \\
([z], [y]) &\mapsto [y] \circ_{C} [z] : = [y \circ_{C} z] \notag
\end{align}
Thus, in $\mathrm{CH}_{2}(J \times_{C} J)/\oplus_{c}\mathrm{CH}_{2}(J_{c} \times_{c} J_{c})$,
\begin{align*}
 \iota_{XJ}([y]) \circ_{k} \iota_{JX}([z]) &= [\iota_{XJ}(y)] \circ_{k} [\iota_{JX}(z)] 
\overset{(\ref{mod correspondences 1})}= [\iota_{XJ}(y) \circ_{k} \iota_{JJ}(z)] \\
&= [\iota_{JJ}(y \circ_{C} z)]  = \iota_{JJ}([y \circ_{C} z]) 
\overset{(\ref{mod correspondences 2})}= \iota_{JJ}([y] \circ_{C} [z]).
\end{align*}   
Here, the third equality uses Lemma \ref{correspondences and closed immersion} for $B' = C$ and $B = \mathrm{Spec}(k)$. So, we get (i).
The proof of (ii) is similar to (i) 
(Use Lemma \ref{correspondences and base change} for $U = \mathrm{Spec}(k(C))$ and $B = C$).          
\end{proof}

Now, we prove the following main proposition in this proof:

\begin{prop} \label{Calculate correspondences} Use above notations.
Then in $\mathrm{CH}_{2}(J \times J)/\mathrm{CH}_{2}(J \times J)_{\equiv}$, 
\[ [\alpha] \circ_{k} [\beta] = [\Delta_{J}]. \]
\end{prop}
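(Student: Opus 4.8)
The plan is to chain the two parts of Lemma \ref{lemma for main prop} and then identify the resulting relative cycle with the relative diagonal of $J$ over $C$. First I would apply Lemma \ref{lemma for main prop} (i) to $[y] = [r_{XJ}^{-1}(a)]$ and $[z] = [r_{JX}^{-1}(b)]$, which gives
\[ [\alpha] \circ_{k} [\beta] = \iota_{XJ}(r_{XJ}^{-1}(a)) \circ_{k} \iota_{JX}(r_{JX}^{-1}(b)) = \iota_{JJ}\big( r_{XJ}^{-1}(a) \circ_{C} r_{JX}^{-1}(b) \big) \]
in $\mathrm{CH}_{2}(J \times J)/\mathrm{CH}_{2}(J \times J)_{\equiv}$. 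Next, Lemma \ref{lemma for main prop} (ii) with $d = a$, $e = b$, together with the identity $a \circ_{\eta} b = \Delta_{J_{\eta}}$ from Step 1, yields
\[ r_{XJ}^{-1}(a) \circ_{C} r_{JX}^{-1}(b) = r_{JJ}^{-1}(a \circ_{\eta} b) = r_{JJ}^{-1}(\Delta_{J_{\eta}}) \]
in $\mathrm{CH}_{2}(J \times_{C} J)/\oplus_{c}\mathrm{CH}_{2}(J_{c} \times_{c} J_{c})$. Combining the two displays reduces the proposition to the single claim $\iota_{JJ}(r_{JJ}^{-1}(\Delta_{J_{\eta}})) = [\Delta_{J}]$ in $\mathrm{CH}_{2}(J\times J)/\mathrm{CH}_{2}(J\times J)_{\equiv}$.

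To prove this last claim I would use the \emph{relative diagonal}. The diagonal $\Delta_{J} \subset J \times J$ factors through the closed subscheme $J \times_{C} J$, so the diagonal embedding $J \hookrightarrow J \times_{C} J$ defines a class $\widetilde{\Delta}_{J} \in \mathrm{CH}_{2}(J \times_{C} J)$ with $\iota_{JJ}(\widetilde{\Delta}_{J}) = \Delta_{J}$. Since $(J \times_{C} J) \times_{C} \eta = J_{\eta} \times_{\eta} J_{\eta}$ compatibly with the diagonal embeddings, the flat-pullback $r_{JJ}(\widetilde{\Delta}_{J})$ is exactly $\Delta_{J_{\eta}}$. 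The localization sequence
\[ \oplus_{c}\mathrm{CH}_{2}(J_{c} \times_{c} J_{c}) \longrightarrow \mathrm{CH}_{2}(J \times_{C} J) \overset{r_{JJ}}{\longrightarrow} \mathrm{CH}_{1}(J_{\eta} \times_{\eta} J_{\eta}) \rightarrow 0 \]
(the same argument as in Lemma \ref{two funcotors}, applied to $J \times_{C} J$ in place of $X \times_{C} J$, using that the complement of the generic fiber is the union of the $J_{c} \times_{c} J_{c}$) shows that $r_{JJ}$ induces an isomorphism on $\mathrm{CH}_{2}(J \times_{C} J)/\oplus_{c}\mathrm{CH}_{2}(J_{c} \times_{c} J_{c})$, so $[\widetilde{\Delta}_{J}] = r_{JJ}^{-1}(\Delta_{J_{\eta}})$ there. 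Applying $\iota_{JJ}$ gives $\iota_{JJ}(r_{JJ}^{-1}(\Delta_{J_{\eta}})) = \iota_{JJ}([\widetilde{\Delta}_{J}]) = [\Delta_{J}]$, which finishes the proof.

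The only genuinely subtle point — hence the main obstacle — is the legitimacy of writing $r_{JJ}^{-1}$ inside the quotient $\mathrm{CH}_{2}(J \times_{C} J)/\oplus_{c}\mathrm{CH}_{2}(J_{c} \times_{c} J_{c})$, i.e.\ verifying the analogue of the left isomorphism of Lemma \ref{two funcotors} for the self-fiber-product, and matching the generic-fiber restriction of the relative diagonal with $\Delta_{J_{\eta}}$. Both are formal consequences of the localization sequence and the base-change identity $(J \times_{C} J) \times_{C} \eta = J_{\eta} \times_{\eta} J_{\eta}$; everything else is a direct application of Lemma \ref{lemma for main prop} and Step 1. (The companion identity $[\beta] \circ_{k} [\alpha] = [\Delta_{X}]$ is proved in exactly the same way, with the roles of $X$ and $J$ interchanged and using $b \circ_{\eta} a = \Delta_{X_{\eta}}$.)
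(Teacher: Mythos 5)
Your proposal is correct and follows essentially the same chain of equalities as the paper: apply Lemma \ref{lemma for main prop} (i), then (ii), then $a \circ_{\eta} b = \Delta_{J_{\eta}}$, and finally identify $\iota_{JJ}(r_{JJ}^{-1}(\Delta_{J_{\eta}}))$ with $[\Delta_{J}]$. Your explicit justification of that last step via the relative diagonal $\widetilde{\Delta}_{J} \in \mathrm{CH}_{2}(J \times_{C} J)$ is a correct elaboration of what the paper leaves implicit.
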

\begin{proof} 
In $\mathrm{CH}_{2}(J \times J)/\mathrm{CH}_{2}(J \times J)_{\equiv}$, 
\begin{align*}
[\alpha] \circ_{k} [\beta] &= \iota_{XJ}(r_{XJ}^{-1}(a)) \circ_{k} \iota_{JX}(r_{JX}^{-1}(b)) && \text{by definition} \\
&= \iota_{JJ}(r_{XJ}^{-1}(a) \circ_{C} r_{JX}^{-1}(b)) && \text{by Lemma \ref{lemma for main prop} (i)} \\
&= \iota_{JJ}(r_{JJ}^{-1}(a \circ_{\eta} b)) && \text{by Lemma \ref{lemma for main prop} (ii)} \\
&= \iota_{JJ}(r_{JJ}^{-1}(\Delta_{J_{\eta}})) && \text{by $a \circ_{\eta} b = \Delta_{J_{\eta}}$} \\
&= [\Delta_{J}]. 
\end{align*}
\end{proof}
\indent By Proposition \ref{Calculate correspondences}, we get
$[\alpha] \circ_{k} [\beta] = [\Delta_{J}]$ in 
$\mathrm{CH}_{2}(J \times J)/\mathrm{CH}_{2}(J \times J)_{\equiv}$.\\
Similarly, $[\beta] \circ_{k} [\alpha] = [\Delta_{X}]$ in $\mathrm{CH}_{2}(X \times X)/
\mathrm{CH}_{2}(X \times X)_{\equiv}$.
Thus, we get 
\[ t_{2}(X) \cong t_{2}(J) \ \ \ \text{in} \ \ \ \mathrm{CH}\mathcal{M}(k, \mathbb{Q}). \]
Therefore, we complete the proof of Theorem \ref{tmotive of genus one} for the case where $f$ is elliptic.
Therefore, we complete the proof of Theorem \ref{tmotive of genus one}, 
and hence of Theorem \ref{motive of genus one}.
\begin{rem}
Taking the \'etale realization of $t_{2}(X) \cong t_{2}(J)$, we have
an isomorphism $H^{2}_{\textit{\'et}}(X, \mathbb{Q}_{l})_{tr} \cong H^{2}_{\textit{\'et}}(J, \mathbb{Q}_{l})_{tr}$,  so get $h^{2}_{\textit{\'et}}(X, \mathbb{Q}_{l})_{tr} = h^{2}_{\textit{\'et}}(J, \mathbb{Q}_{l})_{tr}$.
\end{rem}

\begin{rem}
Thm.~\ref{tmotive of genus one} is proved
for the case where $X$ is an Enriques surface
with an elliptic fibration \cite{Coombes} (see Prop.~\ref{Coombes's result}).
Then we have $b_{2}(X) = 10$ by Def.~\ref{def of Enriques}, so
$b_{1}(X) = 0$ by Prop.~\ref{list},
and hence $(\mathrm{Pic}_{X/k}^{0})_{red} =  0$ by Prop.~\ref{Neron severi} (i).
Thus, we get $h_{1}(X) = h_{3}(X) = 0$
by the argument as in the proof of Thm.~\ref{Picard motive of Jacobian}.
\end{rem}

\begin{rem}
As studied in \cite[Ch.~16]{HuyK3} the case of elliptic K3 surfaces, it is expected that there is an equivalence between bounded derived categories of coherent sheaves $\mathrm{D^{b}}(X) \cong 
\mathrm{D^{b}}(J)$.
This again induces $h(X) \cong h(J)$ (by \cite{FV21} or \cite{Huymotives}).
\end{rem}

\section{Kimura-finiteness} 
\indent Here we collect some properties of Kimura-finiteness for the reader's convenience.
\begin{definition} Let $\mathcal{C}$ be a $\mathbb{Q}$-linear pseudo-abelian tensor category
(e.g. $\mathrm{CH}\mathcal{M}(k, \mathbb{Q})$)
\begin{enumerate}
\item An object $A$ of $\mathcal{C}$ is \textit{evenly finite} if $\wedge^{n}(A) = 0$ for $n$ large enough.
\item An object $A$ of $\mathcal{C}$ is \textit{oddly finite} if $\mathrm{Sym}^{n}(A) = 0$ for $n$ large enough.
\item An object $A$ of $\mathcal{C}$ is \textit{Kimura-finite} if there is a decomposition 
$A = A_{+} \oplus A_{-}$ such that $A_{+}$ is even and $A_{-}$ is odd.
\end{enumerate}
\end{definition}

\begin{conj} (\cite{Kimura} or \cite[Ch.~12]{Andre}).
Every Chow motive is Kimura-finite.
\end{conj}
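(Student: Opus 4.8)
The plan is to reduce the conjecture to motives of smooth projective varieties and then to propagate Kimura-finiteness along the operations under which it is stable. First I would record that the full subcategory of Kimura-finite objects is a thick rigid tensor subcategory of $\mathrm{CH}\mathcal{M}(k,\mathbb{Q})$: by Kimura's results (\cite{Kimura}) it is closed under $\oplus$, under $\otimes$, under passage to direct summands (using that $\mathrm{CH}\mathcal{M}(k,\mathbb{Q})$ is pseudo-abelian, so that $\wedge^{n}$ and $\mathrm{Sym}^{n}$ of a summand are summands of $\wedge^{n}$ and $\mathrm{Sym}^{n}$ of the whole), under duals, and under Tate twist by $\mathbb{L}^{\pm 1}$. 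Since, by the very definition of the category, every object $(X,p,m)$ of $\mathrm{CH}\mathcal{M}(k,\mathbb{Q})$ is a direct summand of a Tate twist $h(X)\otimes\mathbb{L}^{\otimes j}$ with $X\in\mathcal{V}(k)$ and $\mathbb{L}$ invertible, it suffices to prove that $h(X)$ is Kimura-finite for every smooth projective $X$.

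Second, I would enlarge the class of varieties for which $h(X)$ is known to be Kimura-finite and attempt to reach an arbitrary $X$ from it. For a smooth projective curve $B$ the motive is Kimura-finite, with $h_{1}(B)$ oddly finite and $h_{0}(B)$, $h_{2}(B)$ evenly finite; by the stability properties above, the motive of any variety that is a direct summand of a product of curves is then Kimura-finite. In particular this covers every abelian variety, whose motive is generated by $h_{1}$ (Shermenev, Deninger--Murre), and Theorem \ref{Main 4} of the present paper adds the smooth projective surfaces with $p_{g}=0$ and $\kappa<2$ to the list. The ideal strategy for the general case would be to produce, for an arbitrary $X$, a Chow--K\"{u}nneth decomposition $h(X)\cong\bigoplus_{i}h_{i}(X)$ together with an identification of each summand $h_{i}(X)$ as a direct summand of a motive assembled from curves and abelian varieties, whence Kimura-finiteness would follow termwise.

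The hard part is precisely this last identification for the transcendental summands, and it is the reason the statement remains the central open conjecture of the subject rather than a theorem. For surfaces of general type, and more generally for varieties carrying transcendental cohomology not controlled by abelian varieties, there is at present no construction relating the transcendental motive $t_{2}(X)$ (or its higher-dimensional analogue) to a motive built from curves or abelian varieties; the method of this paper succeeds only because the genus $1$ fibration lets one transport $t_{2}$ across an isogeny of Jacobians, and this mechanism has no counterpart in general. I therefore do not expect to settle the conjecture: a complete proof would require an essentially new input, and in fact the statement is known to follow from Voevodsky's smash-nilpotence conjecture and to be closely intertwined with Grothendieck's standard conjectures. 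The realistic contribution, as in the present work, is to extend the list of cases in which Kimura-finiteness can be established unconditionally.
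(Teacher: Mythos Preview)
The statement you were asked to address is a \emph{conjecture}, not a theorem: the paper states it without proof, attributing it to Kimura and O'Sullivan, and immediately moves on to listing known examples. There is therefore no proof in the paper to compare your proposal against.

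You correctly recognise this yourself in your final paragraph, where you write that you ``do not expect to settle the conjecture'' and that ``the statement remains the central open conjecture of the subject rather than a theorem.'' Your reduction to motives of the form $h(X)$ via closure under summands, tensor, and Tate twist is standard and correct, and your assessment of where the obstruction lies (the transcendental summand for varieties not dominated by products of curves) is accurate. But the first two paragraphs are presented as the opening of a proof strategy, which is misleading: no amount of propagation from the known cases reaches an arbitrary smooth projective variety, and you know this. The honest framing is not ``here is my plan, and here is where it fails,'' but rather ``this is an open conjecture; here is what is known and why the remaining cases are out of reach.'' That is essentially what the paper does in the remark immediately following the conjecture.
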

For example, the motives $1$ and $\mathbb{L}$ are Kimura-finite.
\begin{prop} \label{Properties of Kimura-finiteness}
Let $k$ be a field.
\begin{enumerate}
\item The motive of any smooth projective curve over $k$ is Kimura-finite.
\item The motive of any abelian variety is Kimura-finite.
\item Let $M$ and $N$ be Kimura-finite dimensional motives.
Then $M \oplus N$ and $M \otimes N$ are Kimura-finite.
\item Let $\pi : V \rightarrow W$ be a dominant morphism of smooth projective varieties over $k$.
If $h(V)$ is Kimura-finite, so is $h(W)$.
\item (Birational invariant) Let $X$ and $Y$ be smooth projective surfaces over $k$ which are birationally equivalent.
If $h(X)$ is Kimura-finite, so is $h(Y)$.
\end{enumerate}
\end{prop}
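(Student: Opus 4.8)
The plan is to deduce all five assertions from the basic structure theory of Chow motives together with Kimura's foundational results, using repeatedly three facts: that $1$ and $\mathbb{L}$ are evenly finite, that the full subcategory of Kimura-finite motives in $\mathrm{CH}\mathcal{M}(k,\mathbb{Q})$ is closed under direct sums, tensor products and \emph{direct summands} (\cite{Kimura}, \cite{Andre}), and Manin's identity principle.

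I would begin with (ii). An abelian variety $A$ of dimension $g$ admits a Chow--K\"unneth decomposition $h(A)\cong\bigoplus_{i=0}^{2g}h_{i}(A)$ (\cite{DM}) in which $h_{1}(A)$ is oddly finite, since $\mathrm{Sym}^{2g+1}h_{1}(A)=0$ (\cite{Kimura}); consequently $h(A)=h_{+}(A)\oplus h_{-}(A)$ with $h_{+}(A)=\bigoplus_{i\ \mathrm{even}}h_{i}(A)$ evenly finite and $h_{-}(A)=\bigoplus_{i\ \mathrm{odd}}h_{i}(A)$ oddly finite, so $h(A)$ is Kimura-finite. Then (i) follows: a genus $0$ curve has $h(C)\cong 1\oplus\mathbb{L}$, and for $C$ of positive genus the decomposition $h(C)\cong 1\oplus h_{1}(C)\oplus\mathbb{L}$ together with the isomorphism $h_{1}(C)\cong h_{1}(\mathrm{Jac}(C))$ induced by an Abel--Jacobi morphism reduces us to (ii). Part (iii) is the formal closure statement: writing $M=M_{+}\oplus M_{-}$ and $N=N_{+}\oplus N_{-}$, one expands $\wedge^{n}$ of $M\oplus N$ and $\wedge^{n}$, $\mathrm{Sym}^{n}$ of each of the four tensor summands of $M\otimes N$ via the (super-)binomial identities, using that a tensor product of two evenly finite (resp. two oddly finite) motives is evenly finite while a mixed tensor product is oddly finite; this is precisely Kimura's stability theorem, which I would simply cite (\cite{Kimura}, \cite{Andre}).

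For (iv), note first that $\pi$, being a morphism of projective varieties, is proper, hence surjective. The key point is then that $h(W)$ is a direct summand of $h(V)$. Since $k=\overline{k}$ is infinite, Bertini lets me choose a smooth complete-intersection subvariety $V'\subseteq V$ with $\dim V'=\dim W$ meeting a general fibre of $\pi$, so that $g:=\pi|_{V'}\colon V'\to W$ is surjective and generically finite, say of degree $d>0$. Pushing forward the graph $\Gamma_{g}$ and a suitable fibre-product cycle to $V\times W$ and to $W\times V$ produces correspondences $\alpha\colon h(V)\to h(W)$ and $\beta\colon h(W)\to h(V)$; checking on $0$-cycles and extending by Manin's identity principle (as in Proposition~\ref{Galois extension correspondence}(i)) shows $\alpha\circ\beta$ is a nonzero multiple of $\Delta_{W}$. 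Hence $h(W)$ is a direct summand of the Kimura-finite motive $h(V)$, so it is Kimura-finite. For (v) I would instead use the classical factorization of a birational morphism of smooth projective surfaces into a finite sequence of blow-ups at closed points (\cite{Hartshorne}): given birationally equivalent surfaces $X$ and $Y$ there is a smooth projective surface $Z$ with birational morphisms $Z\to X$ and $Z\to Y$, whence $h(Z)\cong h(X)\oplus\mathbb{L}^{\oplus a}\cong h(Y)\oplus\mathbb{L}^{\oplus b}$ by the blow-up formula. If $h(X)$ is Kimura-finite, then so is $h(Z)$ by (iii) and the Kimura-finiteness of $\mathbb{L}$, and therefore so is its direct summand $h(Y)$.

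The delicate part, I expect, is (iv): constructing the splitting correspondences $\alpha,\beta$ explicitly when $\dim V>\dim W$ and verifying that $\alpha\circ\beta$ is a nonzero multiple of $\Delta_{W}$ --- equivalently, pinning down that $h(W)$ really is a direct summand of $h(V)$ and not merely of the motive of an auxiliary complete-intersection subvariety --- is the one place requiring genuine work beyond invoking Kimura's formalism, and both (iv) and (v) ultimately rest on his (non-formal) theorem that direct summands of Kimura-finite motives are again Kimura-finite.
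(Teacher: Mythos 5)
Your proposal is correct and follows essentially the same route as the paper, which simply delegates all five parts to \cite{Kimura} (and, for (v), to Manin's blow-up formula): you reconstruct exactly the standard arguments behind those citations, including the key non-formal input that direct summands of Kimura-finite motives are Kimura-finite. The only blemish is in (iv), where the appeal to Bertini over an infinite field is unnecessary (and sits awkwardly with the hypothesis ``$k$ a field''); one can work directly with the cycle class $h^{e}$ of a power of an ample divisor, since $\pi_{*}(h^{e})=d[W]$ with $d>0$ is all that is needed to split $\Delta_{W}$.
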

\begin{proof} 
See \cite{Kimura}.
In particular, (v) follows from Manin's blow-up formula \cite{Manin}.
\end{proof} 

\begin{rem} \label{algebraic motives}
We only mention the case of surfaces.
Let $k = \overline{k}$
and $S \in \mathcal{V}(k)$ a surface.
Then $h_{i}(S)$ ($i \neq 2$)  and $h_{2}^{alg}(S)$ are Kimura-finite.
Indeed,  $h_{0}(S) \cong 1$, $h_{4}(S) \cong \mathbb{L}^{\otimes 2}$, and 
$h_{2}^{alg}(S) \cong \mathbb{L}^{\oplus \rho(S)}$.
Thus, $h_{0}$, $h_{4}$, and $h_{2}^{alg}$ are Kimura-finite.
By Prop.~\ref{Picard motive}, $h_{1}(S) \cong h_{1}((\mathrm{Pic}_{S/k}^{0})_{red})$.
By Prop.~\ref{Properties of Kimura-finiteness} (ii),
$h_{1}((\mathrm{Pic}_{S/k}^{0})_{red})$ is Kimura finite, so is $h_{1}$.
Similarly, $h_{3}$ is also Kimura-finite. However, the Kimura-finiteness of $t_{2}(S)$ is unknown in general.
At least the following are known to have Kimura-finiteness:
\begin{enumerate}
\item varieties of dimension $\leq 3$ rationally dominated by products of curves (\cite[Ex. 3.15]{V}). In particular, the K3 surfaces (\cite{Par}, \cite{ILP}).
\item K3 surfaces with Picard number 19, 20 (\cite{Pedrini 12}).
\item some K3 surfaces obtained as complete intersections (\cite{Late}, \cite{BL}). 
\item many examples of surfaces with $p_{g} = 0$ over $\mathbb{C}$ 
(\cite{Guletskii and Pedrini}, \cite{PW}).
\end{enumerate}
\end{rem}
In the proof of Theorem \ref{Main 4}, we will use: 
\begin{prop} \label{finiteness for hyper surfaces} (\cite{Kimura})
Let $k$ be a field and let $C, D \in \mathcal{V}(k)$ be curves.
Let $G$ be a finite group which acts freely on $C \times D$.
Let $X$ be a surface which is birational to $C \times D/G$.
Then $h(X)$ is Kimura-finite.
\end{prop}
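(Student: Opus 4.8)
The plan is to build Kimura-finiteness up from the curve factors, pass to the free quotient, and then invoke birational invariance for surfaces. First I would observe that $h(C)$ and $h(D)$ are Kimura-finite by Proposition \ref{Properties of Kimura-finiteness} (i), since $C$ and $D$ are smooth projective curves over $k$. As $h(C \times D) = h(C) \otimes h(D)$, Proposition \ref{Properties of Kimura-finiteness} (iii) then shows that $h(C \times D)$ is Kimura-finite.

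Next I would form the quotient $Y := (C \times D)/G$. Since $C \times D$ is a quasi-projective $k$-variety and $G$ is a finite group, the geometric quotient $Y$ exists as a $k$-variety; it is projective because $C \times D$ is, and irreducible because $C \times D$ is. Because $G$ acts freely, the projection $\pi : C \times D \to Y$ is a finite \'etale morphism of degree $|G|$, so $Y$ is a smooth projective surface. In particular $\pi$ is a dominant morphism of smooth projective varieties, and Proposition \ref{Properties of Kimura-finiteness} (iv) applies to give that $h(Y)$ is Kimura-finite.

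Finally, $X$ and $Y$ are smooth projective surfaces which are birationally equivalent by hypothesis, so Proposition \ref{Properties of Kimura-finiteness} (v), resting on Manin's blow-up formula, yields that $h(X)$ is Kimura-finite. The argument is thus a short chain of applications of Proposition \ref{Properties of Kimura-finiteness}, so I do not expect any serious obstacle; the only step requiring a little care is verifying that $(C \times D)/G$ is genuinely a smooth projective surface, which follows from the freeness of the action (making $\pi$ \'etale, hence $Y$ smooth) together with the existence of quotients of quasi-projective varieties by finite group actions.
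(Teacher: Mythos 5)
Your proof is correct and follows exactly the same chain as the paper: Kimura-finiteness of $h(C)$ and $h(D)$ by Proposition \ref{Properties of Kimura-finiteness} (i), of the product by (iii), of the quotient via the dominant morphism in (iv), and of $X$ via birational invariance in (v). The extra remarks verifying that $(C \times D)/G$ is a smooth projective surface are a welcome (and correct) amplification of what the paper leaves implicit.
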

\begin{proof}
It follows from Proposition \ref{Properties of Kimura-finiteness}.
Indeed, both $h(C)$ and $h(D)$ are Kimura-finite by (i).
Then $h(C \times D) = h(C) \otimes h(D)$ is Kimura-finite by (iii),
so $h(C \times D/G)$ is Kimura-finite by (iv), 
and hence $h(X)$ is Kimura-finite by (v).
\end{proof}

\begin{prop} (\cite[Coro.~7.6.11,\ p.181]{Kahn and Murre and Pedrini}).
\label{equivalent finite dimensionality}
Let $S \in \mathcal{V}(\mathbb{C})$ be a surface.
Then the following properties are equivalent$:$
\begin{enumerate}
\item $a_{S} : \mathrm{CH}_{0}(S)_{\mathbb{Z}}^{0} \cong \mathrm{Alb}_{S/\mathbb{C}}(\mathbb{C});$
\item $p_{g}(S) = 0$ and $h(S)$ is Kimura-finite in $\mathrm{CH}\mathcal{M}(\mathbb{C}, \mathbb{Q})$$;$
\item $t_{2}(S) = 0$.
\end{enumerate}
\end{prop}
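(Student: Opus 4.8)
The plan is to prove the two equivalences (i) $\Leftrightarrow$ (iii) and (ii) $\Leftrightarrow$ (iii). The key dictionary is Proposition \ref{def of trans}: it identifies $\mathrm{CH}^{*}(t_{2}(S))$ with $T(S)_{\mathbb{Q}}$ and the Weil-cohomology realization of $t_{2}(S)$ with the transcendental part $H^{2}(S)_{tr}$, concentrated in degree $2$.

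First I would dispose of the implications issuing from (iii). If $t_{2}(S)=0$, then $h(S)\cong 1\oplus h_{1}(S)\oplus h_{2}^{alg}(S)\oplus h_{3}(S)\oplus(\mathbb{L}\otimes\mathbb{L})$, and every summand is Kimura-finite by Remark \ref{algebraic motives}, so $h(S)$ is Kimura-finite by Proposition \ref{Properties of Kimura-finiteness}(iii). Moreover $H^{2}(S)_{tr}=H^{2}(t_{2}(S))=0$, and since the Hodge pieces $H^{2,0}(S)$ and $H^{0,2}(S)$ cannot be algebraic they lie in $H^{2}(S)_{tr}$, hence $p_{g}(S)=h^{2,0}(S)=0$; this gives (ii). For (i): Proposition \ref{def of trans} gives $T(S)_{\mathbb{Q}}=\mathrm{CH}^{*}(t_{2}(S))=0$, so $T(S)$ is a torsion group; by Roitman's theorem the Albanese map is injective on torsion, so $T(S)=\ker(a_{S})$ is torsion-free, whence $T(S)=0$ and $a_{S}$ is an isomorphism.

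Next, $(\mathrm{ii})\Rightarrow(\mathrm{iii})$: since $t_{2}(S)$ is a direct summand of $h(S)$ and $h(S)$ is Kimura-finite, $t_{2}(S)$ is Kimura-finite. On the other hand $p_{g}(S)=0$ forces $H^{2}(S;\mathbb{C})=H^{1,1}(S)$, so every rational class in $H^{2}(S)$ is of type $(1,1)$ and hence algebraic by the Lefschetz $(1,1)$-theorem; thus $\mathrm{NS}(S)_{\mathbb{Q}}=H^{2}(S;\mathbb{Q})$ and $H^{2}(S)_{tr}=0$. By Proposition \ref{def of trans} the realization of $t_{2}(S)$ vanishes, and a Kimura-finite motive with trivial Weil realization is zero by Kimura's nilpotence theorem (\cite{Kimura}): its identity endomorphism is homologically trivial, hence nilpotent, hence zero since it is idempotent. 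So $t_{2}(S)=0$.

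Finally, $(\mathrm{i})\Rightarrow(\mathrm{iii})$: assume $a_{S}$ is an isomorphism. Choose a smooth curve $C\subset S$ whose image generates $\mathrm{Alb}_{S/\mathbb{C}}$ (e.g. a general hyperplane section); then the composite $\mathrm{CH}_{0}(C)^{0}_{\mathbb{Q}}\to\mathrm{CH}_{0}(S)^{0}_{\mathbb{Q}}\to\mathrm{Alb}_{S/\mathbb{C}}(\mathbb{C})_{\mathbb{Q}}$, which equals $\mathrm{Jac}(C)_{\mathbb{Q}}\to\mathrm{Alb}_{S/\mathbb{C}}(\mathbb{C})_{\mathbb{Q}}$, is surjective, so the first arrow is surjective, i.e. $\mathrm{CH}_{0}(S)$ is supported on $C$. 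As $\mathbb{C}$ is a universal domain, the Bloch--Srinivas decomposition of the diagonal applies and yields, with $\mathbb{Q}$-coefficients, $\Delta_{S}=\Gamma_{1}+\Gamma_{2}$ in $\mathrm{CH}_{2}(S\times S)$ with $\Gamma_{1}$ supported on $D\times S$ for a curve $D\subset S$ and $\Gamma_{2}$ supported on $S\times C$; hence $\Delta_{S}\in\mathrm{CH}_{2}(S\times S)_{\equiv}$. Feeding this into the homomorphism $\Phi_{S,S}$ of Theorem \ref{automorophism groups of motives}, which by that theorem factors through $\mathrm{CH}_{2}(S\times S)/\mathrm{CH}_{2}(S\times S)_{\equiv}$, we get $\mathrm{id}_{t_{2}(S)}=\pi_{2}^{tr}(S)=\Phi_{S,S}(\Delta_{S})=0$, so $t_{2}(S)=0$. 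The main obstacle lies precisely in these two implications into (iii): $(\mathrm{ii})\Rightarrow(\mathrm{iii})$ depends on Kimura's nilpotence theorem, the structural heart of finite-dimensionality, and $(\mathrm{i})\Rightarrow(\mathrm{iii})$ on Bloch--Srinivas, where the only delicate point is that the hypothesis ``$\mathrm{CH}_{0}(S)$ supported on a curve'' must be available over a sufficiently large field, which is automatic over $\mathbb{C}$.
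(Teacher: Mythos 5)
The paper gives no proof of this proposition at all --- it is quoted directly from Kahn--Murre--Pedrini, Cor.~7.6.11 --- so there is nothing internal to compare against; your argument is correct and is essentially the standard proof from that reference: Roitman's theorem plus $\mathrm{CH}^{*}(t_{2}(S)) = T(S)_{\mathbb{Q}}$ for $(\mathrm{iii})\Rightarrow(\mathrm{i})$, the Bloch--Srinivas decomposition of the diagonal forcing $\Delta_{S}\in\mathrm{CH}_{2}(S\times S)_{\equiv}$ and hence $\mathrm{id}_{t_{2}(S)}=0$ for $(\mathrm{i})\Rightarrow(\mathrm{iii})$, and the nilpotence theorem applied to the homologically trivial idempotent $\pi_{2}^{tr}$ for $(\mathrm{ii})\Rightarrow(\mathrm{iii})$. (One cosmetic point: in $(\mathrm{i})\Rightarrow(\mathrm{iii})$, surjectivity of $\mathrm{CH}_{0}(C)^{0}_{\mathbb{Q}}\to\mathrm{CH}_{0}(S)^{0}_{\mathbb{Q}}$ follows from surjectivity of the composite only because the second arrow is injective by hypothesis (i); you use this implicitly, and it is fine.)
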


\section{Classification of algebraic surfaces}
Here we quickly review the classification of surfaces for the reader's convenience.\\
In this section, let $k$ be an algebraically closed field
and $S \in \mathcal{V}(k)$ a surface.
\subsection{\textbf{Kodaira dimension}}
We define the \textit{Kodaira dimension} of $S$ to be
\[ \kappa(S) : = 
\begin{cases}
- \infty & \text{if $P_{m}(S) = h^{0}(S, \omega_{S}^{\otimes m}) = 0$ for every $m \geq 1$} \\
\mathrm{tr.deg}_{k}(\oplus_{m \geq 0}H^{0}(S, \omega_{S}^{\otimes m})) - 1 & \text{otherwise}
\end{cases}
 \]
Then $\kappa = -\infty$, $0$, $1$, or $2$.
Since $P_{m}$ is a birational invariant, so is $\kappa$.
A surface $S$ is \textit{minimal} if and only if 
$S$ do contain smooth rational curves $E$ satisfying $(E^{2}) = (E \cdot K_{S}) = -1$.
If $\kappa(S) \geq 0$, then $S$ is minimal if and only if $K_{S}$ is \textit{nef}, i.e.
$(K_{S} \cdot C) \geq 0$ for every curve $C$.
We denote by $\equiv$ the numerical equivalence of divisors.
Recall:
\begin{thm} \label{kodaira classification of surfaces}
For any surface $S \in \mathcal{V}(k)$, there is a birational morphism 
$f : S \rightarrow S'$ onto a minimal surface $S' \in \mathcal{V}(k)$ that
satisfies one of the following properties:
\begin{enumerate}
\item $\kappa(S') = - \infty$, 
$S' \cong \mathbb{P}^{2}$ or $S'$ is a minimal ruled surface;
\item $\kappa(S') = 0$, $(K_{S'}^{2}) = 0$, $K_{S'} \equiv 0;$
\item $\kappa(S') = 1$, $(K_{S'}^{2}) = 0$, $K_{S'} \not\equiv 0;$
\item $\kappa(S') = 2$, $(K_{S'}^{2}) > 0.$
\end{enumerate}
In particular, if $\kappa(S) \geq 0$, $S'$ is unique.
\end{thm}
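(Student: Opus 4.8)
The plan is to run the surface minimal model program and then split into cases according to $\kappa(S')$. First I would produce the birational morphism $f : S \rightarrow S'$: as long as $S$ contains a smooth rational curve $E$ with $(E^{2}) = (E \cdot K_{S}) = -1$, Castelnuovo's contractibility criterion gives a birational morphism $S \rightarrow S_{1}$ onto a smooth projective surface contracting exactly $E$, with $\rho(S_{1}) = \rho(S) - 1$. Since $\rho \geq 1$ for a projective surface (an ample class is nonzero in $\mathrm{NS}$), this terminates after finitely many steps at a minimal surface $S'$ equipped with a birational morphism $f : S \rightarrow S'$; as the plurigenera $P_{m}$ are birational invariants, $\kappa(S') = \kappa(S)$.

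Next I would analyze minimal surfaces by the value $\kappa(S') \in \{-\infty, 0, 1, 2\}$. If $\kappa(S') \geq 0$, then $K_{S'}$ is nef by the characterization of minimality recalled above, so $K_{S'} + \varepsilon H$ is ample for every ample $H$ and every $\varepsilon > 0$; letting $\varepsilon \to 0$ in $(K_{S'} + \varepsilon H)^{2} > 0$ gives $(K_{S'}^{2}) \geq 0$. By Riemann--Roch, $\chi(mK_{S'}) = \chi(\mathcal{O}_{S'}) + \tfrac12 m(m-1)(K_{S'}^{2})$, and for $m \geq 2$ one has $h^{2}(mK_{S'}) = h^{0}((1-m)K_{S'}) = 0$, since an effective divisor numerically equivalent to a negative multiple of the nef class $K_{S'}$ would force $(K_{S'} \cdot H) = 0$ for ample $H$, hence $(K_{S'}^{2}) \leq 0$ by the Hodge index theorem. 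Therefore, if $(K_{S'}^{2}) > 0$ then $h^{0}(mK_{S'}) \geq \chi(\mathcal{O}_{S'}) + \tfrac12 m(m-1)(K_{S'}^{2})$ grows quadratically, so $\kappa(S') = 2$, giving (iv); conversely $\kappa(S') = 2$ forces $(K_{S'}^{2}) > 0$, the pluricanonical image being a surface being incompatible with the at-most-linear growth of $h^{0}(mK_{S'})$ that $(K_{S'}^{2}) = 0$ yields. When $(K_{S'}^{2}) = 0$, the existence of pluricanonical fibrations (abundance for surfaces, equivalently the classification of $\kappa = 0$ surfaces, classical in this range) shows $mK_{S'} \sim 0$ for some $m$ when $\kappa(S') = 0$, whence $K_{S'} \equiv 0$, which is (ii); and the complementary case $(K_{S'}^{2}) = 0$, $K_{S'} \not\equiv 0$ is precisely $\kappa(S') = 1$, which is (iii).

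The hard part is the case $\kappa(S') = -\infty$: I must show that a minimal surface with vanishing plurigenera is $\mathbb{P}^{2}$ or a minimal geometrically ruled surface $\mathbb{P}(\mathcal{E}) \rightarrow B$. This is Castelnuovo's rationality criterion together with the Enriques classification of ruled surfaces: one shows first that $\kappa(S') = -\infty$ implies $S'$ is birationally ruled over a smooth curve $B$, and then that a relatively minimal ruled surface is $\mathbb{P}^{2}$ or of the form $\mathbb{P}(\mathcal{E}) \rightarrow B$; in positive characteristic this is the work of Bombieri--Mumford. I would invoke these classical results rather than reprove them, referring to standard sources such as Badescu, Beauville, or Bombieri--Mumford I--III. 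Finally, for the uniqueness assertion when $\kappa(S) \geq 0$, I would use the standard fact that a birational map between two minimal surfaces, neither of which is ruled, extends to an isomorphism (no $(-1)$-curves are available to resolve its indeterminacy), so the minimal model $S'$ is unique up to isomorphism.
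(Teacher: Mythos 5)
The paper states this theorem purely as a recall of the Enriques--Bombieri--Mumford classification and offers no proof of its own, so there is nothing internal to compare your argument against; your proposal is a correct outline of the standard proof and defers the genuinely hard steps (the $\kappa=-\infty$ and $\kappa=0$ cases) to the same classical sources the paper implicitly relies on. One small imprecision: the vanishing $h^{2}(mK_{S'})=h^{0}((1-m)K_{S'})=0$ for $m\geq 2$ is not true for every minimal surface with $K_{S'}$ nef (e.g.\ it fails for odd $m$ on an Enriques surface, where $2K\sim 0$), but your own justification shows it can only fail when $(K_{S'}^{2})=0$, and you invoke it only under the hypothesis $(K_{S'}^{2})>0$, so the argument as applied is sound.
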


\subsection{Surfaces not of general type}  We focus on surfaces with $\kappa < 2$.

\begin{thm} (\cite[Thm.~13.2,\ p.195]{Badescu}). \label{kodaira-}
Let $S \in \mathcal{V}(k)$ be a surface. 
$\kappa(S) = - \infty$ if and only if $S$ is birationally ruled, i.e. it is birational to $\mathbb{P}^{1} \times C$ for some curve $C$.
\end{thm}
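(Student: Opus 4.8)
The plan is to prove the two implications separately: the ``if'' direction is a short cohomological computation, while the ``only if'' direction will be deduced from the classification theorem already recorded as Theorem \ref{kodaira classification of surfaces}.

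For the easy direction, suppose $S$ is birational to $\mathbb{P}^1 \times C$ for some smooth projective curve $C$. Since the plurigenera $P_m$, and hence $\kappa$, are birational invariants of smooth projective surfaces, it suffices to show $\kappa(\mathbb{P}^1 \times C) = -\infty$. Now $\omega_{\mathbb{P}^1 \times C} \cong \mathrm{pr}_1^*\omega_{\mathbb{P}^1} \otimes \mathrm{pr}_2^*\omega_C$, so by the K\"unneth formula $H^0(\mathbb{P}^1 \times C, \omega^{\otimes m}) \cong H^0(\mathbb{P}^1, \mathcal{O}_{\mathbb{P}^1}(-2m)) \otimes_k H^0(C, \omega_C^{\otimes m})$, and the first tensor factor vanishes for every $m \geq 1$. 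Hence $P_m(\mathbb{P}^1 \times C) = 0$ for all $m \geq 1$, i.e.\ $\kappa(\mathbb{P}^1 \times C) = -\infty$, and therefore $\kappa(S) = -\infty$.

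For the converse, I would first pass to a minimal model: by Theorem \ref{kodaira classification of surfaces} there is a birational morphism $f : S \to S'$ onto a minimal surface $S'$, and by birational invariance $\kappa(S') = \kappa(S) = -\infty$, so $S'$ falls into case (i) of that theorem, namely $S' \cong \mathbb{P}^2$ or $S'$ is a minimal ruled surface $\pi : S' \to B$ over a smooth projective curve $B$. If $S' \cong \mathbb{P}^2$, then $S'$ is rational, hence birational to $\mathbb{P}^1 \times \mathbb{P}^1$ (explicitly via $[x:y:z] \mapsto ([x:y],[x:z])$), so $S$ is birationally ruled. If instead $\pi : S' \to B$ is a $\mathbb{P}^1$-bundle, then its generic fiber is $\mathbb{P}^1_{k(B)}$ (by Tsen's theorem, since $\mathrm{Br}(k(B)) = 0$), so there is a dense open $U \subseteq B$ with $\pi^{-1}(U) \cong \mathbb{P}^1 \times U$; thus $S' \sim_{birat} \mathbb{P}^1 \times B$, and hence $S \sim_{birat} \mathbb{P}^1 \times B$ as well.

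The genuine obstacle lies entirely in the invocation of Theorem \ref{kodaira classification of surfaces}(i): in arbitrary characteristic, the implication ``$\kappa(S') = -\infty \Rightarrow S' \cong \mathbb{P}^2$ or $S'$ is a $\mathbb{P}^1$-bundle'' is the Bombieri--Mumford classification of surfaces, which is far from elementary (in characteristic $0$ it is classical Castelnuovo--Enriques theory). Once that input is granted, the rest is soft: the two observations that $\mathbb{P}^2$ and every $\mathbb{P}^1$-bundle over a curve are birationally ruled, together with the K\"unneth computation for the reverse implication. If one preferred to avoid citing the full structure theorem, one could instead quote the Enriques criterion for ruledness (e.g.\ vanishing of $P_4$ and $P_6$), but that criterion is no less deep.
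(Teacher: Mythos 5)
The paper offers no proof of this statement at all: it is quoted verbatim from B\u{a}descu (Thm.~13.2) and used as a black box, so there is nothing to compare line by line. Your argument is correct and is the standard reduction. The ``if'' direction via birational invariance of the plurigenera and the K\"unneth computation $H^{0}(\mathbb{P}^{1}\times C,\ \omega^{\otimes m})\cong H^{0}(\mathbb{P}^{1},\mathcal{O}(-2m))\otimes H^{0}(C,\omega_{C}^{\otimes m})=0$ is fine in any characteristic. For the converse you correctly isolate where the depth lies: the implication ``$\kappa=-\infty$ $\Rightarrow$ the minimal model is $\mathbb{P}^{2}$ or geometrically ruled'' is exactly the Castelnuovo--Enriques criterion (Mumford in characteristic $p$), and the paper's Theorem \ref{kodaira classification of surfaces} is itself only cited, so your proof does not make the statement more elementary than the paper's citation --- it merely trades one deep reference for another, as you acknowledge. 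One caution: in B\u{a}descu's terminology a ``ruled surface'' is \emph{defined} as a surface birational to $C\times\mathbb{P}^{1}$, so if ``minimal ruled surface'' in case (i) of the classification theorem is read that way, your deduction is a tautology rather than an argument; your reading of it as a $\mathbb{P}^{1}$-bundle $\pi:S'\to B$ is the substantive one, and then the step $S'\sim_{birat}\mathbb{P}^{1}\times B$ is correct (indeed easier than you make it: a geometrically ruled surface is $\mathbb{P}(\mathcal{E})$ for a rank-$2$ bundle $\mathcal{E}$ on $B$, hence Zariski-locally trivial, so Tsen's theorem is not needed). Either way the logic closes and there is no gap beyond the deliberately deferred classification input.
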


\begin{prop} \label{relation betti} (e.g. \cite[Thm.~5.1,\ p.72]{Badescu}).
Let $S \in \mathcal{V}(k)$ be a surface.
Then 
\[ 10 - 8  q + 12  p_{g} = (K_{S}^{2}) + b_{2} + 2 \ \Delta,\]
where $\Delta = 2  q - b_{1}$.
Also, $\Delta = 0$ if $\mathrm{char}(k) = 0$, and $0 \leq \Delta \leq 2  p_{g}$ 
if $\mathrm{char}(k) > 0$.
\end{prop}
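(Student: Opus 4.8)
The plan is to obtain this as a repackaging of \emph{Noether's formula}
\[ 12\,\chi(\mathcal{O}_S) = (K_S^2) + e(S), \]
valid for every smooth projective surface over $k$ in arbitrary characteristic (see \cite{Badescu}); here $e(S) = \sum_i (-1)^i b_i(S)$ is the $\ell$-adic Euler characteristic, which for a smooth proper surface equals the second Chern number of the tangent bundle. First I would rewrite the left-hand side with the conventions fixed in the paper: $\chi(\mathcal{O}_S) = h^0(\mathcal{O}_S) - h^1(\mathcal{O}_S) + h^2(\mathcal{O}_S) = 1 - q + p_g$. Next, Poincar\'e duality for the smooth projective surface $S$ gives $b_0 = b_4 = 1$ and $b_1 = b_3$, so that
\[ e(S) = b_0 - b_1 + b_2 - b_3 + b_4 = 2 - 2 b_1 + b_2. \]

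Substituting both identities into Noether's formula yields
\[ 12(1 - q + p_g) = (K_S^2) + 2 - 2 b_1 + b_2, \]
and rearranging gives $10 - 12 q + 12 p_g = (K_S^2) + b_2 - 2 b_1$. Adding $4q$ to both sides and using $\Delta = 2q - b_1$, the right-hand side becomes $(K_S^2) + b_2 + 2(2q - b_1) = (K_S^2) + b_2 + 2\Delta$; this is exactly the asserted equality $10 - 8q + 12 p_g = (K_S^2) + b_2 + 2\Delta$.

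It then remains to justify the estimates on $\Delta$. For the inequality $\Delta \ge 0$, recall that $(\mathrm{Pic}^0_{S/k})_{red}$ is an abelian variety of dimension $\tfrac{1}{2} b_1(S)$ (Theorem \ref{abelian picard variety} together with \cite[Prop.~0.7.4, p.69]{Cossec and Dolgachev}), while the tangent space of $\mathrm{Pic}^0_{S/k}$ at the identity is $H^1(S, \mathcal{O}_S)$, of dimension $q$; hence $\tfrac{1}{2} b_1 \le q$, i.e. $\Delta \ge 0$, with equality precisely when $\mathrm{Pic}^0_{S/k}$ is smooth. When $\mathrm{char}(k) = 0$ (or, more generally, when $h^2(\mathcal{O}_S) = 0$, by Theorem \ref{abelian picard variety} (iii)) the group scheme $\mathrm{Pic}^0_{S/k}$ is smooth, so $\Delta = 0$. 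For the upper bound $\Delta \le 2 p_g$ in positive characteristic, the point is that the obstructions to smoothness of $\mathrm{Pic}^0_{S/k}$ at the identity lie in $H^2(S, \mathcal{O}_S)$, a space of dimension $p_g$; since a complete local ring presented as a power series ring in $q$ variables modulo at most $p_g$ relations has dimension $\ge q - p_g$, one gets $\tfrac{1}{2} b_1 \ge q - p_g$, that is $\Delta \le 2 p_g$. I expect this last bound to be the only genuinely nontrivial input: everything else is bookkeeping with Noether's formula and Poincar\'e duality, whereas $\Delta \le 2 p_g$ really uses the deformation theory of the Picard scheme (essentially due to Igusa), and for it I would simply cite \cite[Thm.~5.1, p.72]{Badescu} rather than reproduce the argument in full.
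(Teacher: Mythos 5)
The paper gives no proof of this proposition at all---it is stated with only the citation to B\u{a}descu---so there is nothing internal to compare against; your derivation is the standard argument behind that reference and is correct: Noether's formula $12\chi(\mathcal{O}_S)=(K_S^2)+e(S)$ together with $\chi(\mathcal{O}_S)=1-q+p_g$ and Poincar\'e duality gives the displayed identity, and the bounds $0\le\Delta\le 2p_g$ (with $\Delta=0$ in characteristic $0$) follow from $\dim(\mathrm{Pic}^0_{S/k})_{red}=\tfrac12 b_1$, the identification of the tangent space at the identity with $H^1(S,\mathcal{O}_S)$, and the obstruction bound in $H^2(S,\mathcal{O}_S)$, exactly as in the cited source. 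Your arithmetic checks out, so this is a faithful reconstruction of the proof the paper delegates to \cite[Thm.~5.1]{Badescu}.
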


Using Proposition \ref{relation betti}, we get the following$:$

\begin{prop} \label{list}
Let $S \in \mathcal{V}(k)$ be a minimal surface with $(K_{S}^{2}) = 0$ and $p_{g} \leq 1$.
\begin{enumerate}
\item $b_{2} = 22$,\ \ $b_{1} = 0$,\ \ $\chi = 2$,\ \ $q = 0$,\ \ $p_{g} = 1$,\ \ $\Delta = 0$.
\item $b_{2} = 14$,\ \ $b_{1} = 2$,\ \ $\chi = 1$,\ \ $q = 1$,\ \ $p_{g} = 1$,\ \ $\Delta = 0$.
\item $b_{2} = 10$,\ \ $b_{1} = 0$,\ \ $\chi = 1$,\ \ $q = 0$,\ \ $p_{g} = 0$,\ \ $\Delta = 0$.
\item $b_{2} = 10$,\ \ $b_{1} = 0$,\ \ $\chi = 1$,\ \ $q = 1$,\ \ $p_{g} = 1$,\ \ $\Delta = 2$.
\item $b_{2} = 6$,\ \ \ $b_{1} = 4$,\ \ $\chi = 0$,\ \ $q = 2$,\ \ $p_{g} = 1$,\ \ $\Delta = 0$.
\item $b_{2} = 2$,\ \ \ $b_{1} = 2$,\ \ $\chi = 0$,\ \ $q = 1$,\ \ $p_{g} = 0$,\ \ $\Delta = 0$.
\item $b_{2} = 2$,\ \ \ $b_{1} = 2$,\ \ $\chi = 0$,\ \ $q = 2$,\ \ $p_{g} = 1$,\ \ $\Delta = 2$.
\end{enumerate}
In particular, any minimal surface with $\kappa = 0$ belongs to the above list. 
\end{prop}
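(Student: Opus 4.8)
The plan is to read the seven rows of the list off a short finite search, using only Proposition~\ref{relation betti} together with two elementary facts about surfaces, and then to deduce the ``in particular'' clause from the classification Theorem~\ref{kodaira classification of surfaces}.

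First I would assemble the constraints. For any $S \in \mathcal{V}(k)$ one has $\chi(\mathcal{O}_S) = 1 - q + p_g$, and $b_1(S)$ is even since $b_1(S) = 2\dim(\mathrm{Pic}^0_{S/k})_{red}$; hence $\Delta = 2q - b_1$ is even as well. Feeding $(K_S^2) = 0$ into Proposition~\ref{relation betti} gives the two identities
\[ b_2 = 10 - 8q + 12 p_g - 2\Delta, \qquad b_1 = 2q - \Delta. \]
The hypothesis $p_g \le 1$ forces $p_g \in \{0, 1\}$; Proposition~\ref{relation betti} gives $\Delta = 0$ when $\mathrm{char}(k) = 0$ and $0 \le \Delta \le 2p_g$ otherwise, which together with the parity of $\Delta$ leaves only $\Delta \in \{0, 2\}$, the value $2$ occurring only if $\mathrm{char}(k) > 0$ and $p_g = 1$. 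The remaining inequalities are $b_1 \ge 0$ and $b_2 \ge 0$.

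Next I would run the search. For fixed $(p_g, \Delta)$ the first identity presents $b_2$ as a strictly decreasing affine function of $q$, so $b_2 \ge 0$ bounds $q$ from above and $b_1 \ge 0$ bounds it from below; the admissible pairs are $(p_g, \Delta) = (0,0)$ with $q \in \{0, 1\}$, $(p_g, \Delta) = (1,0)$ with $q \in \{0, 1, 2\}$, and $(p_g, \Delta) = (1,2)$ (positive characteristic only, with $q = 0$ ruled out by $b_1 = 2q - 2 \ge 0$) with $q \in \{1, 2\}$. Substituting and using $\chi = 1 - q + p_g$ yields precisely the rows (iii), (vi); then (i), (ii), (v); then (iv), (vii), respectively. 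This finishes the main statement.

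For the last assertion, let $S$ be minimal with $\kappa(S) = 0$. By Theorem~\ref{kodaira classification of surfaces} (ii), $S$ equals its own minimal model, so $(K_S^2) = 0$ and $K_S \equiv 0$; since a nonzero effective divisor has strictly positive intersection with an ample class, the linear system $|K_S|$ is empty or a single point, whence $p_g(S) = h^0(S, \omega_S) \le 1$. Thus $S$ satisfies the hypotheses and appears in the list. I do not expect a genuine obstacle here: the only point needing attention is bookkeeping — one must impose the integrality of $q$, the evenness of $b_1$ (equivalently of $\Delta$), and the characteristic-dependent range of $\Delta$ all at once, since dropping any one of them would admit spurious numerical solutions.
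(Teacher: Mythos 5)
Your proof is correct and is essentially the argument the paper intends: the paper gives no written proof beyond the phrase ``Using Proposition~\ref{relation betti}, we get the following'', i.e.\ exactly the finite enumeration from $b_2 = 10 - 8q + 12p_g - 2\Delta$, $b_1 = 2q - \Delta$, the constraints $0 \le \Delta \le 2p_g$ and $b_1, b_2 \ge 0$, and the evenness of $b_1$. Your derivation of $p_g \le 1$ from $K_S \equiv 0$ for the ``in particular'' clause is also the standard step implicit in the paper.
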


\begin{definition} \label{def of Enriques}
A minimal surface $S \in \mathcal{V}(k)$ with $\kappa(S) = 0$.\\
\indent $\bullet$ $S$ is called an \textit{Enriques surface} if $b_{2}(S) = 10$.
\end{definition}

\begin{thm}\label{Enriques has genus 1}(\cite[Thm.~10.17]{Badescu}).
Any Enriques surface has a genus $1$ fibration.
\end{thm}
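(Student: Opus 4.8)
The plan is to produce a base-point-free pencil of curves of arithmetic genus $1$ on the Enriques surface $S$ by lattice theory together with Riemann--Roch, and then take the morphism it defines. First I would record the basic numerics from Proposition \ref{list}: an Enriques surface is a minimal surface with $\kappa = 0$, $b_2(S) = \rho(S) = 10$, $\chi(\mathcal{O}_S) = 1$, and $K_S$ numerically trivial. Hence Riemann--Roch on the surface reads $\chi(D) = 1 + \tfrac12(D^2)$ for every divisor $D$, so $(D^2)$ is always even; combined with the Hodge index theorem this makes $\mathrm{Num}(S)$ an even lattice of rank $10$ and signature $(1,9)$. Using that for an Enriques surface this lattice is moreover unimodular (Poincar\'e duality together with $b_1 = 0$ and control of torsion in $\mathrm{NS}(S)$), the classification of indefinite even unimodular lattices gives $\mathrm{Num}(S) \cong U \oplus E_8(-1)$; in particular there is a nonzero primitive isotropic class.

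Second, I would upgrade such a class to a nef effective one. Given a primitive $c$ with $(c^2) = 0$, a representative $D$ has $\chi(D) = 1 > 0$, so $h^0(D) + h^0(K_S - D) \ge 1$; choosing the sign of $c$ appropriately, and using that $(K_S - D)\cdot H < 0$ for an ample $H$ once $D$ is effective and nonzero, one gets $h^2(D) = 0$ and hence a genuinely effective $D$ with $(D^2) = 0$. Then, fixing an ample $H$ and minimizing $D\cdot H$ among nonzero effective divisors with $(D^2) = 0$, I would argue the minimizer is nef: otherwise it meets some irreducible curve $C$ negatively, so $C$ is a component of $D$ with $(C^2) < 0$, whence $(C^2) = -2$ (minimality of $S$ rules out $(-1)$-curves and the lattice is even), and the reflection $s_C(D) = D + (D\cdot C)\,C$ is again effective and isotropic with strictly smaller intersection with $H$, a contradiction.

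Finally, with $D$ nef, $D \neq 0$, $(D^2) = 0$: one has $h^2(D) = h^0(K_S - D) = 0$ since $(K_S - D)\cdot H = -D\cdot H < 0$, so $h^0(D) \ge \chi(D) = 1$ and $D$ is effective; a standard analysis of linear systems on Enriques surfaces (stripping off the fixed part, handling the $(-2)$-curves, and passing to $|2D|$ when $|D|$ itself has a fixed component) shows that the pencil $|2D|$ --- or $|D|$ --- is base-point free of projective dimension $1$. Its general member $F$ satisfies $p_a(F) = 1 + \tfrac12\bigl((F^2) + F\cdot K_S\bigr) = 1$, so the associated morphism, composed with its Stein factorization, is a fibration $S \to C'$ onto a smooth projective curve whose generic fibre is a curve of arithmetic genus $1$ --- that is, a genus $1$ fibration. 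I expect the delicate point to be precisely this last step: establishing base-point-freeness and controlling the fixed components and nodal curves; the secondary subtlety is justifying $\mathrm{Num}(S) \cong U \oplus E_8(-1)$ in positive characteristic without transcendental input. Everything else is formal bookkeeping with Riemann--Roch and the Weyl group generated by $(-2)$-curves.
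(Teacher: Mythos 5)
The paper offers no proof of this statement: it is quoted directly from \cite[Thm.~10.17]{Badescu}, and the argument given there (following Bombieri--Mumford and Cossec--Dolgachev) is exactly the one you sketch --- produce an isotropic class in the rank-$10$ hyperbolic lattice $\mathrm{Num}(S)$, make it effective via Riemann--Roch with $\chi(\mathcal{O}_S)=1$ and $K_S\equiv 0$, make it nef by reflections in $(-2)$-curves, and extract a base-point-free genus-$1$ pencil from $|2D|$. Your outline is correct and coincides with the cited proof; the one step you defer to ``standard analysis of linear systems'' (base-point-freeness of $|2D|$ and integrality of the general member, valid in all characteristics) is precisely the technical content of the reference, so nothing is being proved here that the paper does not also delegate.
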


\begin{prop} (e.g. \cite[Thm.~8.6,\ p.113]{Badescu}).
\label{hyper elliptic and quasi hyper-elliptic}
Let  $S \in \mathcal{V}(k)$ be a minimal surface with $\kappa(S) = 0$.
Let $\mathrm{alb}_{S} : S \rightarrow \mathrm{Alb}_{S/k}$ be the Albanese morphism of $S$.
If $b_{1}(S) = 2$, then the morphism $\mathrm{alb}_{S}$ gives rise to a fibration $a : S \rightarrow E$ 
onto an elliptic curve $E$, all of whose fibers are integral curves of arithmetic genus one.
\end{prop}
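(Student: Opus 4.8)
The plan is to realise the desired fibration as the Stein factorization of the Albanese morphism, and then to control its fibres by combining $K_S \equiv 0$ with the canonical bundle formula (Theorem~\ref{Canonical bundle formula}) and the Shioda--Tate formula (Theorem~\ref{Mordell-Weil Theorem for function fields}). First I would observe that $E := \mathrm{Alb}_{S/k}$ is an elliptic curve: being dual to $(\mathrm{Pic}^{0}_{S/k})_{red}$, it has dimension $\dim (\mathrm{Pic}^{0}_{S/k})_{red} = \tfrac12 b_{1}(S) = 1$ by \cite[Prop.~0.7.4,\ p.69]{Cossec and Dolgachev}. The morphism $\mathrm{alb}_{S} : S \to E$ is surjective, since its image generates $E$ and is positive-dimensional. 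Write $\mathrm{alb}_{S} = \pi \circ a$ for the Stein factorization, with $a : S \to B$ a fibration onto a smooth projective curve $B$ and $\pi : B \to E$ finite surjective; Riemann--Hurwitz gives $g(B) \ge g(E) = 1$, and functoriality of the Albanese variety together with $\mathrm{alb}_{B} : B \xrightarrow{\ \sim\ } \mathrm{Jac}(B)$ (genus one) produces a homomorphism $\psi : E \to \mathrm{Jac}(B)$ with $\pi_{\ast} \circ \psi = \mathrm{id}_{E}$, so once $g(B)=1$ is known $\pi$ has degree $1$ and is an isomorphism, making $a$ a fibration $a : S \to E$.

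Since $S$ is minimal with $\kappa(S) = 0$, Theorem~\ref{kodaira classification of surfaces}(ii) gives $K_{S} \equiv 0$. Then adjunction applied to any fibre $S_{b}$ of $a$ gives $2\,p_{a}(S_{b}) - 2 = (K_{S} \cdot S_{b}) + (S_{b}^{2}) = 0$, so $p_{a}(S_{b}) = 1$; in particular $a$ is a minimal genus $1$ fibration (its generic fibre is a regular, projective, geometrically integral curve of arithmetic genus $1$, and $S$ has no $(-1)$-curve). I would then apply Theorem~\ref{Canonical bundle formula} to $a$, writing $\omega_{S} \cong a^{\ast}(\mathcal{L}^{-1}\otimes \omega_{B})\otimes \mathcal{O}_{S}(\sum_{i} n_{i} \overline{S_{b_{i}}})$ with $\deg(\mathcal{L}^{-1}\otimes \omega_{B}) = 2g(B) - 2 + \chi(\mathcal{O}_{S}) + \mathrm{length}(T)$, and intersecting $K_{S} \equiv 0$ with a multisection $\Gamma$ of $a$ of degree $d \ge 1$ (which exists since $S$ is projective):
\[ 0 = d\bigl(2g(B) - 2 + \chi(\mathcal{O}_{S}) + \mathrm{length}(T)\bigr) + \sum_{i} n_{i}\,(\overline{S_{b_{i}}}\cdot \Gamma). \]
As $\chi(\mathcal{O}_{S}) \ge 0$ (classification; Proposition~\ref{list}), $\mathrm{length}(T) \ge 0$, $n_{i} \ge 0$ and $(\overline{S_{b_{i}}}\cdot \Gamma) > 0$, the case $g(B) \ge 2$ would make the right-hand side $\ge 2d > 0$, a contradiction; hence $g(B) = 1$ (so $\pi$ is an isomorphism and $B \cong E$), and moreover every summand vanishes: $\chi(\mathcal{O}_{S}) = 0$, $\mathrm{length}(T) = 0$ (whence $n_{i} = m_{i} - 1$) and $n_{i} = 0$, i.e.\ $a$ has no multiple fibres.

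Finally I would show every fibre is irreducible. Since $b_{1}(S) = b_{1}(E) = 2$, Theorem~\ref{Mordell-Weil Theorem for function fields}(i) applies and yields $\mathrm{rank}\,\mathrm{Jac}(S_{\eta})(\eta) = \rho(S) - 2 - \sum_{b}(\#\mathrm{Irr}(S_{b}) - 1) \ge 0$. The classes of a fibre $F$ and of $\Gamma$ are independent in $\mathrm{NS}(S)$ (as $(F^{2}) = 0 < (F\cdot\Gamma)$), so $\rho(S) \ge 2$; on the other hand $\chi(\mathcal{O}_{S}) = 0$ together with $b_{1}(S) = 2$ places $S$ in case (vi) or (vii) of Proposition~\ref{list}, so $\rho(S) \le b_{2}(S) = 2$. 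Hence $\rho(S) = 2$ and $\sum_{b}(\#\mathrm{Irr}(S_{b}) - 1) = 0$, so every fibre of $a$ is irreducible; together with the absence of multiple fibres this shows every fibre of $a : S \to E$ is an integral curve of arithmetic genus one, as desired.

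The hard part will be guaranteeing that the fibres are genuinely \emph{integral}: simultaneously excluding multiple fibres (handled by $K_{S} \equiv 0$ and the canonical bundle formula) and reducible fibres (handled by Shioda--Tate, once $\rho(S) = b_{2}(S) = 2$ has been read off the classification of surfaces). A secondary technical point is the identification of the Stein base $B$ with $E = \mathrm{Alb}_{S/k}$, which relies on functoriality of the Albanese variety together with Riemann--Hurwitz.
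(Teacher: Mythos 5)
Your argument is essentially correct, but note that the paper does not prove this proposition at all: it is quoted from the literature (B\u{a}descu, Thm.~8.6), so there is no in-paper proof to compare against. What you have written is a reasonable self-contained derivation assembled from the paper's own toolkit: Stein factorization plus Albanese functoriality to identify the base with $\mathrm{Alb}_{S/k}$, adjunction with $K_{S}\equiv 0$ to get arithmetic genus one, the canonical bundle formula (Theorem~\ref{Canonical bundle formula}) to kill $g(B)\geq 2$, $\chi(\mathcal{O}_{S})$, $\mathrm{length}(T)$ and the multiple fibres, and the Shioda--Tate formula of Theorem~\ref{Mordell-Weil Theorem for function fields}(i) together with $\rho(S)=b_{2}(S)=2$ to force irreducibility; irreducible plus non-multiple does give an integral fibre on a regular surface. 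Two points are glossed over and worth making explicit. First, the input $\chi(\mathcal{O}_{S})\geq 0$ is itself a nontrivial piece of the classification; since you already know $K_{S}^{2}=0$, $b_{1}=2$ and (after the canonical bundle formula) $\chi=0$, it is cleaner to read $b_{2}=2$ directly off Noether's formula in Proposition~\ref{relation betti} ($b_{2}=14-12(q-p_{g})=2+12\chi$) rather than matching against cases (vi)--(vii) of Proposition~\ref{list}, which lessens the circularity concern. Second, you silently assume the generic fibre of $a$ is geometrically integral (needed for $a$ to be a genus $1$ fibration in the paper's sense); this is standard for a fibration from a smooth surface but deserves a sentence, especially in positive characteristic where the statement is being used precisely to cover the quasi-hyperelliptic case.
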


\begin{definition} \label{definition of hyper elliptic}
Let $S  \in \mathcal{V}(k)$ be a minimal surface with $\kappa(S) = 0$ and $b_{2}(S) = 2$.
Let $a : S \rightarrow E$ be the Albanese fibration of $S$
as in Proposition \ref{hyper elliptic and quasi hyper-elliptic}.
\begin{enumerate}
\item  $S$ is \textit{hyper-elliptic} if the generic fiber of $a$ is \textit{smooth}.
\item  $S$ is \textit{quasi hyper-elliptic} if the generic fiber of $a$ is \textit{non-smooth}.
\end{enumerate}
\end{definition} 

To prove the Kimura-finiteness of hyper-elliptic surfaces, we need the following:

\begin{prop} (\cite[Thm.~4,\ p.35]{Bombieri and Mumford II}). \label{hyper-elliptic}
Let $S$ be a hyper-elliptic surface. Then 
\[ S \cong E \times F/G, \]
where $E$ and $F$ are elliptic curves, and $G$ is a finite subgroup scheme of $E$. 
\end{prop}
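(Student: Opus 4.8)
The plan is to realize $S$ as a finite étale quotient of an abelian surface, following the classical Bagnera--De Franchis strategy adapted to positive characteristic. I would start from the Albanese fibration $a : S \to E$ of Proposition~\ref{hyper elliptic and quasi hyper-elliptic}, where $E = \mathrm{Alb}_{S/k}$ is an elliptic curve; since $S$ is hyper-elliptic in the sense of Definition~\ref{definition of hyper elliptic}, the generic fibre of $a$ is smooth, so every closed fibre is a smooth curve of arithmetic genus one, i.e.\ (over the algebraically closed $k$) an elliptic curve. From Proposition~\ref{list}(vi) we have $\chi(\mathcal{O}_S)=0$ and $e(S)=b_2(S)-2b_1(S)+2=0$, and, $S$ being minimal with $\kappa=0$, also $K_S\equiv 0$ by Theorem~\ref{kodaira classification of surfaces}(ii). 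The first substantive step is to rule out singular fibres: by multiplicativity of the Euler characteristic one has $e(S)=e(E)\cdot e(F_{\mathrm{gen}})+\sum_{\text{bad fibres}}(\text{contribution})$ with each contribution $\ge 0$, and $e(E)\cdot e(F_{\mathrm{gen}})=0$; since $e(S)=0$, there are no singular fibres.

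Next I would eliminate multiple fibres. Feeding $K_S\equiv 0$, $\chi(\mathcal{O}_S)=0$ and $p_a(E)=1$ into the canonical bundle formula (Theorem~\ref{Canonical bundle formula}), the correction term $\sum_i n_i\overline{X_{c_i}}$ must be numerically equivalent to a rational multiple of a fibre; as the $\overline{X_{c_i}}$ are supported on distinct fibres this forces $n_i=0$ for all $i$, and then $n_i=m_i-1$ whenever $c_i$ lies outside the support of the torsion sheaf $T\subset R^1a_*\mathcal{O}_S$ kills the tame multiple fibres. The wild multiple fibres (which occur only in characteristic $p$) require a separate argument; this is exactly the delicate part, so here I would invoke \cite[Thm.~4,\ p.35]{Bombieri and Mumford II} rather than reprove it. Thus $a\colon S\to E$ is a smooth elliptic fibration with no multiple fibres; since $E$ is complete, the $j$-invariant is a morphism from a complete curve to $\mathbb{A}^1$, hence constant, so $a$ is isotrivial.

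Granting isotriviality, $S$ is an étale-locally trivial bundle over $E$ with fibre $F$ and structure group contained in $\mathrm{Aut}(F)$. I would pass to a connected finite étale Galois cover $E'\to E$ that trivializes first the ``linear'' part $\mathrm{Aut}(F,0)$ of the structure group and then the residual $F$-torsor structure; every connected finite étale cover of an elliptic curve is an isogeny, so $E'$ is again an elliptic curve, the Galois group $G$ is a finite subgroup scheme of $E'$ acting on $E'$ by translations, and $S\times_E E'\cong E'\times F$ over $E'$. Renaming $E'$ as $E$, the group $G\subset E$ acts on $E\times F$ diagonally, by translations on the first factor and by automorphisms on the second; since translation by a non-zero element of $E$ is fixed-point-free, this diagonal action is automatically free, whence $S\cong(E\times F)/G$ with $G$ a finite subgroup scheme of $E$, as asserted.

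The main obstacle is the package ``no wild multiple fibres $+$ trivialization after an étale cover'' in characteristic $p$: in characteristic zero the absence of wild fibres is automatic and an isotrivial smooth elliptic fibration is trivialized by a genuine isogeny, but in characteristic $p$ one must control inseparable and wild phenomena, which is precisely the content of the cited theorem of Bombieri--Mumford and the reason we do not carry it out in detail here.
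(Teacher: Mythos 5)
The paper does not prove this proposition at all: it is imported verbatim from Bombieri--Mumford, with the citation \cite[Thm.~4, p.35]{Bombieri and Mumford II} standing in place of an argument, and it is then used as a black box in case (iii-ii) of the proof of Theorem \ref{characteristic $p$}. So there is no proof in the paper to compare yours against. Your sketch is the standard Bagnera--De Franchis outline, and the early reductions are sound: Proposition \ref{list} (vi) gives $e(S)=0$, the Euler-number formula for elliptic fibrations then kills the singular fibres, and $K_S\equiv 0$ fed into Theorem \ref{Canonical bundle formula} forces $\mathrm{length}(T)=0$ and $n_i=0$, hence $m_i=1$ (note that much of this is already subsumed by Proposition \ref{hyper elliptic and quasi hyper-elliptic}, which gives integrality of all fibres). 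But at both places where the real characteristic-$p$ content lives you fall back on the very theorem being proved, so the proposal is circular exactly where it matters; as a self-contained proof it does not go beyond the paper's own citation.

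There is also one step that is not merely deferred but wrong as stated: the claim that the isotrivial fibration is trivialized by a connected finite \emph{\'etale} Galois cover $E'\to E$, with $G$ the (\'etale) Galois group. If that were so, $G$ would be a finite \emph{group}, whereas the statement you are proving asserts only that $G$ is a finite subgroup \emph{scheme} of $E$ --- and this is not an idle generality. In characteristics $2$ and $3$ the Bombieri--Mumford list contains hyper-elliptic surfaces for which $G$ has a non-reduced part (e.g.\ $\mu_2$, $\mu_3$), so the isogeny $E\to E/G$ is inseparable and $E\times F\to S$ is \emph{not} an \'etale cover; the correct descent argument works with the quotient by a free action of a finite flat group scheme, not with \'etale Galois theory. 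Your closing caveat about ``inseparable and wild phenomena'' gestures at this, but the main line of the argument as written would only ever produce an \'etale $G$ and hence cannot recover the full statement.
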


As far as I know, the motive of the following object is not well-understood:

\begin{thm} \label{Kodaira dimension one}
(e.g. \cite[Thm.~9.9,\ p.129]{Badescu}).
Let $S \in \mathcal{V}(k)$ be a minimal surface.
If $\kappa(S) = 1$, then $S$ has a genus $1$ fibration.
\end{thm}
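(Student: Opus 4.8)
The plan is to produce the required fibration from the pluricanonical systems of $S$. Since $S$ is minimal with $\kappa(S)=1\ge 0$, the canonical divisor $K_S$ is nef, and by Theorem~\ref{kodaira classification of surfaces}(iii) we have $(K_S^{2})=0$ and $K_S\not\equiv 0$. The key input is that $K_S$ is semiample: by the classical structure theory of pluricanonical maps for minimal surfaces (abundance for surfaces, valid in every characteristic), there is $m>0$ with $|mK_S|$ base-point-free. The associated morphism $\psi=\phi_{|mK_S|}\colon S\to\mathbb{P}^{N}$ has image $Y$ of dimension $\kappa(S)=1$, so $Y$ is a curve. Taking the Stein factorization $S\xrightarrow{f}B\xrightarrow{g}Y$, the morphism $f$ has connected fibers and satisfies $f_{*}\mathcal{O}_{S}\cong\mathcal{O}_{B}$, while $g$ is finite; $B$ is a normal, hence smooth, projective curve over $k$. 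Thus $f\colon S\to B$ is a fibration from a smooth projective surface to a smooth projective curve, and $f$ is automatically flat since $S$ is integral and $B$ a smooth curve.

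Next I would compute the arithmetic genus of the general fiber. Set $L:=g^{*}\mathcal{O}_{\mathbb{P}^{N}}(1)$, an ample line bundle on $B$ as the pullback of an ample bundle under the finite morphism $g$; then $mK_{S}=\psi^{*}\mathcal{O}(1)=f^{*}L$ in $\mathrm{Pic}(S)$. For a fiber $F$ of $f$, regarded as an effective divisor on $S$, one has $F^{2}=0$, and by the projection formula $m\,(K_{S}\cdot F)=(f^{*}L\cdot F)=\deg\bigl(L|_{f(F)}\bigr)=0$ because $f(F)$ is a point; hence $K_{S}\cdot F=0$. The adjunction formula now gives $2p_{a}(F)-2=F\cdot(F+K_{S})=0$, so $p_{a}(F)=1$. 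Since $f$ is flat, $\chi(\mathcal{O}_{F_{b}})$ is constant in $b\in B$, so the generic fiber $S_{\eta}=f^{-1}(\eta)$ over $K:=k(B)$ satisfies $p_{a}(S_{\eta})=1-\chi(\mathcal{O}_{S_{\eta}})=1-\chi(\mathcal{O}_{F_{b}})=p_{a}(F)=1$.

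Finally I would check that $S_{\eta}$ qualifies as a regular genus $1$ curve in the sense of the Introduction: it is an open subscheme of the regular scheme $S$, hence regular (in particular reduced and, being connected, integral), it is projective over $K$, and from $f_{*}\mathcal{O}_{S}\cong\mathcal{O}_{B}$ we get $H^{0}(S_{\eta},\mathcal{O}_{S_{\eta}})=K$, so $S_{\eta}$ is geometrically connected; together with the fact that the general scheme-theoretic fiber is an integral curve over $k$ this yields that $S_{\eta}$ is geometrically integral, with $p_{a}(S_{\eta})=1$. Hence $f\colon S\to B$ is a genus $1$ fibration. I expect the main obstacle to be the input from abundance, i.e. base-point-freeness of $|mK_{S}|$ for $m\gg 0$ in arbitrary characteristic (equivalently, that the Iitaka fibration descends to a morphism on the minimal model $S$ itself); a secondary delicate point is confirming geometric integrality — as opposed to geometric regularity — of $S_{\eta}$ in characteristics $2$ and $3$, where $f$ may be quasi-elliptic and $S_{\eta}$ acquires a cusp, but not a nilpotent, after base change to $\overline{K}$, consistently with Proposition~\ref{non-smooth genus one curve}.
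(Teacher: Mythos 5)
The paper offers no proof of this theorem of its own --- it is quoted from \cite[Thm.~9.9, p.129]{Badescu} --- and your argument is in essence the one given in that reference: pass to a pluricanonical map with one-dimensional image, take the Stein factorization $f\colon S\to B$, and combine $F^{2}=(K_{S}\cdot F)=0$ with adjunction and flatness to get $p_{a}(S_{\eta})=1$. The one step you should not take as a black box is the semiampleness of $K_{S}$. In the standard development (Bombieri--Mumford, B\u{a}descu), base-point-freeness of $|mK_{S}|$ for a minimal surface with $\kappa=1$ is \emph{deduced} from the existence of the genus $1$ fibration via the canonical bundle formula (Theorem~\ref{Canonical bundle formula}), so invoking ``abundance for surfaces'' at the outset is circular unless you supply an independent proof. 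The fix is elementary and leaves the rest of your argument intact: since $\kappa(S)=1$ there is $n$ with $P_{n}(S)\ge 2$; write $|nK_{S}|=|M|+Z$ with $Z$ the fixed part. Nefness of $K_{S}$ and $(K_{S}^{2})=0$ (Theorem~\ref{kodaira classification of surfaces}) give $0=n(K_{S}\cdot K_{S})=(K_{S}\cdot M)+(K_{S}\cdot Z)$ with both summands nonnegative, hence $(K_{S}\cdot M)=0$; then $0=n(K_{S}\cdot M)=(M^{2})+(M\cdot Z)$ with both summands nonnegative because $M$ has no fixed component, hence $(M^{2})=0$. A linear system of dimension $\ge 1$ without fixed components and with self-intersection $0$ is base-point-free, so $\phi_{|M|}$ is a morphism onto a curve; after Stein factorization $M\equiv aF$ with $a>0$, and $(K_{S}\cdot F)=a^{-1}(K_{S}\cdot M)=0$ replaces your computation via $mK_{S}=f^{*}L$.

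The second point you flag, geometric integrality of $S_{\eta}$, also needs a sentence more than you give it: $H^{0}(S_{\eta},\mathcal{O}_{S_{\eta}})=K$ yields geometric connectedness, but irreducibility and reducedness of the general \emph{closed} fiber are what you actually use, and these follow from the finite generation of $\mathrm{NS}(S)$ (only finitely many fibers can be reducible) together with the fact that a fiber $F$ with $p_{a}(F)=1$ and $F=mD$, $m>1$, would force $p_{a}(D)=1$ as well and is handled by the multiple-fiber analysis; the residual possibility of a geometrically non-reduced generic fiber in characteristic $2$ or $3$ is excluded for arithmetic genus $1$ by Tate's genus-change bound, exactly as in the proof of Proposition~\ref{non-smooth genus one curve}. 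With these two repairs your proof is complete and coincides with the argument behind the paper's citation.
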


To prove the second main theorem, we need:

\begin{prop} \label{inequality} (e.g. \cite[Rem.~8.3,\ p.112]{Badescu}).
Let $f : X \rightarrow C$ be a minimal genus $1$ fibration.
Use the same notations as in the canonical bundle formula.
We set
\[ \lambda(f) : = 2 p_{a}(C) - 2 + \chi(\mathcal{O}_{X}) + \mathrm{length}(T) + \sum_{i=1}^{r}n_{i}/m_{i}. \]
\begin{enumerate}
\item $\lambda(f) < 0$ if and only if $\kappa(X) = - \infty$.
\item $\lambda(f) = 0$ if and only if $\kappa(X) = 0$.
\item $\lambda(f) > 0$ if and only if $\kappa(X) = 1.$
\end{enumerate}
\end{prop}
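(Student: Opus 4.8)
The plan is to reduce the statement to a single numerical computation coming from the canonical bundle formula (Theorem \ref{Canonical bundle formula}). First I would rewrite that formula numerically: let $F\in\mathrm{Num}(X)_{\mathbb{Q}}$ be the class of a fiber of $f$. Since the numerical class of a divisor pulled back from the curve $C$ depends only on its degree, $f^{*}(\mathcal{L}^{-1}\otimes\omega_{C})$ is numerically $\big(2p_{a}(C)-2+\chi(\mathcal{O}_{X})+\mathrm{length}(T)\big)F$, and since $X_{c_{i}}\equiv F$ for every $c_{i}$ we get $\mathcal{O}_{X}\big(\sum_{i}n_{i}\overline{X_{c_{i}}}\big)\equiv\big(\sum_{i}n_{i}/m_{i}\big)F$. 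Hence
\[ K_{X}\equiv\lambda(f)\,F\qquad\text{in }\mathrm{Num}(X)_{\mathbb{Q}}. \]
Because the three conditions $\lambda(f)<0$, $\lambda(f)=0$, $\lambda(f)>0$ are exhaustive and mutually exclusive, it is enough to establish the three implications $\lambda(f)<0\Rightarrow\kappa(X)=-\infty$, $\lambda(f)=0\Rightarrow\kappa(X)=0$, $\lambda(f)>0\Rightarrow\kappa(X)=1$; the three biconditionals then follow formally (and, incidentally, $\kappa(X)=2$ never occurs).

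For the first implication I would argue directly: if $H$ is an ample divisor on $X$, then $K_{X}\cdot H=\lambda(f)(F\cdot H)<0$, so $mK_{X}$ is not effective for any $m\ge 1$; thus $P_{m}(X)=0$ for all $m\ge 1$ and $\kappa(X)=-\infty$ by definition.

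For the other two implications assume $\lambda(f)\ge 0$. The key point is that then $X$ itself is a minimal surface. Indeed, if $E\subset X$ were a $(-1)$-curve, adjunction would give $E\cdot K_{X}=-1$, i.e. $\lambda(f)(E\cdot F)=-1$; but if $E$ is contained in a fiber this reads $0=-1$ (and relative minimality of $f$ already excludes $(-1)$-curves inside fibers), while if $E$ is horizontal then $E\cdot F>0$ forces $\lambda(f)=-1/(E\cdot F)<0$, contrary to assumption. So $X$ is minimal, $K_{X}\equiv\lambda(f)F$ is nef, and $(K_{X}^{2})=\lambda(f)^{2}(F^{2})=0$. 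If $\lambda(f)=0$ then $K_{X}\equiv 0$: a minimal surface with $\kappa=-\infty$ is $\mathbb{P}^{2}$ or a minimal ruled surface, each of which has $K\not\equiv 0$, and by Theorem \ref{kodaira classification of surfaces} a minimal surface with $\kappa=1$ has $K\not\equiv 0$ and one with $\kappa=2$ has $(K^{2})>0$; hence $\kappa(X)=0$. If $\lambda(f)>0$ then $K_{X}\equiv\lambda(f)F\not\equiv 0$ (a fiber is effective and nonzero, so $F\not\equiv 0$), and since $K_{X}$ is nef, $X$ is neither $\mathbb{P}^{2}$ nor ruled, so $\kappa(X)\ge 0$; now $(K_{X}^{2})=0$ rules out $\kappa=2$ and $K_{X}\not\equiv 0$ rules out $\kappa=0$, leaving $\kappa(X)=1$.

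I expect the only delicate step to be the minimality claim together with the careful use of the Enriques--Kodaira classification (Theorem \ref{kodaira classification of surfaces}) to pin down $\kappa$ once one knows $K_{X}$ is nef, numerically proportional to a fiber, and of self-intersection zero; the numerical reduction $K_{X}\equiv\lambda(f)F$ itself is routine once the canonical bundle formula is in hand.
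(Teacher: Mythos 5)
Your proof is correct. Note that the paper itself gives no proof of this proposition; it is quoted from B\u{a}descu (Rem.~8.3) as a known fact, and your argument --- rewriting the canonical bundle formula numerically as $K_{X}\equiv\lambda(f)F$, checking that $\lambda(f)\geq 0$ forces absolute minimality of $X$, and then reading off $\kappa(X)$ from the Enriques--Kodaira classification via nefness, $K\equiv 0$ versus $K\not\equiv 0$, and $(K^{2})=0$ --- is exactly the standard proof underlying the cited remark. The trichotomy reduction to three one-way implications is also sound, so nothing is missing.
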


\section{Chow motives of surfaces not of general type with $p_{g} = 0$}
In this section, we prove the second main theorem of this paper (Theorem \ref{characteristic $p$}).\\
Let $k$ be an algebraically closed field and let $X \in \mathcal{V}(k)$ be a surface.\\ 
\indent Let us recall the result of Bloch-Kas-Lieberman$:$
\begin{thm} (\cite{Bloch and Kas and Lieberman}). \label{result of BKL}
Assume that $k = \mathbb{C}$, $p_{g} = 0$, and $\kappa < 2$.
Then 
\[ a_{X} : \mathrm{CH}_{0}(X)^{0}_{\mathbb{Z}} \cong \mathrm{Alb}_{X/\mathbb{C}}(\mathbb{C}).\]
\end{thm}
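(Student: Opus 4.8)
The plan is to reduce the statement to the Kimura-finiteness of $h(X)$ and then run through the classification of surfaces, using Theorem \ref{motive of genus one} to pass to a Jacobian fibration at the essential point. Since $p_{g}(X) = 0$, Proposition \ref{equivalent finite dimensionality} shows that the asserted isomorphism $a_{X} : \mathrm{CH}_{0}(X)^{0}_{\mathbb{Z}} \cong \mathrm{Alb}_{X/\mathbb{C}}(\mathbb{C})$ is equivalent to $t_{2}(X) = 0$, hence to $h(X)$ being Kimura-finite. As Kimura-finiteness is a birational invariant of smooth projective surfaces (Proposition \ref{Properties of Kimura-finiteness} (v)), we may assume $X$ is minimal and treat the cases $\kappa(X) = -\infty, 0, 1$ separately.

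If $\kappa(X) = -\infty$, then $X$ is birationally ruled over a curve $B$ by Theorem \ref{kodaira-}, and $h(\mathbb{P}^{1} \times B) \cong h(\mathbb{P}^{1}) \otimes h(B)$ is Kimura-finite by Proposition \ref{Properties of Kimura-finiteness} (i) and (iii); hence so is $h(X)$. If $\kappa(X) = 0$, then by the classification (Proposition \ref{list}) and $p_{g}(X) = 0$ we have either $b_{2}(X) = 2$ --- so $X$ is hyper-elliptic, since quasi-hyper-elliptic surfaces do not exist over $\mathbb{C}$ --- whence $X \cong E \times F/G$ by Proposition \ref{hyper-elliptic} and $h(X)$ is Kimura-finite by Proposition \ref{finiteness for hyper surfaces}; or $b_{2}(X) = 10$, i.e.\ $X$ is an Enriques surface, which carries a genus $1$ fibration by Theorem \ref{Enriques has genus 1}. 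If $\kappa(X) = 1$, then $X$ carries a genus $1$ fibration by Theorem \ref{Kodaira dimension one}.

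In the two remaining cases $X$ admits a genus $1$ fibration $f : X \to C$; let $j : J \to C$ be its Jacobian fibration. By Theorem \ref{motive of genus one} we have $h(X) \cong h(J)$, so it is enough to prove $h(J)$ Kimura-finite. Since $k = \mathbb{C}$, the fibration $j$ is elliptic, and it admits a section; by Proposition \ref{euler} we have $\chi(\mathcal{O}_{J}) = \chi(\mathcal{O}_{X})$ and $b_{i}(J) = b_{i}(X)$, so $q(J) = q(X)$ and $p_{g}(J) = p_{g}(X) = 0$, while $\kappa(J) \leq 1$ because $J$ carries a genus $1$ fibration. If $\kappa(J) < 0$ we conclude as in the ruled case; if $\kappa(J) = 0$, then since a Jacobian fibration has no multiple fibers $J$ cannot be an Enriques surface, so it is hyper-elliptic and $h(J)$ is Kimura-finite by Propositions \ref{hyper-elliptic} and \ref{finiteness for hyper surfaces}.

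The only situation not covered is $\kappa(J) = 1$: a properly elliptic \emph{Jacobian} surface with $p_{g} = 0$. For this I would invoke the original argument of Bloch--Kas--Lieberman \cite{Bloch and Kas and Lieberman}, which establishes directly that $T(J) = 0$ --- equivalently $t_{2}(J) = 0$ --- for every Jacobian elliptic surface over $\mathbb{C}$ with $p_{g} = 0$, by using the section to move $0$-cycles into fibres and controlling cycles supported on fibres via a decomposition-of-the-diagonal argument in the spirit of Mumford and Bloch. This base case is the genuine obstacle; the rest is bookkeeping with the classification. Equivalently, Theorem \ref{result of BKL} can be read off as the case $k = \mathbb{C}$ of the paper's Theorem \ref{characteristic $p$} together with one further application of Proposition \ref{equivalent finite dimensionality}.
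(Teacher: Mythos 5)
The paper does not actually prove this statement: Theorem \ref{result of BKL} is recalled verbatim from \cite{Bloch and Kas and Lieberman}, and the in-paper argument you are implicitly reconstructing is the proof of Theorem \ref{characteristic $p$} (whose case $k=\mathbb{C}$, combined with Proposition \ref{equivalent finite dimensionality}, recovers the statement). Your reduction to Kimura-finiteness, the case split by $\kappa$, the Noether-formula dichotomy $b_{2}=10$ versus $b_{2}=2$, and the passage to the Jacobian fibration via Theorem \ref{motive of genus one} all match that proof. The problem is your final paragraph.

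The case $\kappa(J)=1$ that you single out as ``the genuine obstacle'' is vacuous, and seeing this is precisely the point of passing to the Jacobian. Since $j$ admits a section, it has no multiple fibers (Lemma \ref{section}(ii)) and $R^{1}j_{*}\mathcal{O}_{J}$ is invertible, so the canonical bundle formula gives $\lambda(j)=2p_{a}(C)-2+\chi(\mathcal{O}_{J})$ with no correction terms. By Proposition \ref{euler}(i), $\chi(\mathcal{O}_{J})=\chi(\mathcal{O}_{X})=1-q(X)$, and $p_{a}(C)\le q(X)\le 1$ in both of your remaining cases ($X$ Enriques: $q=0$, $p_{a}(C)=0$, so $\lambda(j)=-1$; the case $q=1$, $b_{2}=2$: $\lambda(j)\le 2-2+0=0$). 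Proposition \ref{inequality} then forces $\kappa(J)\le 0$ in every case, and $\kappa(J)<0$ whenever $X$ is Enriques or $\kappa(X)=1$ with $q=0$. So the properly elliptic Jacobian surface with $p_{g}=0$ you propose to handle by the original BKL decomposition-of-the-diagonal argument does not exist. As written, your proof is not wrong, but it defers the one case you could not dispatch to the source of the theorem itself --- a sub-case of the statement being proved --- which leaves the argument non-self-contained exactly where it looks hardest, and it would also block the generalization to positive characteristic that is the paper's actual goal, since BKL's cycle-theoretic argument for that case is what Theorem \ref{characteristic $p$} is designed to avoid. Replace that paragraph with the $\lambda(j)$ computation and your proof closes up and coincides with the paper's.
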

By Proposition \ref{equivalent finite dimensionality},
Theorem \ref{result of BKL} is equivalent to the following$:$
\begin{thm} \label{motivic vertion 2 of BKL}
Assume that $k = \mathbb{C}$, $p_{g} = 0$, and $\kappa < 2$.
Then $h(X)$ is Kimura-finite.
\end{thm}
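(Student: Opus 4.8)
The plan is to reduce to the case $\kappa(X) = 0$ or $1$ and then invoke the main theorem (Theorem \ref{motive of genus one}) together with the classification of surfaces from Section \ref{}. Since Kimura-finiteness is a birational invariant for surfaces (Proposition \ref{Properties of Kimura-finiteness} (v)), we may replace $X$ by its minimal model $S'$ obtained in Theorem \ref{kodaira classification of surfaces}; note that $p_g$ and $\kappa$ are birational invariants, so $S'$ still has $p_g = 0$ and $\kappa < 2$. We now split on the Kodaira dimension of $S'$.

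If $\kappa(S') = -\infty$, then by Theorem \ref{kodaira-} the surface $S'$ is birationally ruled, i.e. birational to $\mathbb{P}^1 \times C$ for a curve $C$; since $h(\mathbb{P}^1) = 1 \oplus \mathbb{L}$ and $h(C)$ are Kimura-finite (Proposition \ref{Properties of Kimura-finiteness} (i)), $h(\mathbb{P}^1 \times C) = h(\mathbb{P}^1) \otimes h(C)$ is Kimura-finite by part (iii), hence so is $h(S')$ by part (v). If $\kappa(S') = 0$ or $1$, then by Theorem \ref{Enriques has genus 1}, Proposition \ref{hyper elliptic and quasi hyper-elliptic}/Definition \ref{definition of hyper elliptic} (treating the $b_2 = 2$ case), and Theorem \ref{Kodaira dimension one} — more precisely, by the classification recalled in Proposition \ref{list} combined with the fact that $p_g = 0$ forces $S'$ into one of the cases $b_2 \in \{10, 2\}$ — the surface $S'$ carries a genus $1$ fibration $f : S' \to C$ (in the hyper-elliptic case $S'$ is already a quotient $E \times F/G$ by Proposition \ref{hyper-elliptic} and $h(S')$ is Kimura-finite by Proposition \ref{finiteness for hyper surfaces}, so that subcase is immediate). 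In the remaining subcases, take the Jacobian fibration $j : J \to C$ of $f$. By Theorem \ref{motive of genus one} we have $h(S') \cong h(J)$, so it suffices to prove $h(J)$ is Kimura-finite.

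The key point is thus that $h(J)$ is Kimura-finite when $j : J \to C$ is a Jacobian genus $1$ fibration. Using the Chow–Künneth decomposition $h(J) \cong 1 \oplus h_1(J) \oplus h_2^{alg}(J) \oplus t_2(J) \oplus h_3(J) \oplus (\mathbb{L}\otimes\mathbb{L})$ and Remark \ref{algebraic motives}, every summand except $t_2(J)$ is already known to be Kimura-finite; so by Proposition \ref{Properties of Kimura-finiteness} (iii) it remains to show $t_2(J)$ is Kimura-finite, equivalently (since $p_g(J) = p_g(S') = 0$, using Proposition \ref{euler} and birational invariance, so by Proposition \ref{def of trans} one wants $t_2(J) = 0$ or at least finite-dimensional) that $h(J)$ is Kimura-finite. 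Here I would argue that $J$, being a Jacobian elliptic or quasi-elliptic surface with $p_g = 0$ and $\kappa \le 1$, is dominated by (or birational to a quotient of) a product of curves: in the elliptic case one pulls back the fibration along a suitable base change to obtain a section and analyzes the resulting surface via the $K/k$-trace structure from Theorem \ref{Mordell-Weil Theorem for function fields}, while in the quasi-elliptic case $t_2(J) = 0$ directly by Theorem \ref{tmotive of qusi-elliptic}. Combining Proposition \ref{finiteness for hyper surfaces} (for surfaces birational to $C \times D/G$) with these reductions gives Kimura-finiteness of $h(J)$, hence of $h(S')$, hence of $h(X)$.

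The main obstacle I anticipate is precisely the elliptic, $p_g = 0$, $\kappa \in \{0,1\}$ case: one must genuinely show $t_2(J) = 0$ (equivalently $h(J)$ Kimura-finite) for a Jacobian elliptic surface, which is not purely formal and is the content hidden behind the classification plus the structure of such surfaces (Enriques surfaces with $p_g = 0$, bielliptic-type surfaces, and properly elliptic surfaces with $p_g = 0$). For Enriques surfaces this is handled by showing $h_1 = h_3 = 0$ (since $b_1 = 0$) and invoking the known vanishing or finite-dimensionality of $t_2$ for Enriques surfaces; for the $\kappa = 1$ case with $p_g = 0$ one uses that such a surface is birationally dominated by a product of curves after base change. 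Making this reduction uniform and correct in arbitrary characteristic — in particular ensuring the base-change curve can be chosen so that the total space is again (birational to) a product of curves or a free quotient thereof — is the delicate step; everything else is bookkeeping with the Chow–Künneth components and the already-established $h(X) \cong h(J)$.
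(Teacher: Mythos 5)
Your write-up takes a genuinely different route from the paper's own proof of this particular statement: in the paper, Theorem \ref{motivic vertion 2 of BKL} is deduced in one line from the Bloch--Kas--Lieberman theorem (Theorem \ref{result of BKL}) via the equivalence of Proposition \ref{equivalent finite dimensionality}. What you are attempting instead is the self-contained argument that the paper reserves for the characteristic-$p$ generalization (Theorem \ref{characteristic $p$}). Up to a point you follow that proof correctly: reduction to a minimal model by Proposition \ref{Properties of Kimura-finiteness} (v), the ruled case via Theorem \ref{kodaira-} and Proposition \ref{finiteness for hyper surfaces}, the Noether-formula dichotomy $(q,b_2)\in\{(0,10),(1,2)\}$, the existence of a genus $1$ fibration, and the reduction $h(X)\cong h(J)$ via Theorem \ref{motive of genus one}.

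The gap is in the final step: you never actually prove that $h(J)$ is Kimura-finite, and the two substitutes you offer do not work. The idea you are missing is that the Jacobian fibration $j$ has a section and hence no multiple fibers (Lemma \ref{section} (ii)), so the canonical bundle formula gives $\lambda(j)=2p_a(C)-2+\chi(\mathcal{O}_J)$ with $\chi(\mathcal{O}_J)=\chi(\mathcal{O}_X)$ by Proposition \ref{euler}; Proposition \ref{inequality} then forces $\kappa(J)=-\infty$ in the case $q=0$ (so $J$ is \emph{rational}, and in particular the Jacobian of an Enriques fibration is a rational surface) and $\kappa(J)\le 0$ with $b_2(J)=2$ in the case $q=1$ (so $J$ is ruled, hyper-elliptic, or quasi hyper-elliptic). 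In every case $J$ drops into a class already covered by Proposition \ref{finiteness for hyper surfaces} or Theorem \ref{tmotive of qusi-elliptic}; this strict decrease of Kodaira dimension under passage to the Jacobian is what makes the reduction close. By contrast, your treatment of the Enriques case appeals to the ``known vanishing or finite-dimensionality of $t_2$'' for Enriques surfaces, which over $\mathbb{C}$ is precisely an instance of the theorem being proved (it is the $q=0$, $\kappa=0$ case of BKL), hence circular as written; and your claim that a Jacobian elliptic surface with $p_g=0$ and $\kappa=1$ is ``birationally dominated by a product of curves after base change'' is both unsubstantiated and unnecessary, since the $\lambda(j)$ computation shows that case never arises. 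Without the $\lambda(j)$ argument the proof does not go through.
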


In this paper, we generalize Theorem \ref{motivic vertion 2 of BKL} to arbitrary characteristic$:$
\begin{thm} \label{characteristic $p$}
Let $X$ be a smooth projective surface over an algebraically closed field $k$.
If $p_{g}(X) = 0$ and $\kappa(X) < 2$, then $h(X)$ is Kimura-finite in 
$\mathrm{CH}\mathcal{M}(k, \mathbb{Q})$.
\end{thm}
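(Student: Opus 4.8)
The plan is to reduce to the Jacobian fibration and then read off the answer from the classification of surfaces. Since Kimura-finiteness is a birational invariant of surfaces (Proposition~\ref{Properties of Kimura-finiteness}(v)), I may assume $X$ is minimal. If $\kappa(X)=-\infty$, then $X$ is birationally ruled (Theorem~\ref{kodaira-}), hence birational to $\mathbb{P}^1\times C$ for some smooth projective curve $C$; as $h(\mathbb{P}^1\times C)=h(\mathbb{P}^1)\otimes h(C)$ is Kimura-finite by parts (i) and (iii) of Proposition~\ref{Properties of Kimura-finiteness}, part (v) finishes this case. So the substance is the case $\kappa(X)\in\{0,1\}$.

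Here $X$ carries a minimal genus $1$ fibration $f:X\to C$: if $\kappa(X)=0$, then $p_g(X)=0$ forces $X$ to be an Enriques surface or a (quasi-)bielliptic surface by Proposition~\ref{list}, and one takes the fibration of Theorem~\ref{Enriques has genus 1} in the first case and the Albanese fibration of Proposition~\ref{hyper elliptic and quasi hyper-elliptic} in the second; if $\kappa(X)=1$, one uses Theorem~\ref{Kodaira dimension one}. Recall that $h(S)$ is Kimura-finite for a surface $S$ once $t_2(S)$ is, because the remaining pieces $1,h_1(S),h_2^{alg}(S),h_3(S),\mathbb{L}\otimes\mathbb{L}$ of the Chow--K\"unneth decomposition are always Kimura-finite (Remark~\ref{algebraic motives}) and Kimura-finiteness is stable under direct sums (Proposition~\ref{Properties of Kimura-finiteness}(iii)). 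Thus if $f$ is quasi-elliptic we are done immediately by $t_2(X)=0$ (Theorem~\ref{tmotive of qusi-elliptic}); so I may assume $f$ is elliptic. Let $j:J\to C$ be the Jacobian fibration of $f$. By Theorem~\ref{motive of genus one} we have $h(X)\cong h(J)$, so it suffices to show $t_2(J)$ is Kimura-finite.

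The crux is to identify $J$ from the invariant relations of Section~7. From $p_g(X)=0$ together with Noether's formula and the nonnegativity of the Euler number of an elliptic surface one obtains $\chi(\mathcal{O}_X)\in\{0,1\}$; by Proposition~\ref{euler}, $\chi(\mathcal{O}_J)=\chi(\mathcal{O}_X)$ and $b_i(J)=b_i(X)$. Since $J$ is Jacobian it has no multiple fibers and $R^1j_*\mathcal{O}_J$ is invertible, so the invariant of Proposition~\ref{inequality} reduces to $\lambda(j)=2p_a(C)-2+\chi(\mathcal{O}_J)$; combined with the bound $p_a(C)\le\tfrac12 b_1(X)$ coming from $f$ and with $\lambda(f)\ge\lambda(j)$, this forces $\kappa(J)\le 0$. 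Now: if $\chi(\mathcal{O}_J)=1$ (hence $C\cong\mathbb{P}^1$ and $b_1(J)=0$), $J$ is a relatively minimal rational elliptic surface, so $J$ is rational, $\mathrm{CH}_0(J)\cong\mathbb{Z}$, and $t_2(J)=0$; if $\chi(\mathcal{O}_J)=0$, then $e(J)=0$, so $j$ has no singular fibers, $J$ is isotrivial, and $J$ is either $\mathbb{P}^1\times F$ with $F$ elliptic or a bielliptic surface $E\times F/G$ as in Proposition~\ref{hyper-elliptic}; in the former $h(J)=h(\mathbb{P}^1)\otimes h(F)$ is Kimura-finite, and in the latter $h(J)$ is Kimura-finite by Proposition~\ref{finiteness for hyper surfaces} (and for the non-classical bielliptic surfaces in characteristics $2,3$ one instead uses that the Albanese fibration of $J$ is quasi-elliptic, so $t_2(J)=0$). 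In every case $t_2(J)$ is Kimura-finite, hence so is $h(J)\cong h(X)$.

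The hard part is this final identification of $J$: proving $\kappa(J)\le 0$ and that $J$ is rational, a product, or bielliptic, uniformly in all characteristics. Over $\mathbb{C}$ this is exactly the surface-theoretic input behind Bloch--Kas--Lieberman's theorem; the new difficulty is to make it characteristic-free, which requires keeping track of the defect $\Delta=2q-b_1$, of wild fibers and of the possible torsion in $R^1f_*\mathcal{O}_X$, and of the non-reduced group schemes arising for (quasi-)bielliptic surfaces in small characteristic. The tools that make this go through are the invariant equalities of Proposition~\ref{euler}, the canonical bundle formula, and the fact that passing to the Jacobian fibration removes both the multiple fibers and the torsion of $R^1f_*\mathcal{O}$.
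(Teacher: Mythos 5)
Your proposal is correct and follows essentially the same route as the paper: reduce to a minimal model, dispose of the ruled case, produce a genus $1$ fibration from the classification, replace $X$ by its Jacobian fibration $J$ via Theorem \ref{motive of genus one}, and then use $\lambda(j)=2p_{a}(C)-2+\chi(\mathcal{O}_{J})\le 0$ together with $\chi(\mathcal{O}_{J})=\chi(\mathcal{O}_{X})$ and $b_{2}(J)=b_{2}(X)$ to identify $J$ as birationally ruled, hyper-elliptic, or quasi hyper-elliptic. The only cosmetic differences are that in the case $\chi=1$ the paper concludes $\kappa(J)<0$ directly from $\lambda(j)=-1$ and reuses the ruled case (rather than invoking rationality of $J$ and $\mathrm{CH}_{0}(J)\cong\mathbb{Z}$), and in the case $\chi=0$ it reads off (quasi-)hyper-ellipticity from $b_{2}(J)=2$ and Definition \ref{definition of hyper elliptic} instead of your $e(J)=0$/isotriviality detour.
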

\textbf{Proof of Theorem \ref{characteristic $p$}.}
The ideas of the proof are based on \cite[Prop.~4, p.138]{Bloch and Kas and Lieberman}, 
\cite[Coro.~2.12,\ p.187]{Guletskii and Pedrini}, and \cite[Thm.~11.10,\ p.313]{Voisin}.\\
 
(i) we assume $\kappa(X) < 0$.
By Theorem \ref{kodaira-}, 
we have $X \sim_{birat} \mathbb{P}^{1} \times C$ for some $k$-curve $C$. 
We apply Proposition \ref{finiteness for hyper surfaces} to $G = 0$, and see that $h(X)$ is Kimura-finite.\\

So we assume $\kappa(X) \geq 0$.
By Proposition \ref{Properties of Kimura-finiteness} (v), we may assume $X$ is minimal.
Since $0 \leq \kappa < 2$, we have $(K^{2}) = 0$ by Theorem \ref{kodaira classification of surfaces}.
Then the Noether formula $10 - 8  q + 12  p_{g} = (K^{2}) + b_{2} + 2 \Delta$ becomes 
\[ 10 - 8q = b_{2}. \]
Since $b_{2} \geq 0$, we must consider the following two cases$:$\\

\indent $(a)$ $q(X) = 0$, \ $b_{2}(X) = 10;$\\
\indent $(b)$ $q(X) = 1$, \ $b_{2}(X) = 2$.\\

\begin{lem} \label{has elliptic}
Let $S \in \mathcal{V}(k)$ be a minimal surface.
If $p_{g} = 0$ and $0 \leq \kappa < 2$, then $S$ has a genus $1$ fibration.
\end{lem}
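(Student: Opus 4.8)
The plan is to dispose of the lemma by invoking the classification of minimal surfaces recalled in Section 10, splitting into the two numerical cases $(a)$ and $(b)$ that the Noether formula has already forced. First I would treat case $(a)$, where $q(X)=0$ and $b_{2}(X)=10$. By Proposition \ref{list}, a minimal surface with $(K_{S}^{2})=0$, $p_{g}=0$, and $b_{2}=10$ falls into the list, and combined with $\kappa(S)\geq 0$ it must have $\kappa=0$; among the surfaces in that list with these invariants, $S$ is an Enriques surface in the sense of Definition \ref{def of Enriques} (since $b_{2}=10$). Then Theorem \ref{Enriques has genus 1} immediately gives a genus $1$ fibration on $S$. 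If $\kappa(S)=1$ instead of $0$, Theorem \ref{Kodaira dimension one} gives a genus $1$ fibration directly, with no reference to the value of $b_{2}$.

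Next I would handle case $(b)$, where $q(X)=1$ and $b_{2}(X)=2$. Again, if $\kappa(S)=1$, Theorem \ref{Kodaira dimension one} provides the genus $1$ fibration. If $\kappa(S)=0$, then $S$ is a minimal surface with $\kappa=0$ and $b_{2}=2$, so by Proposition \ref{hyper elliptic and quasi hyper-elliptic} the Albanese morphism $\mathrm{alb}_{S}\colon S\to \mathrm{Alb}_{S/k}$ (note $b_{1}(S)=2$ by Proposition \ref{list}, consistent with $q=1$) produces a fibration $a\colon S\to E$ onto an elliptic curve whose fibers are integral curves of arithmetic genus one — that is, a genus $1$ fibration in the sense used in this paper. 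So in every sub-case $S$ carries a genus $1$ fibration.

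The cleanest way to package this is simply to note that by Theorem \ref{kodaira classification of surfaces} a minimal $S$ with $0\le\kappa<2$ and $(K_{S}^{2})=0$ has either $\kappa(S)=1$ or $\kappa(S)=0$; in the former case apply Theorem \ref{Kodaira dimension one}, and in the latter case, using $p_{g}=0$ one has from Proposition \ref{list} that $S$ is either an Enriques surface ($b_{2}=10$, apply Theorem \ref{Enriques has genus 1}) or satisfies $b_{2}=2$ with $q=1$ (apply Proposition \ref{hyper elliptic and quasi hyper-elliptic}); in all cases $S$ admits a genus $1$ fibration. I would phrase the write-up so that the $\kappa=1$ case is dispatched first in one line, and then only the $\kappa=0$ case invokes the Noether-formula dichotomy between $b_{2}=10$ and $b_{2}=2$.

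The main obstacle, such as it is, is bookkeeping rather than mathematics: one must be careful that the surfaces in Proposition \ref{list} with $p_{g}=0$ are exactly items $(iii)$ ($b_{2}=10$, $q=0$) and $(vi)$ ($b_{2}=2$, $q=1$), so that the two arithmetic cases $(a)$ and $(b)$ line up precisely with ``Enriques'' and ``(quasi-)hyper-elliptic'' respectively, and that ``genus $1$ fibration'' in Proposition \ref{hyper elliptic and quasi hyper-elliptic} matches Definition of genus $1$ fibration used throughout (integral fibers of arithmetic genus one is exactly what is needed, since a fibration whose generic fiber has arithmetic genus one and is geometrically integral is a genus $1$ fibration, possibly quasi-elliptic). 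No delicate argument is required; every ingredient is already available in Sections 7 and 10.
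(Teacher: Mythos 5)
Your proposal is correct and follows essentially the same route as the paper: dispatch $\kappa=1$ via Theorem \ref{Kodaira dimension one}, and for $\kappa=0$ use the Noether-formula dichotomy ($q=0$, $b_{2}=10$ gives an Enriques surface, handled by Theorem \ref{Enriques has genus 1}; $q=1$, $b_{2}=2$ gives a (quasi-)hyper-elliptic surface whose Albanese fibration from Proposition \ref{hyper elliptic and quasi hyper-elliptic} is the required genus $1$ fibration). Your extra care in matching the cases of Proposition \ref{list} and the definition of genus $1$ fibration is consistent with, and slightly more explicit than, the paper's own write-up.
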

\begin{proof}
First, we assume $\kappa = 0$.
If $\kappa = p_{g} = q = 0$, then $S$ is an Enriques surface, 
so $S$ has a genus $1$ fibration by Theorem \ref{Enriques has genus 1}.
If $\kappa = p_{g} = 0$ and $q = 1$, then $b_{2} = 2$ as in (b).
By Definition \ref{definition of hyper elliptic}, $S$ is hyper-elliptic or quasi-hyper elliptic, so $S$ has a genus $1$ fibration.
Next, we assume $\kappa = 1$.
Then $S$ has a genus $1$ fibration by Theorem \ref{Kodaira dimension one}.
\end{proof}

By Lemma \ref{has elliptic}, we see that $X$ has a genus $1$ fibration
\[ f : X \rightarrow C. \]
Since $f$ is a fibration, $q(X) \geq q(C) = p_{a}(C)$. 
Since $\chi = 1 - q + p_{g} = 1 + q$, by $(a)$ and $(b)$, we must consider the following two cases$:$\\ 

\indent $(a)$ $\chi(\mathcal{O}_{X}) = 1$, \ $p_{a}(C) = 0$, \ $b_{2}(X) = 10;$\\
\indent $(b)$ $\chi(\mathcal{O}_{X}) = 0,$ \ $p_{a}(C) \leq 1$, \ $b_{2}(X) = 2$.\\

\noindent Let $j : J \rightarrow C$ be the Jacobian fibration of $f$.
By Theorem \ref{motive of genus one}, we get an isomorphism  
\[ h(X) \cong h(J) \ \ \ \text{in} \ \ \ \mathrm{CH}\mathcal{M}(k, \mathbb{Q}). \] 
Thus, it suffices to prove that $h(J)$ is Kimura-finite (In fact, we only use $t_{2}(X) \cong t_{2}(J)$).\\

\indent (ii) we assume $(a)$.
So $\chi(\mathcal{O}_{X}) = 1$ and $p_{a}(C) = 0$.\\
By Proposition \ref{euler} (i), $\chi(\mathcal{O}_{J}) = \chi(\mathcal{O}_{X}) = 1$.
Thus,
\[ \lambda(j) : = 2 p_{a}(C) - 2 + \chi(\mathcal{O}_{J}) = 0 - 2 + 1 = -1. \]
We apply Proposition \ref{inequality} to $\lambda(j) = - 1$, and get 
\[ \kappa(J) < 0. \]
By (i), we see that $h(J)$ is Kimura-finite.\\

\indent (iii) we assume $(b)$.
So $\chi(\mathcal{O}_{X}) = 0$, $p_{a}(C) \leq 1$, and $b_{2}(X) = 2$.\\
\indent (iii-i) We prove that $J$ is a hyper elliptic surface or a quasi hyper-elliptic surface.\\
\indent First, we prove $\kappa(J) \leq 0$. 
By Proposition \ref{euler} (i), $\chi(\mathcal{O}_{J}) = \chi(\mathcal{O}_{X}) = 0$.
Thus, 
\[ \lambda(j) : = 2 p_{a}(C) - 2 + \chi(\mathcal{O}_{J}) \leq 2 - 2  + 0 = 0.\]
We apply Proposition \ref{inequality} to $\lambda(j) \leq 0$, and get 
\[ \kappa(J) \leq 0. \]
By (i), we only consider the case where $\kappa(J) = 0$.\\
\indent Next, we prove $b_{2}(J) = 2$.
Now, $b_{2}(X) = 2$.
By Proposition \ref{euler} (ii), we get $b_{2}(J)  = b_{2}(X) = 2$.
By Definition \ref{definition of hyper elliptic}, $J$ is hyper-elliptic or quasi hyper-elliptic.\\
\indent (iii-ii) We prove that $M(J)$ is Kimura-finite.\\
\indent First, we assume that $J$ is quasi hyper-elliptic.
By Definition \ref{definition of hyper elliptic}, $J$ has a quasi-elliptic fibration $j' : J \rightarrow C'$.
Using Theorem \ref{tmotive of qusi-elliptic}, we have 
\[ t_{2}(J) = 0. \]
Then $h(J) \cong \oplus_{i = 0, \neq 2}^{4}h_{i}(J) \oplus h_{2}^{alg}(J)$.
By Remark \ref{algebraic motives},
$h_{i}(J) (i \neq 2)$ and $h_{2}^{alg}(J)$ are Kimura-finite.
By Proposition \ref{Properties of Kimura-finiteness} (iii),
we see that $h(J)$ is Kimura-finite.\\
\indent Next, we assume that $J$ is hyper-elliptic. By Proposition \ref{hyper-elliptic}, 
there are elliptic curves $E$, $F$, and a finite subgroup scheme $G$ of $E$ such that
\[ J \cong (E \times F)/G. \]
By Proposition \ref{finiteness for hyper surfaces}, $h(J)$ is Kimura-finite.
This completes the proof of Theorem \ref{characteristic $p$}.
\begin{rem}
Let $f : X \rightarrow C$ be a genus 1 fibration with $p_{g}(X) = 0$, and 
$j : J \rightarrow C$ the Jacobian fibration of $f$.
By the above arguments, $\kappa(X) \geq \kappa(J)$ (if $X$ is an Enriques surface, then $J$ is rational by the argument as in (ii), so $\kappa(X) = 0$ and $\kappa(J) < \infty$).
``Roughly speaking", it is easier to prove 
the Kimura-finiteness of surfaces with low Kodaira dimension than that of 
surfaces with high Kodaira dimension. 
\end{rem}
\begin{rem} Surfaces with $p_{g} = 0$$:$

\begin{center}
\begin{tabular}{|c|c|c|}
\hline
$$ & $\kappa = 0$ & \ $\kappa = 1$ \  \\
\hline
$q = 0$ & Enriques surface & canonical fibration \\
           &                          & \\
\hline
$q = 1$ & Albanese fibration & canonical fibration or Albanese fibration\\
           &                             &           \\
\hline
\end{tabular}
\end{center}
\indent Let $X$ be a surface with $p_{g} = 0$, $\kappa = 1$, and $q = 1$.
Bloch-Kas-Lieberman considered the Albanese fibration $a : X \rightarrow \mathrm{Alb}_{X/k}$.
The genus of the generic fiber of $a$ is $\geq 1$.\\
\indent On the other hand, we consider the canonical fibration $f : X \rightarrow C$.
The genus of the generic fiber of $f$ is equal to $1$.
``Roughly speaking", it is easier to treat genus 1 fibrations 
than genus $g \geq 2$ fibrations, in positive characteristics.
\end{rem}

\subsection*{Acknowledgements}
First and foremost, I would like to thank my Ph.D. supervisor, Prof.~Masaki Hanamura 
for many helpful comments and suggestions.
I am deeply grateful to Prof.~Takao Yamazaki for his comments, and 
giving me an opportunity to talk about this paper 
at Algebraic Seminar in Tohoku University.
I would like to thank Prof. Nobuo Tsuzuki for his generous support, 
and the referee for very useful comments and suggestions.
I express my appreciation for the hospitality of Research Alliance Center for Mathematical Sciences, Tohoku University, where this work was done.

 \end{document}